\documentclass[12pt, oneside]{amsart}

\topmargin -0.1cm
\advance \topmargin by -\headheight
\advance \topmargin by -\headsep
\setlength{\paperheight}{270mm}%
\textheight 22.5cm
\oddsidemargin 0.2cm
\evensidemargin \oddsidemargin
\marginparwidth 2cm
\textwidth 16.4cm
\usepackage{color}
\usepackage{enumitem}
\usepackage{bbm}

\usepackage{graphicx}
\usepackage[mathscr]{euscript}
\usepackage{amsthm,amssymb,amsmath}
\usepackage{subfigure}
\allowdisplaybreaks[4]


\begin{document}
\title{Measure doubling of small sets in  $\mathrm{SO}(3,\mathbb{R})$}

\author{Yifan Jing}
\address{Mathematical Institute, University of Oxford, Oxford OX2 6GG, UK}
\email{yifan.jing@maths.ox.ac.uk}

\author{Chieu-Minh Tran}
\address{Department of Mathematics, National University of Singapore, Singapore}
\email{trancm@nus.edu.sg}

\author{Ruixiang Zhang}
\address{Department of Mathematics, University of California Berkeley, CA, USA}
\email{ruixiang@berkeley.edu}

\thanks{YJ~was supported by Ben Green’s Simons Investigator Grant, ID:376201.}
\thanks{RZ was supported by the NSF grant DMS-2207281, NSF CAREER Award DMS-2143989, and the Alfred P. Sloan Foundation}

\subjclass[2020]{Primary 20G20; Secondary 43A75, 22E30, 03C20, 11B30}

\date{} 

\newtheorem{theorem}{Theorem}[section]
\newtheorem{lemma}[theorem]{Lemma}
\newtheorem{corollary}[theorem]{Corollary}
\newtheorem{fact}[theorem]{Fact}
\newtheorem{conjecture}[theorem]{Conjecture}

\newtheorem{proposition}[theorem]{Proposition}
\theoremstyle{definition}
\newtheorem{remark}[theorem]{Remark}
\newtheorem{definition}[theorem]{Definition}
\newtheorem*{thm:associativity}{Theorem \ref{thm:associativity}}
\newtheorem*{thm:associativity2}{Theorem \ref{thm:associativity2}}
\newtheorem*{thm:associativity3}{Theorem \ref{thm:associativity3}}
\def\tri{\,\triangle\,}

\def\d{\,\mathrm{d}}
\def\BM{\mathrm{BM}}
\def\RR{\mathbb{R}}
\def\ZZ{\mathbb{Z}}
\def\TT{\mathbb{T}}
\def\LL{\mathscr{L}}
\def\id{\mathrm{id}}
\def\tmu{\tilde{\mu}}
\def\pr{\mathrm{p}}
\newcommand\NN{\mathbb N}
\newcommand{\Case}[2]{\noindent {\bf Case #1:} \emph{#2}}
\newcommand\inner[2]{\langle #1, #2 \rangle}
\def\aL{\mathfrak{l}}
\def\aee{=_\ult}

\renewcommand{\epsilon}{\varepsilon}

\def\ult{\mathcal{U}}
\def \Sot{\mathrm{SO}(3,\RR)}
\def \Sod{\mathrm{SO}(d, \RR)}
\def \Gld{\mathrm{GL}(d, \RR)}

\def\cA{\mathscr{A}}
\def\cB{\mathcal{B}}
\def\cP{\mathcal{P}}
\def\cD{\Sigma}
\def\cAe{\mathscr{A}_{\epsilon}}
\def\cAet{\mathscr{A}_{\epsilon, \theta}}

\def \diam{\mathrm{diam}}

\def\EE{\mathbb{E}}

\def \fL {\mathfrak{l}}

\begin{abstract}
Let $\mathrm{SO}(3,\mathbb{R})$ be the 3D-rotation group equipped with the real-manifold topology and the normalized Haar measure $\mu$.
Resolving a problem by Breuillard and Green, we show that if $A \subseteq \mathrm{SO}(3,\mathbb{R})$ is an open subset with sufficiently small measure,
then 
$$ \mu(A^2) > 3.99 \mu(A).$$
We also show a more general result for the product of two sets, which can be seen as a Brunn--Minkowski-type inequality for sets with small measure in $\mathrm{SO}(3,\mathbb{R})$.
\end{abstract}

\maketitle

\section{Introduction}

\subsection{Results and backgrounds} Throughout, let $\Sot$ be the 3D-rotation group, or more precisely,
$$ \Sot := \{ Q \in M_{3,3}(\RR):  QQ^{T} = Q^{T}Q= I_3, \mathrm{det}(Q)=1\} $$
with $M_{3,3}(\RR)$ the set of $(3\times 3)$-matrices with real coefficients, $Q^T$ the transpose of $Q$, $I_3$ the identity $(3\times 3)$-matrix, and the group operation on $\Sot$ given by matrix multiplication.
 As a consequence, $\Sot$ can be identified with a Euclidean closed subset of $\RR^9$, which gives it a real-manifold topology. 
 It is well-known that the group $\Sot$ is compact and connected with respect to this topology. In particular, it has a normalized Haar measure $\mu$.  For $A,B \subseteq \Sot$, we are interested in their product set $ AB:= \{ab : a \in A, b \in B \}.$ We write $A^2$ instead of $AA$, and define the $k$-fold product $A^k$ similarly.

 The following question was described  in Green's list of open problem~\cite{Green}, where it was also attributed to discussions with Breuillard: 
 \begin{center}
      { \it When $A \subseteq \Sot $ is open and has sufficiently small measure, is $\mu(A^2) > 3.99\mu(A)$? }
 \end{center}
  It was also remarked in ~\cite{Green} that, if true, this will be the best possible, as seen by considering small neighbourhoods of a 1-dimensional subgroup. (Indeed, the construction in Section~\ref{sec: constructionandexamples} yields such open $A\subseteq \Sot$ with  $3.99\mu(A)< \mu(A)^2 < 4\mu(A)$ and $\mu(A)$ arbitrarily small, so we cannot replace $3.99$ by $4$.) In less precise form, the question traces back to the much earlier work of Henstock and Macbeath~\cite{HenstockMacbeath}, where they proposed the problem of determining minimal doubling in nonabelian locally compact groups. From this angle, $\Sot$ is of interest as it is the first nontrivial compact and connected case.

  Our main result answers the question by Breuillard and Green positively:
\begin{theorem}\label{thm: maingrowth}
For all $\epsilon>0$, there is $\delta>0$ such that if $A \subseteq \Sot$ is an open subset with $\mu(A)< \delta$, 
then 
$$ \mu(A^2) > (4-\epsilon) \mu(A).$$
\end{theorem}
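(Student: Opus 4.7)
The plan is to argue by contradiction using a compactness/ultraproduct setup, reducing the problem to a Brunn--Minkowski-type inequality in the Lie algebra $\mathfrak{so}(3,\mathbb{R}) \cong \mathbb{R}^3$, and then handling separately the residual configuration that saturates the constant $4$.

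Suppose the theorem fails: fix $\epsilon>0$ and open sets $A_n \subseteq \mathrm{SO}(3,\mathbb{R})$ with $\mu(A_n) \to 0$ but $\mu(A_n^2) \leq (4-\epsilon)\mu(A_n)$. First I would \emph{localize}: by partitioning $\mathrm{SO}(3,\mathbb{R})$ into small balls and pigeonholing, pass to a sub-piece of $A_n$ sitting inside a ball of radius $r_n\to 0$, preserving the doubling bound up to $o(1)$; after translating, the piece sits near the identity. Pull back via $\log$ to $\mathfrak{so}(3,\mathbb{R})$, rescale by $r_n^{-1}$, and extract an ultraproduct/Loeb limit $\Omega \subseteq \mathbb{R}^3$ of positive finite Lebesgue measure. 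The Baker--Campbell--Hausdorff formula $\exp(X)\exp(Y) = \exp(X+Y+O(|X||Y|))$ guarantees that under isotropic rescaling group multiplication converges to ordinary addition in the limit, so the classical Brunn--Minkowski inequality in $\mathbb{R}^3$ gives $|\Omega+\Omega|\geq 2^3|\Omega|=8|\Omega|$, a contradiction.

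The real difficulty is that, in order to keep $|\Omega|$ macroscopic, one may be forced to rescale \emph{anisotropically}, in which case $\Omega$ degenerates onto a line $\ell \subseteq \mathfrak{so}(3,\mathbb{R})$. Such a line exponentiates to a maximal torus $H \subseteq \mathrm{SO}(3,\mathbb{R})$, and the original $A_n$ is morally a tubular neighborhood of transverse width $\delta_n$ around an arc of $H$ --- precisely the extremal configuration of Section~\ref{sec: constructionandexamples}. To finish the contradiction in this regime I would slice $A_n \approx \bigsqcup_{h\in H} A_n(h)$ by transverse $2$-planes, use that conjugation by $h \in H$ acts on the transverse plane by an orthogonal rotation, apply a $2$-dimensional Brunn--Minkowski transversally to obtain $|A_n^2(h_1h_2)|^{1/2} \geq |A_n(h_1)|^{1/2}+|A_n(h_2)|^{1/2}$, and integrate along $H$ via Cavalieri or Prékopa--Leindler. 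The sharp constant $4=2^2$ comes from this transverse direction; the fact that no extra length factor appears corresponds to the $A_n$'s being forced to wrap (essentially) around all of $H$ in this regime, since otherwise the rescaled set would not degenerate.

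The main obstacle is this last step: controlling the BCH commutator terms and the non-flatness of the normal bundle along $H$ with enough precision to turn the heuristic $2$-dimensional Brunn--Minkowski into a rigorous lower bound that is sharp up to $o(1)$. I expect this is where the paper's ``associativity'' toolbox (Theorems~\ref{thm:associativity}, \ref{thm:associativity2}, \ref{thm:associativity3}) enters, providing a robust noncommutative Brunn--Minkowski statement which absorbs the $O(|X||Y|)$ errors uniformly enough for the slicing-and-integration argument to close and deliver the desired contradiction.
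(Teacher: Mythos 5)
Your proposal diverges substantially from the paper's argument and, more importantly, contains a step that cannot work as stated.

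The crucial gap is the localization step. You propose to pass to a sub-piece of $A_n$ contained in a ball of radius $r_n \to 0$ while ``preserving the doubling bound up to $o(1)$.'' This is false, and the paper's own extremal example shows why: the near-optimal configuration is a thin tube of transverse width $\delta$ around essentially \emph{all} of a maximal torus (Proposition~\ref{prop: example}), which has $\mu(A^2)/\mu(A) \approx 4$. If you restrict such a tube to a ball of radius $r \to 0$, you get roughly a $\delta \times \delta \times r$ box near the identity, whose square is a $2\delta \times 2\delta \times 2r$ box — doubling jumps to $\approx 8$, not $4 - o(1)$. Small doubling close to $4$ in $\mathrm{SO}(3,\mathbb{R})$ is an intrinsically \emph{global} phenomenon driven by the set wrapping around a one-dimensional subgroup, and no pigeonholing to a shrinking ball can retain it. Consequently the entire case split into ``isotropic rescaling, contradiction by $\mathbb{R}^3$ Brunn--Minkowski'' versus ``anisotropic degeneration onto a line'' never gets off the ground: the first branch relies on a hypothesis (local small doubling) that was never established, and the second branch is only reached from the first.

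A secondary problem: the ``associativity toolbox'' you invoke (\verb|thm:associativity|, etc.) is a phantom — these labels appear only as unused \verb|\newtheorem*| declarations in the preamble; no such theorems exist in the paper, so there is no robust noncommutative Brunn--Minkowski statement of that form for you to lean on. Finally, your intuition for where the constant $4$ comes from is different from the paper's. You attribute it to a transverse $2$-dimensional Brunn--Minkowski. In the paper, the constant $4 = 2^2$ arises differently: the hypothesis $\mu(A^2) < (4-\epsilon)\mu(A)$ translates into a Brunn--Minkowski growth $\BM(A,A) < 2$; after transporting this to a noncompact connected Lie model $L$ (via Tao's approximate-group construction, Hrushovski--Massicot--Wagner, and Gleason--Yamabe), the threshold $\BM < 2$ is exactly what the Kemperman inverse theorem (Fact~\ref{fact: AJTZ}) needs to produce a continuous surjection $L \twoheadrightarrow \RR$ with compact kernel; pulling an interval back through this surjection and down to $\mathrm{SO}(3,\mathbb{R})$ yields sets with doubling below $2 + 10^{-12}$, contradicting the measure-expansion gap (Fact~\ref{fact:expansiongap}). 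No transverse slicing occurs, and the role of the extremal torus configuration is conceptual, not computational.
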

Our proof, in fact,  gives the same conclusion for all compact semisimple Lie groups $G$ (Theorem~\ref{thm: mainBM2}), but the inequality is not sharp unless $G$ contains $\Sot$ as a closed subgroup.  Nevertheless, this provides a sharp contrast with compact and connected Lie group which is not semisimple. Here, the lower bound $\mu(A^2)\geq \min\{ 1, 2\mu(A)\}$
given by the Kemperman inequality cannot be improved. For open subsets of $\Sot$ with very small measure, Theorem~\ref{thm: maingrowth} improves the measure expansion gap by the first two authors~\cite{JT}, which says that there is a constant $\eta>0$ such that 
$$
\mu(A^2)\geq  \min\{1, 2\mu(A)+\eta\mu(A)|1-2\mu(A)|\}
$$
whenever $G$ is a compact semisimple Lie group, and $A \subseteq G$ is open. Note that construction in Section~\ref{sec: constructionandexamples} mentioned earlier also provides  open $A \subseteq \Sot$ with measure close to $1/2$ such that $2\mu(A)< \mu(A^2)< 2.01 \mu(A) $. Hence, the condition that $\mu(A)$ has sufficiently small measure is necessary, and Theorem~\ref{thm: maingrowth} does not provide information about general open $A \subseteq \Sot$.

Breuillard and Green studied the aforementioned question in connection with product theorems in groups of Lie type over finite fields~\cite{Helfgott08, BGT11, PS16} and results on approximate groups~\cite{Hrushovski, BGT}. While not stated explicitly, subgroups and neighborhoods are expected to be the explanation for small doubling in $\Sot$, just as they are in the abelian settings of additive combinatorics.  Thus, behind the question by Breuillard and Green is the challenge to generalize results in additive combinatorics to nonabelian groups. The result in this paper and the authors earlier work on the nonabelian Brunn--Minkowski~\cite{JTZ} are currently the only product-type theorems with sharp bounds. 



 Recall that, for $\RR^n$ equipped with the usual Lebesgue measure $\lambda$,  the Brunn--Minkowski inequality tells us that if $X, Y\subseteq \RR^n$ are open, then 
$\lambda(X+Y)^{1/n} \geq \lambda(X)^{1/n} + \lambda(Y)^{1/n}  $ where we set $X+Y =\{x+y: x\in X, y\in Y\}$. 
 For $\Sot$, our proof of Theorem~\ref{thm: maingrowth} also yield the following more general asymmetric result, which can be seen as a Brunn--Minkowski type inequality for $\Sot$. 
\begin{theorem}\label{thm: mainBM}
 For all $\epsilon$ and $N$, there is $c =c(\epsilon, N)$ such that whenever  $A , B  \subseteq \mathrm{\Sot}$ are open, $0< \mu(A), \mu(B) < c$, and $\mu (A)/N  < \mu(B) <N \mu(A) $, we have
 \[
 \mu(AB)^{\frac{1}{2}+\epsilon}  \geq \mu(A)^{\frac{1}{2}+\epsilon}+ \mu(B)^{\frac{1}{2}+\epsilon}.
 \]
\end{theorem}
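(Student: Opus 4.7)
The target exponent $\tfrac12$ encodes the geometry of the extremizers: an $r$-neighborhood of a one-parameter subgroup $H\subseteq\Sot$ has measure comparable to $\mathrm{length}(H)\cdot r^2$, so the ``transverse'' dimension $2$ gives the exponent $\tfrac1{n-1}=\tfrac12$. When $A,B$ are concentrated in tubes of radii $r_A,r_B$ around the same $H$, the product $AB$ is a tube of radius $\approx r_A+r_B$, and the claim reduces to the classical Brunn--Minkowski inequality for disks in $\mathbb{R}^2$. The $\epsilon$-slack in the exponent is to absorb lower order errors coming from the fact that $A$ and $B$ are only approximately such tubes. Checking consistency: setting $A=B$, the inequality recovers $\mu(A^2)\geq 2^{1/(1/2+\epsilon)}\mu(A)\to 4\mu(A)$ as $\epsilon\to 0$, matching Theorem~\ref{thm: maingrowth}.

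The plan has three steps. First, I would normalize so that $\mu(B)\leq \mu(A)<c$ (using the ratio hypothesis) and argue by contradiction. Assuming the inequality fails, I pass to an ultralimit of putative counterexamples $(A_n,B_n)$ with measures tending to $0$, working in a nonstandard extension as in~\cite{Hrushovski, JTZ}. Rescaling by the appropriate scale (the ``diameter'' of the sets in a fixed word-metric after group normalization) should produce a limiting pair $(\widetilde A,\widetilde B)$ of positive measure in some homogeneous model, saturating a BM-type inequality with exponent exactly $\tfrac12$. By Theorem~\ref{thm: maingrowth} applied to $A_n\cup B_n$, the doubling of $A_n\cup B_n$ tends to the extremal value $4$, which should force $(\widetilde A,\widetilde B)$ into a rigid structure.

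Second, I would establish a stability/inverse theorem: any pair that nearly saturates the exponent-$\tfrac12$ inequality must lie, up to negligible error, in an $r$-neighborhood of a single common one-parameter subgroup $H\subseteq\Sot$. This requires ruling out (a) the case where $A$ and $B$ concentrate near different tori (in which case $AB$ spreads into an essentially $3$-dimensional set, giving doubling $\gg 4$), and (b) the case where $A,B$ have non-tube-like transverse profile. Both should follow from a careful structural analysis of the extremizers of Theorem~\ref{thm: maingrowth}, which likely already appears in its proof. Third, once $A,B$ are tubes around a common $H$, I would use exponential coordinates on a neighborhood of the identity, write $A$ as the (approximate) union of fibers $h\cdot D_A(h)$ for $h\in I_A\subseteq H$ with $D_A(h)$ a planar region transverse to $H$, and similarly for $B$. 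Since $H$ is abelian and conjugation by $h\in H$ preserves transverse Lebesgue measure to first order, a Fubini argument gives
\[
\mu(AB)\;\geq\;(1-o(1))\int_{I_A+I_B}\lambda_2\bigl(D_A(h)+D_B(h')\bigr)\,\mathrm{d}h\,\mathrm{d}h',
\]
and the classical Brunn--Minkowski in $\mathbb{R}^2$ (exponent $\tfrac12$) together with Brunn--Minkowski on $I_A+I_B\subseteq H\cong \mathbb{T}$ yields the inequality, with the $\epsilon$ absorbing the $o(1)$ factors.

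The main obstacle is the second step: the structural inverse theorem characterizing near-extremizers of the doubling inequality in $\Sot$. Theorem~\ref{thm: maingrowth} only gives the sharp constant, not the shape of extremizers, and extracting rigidity (alignment of axes and of transverse disks) in the \emph{asymmetric} setting is substantially more delicate than in the symmetric $A=B$ case. I expect this to require combining the measure-expansion gap of~\cite{JT}, the nonabelian BM of~\cite{JTZ}, and a delicate continuity argument in the ultraproduct model, together with an explicit analysis of how noncommutativity of $\Sot$ away from a one-parameter subgroup forces extra expansion.
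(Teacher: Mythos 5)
Your plan diverges fundamentally from the paper's argument, and the divergence is not benign: the step you identify as the main obstacle is precisely the step the paper is designed to avoid, and the paper in fact never establishes it.

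Concretely, Step 2 of your plan asks for a stability/inverse theorem stating that near-extremizers of the exponent-$\tfrac12$ inequality must concentrate in a tube around a common one-parameter subgroup $H$. You write that such a rigidity statement ``likely already appears in'' the proof of Theorem~\ref{thm: maingrowth}. It does not. The paper proves the sharp constant without ever classifying extremizers or near-extremizers; the structural description of extremizers (sets of the form $\{g:\angle(u,gu)<\theta\}$) is only ever posed as the \emph{Strong Breuillard--Green Conjecture} and is left open. So the key ingredient your plan needs is unavailable. There is also a logical wrinkle: assuming the theorem fails yields $\BM(A_n,B_n)\leq 2-\delta$, but since we do not yet know any lower bound $\BM\geq 2$ for small sets (that is what we are trying to prove), a violating pair is not a ``near-extremizer'' of a proved inequality, so there is nothing to stabilize around. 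Relatedly, invoking Theorem~\ref{thm: maingrowth} inside this proof is circular, since the paper proves the asymmetric Theorem~\ref{thm: mainBM} (via Theorem~\ref{thm: maingrowth2}) first and only then specializes to the symmetric Theorem~\ref{thm: maingrowth}; and applying it to $A_n\cup B_n$ would not give the control you want in any case, since nothing in the contradiction hypothesis bounds $\mu((A_n\cup B_n)^2)$.

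The paper's actual route is quite different and entirely sidesteps inverse/stability analysis. After passing to an ultraproduct (Section 4), it produces a commensurable, definably amenable approximate group $S$ via Tao's construction (Section 5), applies the Massicot--Wagner form of Hrushovski's locally compact model theorem plus Gleason--Yamabe to get a \emph{connected Lie model} $\pi:\langle S\rangle\to L$ (Section 6), reconstructs sets $A'',B''\subseteq\langle S\rangle$ with $\BM$ still close to the original via a H\"older ``spillover'' argument (Section 7), and then uses density functions smoothed by the Lebesgue differentiation theorem to transfer the small growth to compact sets $X,Y\subseteq L$ with $\BM(X,Y)<2-\epsilon'$ (Sections 8--9). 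The Kemperman inverse theorem for noncompact groups (which rests on the nonabelian Brunn--Minkowski inequality) then forces a continuous surjection $\phi:L\to\RR$ with compact kernel; pulling back an interval $I\subseteq\RR$ and transferring back to $\Sot$ yields sets $D_n$ with $\mu(D_n^2)<(2+10^{-12})\mu(D_n)$ and $\mu(D_n)\to0$, contradicting the measure-expansion gap of \cite{JT}. Thus the paper buys exactness on the hard step by bootstrapping against the expansion gap rather than by classifying near-extremal shapes. Your tube heuristic is correct as motivation (it is essentially the construction in Section~\ref{sec: constructionandexamples}), and your Step~3 Fubini computation is a reasonable description of what would happen \emph{if} the rigidity held, but without an actual inverse theorem the plan does not close.
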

In Section~\ref{sec: constructionandexamples}, we will propose conjectural Brunn--Minkowski type inequalities for other compact connected Lie groups. The proofs of our theorems, in fact, provide a reduction of these results to the nonabelian Brunn--Minkowski conjecture for noncompact groups and a measure expansion gap result; see Remark~\ref{remark: section 10}.

We end this background discussion  proposing a conjectural strengthening of Theorem~\ref{thm: maingrowth}: 

\begin{conjecture}[Strong Breuillard--Green Conjecture]
If $A \subseteq \Sot$ is open,  then 
$$ \mu(A^2) \geq \min \{1, 4 \mu(A) (1-\mu(A))\}. $$
Moreover, if $\mu(A)< 1/2$, the equality happens if and only if $A$ is of the form
$$  \{ g\in \Sot: \angle (u, gu) < \arccos(1 -\mu(A)) \}  $$
with $u \in \RR^3$ a unit vector, $gu$ its image $g$-action, and  $\angle (u, gu)$ the angle between them.
\end{conjecture}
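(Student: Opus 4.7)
I would first reduce to the case $\mu(A)<1/2$: if $\mu(A)\geq 1/2$, the Kemperman inequality $\mu(A^2)\geq\min\{1,2\mu(A)\}$ forces $\mu(A^2)=1$, already meeting the target. Thus it suffices to prove $\mu(A^2)\geq 4\mu(A)(1-\mu(A))$ for $\mu(A)<1/2$. As a sanity check I would verify the extremizer. Fix a unit $u\in\RR^3$ and write $C_\theta=\{g\in\Sot:\angle(u,gu)<\theta\}$. Since $g\mapsto gu$ pushes Haar measure to uniform measure on $S^2$, $\mu(C_\theta)=(1-\cos\theta)/2$. By $\Sot$-invariance of the spherical angle, $k\in C_\theta^2$ iff the intersection on $S^2$ of the two open half-angle-$\theta$ caps around $u$ and $ku$ is nonempty, that is iff $\angle(u,ku)<2\theta$; hence $\mu(C_\theta^2)=\mu(C_{2\theta})=(1-\cos 2\theta)/2=\sin^2\theta=4\mu(C_\theta)(1-\mu(C_\theta))$, matching the conjecture exactly.

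The strategy for the general bound is a symmetrization toward $C_\theta$. Consider the projection $\pi:\Sot\to S^2$, $g\mapsto gu$, whose fibers are cosets of the stabilizer $H=\{g:gu=u\}\cong\mathrm{SO}(2)$, and let $f_A:S^2\to[0,1]$ be the relative Haar mass of $A$ along each fiber, so $\mu(A)=\int_{S^2}f_A\,d\sigma$. I would split into two regimes. For $\mu(A)<\delta_0$ small, I would first strengthen Theorem~\ref{thm: maingrowth} to a stability result: either $\mu(A^2)\geq(4+\eta)\mu(A)$ for some absolute $\eta>0$, which already beats $4\mu(A)(1-\mu(A))$ once $\mu(A)<\eta/4$, or $A$ is $o(1)$-close in symmetric difference to some cap $C_{\theta(A)}$, in which case a perturbative expansion around the extremal computation above closes the gap. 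For $\mu(A)\in[\delta_0,1/2)$, I would aim to apply a Riesz-type rearrangement on $S^2$: replace $f_A$ by its cap-decreasing rearrangement around $u$ and show that this operation cannot increase $\mu(A^2)$, reducing to the indicator of $C_{\theta(A)}$ with $\mu(C_{\theta(A)})=\mu(A)$.

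The main obstacle is the rearrangement step: the product $gh$ depends on the $H$-components of both $g$ and $h$, not only on $(gu,hu)\in S^2\times S^2$, so $\mu(A^2)$ is not a two-dimensional spherical convolution of $f_A$ with itself and classical spherical rearrangement does not apply off the shelf. To circumvent this I would try a variational/flow approach: parameterize $\mu$-preserving deformations $\{A_s\}$ of $A$ that push mass along the level sets of $\angle(u,\cdot u)$ toward $C_{\theta(A)}$, and show $\frac{d}{ds}\mu(A_s^2)\le 0$ using the lift-and-recompose description of products of caps above. A compactness argument then yields a minimizer, which the small-measure stability combined with flow monotonicity force to be a cap. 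The equality characterization follows since strict monotonicity of the flow leaves caps as the only stationary configurations, so $\mu(A^2)=4\mu(A)(1-\mu(A))$ forces $A=C_{\theta(A)}$ up to the choice of axis $u\in S^2$. I expect propagating cap-stability from the small-measure regime up through $\mu(A)$ close to $1/2$ to be the hardest step, since neither the structure theorems underlying Theorem~\ref{thm: maingrowth} nor the Kemperman inequality is tight in this intermediate range.
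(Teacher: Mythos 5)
This statement is labeled a \emph{conjecture} in the paper, and the paper does not prove it. The authors only establish the weaker asymptotic form (Theorem~\ref{thm: maingrowth}, giving $\mu(A^2)>(4-\varepsilon)\mu(A)$ for $\mu(A)$ small), together with the extremizer computation (Proposition~\ref{prop: example}, via Lemma~\ref{lem: good representation} and Lemma~\ref{lem: example}) showing equality for the caps $C_\theta$. Your opening reduction via Kemperman and your cap computation are both correct and match what the paper does in Section~3; the identification of $C_\theta^2$ with $C_{2\theta}$ through the cap-intersection criterion is essentially Lemma~\ref{lem: example}, and the resulting identity $\mu(C_\theta^2)=4\mu(C_\theta)(1-\mu(C_\theta))$ is Proposition~\ref{prop: example}.

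Beyond that point, however, your proposal is a research outline rather than a proof, and each of the remaining steps is a genuine gap. First, the ``stability'' strengthening of Theorem~\ref{thm: maingrowth} --- that small doubling close to $4$ forces $A$ to be $o(1)$-close to a cap --- is not proven in the paper and does not follow from the Lie model argument as given; the ultraproduct method is qualitative and destroys exactly the quantitative structure one would need. Second, you correctly observe that $\mu(A^2)$ is not a function of the spherical projection $f_A$ alone, so the Riesz/cap-rearrangement template does not apply; but the replacement ``flow'' you propose is unsupported: you give no argument for $\frac{d}{ds}\mu(A_s^2)\le 0$, and indeed no such monotone deformation is known for this problem. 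Third, the compactness argument for a minimizer is not automatic, since $A\mapsto\mu(A^2)$ is only lower semicontinuous in the wrong direction for minimization over fixed-measure open sets. Finally, you flag yourself that propagating the bound from $\mu(A)$ small up to $\mu(A)$ near $1/2$ is the ``hardest step,'' and you offer no mechanism for it; this intermediate regime is precisely where none of the paper's tools (the Lie model, Kemperman, the expansion gap of~\cite{JT}) are sharp. In short, the extremizer verification is fine, but the proposal does not close any of the gaps that make this a conjecture rather than a theorem.
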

 We will discuss the construction behind this conjecture and generalizations to simple Lie groups with finite center in Section~\ref{sec: constructionandexamples}.

\subsection{Overview of the proof}

The central idea of the proofs of Theorem~\ref{thm: maingrowth} and Theorem~\ref{thm: mainBM} is to link the compact setting of $\Sot$ to the setting of noncompact Lie groups using ultraproduct and the Massicot--Wagner version~\cite{MW} of the so-called Hrushovski's Lie model theorem~\cite{Hrushovski}. The compact settings differs from the noncompact ones in that there is a useful nonabelian Brunn--Minkowski inequality in the latter; this was proven by the authors~\cite{JTZ} using the Iwasawa decomposition to reduce the problem to lower dimensions.
For a compact semisimple Lie group, the Iwasawa decomposition  returns the group itself.

For concreteness, towards a contradiction, let us assume that the conclusion of Theorem~\ref{thm: maingrowth} is false for $\epsilon=0.01$. Then there is a sequence $(A_n)$ of open subsets of $\Sot$ such that 
$$\mu((A_n)^2)< 3.99 \mu(A_n)  \quad \text{and}\quad  \lim_{n \to \infty} \mu(A_n) = 0.$$
Let $\mu_n$ be the normalization of $\mu$ on $A_n$ (i.e. setting $\mu_n(X) = \mu(X)/\mu(A_n)$ for measurable $X\subseteq \Sot$), we get 
\[
\mu_n(A_n)=1,\quad \mu_n((A_n)^2) < 3.99 \mu_n(A_n),\quad \text{and}\quad  \lim_{n \to \infty} \mu_n( \Sot  ) =\infty.
\]
Taking a ``suitable limit'' of the sequence of triples $( \Sot, A_n, \mu_n) $ we arrive at a triple $(G_\infty, A_\infty, \mu_\infty)$ where $G_\infty$ is a ``pseudo-compact''  group, $A_\infty \subseteq G_\infty$ is ``pseudo-open'', $\mu_\infty$ is a ``pseudo-Haar'' measure, and
$$\mu_\infty(A_\infty)=1,\quad  \mu_\infty((A_\infty)^2) < 3.99 \mu_\infty(A_\infty),\quad  \text{and}\quad  \mu_\infty( G_\infty  ) =\infty.$$
The suitable limit notion we use here is taking ultraproduct from model theory, and one can think of it as taking the average of $( \Sot, A_n, \mu_n) $. Notions like ``pseudo-Haar'' must also be carefully defined, but we will not do that in this overview.

Despite what the name ``pseudo-compact'' and ``pseudo-open'' might suggest, there is no automatic topological data on $G_\infty$. However, the measure-theoretic data on $G_\infty$ is enough to apply tools from the study of approximate groups to modify $A_\infty$ ``slightly'' and  construct a ``good'' surjective group homomorphism $$\pi: \langle A_\infty \rangle \to L$$  where $L$ is  noncompact, unimodular (i.e., left Haar measures are also right-invariance), and connected Lie group. Such $\pi$ is often called a Lie model.
We will postpone explaining this point for now and proceed with the argument. Using ideas from real/harmonic analysis, also to be revisited later, 
one shows that
$$ \lambda( X^2) < 3.99 \lambda(X)$$
with $X = \pi(A_\infty)$ and $\lambda$ a left (and hence right) Haar measure on $L$.

Let $d$ be the dimension of $L$, and $m$ the maximum dimension of a compact subgroup of $L$. The nonabelian Brunn--Minkowski Conjecture~\cite[Conjecture 1.4]{JTZ} predicts that $ \lambda(X^2) \geq 2^{d-m} \lambda (X).$
This is still not known in general, but we do know that  
$$ \lambda(X^2) \geq 2^{d-m -\lfloor (d-m)/3 \rfloor} \lambda (X).$$ Applying it into our case, we get $d-m<2$, which implies $d-m=1$ because $L$ is noncompact. In other words, $L$ has a compact subgroup $H$ of codimension $1$. By standard Lie-theoretic argument, we learn that $H$ must be a normal subgroup of $L$, and $L/H$ must be an isomorphic copy of $\RR$.

 Now, let $I \subseteq \RR$ be the interval $(0, 1)_\RR$. Then $I+I =\{ x+y : x, y \in I \}$ is the interval $(0, 2)_\RR$ which has exactly twice the Lebesgue measure of $I$. Let $\rho: \langle A_\infty \rangle \to \RR$ denotes the composition $\phi \circ \pi$ of $\pi: \langle A_\infty \rangle \to L$ with the quotient map $\phi: L \to \RR$, and set $B_\infty = \rho^{-1}(I)$. Then from the fact that $\pi$ is well-behaved, we learn that
 $$ \mu_\infty( (B_\infty)^2) = 2\mu_\infty(B_\infty).  $$
  As $G_\infty$  is the limit of copies of $\Sot$, and from the fact that $\pi: \langle A_\infty \rangle \to L$ is ``good'', we get  $B_n \subseteq \Sot$ with very small measure such that $$\mu(B_n^2) < (2+10^{-12}) \mu(B_n).$$
 This is known to be impossible by the measure expansion gap for semisimple Lie group developed by the first two authors in~\cite{JT}. This completes our proof via ``bootstrapping''.

\subsection*{Approximate groups and Lie models.} We now come back to an earlier point where we ``slightly'' modify $A_\infty$ and obtain a Lie model  $\pi: \langle A_\infty \rangle \to L$.

Conceptually, such a homomorphism reflects the expectation that  sets with small doubling in any setting are supposed to have origin in Lie groups.
 However, the small doubling condition $\mu_\infty(A_\infty^2) < 3.99 \mu_\infty(A_\infty)$ is not strong enough to guarantee the existence of such $\pi$ in the literal sense. One must go around this problem by first using a technique by Tao~\cite{Tao-expansion} to construct an approximate subgroup $S_\infty \subseteq G_\infty$ closely related to $A_\infty$.  (Recall that $S_\infty \subseteq G_\infty$ is called an approximate group if $\id_{G_\infty} \in S$, $S_\infty= S_\infty^{-1}$, and $S_\infty^2$ is covered by finitely many translates of $S_\infty$). 

Under an assumption called definable amenability, a version of the Hrushovski's Lie model theorem by Massicot and Wagner allows the construction of ``good'' surjective group homomorphism $\pi: \langle S_\infty \rangle \to L$ where $L$ is a locally compact group (but not yet a connected Lie group). Fortunately, one can modify $A_\infty$ and $S_\infty$ to make them ``pseudo-semialgebraic'' and arrange that the definability condition is satisfied.
Next, we use the Gleason--Yamabe theorem~\cite{Gleason,Yamabe} to obtain an open subgroup $L'$ of $L$, and a normal compact subgroup $K$ of $L'$ such that $L'/K$ is a connected Lie group. Replacing $S_\infty$ with $S^4_\infty \cap \pi^{-1} (L')$, the locally compact group $L$ with the Lie group $L'/K$, and $\pi$ with $\phi \circ \pi|_{ \langle S^4_\infty \cap \pi^{-1} (L') \rangle }$, we arrange that $L$ is a connected Lie group.

We cannot completely replace $A_\infty$ by $S_\infty$ because the latter might have doubling rate much larger than $3.99$ even though still bounded. In a mock version of the actual argument, we construct $A'_\infty \subseteq \langle S_\infty \rangle$ from $A_\infty$ and $S_\infty$ such that $\mu_\infty((A'_\infty)^2) < 3.99 \mu_\infty(A'_\infty)$. The set $A'_\infty$ is obtained by taking the intersection of $\langle S_\infty \rangle$ and a random translate of $A_\infty$ with respect to a carefully chosen probability measure.
Noting that $\langle A'_\infty \rangle =\langle S_\infty \rangle$ because of the connectedness of $L$, and we replace the original $A_\infty$ by this $A'_\infty$. The actual argument is slightly more complicated than the above mock version. Instead of getting $A'_\infty$ as above, we get two sets $\Tilde{A}_\infty$ and $\Tilde{A}'_\infty$ satisfying an asymmetric small doubling condition in the form of a Brunn--Minkowski-type inequality. Therefore, even if we are only interested in Theorem~\ref{thm: maingrowth}, the proof required essentially also handle  Theorem~\ref{thm: mainBM}.

We end this part remarking that the argument via Hrushovski's Lie model theorem has deep roots within model theory. The origin of such approach traces back to Robinson's nonstandard-analysis~\cite{Robinson} and is related to other results like van den Dries--Wilkie reproof of Gromov theorem using ultraproduct~\cite{VanWilkie}, the body of work surrounding Pillay's Conjecture~\cite{Pillay}, Goldbring's proof of Hilbert's 5th problem for local groups~\cite{Goldbring}.

\subsection*{Density of definable sets over Lie models.} We next briefly indicate the ideas from real/harmonic analysis used in deducing $ \lambda( X^2) < 3.99 \lambda(X)$ when we have the Lie model $\pi: \langle A \rangle \to L$, with $X = \pi(A_\infty)$ and $\lambda$ a Haar measure on $L$.

If $\langle A_\infty \rangle$ were a locally compact group, the pseudo-Haar measure $\mu_\infty$ and the Haar measure $\lambda$ on $L$ can be linked together by a Haar measure $\nu$ on $\ker \pi$ and a quotient integral formula. This allows us to relate  $\mu_\infty(A_\infty)$ and $\mu_\infty(A^2_\infty)$ with the measure of the images $\lambda(X)$ and $\lambda(X^2)$ using the density function $f_A(x) = \nu ( \ker \pi \cap x^{-1}A  )$. The argument then proceeds using a ``spillover'' technique  in~\cite{JT} and \cite{JTZ}.

There is no similar measure on the kernel $\ker \pi$ of our Lie model. The Radon--Nikodym theorem does imply there is a density functions defined almost everywhere linking $\mu_\infty$ and $\lambda$. This is still not good enough for us for the following reason.  We  need to study the relationship between the density functions $f_{A_\infty}$ and $f_{A^2_\infty}$. As $A^2_\infty$ is generally a uncountable union of translates of $A_\infty$, the bad behavior at a null-set of points might contribute too much in a product, making such relationship unclear.

In our approach, we approximate $f_{A_\infty}$ by a family of  better behaved functions $(f^\epsilon_{A_\infty})_{\epsilon \in \RR^{>0}}$, where $f^\epsilon_{A_\infty}$
 is obtained  by considering average behavior of  $f_{A_\infty}$ in a suitable $\epsilon$-ball around the point under consideration.  The Lebesgue differentiation theorem for the Lie group $L$ implies the convergence of $(f^\epsilon_{A_\infty})_{\epsilon \in \RR^{>0}}$ to $f^\epsilon_{A_\infty}$ almost everywhere. We also do the same for $f_{A^2_\infty}$. It turns out that $(f^\epsilon_{A_\infty})_{\epsilon \in \RR^{>0}}$ are well-behaved enough to replace the role played by usual density function and allow us to obtain the desired conclusion.

  We remark that the Lebesgue differentiation theorem over manifolds we used is a consequence of the weak-$L^1$ estimates for the Hardy--Littlewood maximal function from harmonic analysis.

\subsection{Structure of the paper}
The paper is organized as follows. Section~2 includes some facts about Haar measures, Riemannian metrics, and Lie groups, which will be used in the subsequent parts of the paper.  In Section 3, we construct examples in $\Sot$ to show that for every $c\in(0,1)$ there is a set $A$ with $\mu(A)=c$ with $\mu(A^2)=4\mu(A)(1-\mu(A))$. We also generalize this construction to other simple Lie groups and make some more general conjectures. 

Section~4 allows us to find sets with doubling doubling smaller than $4-\varepsilon$ in the ultraproduct group. This step corresponds to the ``taking limit'' step in the overview. In Section 5, we will find a definably amenable open approximate group that is commensurable to the original set with with small doubling in the ultraproduct group. In Section 6, we will find a subgroup of the ultraproduct group and then go down to a connected Lie model via Massicot--Wagner version of the Hrushovski's Lie model theorem. 

In Section 7, we will reconstruct sets with doubling smaller than $4-\varepsilon$ in the subgroup of the ultraproduct group obtained in Section 6 from the approximate group. In Section 8, we will introduce a density function to connect doubling of sets in the ultraproduct group and doubling of their projections in the Lie model. Section 9 allows us to produce a set with doubling smaller than $4-\varepsilon$ in the connected Lie group using density function. In Section 10, we prove the main theorem by pulling back sets with doubling close to $2$ in the Lie model and obtaining a contradiction to the measure growth gaps.

\subsection{Notation and convention} From now on,  $k$ and $l$ are in the ordered ring $\ZZ$ of integers, $m$ and $n$ are in the ordered semiring $\NN =\{0, 1, \ldots\}$ of natural numbers. By a constant, we mean an element in the positive cone $\RR^{>0}$ of the ordered ring $\RR$ of real numbers. 

We will let $G$ with possible decorations denote a multiplicative group possibly with more data (topology, differentiable structures, etc). By a measure on $G$, we mean a nonnegative measure $\mu: \Sigma \to \RR^{\geq 0}$ on a $\sigma$-algebra $\Sigma$ of subsets of $G$. If set $A$ is in this $\sigma$-algebra $\Sigma$, we will say that $A$ is measurable with respect to $\mu$. 

By a locally compact group, we means a topological group which is locally compact as a topological space. Similar conventions apply to compact groups, connected groups, etc.

A measure on a locally compact group is always assumed to be a Haar measure. By a measurable subset of a locally compact group, we mean a set measurable with respect to some Haar measure, equivalently, with respect to the up-to-constant unique complete Haar measure. See Section~\ref{sec: Prelimmeasurelocallycompact} for more details.

Given a set $A$, we write $A^n$ for the $n$-fold product set of $A$, that is $\{a_1\cdots a_n\mid a_1,\dots,a_n\in A\}$. We write $A^{[n]}$ for the $n$-dimensional Cartesian product of $A$, but we still write $\RR^n$ for the $n$-dimensional Euclidean space for the notational convention. 

We use the asymptotic notation from harmonic analysis. That is, $f\lesssim g$ means $f=O(g)$, and $f\sim g$ if $f\lesssim g$ and $g\lesssim f$. We write $f=O_{\alpha_1,\dots,\alpha_n}(g)$, $f\lesssim_{\alpha_1,\dots,\alpha_n} g$, and $f\sim_{\alpha_1,\dots,\alpha_n} g$ when the hidden constant(s) depends  on  $\alpha_1,\dots,\alpha_n$.

\section{Preliminaries}

We recall here a number of standard facts about measures, Riemannian metrics, locally compact groups, and Lie groups. Advanced concepts more directly related to the main argument will be included later on.

\subsection{Measures and locally compact groups} 
\label{sec: Prelimmeasurelocallycompact}

Recall that a {\bf premeasure} $\mu_0$ on an ambient set $\Omega$ is a nonnegative real-valued function on a Boolean algebra  (closed under finite union and taking relative complement) of subsets of $\Omega$ such that the following holds:
\begin{enumerate}
    \item [(PM1)] $\mu_0(\emptyset) =0$
    \item [(PM2)] ($\sigma$-additivity) If $(A_n)$ is a sequence of sets such that $\mu_0(A_n)$  for each $n$ and $\mu_0(\bigcup n A_n)   $ are
     are well defined, then 
    $$ \mu_0\Big(\bigcup_n A_n\Big)= \sum_{i=1}^n \mu_0(A_i). $$
\end{enumerate}
In other words, the premeasure $\mu_0$ behaves like a measure, except that the collection of set it applies to might not be a $\sigma$-algebra (also closed under countable union). We will later need the following fact, which allows us to construct measures from premeasures:

\begin{fact}[Carath\'eodory's extension theorem] \label{Caratheodory}
Suppose $\mu_0$ is a premeasure on an ambient set $\Omega$. Then there is a measure on $\Omega$ extending $\mu_0$.
\end{fact}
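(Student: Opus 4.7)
The plan is to follow the classical Carath\'eodory construction: build an outer measure from $\mu_0$, identify a $\sigma$-algebra of ``measurable'' sets via the Carath\'eodory criterion, and then verify that the restriction of the outer measure to this $\sigma$-algebra is the desired extension. Let $\cA$ denote the Boolean algebra on which $\mu_0$ is defined. First I would define
\[
\mu^*(E) \;=\; \inf\Bigl\{ \sum_{n} \mu_0(A_n) \;:\; A_n \in \cA,\; E \subseteq \bigcup_n A_n \Bigr\}
\]
for every $E \subseteq \Omega$, with the convention $\mu^*(E) = +\infty$ if no countable cover from $\cA$ exists. A short exercise in manipulating infima shows that $\mu^*$ is an outer measure on $\Omega$: it is monotone, $\mu^*(\emptyset) = 0$, and countably subadditive by the standard ``$\epsilon / 2^n$'' trick.

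Next I would introduce the Carath\'eodory measurability criterion, calling a set $E \subseteq \Omega$ measurable if
\[
\mu^*(T) \;=\; \mu^*(T \cap E) + \mu^*(T \setminus E) \qquad \text{for every } T \subseteq \Omega,
\]
and prove the standard fact that the collection $\cD^*$ of such sets forms a $\sigma$-algebra on which $\mu^*$ restricts to a (complete) measure. This is routine: closure under complements is immediate from symmetry in the criterion; closure under finite unions follows by applying the criterion twice and using subadditivity; and closure under countable disjoint unions, together with $\sigma$-additivity of the restriction, is obtained by a standard truncation argument combined with monotone convergence of the partial sums.

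The two steps that carry the real content are (i) showing $\cA \subseteq \cD^*$ and (ii) showing $\mu^*|_\cA = \mu_0$. For (i), given $A \in \cA$ and any test set $T$, I would fix a cover $(A_n) \subseteq \cA$ of $T$ and use the fact that $A_n \cap A$ and $A_n \setminus A$ lie in $\cA$ and partition $A_n$; finite additivity of the premeasure then gives $\mu_0(A_n) = \mu_0(A_n \cap A) + \mu_0(A_n \setminus A)$, and taking the infimum over covers yields the Carath\'eodory equality. For (ii), the inequality $\mu^*(A) \le \mu_0(A)$ is immediate from the one-element cover $\{A\}$; the reverse inequality $\mu_0(A) \le \sum_n \mu_0(A_n)$ for any cover $(A_n)$ of $A$ by elements of $\cA$ is where the $\sigma$-additivity hypothesis (PM2) is genuinely used, via the disjointification $B_n = A \cap A_n \setminus \bigcup_{k < n}(A \cap A_k) \in \cA$ and the identity $A = \bigsqcup_n B_n$.

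The main obstacle I anticipate is precisely the verification that $\mu^*$ agrees with $\mu_0$ on $\cA$: this is the unique place where one must combine the $\sigma$-additivity axiom (PM2), whose statement in the paper carries the technical subtlety that the countable union must itself lie in the domain where $\mu_0$ is defined, with the disjointification step above, which must stay inside $\cA$. Once these two ingredients are handled carefully, the measure $\mu := \mu^*|_{\cD^*}$ extends $\mu_0$ and the theorem follows.
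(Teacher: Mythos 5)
Your proof is the standard Carath\'eodory construction (outer measure via countable covers, Carath\'eodory measurability criterion, verification that the algebra lies inside the measurable sets and that $\mu^*$ restricts to $\mu_0$ there), and it is correct; the key step of disjointifying a cover inside $\cA$ and invoking (PM2) to get $\mu_0(A) \le \sum_n \mu_0(A_n)$ is exactly the place where the premeasure hypothesis does its work. The paper cites this as a black-box fact without giving a proof, so there is no in-paper argument to compare against; your argument is the canonical one.
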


We say that $\mu$ is {\bf complete} if every subset of a $\mu$-null set is measurable. It is well known that there is a smallest complete measure extending $\mu$, which we will refer to as the {\bf completion} of $\mu$.

Let $\Omega$ be a topological space. A measure $\mu$ on $\Omega$ is a {\bf Borel measure} if the following hold:
\begin{enumerate}
    \item[(BM1)] Every Borel subset (i.e. member of the $\sigma$-algebra generated by open sets) of $\Omega$ is $\mu$-measurable.
    \item[(BM2)] $\mu$ is the completion of its restriction to the $\sigma$-algebra of Borel subsets of $\Omega$.
\end{enumerate}

Again $\Omega$ is a topological space. A Borel measure $\mu$ on $\Omega$ is an {\bf outer Radon measure} if we have the following
\begin{enumerate}
    \item[(RM1)] (Locally finite) Every $x \in \Omega$ has an open neighborhood with finite measure.
    \item[(RM2)](Outer regularity)  The measure of a Borel $A \subseteq \Omega$ is the infimum of the measure of the open subsets of $\Omega$ containing $A$.
    \item[(RM3)](Inner regularity of open set) The measure of an open $U \subseteq \Omega$ is the supremum of the measure of the compact subsets of $U$.
   
\end{enumerate}

Suppose $G$ is a group and $H \leq G$. A measure $\mu$ on the left-coset space $G/H$ is {\bf left-invariant} if for all measurable $A \subseteq G/H$ and $g\in G$, we have $\mu(A)=\mu(gA)$.

A {\bf locally compact group} $G$ is a group equipped with a locally compact and Hausdorff topology on its underlying set such that multiplication and inversion are continuous.
It is easy to see that when $H$ is a compact subgroup of $G$, the left-cosets space $G/H$ equipped with the quotient topology  is also a locally compact and Haussdorff topological space. We have the following fact~\cite[Chapter~1]{Harmonicanalysis}:

\begin{fact} \label{fact: Haarmeasurenew}
Suppose $G$ is a locally compact group, and $H \leq G$ is compact. Then there is a left-invariant  nonzero Radon measure on the left-cosets space $G/H$. Moreover, any two such measures differ by a constant. 
\end{fact}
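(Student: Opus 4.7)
The plan is to obtain the measure on $G/H$ by pushing forward the left Haar measure on $G$ via integration along the compact fibers, and then to deduce uniqueness by the usual Haar-type argument.

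First I would invoke the standard existence theorem for locally compact groups to fix a left Haar measure $\mu_G$ on $G$, and (using that $H$ is compact, hence itself a compact group) the normalized Haar measure $\mu_H$ on $H$. Let $\pi : G \to G/H$ denote the quotient map. The core construction is the averaging operator
\[
T : C_c(G) \to C_c(G/H), \qquad (Tf)(gH) := \int_H f(gh) \, \d\mu_H(h).
\]
One checks that $T f$ is well defined (the integrand is independent of the coset representative by right-invariance of $\mu_H$), continuous, and compactly supported (since $\pi$ is a closed map and $H$ is compact). The key lemma I would prove next is that $T$ is surjective: given $\phi \in C_c(G/H)$ with support $K$, pick a compact $\tilde K \subseteq G$ with $\pi(\tilde K) = K$ and a bump function $\psi \in C_c(G)$ with $\psi \geq 0$ and $\psi > 0$ on a neighborhood of $\tilde K$; then $T\psi$ is strictly positive on $K$, and $f := (\phi \circ \pi)\cdot \psi / (T\psi \circ \pi)$ satisfies $Tf = \phi$.

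Next I would define a positive linear functional $\Lambda$ on $C_c(G/H)$ by
\[
\Lambda(\phi) := \int_G f \, \d\mu_G \qquad \text{whenever } Tf = \phi.
\]
The main thing to verify is that this is independent of the choice of lift $f$, i.e.\ that $Tf \equiv 0$ implies $\int_G f \, \d\mu_G = 0$. This is where compactness of $H$ matters decisively: since $H$ is compact, its modular function is trivial, and the Fubini-type calculation
\[
\int_G f(g) \, \d\mu_G(g) = \int_G \!\int_H f(gh) \, \d\mu_H(h) \, \d\mu_G(g) = \int_G (Tf)(gH)\,\d\mu_G(g)
\]
(together with left-invariance of $\mu_G$ to justify changing order) forces the integral to vanish whenever $Tf = 0$. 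Once $\Lambda$ is well defined, positivity is immediate, so the Riesz representation theorem produces a nonzero outer Radon measure $\mu_{G/H}$ on $G/H$ with $\int \phi \, \d\mu_{G/H} = \Lambda(\phi)$. Left-invariance then follows from the identity $T(L_{g_0} f) = L_{g_0}(Tf)$ (where $L_{g_0}$ denotes left translation) combined with the left-invariance of $\mu_G$.

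For uniqueness up to a scalar I would follow the classical Haar-measure argument: given two nonzero left-invariant Radon measures $\mu_1, \mu_2$ on $G/H$, pick a nonnegative $\phi_0 \in C_c(G/H)$ with $\int \phi_0 \, \d\mu_1 > 0$, and for an arbitrary $\phi \in C_c(G/H)$ form the left convolution-like average $F(g_0 H) := \int \phi(g_0 g H) \phi_0(gH)\,\d\mu_2(gH)$. A standard Fubini interchange combined with left-invariance of both $\mu_1$ and $\mu_2$ shows that the ratio $\int \phi \, \d\mu_2 \big/ \int \phi \, \d\mu_1$ is independent of $\phi$, giving the desired scalar. The main obstacle in the whole argument is the well-definedness of $\Lambda$; everything else reduces to bookkeeping with Riesz representation and translation-invariance, but the vanishing of $\int_G f \, \d\mu_G$ when $Tf \equiv 0$ is the place where the compactness (hence unimodularity) of $H$ is used in an essential way, and care must be taken to justify the Fubini interchange on the support of $f$.
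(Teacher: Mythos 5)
The paper itself gives no proof: Fact~\ref{fact: Haarmeasurenew} is cited from the reference \cite{Harmonicanalysis}. Your existence construction is essentially the standard one there — Weil's quotient integral formula via the averaging operator $T$, then Riesz representation — and the key well-definedness computation is correct. One small clarification of where compactness of $H$ enters: the relevant fact is that $\Delta_G|_H$ is trivial (a continuous homomorphism from a compact group into $\RR^{>0}$ has compact, hence trivial, image), which is what makes the right-translation by $h\in H$ inside the Fubini step cost-free. You phrase this as ``the modular function of $H$ is trivial''; that is also true, and in general the condition one needs for an invariant measure on $G/H$ to exist is $\Delta_G|_H = \Delta_H$, but the object actually used in your calculation is $\Delta_G|_H$, not $\Delta_H$.

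The uniqueness step, however, has a genuine gap. The quantity
\[
F(g_0 H) := \int \phi(g_0 g H)\,\phi_0(gH)\,\d\mu_2(gH)
\]
is not well-defined as a function of $g_0 H$: replacing $g_0$ by $g_0 h$ and reindexing $g' = hg$ turns $\phi_0(gH)$ into $\phi_0(h^{-1} g' H)$, which need not equal $\phi_0(g'H)$. So $F$ descends to $G/H$ only if $\phi_0$ is $H$-left-invariant, an extra condition you have not imposed. This is not a cosmetic issue: the classical ratio argument relies on being able to translate on both sides of a group multiplication, and $G/H$ is a homogeneous space, not a group, so there is only a one-sided $G$-action. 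You can repair the argument by first averaging $\phi_0$ over $H$ to force the required invariance, but the cleaner route is to avoid the ratio argument altogether: given any nonzero left-invariant Radon measure $\mu'$ on $G/H$, the functional $f \mapsto \int_{G/H} Tf\,\d\mu'$ on $C_c(G)$ is a left Haar integral on $G$; by uniqueness of Haar measure on $G$ it equals a scalar multiple of $\int_G f\,\d\mu_G$, and surjectivity of $T$ (which you already proved) then forces $\mu'$ to be that same scalar multiple of the measure you constructed.
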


In particular, with $H= \{1_G\}$, the locally compact group $G$ can be equipped with a left-invariant complete nonzero Radon measure, which is called a {\bf left Haar measure}. Any two left Haar measures differs only by a positive constant. If a left Haar measure $\mu$ on $G$ is right-invariant (i.e., $\mu(A) = \mu(Ag)$ for all measurable $A \subseteq G$ and $g\in G$, we call it a {\bf  Haar measure}. If one (equivalently, all) Haar measures on $G$ is right-invariant, we say that $G$ is {\bf unimodular}. The additive group $\RR^d$ with the Euclidean topology and compacts group are, in particular, unimodular.

Below are some other facts about the Radon measure in Fact~\ref{fact: Haarmeasurenew} that we will use.

\begin{fact} \label{Fact: FurtherpropertiesHaar}
Suppose $G$ is a locally compact group, and $H \leq G$ is compact. Suppose $\mu$ is a left-invariant  complete nonzero Radon measure on the left-cosets space $G/H$. Then we have the following:
\begin{enumerate}
    \item Open sets have positive measures.
    \item Compact sets have finite measures.
\end{enumerate}
\end{fact}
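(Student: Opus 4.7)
My plan is to handle the two assertions separately, dispatching (2) first since it follows immediately from local finiteness together with compactness, and then tackling (1) by a contradiction argument that exploits left-invariance together with the transitive left action of $G$ on $G/H$.

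For (2), given any compact $K \subseteq G/H$, I would invoke local finiteness (RM1) to cover each $x \in K$ by an open neighborhood $U_x$ with $\mu(U_x) < \infty$. Compactness of $K$ then produces a finite subcover $U_{x_1}, \dots, U_{x_n}$, and subadditivity of $\mu$ yields $\mu(K) \le \sum_{i=1}^n \mu(U_{x_i}) < \infty$. Nothing beyond (RM1) and finite subadditivity is used here.

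For (1), I would argue by contradiction, assuming that some nonempty open $U \subseteq G/H$ satisfies $\mu(U) = 0$. Fix $u_0 \in U$ and lift it to $\tilde u_0 \in G$. For each $x \in G/H$ with some lift $\tilde x \in G$, the element $g = \tilde x \tilde u_0^{-1}$ carries $u_0$ to $x$; hence the family of left translates $\{gU : g \in G\}$ covers $G/H$. By left-invariance of $\mu$, each $gU$ still has measure zero. Any compact $K \subseteq G/H$ is therefore covered by finitely many such translates (using compactness of $K$), forcing $\mu(K) = 0$. Applying inner regularity on open sets (RM3) then shows that every open subset of $G/H$ is $\mu$-null, and outer regularity (RM2) extends this to every Borel set, contradicting the assumption that $\mu$ is nonzero.

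There is no deep obstacle; the most delicate point is the final chain in the proof of (1), where one must carefully use (RM3) to pass from "all compact sets are null" to "all open sets are null" and then (RM2) to reach "all Borel sets are null." The rest of the argument is bookkeeping, and in particular the compactness and translation steps are routine once one records that the $G$-action on the coset space $G/H$ is transitive.
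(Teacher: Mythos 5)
Your argument is correct and is the standard one. The paper states this as a \emph{Fact} without supplying a proof (it is a routine property of left-invariant Radon measures on homogeneous spaces), so there is no internal proof to compare against; your proposal simply supplies the expected argument.

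For (2), covering a compact set by finitely many finite-measure neighborhoods from (RM1) and invoking subadditivity is exactly right. For (1), the chain is sound: transitivity of the $G$-action on $G/H$ lets you translate a hypothetical null open $U$ to cover any compact $K$; left-invariance keeps each translate null, so $\mu(K)=0$; (RM3) then forces every open set to be null; (RM2) forces every Borel set to be null; and since a Radon measure in the paper's sense is by (BM2) the completion of its Borel restriction, this makes $\mu$ identically zero, contradicting nonvanishing. One small point worth recording explicitly is that each translate $gU$ is again open (the $G$-action on $G/H$ is by homeomorphisms), hence Borel, so left-invariance applies to it; you use this implicitly but it deserves a word. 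With that noted, there is no gap.
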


 \subsection{Riemannian metrics and Lie groups}
 \label{sec: PrelimRiemannianmetricandLiegroup}
 The material in this section is standard and can be found in any book of Riemannian geometry; see~\cite{Riemann}, for example.

Let $M$ be a smooth manifold, a {\bf Riemannian metric} $g$ on $M$ assigns in a smooth fashion to each point $p\in M$ a positive definite inner product $g_p$ on the tangent space $T_pM$ of $M$ at the point  $p$. Here, {\bf positive definite} means $g_p(v,v) \geq 0$ for each $v\in T_pM$ and the equality holds if and only if $v=0$. A {\bf Riemannian manifold} $(M, g)$ is a smooth manifold together with a Riemannian metric $g$ on $M$.

Suppose $(M, g)$ is a Riemannian manifold with dimension $n$. Let $U \subseteq M$ be a coordinate patch, and  $x: U \to \RR^n, p \mapsto( x_1(p), \ldots, x_n(p))$ the coordinate function. If $K \subseteq U$ is compact, we define 
$$ \mathrm{Vol}_g(K) = \int_{z\in x(K)} \sqrt{G \circ x^{-1}}(z) d\lambda . $$
where $G = \det( g_{ij})_{(i,j) \in [n]\times [n]}$, $g_{ij} = g(\partial_i, \partial_j)  $, and $\lambda$ is a Lebesgue measure on $\RR^n$. Recall that $A \subseteq M$ is measurable if for each coordinate patch $U \subseteq M$ and coordinate function $x: U \to \RR^n$, the image $x(A \cap U)$ is measurable. For a measurable $A \subseteq M$, we can define its volume $\mathrm{Vol}_g(A)$ to be the supremum of volume of finite union of compact sets each contained in a coordinate patch. We have the following fact:
\begin{fact}
     $\mathrm{Vol}_g$ is a complete measure on $M$ compatible with the topology on $M$. Moreover, $\mathrm{Vol}$ is independent of a choice of coordinate patch.
\end{fact}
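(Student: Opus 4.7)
The plan is to first establish coordinate-invariance of the local formula, then globalize via paracompactness and Carath\'eodory's extension theorem, and finally verify the Radon and completeness conditions. For coordinate-invariance, I would let $U, V \subseteq M$ be two coordinate patches carrying coordinate functions $x$ and $y$ respectively, and compare the two candidate values of $\mathrm{Vol}_g(K)$ for a compact $K \subseteq U \cap V$. Writing the transition diffeomorphism as $\psi = y \circ x^{-1}$, the tensorial transformation law for the metric reads $g^{(y)}_{ij} = \sum_{k,\ell} g^{(x)}_{k\ell}\, \partial_{y_i}(x_k \circ y^{-1})\, \partial_{y_j}(x_\ell \circ y^{-1})$, so taking determinants yields $\sqrt{\det g^{(y)}} = |\det D\psi^{-1}| \sqrt{\det g^{(x)}}$. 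The Euclidean change-of-variables formula applied to $\psi$ then forces the two integral expressions to agree, and inner regularity of Lebesgue measure lifts this equality from compact subsets to all measurable subsets of $U \cap V$.

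For the globalization step, since $M$ is second-countable and paracompact, it admits a countable cover by coordinate patches $\{U_i\}$. I would define $\mathrm{Vol}_g$ first on the Boolean algebra of finite disjoint unions of Borel sets each contained in some $U_i$ using the displayed formula from the statement, with coordinate-invariance guaranteeing that the value is unambiguous on overlaps. Checking $\sigma$-additivity chart-by-chart reduces to $\sigma$-additivity of Lebesgue measure on $\RR^n$, so this yields a premeasure; Fact~\ref{Caratheodory} then extends it to a measure on the Borel $\sigma$-algebra of $M$, whose completion is the desired $\mathrm{Vol}_g$. Alternatively, one can use a smooth partition of unity $\{\chi_i\}$ subordinate to $\{U_i\}$ and directly set $\mathrm{Vol}_g(A) = \sum_i \int_{x_i(U_i)} (\chi_i \mathbf{1}_A) \sqrt{G \circ x_i^{-1}}\, d\lambda$, with coordinate-invariance ensuring independence of the chosen partition.

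The remaining topological-compatibility and regularity claims are then essentially automatic: open sets are Borel hence measurable, finite on compacts (since $\sqrt{G}$ is continuous on each chart and compact sets meet only finitely many cover elements), strictly positive on nonempty opens (since $g_p$ is positive-definite, so $\sqrt{G} > 0$ pointwise), and complete by the final completion step. Outer and inner regularity transfer directly from the corresponding properties of Lebesgue measure. The main obstacle will be the coordinate-change computation; tracking the Jacobian of the metric-transformation matrix is what makes the change-of-variables factor cancel exactly with the change in $\sqrt{G}$, after which every subsequent step is standard measure-theoretic bookkeeping.
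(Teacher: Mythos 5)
The paper does not supply a proof of this Fact; it is stated as a standard result with a pointer to a Riemannian-geometry reference (\cite{Riemann}), so there is no paper proof to compare against. That said, your proposed argument is correct and is the standard textbook proof. The coordinate-invariance computation is right: from $g^{(y)} = (D\psi^{-1})^{T} g^{(x)} (D\psi^{-1})$ one gets $\sqrt{\det g^{(y)}}(\psi(u)) = \lvert\det D\psi(u)\rvert^{-1}\sqrt{\det g^{(x)}}(u)$, and this Jacobian factor cancels exactly against the one introduced by the Euclidean change-of-variables formula. The globalization via a countable atlas and Carath\'eodory (or equivalently a smooth partition of unity) is standard, and the Radon and completeness properties do transfer from Lebesgue measure as you say. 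Two small points worth keeping in mind: (i) the collection of finite unions of Borel sets each contained in a chart is a \emph{ring} of sets rather than an algebra on $M$, which is fine for Carath\'eodory but should be noted if you want to match the paper's stated form of Fact~\ref{Caratheodory}; and (ii) the second-countability/paracompactness hypothesis you invoke is automatic for the manifolds of interest here (Lie groups), but it is an assumption, not a free consequence of being a smooth manifold.
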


Suppose $(M, g)$ is a Riemannian manifold with dimension $n$. Let $U \subseteq M$ be a coordinate patch, and  $x: U \to \RR^n, p \mapsto( x_1(p), \ldots, x_n(p))$ the coordinate function. Define length $L_g$ of an admissible curve.

 It is a well-known fact that in a connected Riemannian manifold, every two points $p$ and $q$ are connected by an admissible curve. We define 
 $$ d_g(p,q) = \inf \{ \gamma: \gamma \text{ is an admissible curve linking } p \text{ and } q  \}. $$
 
\begin{fact}\label{fact: curve}
     $d_g$ is a metric compatible with the topology on $M$. 
\end{fact}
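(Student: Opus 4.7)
The plan is to verify the four metric axioms and then match the $d_g$-topology to the manifold topology, all through a local comparison of the Riemannian norm with the Euclidean norm in a coordinate chart. Non-negativity is immediate from the definition of $L_g$ as the integral of a non-negative integrand. Symmetry follows because reversing the parametrization of an admissible curve from $p$ to $q$ yields an admissible curve from $q$ to $p$ of the same length. The triangle inequality $d_g(p,r) \leq d_g(p,q) + d_g(q,r)$ is obtained by concatenating nearly minimizing admissible curves (with a standard piecewise reparametrization to keep the concatenation admissible) and then passing to the infimum. The remaining nontrivial axiom is strict positivity: $d_g(p,q) > 0$ whenever $p \neq q$.

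For this, I would fix a coordinate chart $(U, x)$ around $p$, choose a compact neighborhood $K \subseteq U$ of $p$, and use continuity of $(g_{ij})$ together with compactness of $K$ to produce constants $0 < c \leq C$ satisfying
\[
c \, \|v\|_E \;\leq\; \sqrt{g_q(v,v)} \;\leq\; C \, \|v\|_E
\]
for every $q \in K$ and every tangent vector $v$, where $\|\cdot\|_E$ denotes the Euclidean norm read off through the chart. This immediately gives $c \, L_E(\gamma) \leq L_g(\gamma) \leq C \, L_E(\gamma)$ for any admissible curve $\gamma$ whose image lies in $K$.

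To derive $d_g(p,q) > 0$ and topology compatibility, pick a Euclidean ball $B$ of some radius $r$ around $x(p)$ with $x^{-1}(\overline{B}) \subseteq K$. Any admissible curve $\gamma$ from $p$ to a point $q$ either remains inside $x^{-1}(B)$, in which case $L_g(\gamma) \geq c \, d_E(x(p), x(q))$, or it exits $x^{-1}(B)$, in which case the initial sub-arc up to the first exit point stays in $K$ and has Euclidean length at least $r$, hence $g$-length at least $c r$. This yields $d_g(p,q) \geq c \, \min\{ d_E(x(p), x(q)), r \}$, which in particular is strictly positive for $q \neq p$. Conversely, taking $\gamma$ to be the chart-straight segment from $x(p)$ to $x(q)$ (admissible when $q$ is close enough to $p$) gives $d_g(p,q) \leq C \, d_E(x(p), x(q))$. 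The sandwich shows that sufficiently small $d_g$-balls around $p$ are trapped between Euclidean balls, so the two topologies agree locally, hence globally.

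The main technical obstacle is keeping admissible curves from escaping the coordinate chart and returning through regions where the metric could be small, which a priori could make $d_g(p,q)$ vanish. The compactness of $K$ supplies the uniform lower bound $c$, and the ``first-exit-point'' trick above converts this into a genuine length bound; everything else is standard Riemannian bookkeeping.
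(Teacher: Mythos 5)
Your proof is correct and is the standard textbook argument (the paper states this as a background Fact without proof, citing a Riemannian geometry text). The only steps worth articulating a bit more carefully, had you spelled them out, are (i) that concatenation of admissible (piecewise-smooth) curves stays admissible after a mild reparametrization, which you flag, and (ii) that in the ``first-exit'' argument the exit time exists by continuity and that the sub-arc up to it has Euclidean length at least $r$ because Euclidean length dominates Euclidean displacement; both are routine and your sketch handles them appropriately.
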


If $d$ is a metric on $M$ such that $d=d_g$ for some Riemannian metric $g$, we say $d$ is the distance function {\bf induced by a Riemannian metric} $g$. The following fact gives us the existence of an invariant Riemannian metric.
\begin{fact}
If a Lie group $G$ acts smoothly and transitively on a smooth manifold $M$ with compact isotropy groups, then there exists a $G$-invariant Riemannian metric on $M$.
\end{fact}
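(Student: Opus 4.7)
The plan is to produce the invariant metric by averaging a single inner product over the isotropy group at one base point and then pushing it around $M$ via the transitive $G$-action.

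First I would fix $p_0 \in M$ and set $H = \mathrm{Stab}_G(p_0)$, which is compact by hypothesis; the orbit map $g \mapsto g \cdot p_0$ descends to a $G$-equivariant diffeomorphism $G/H \to M$, so it suffices to build a $G$-invariant Riemannian metric on $G/H$. Differentiating the $H$-action on $M$ at its fixed point $p_0$ yields the \emph{isotropy representation} of $H$ on $T_{p_0} M$, which is a smooth (and in particular continuous) finite-dimensional representation of the compact group $H$. Picking any inner product $\langle\cdot,\cdot\rangle_0$ on $T_{p_0} M$ and integrating against a Haar probability measure $\nu$ on $H$,
\[
g_{p_0}(u,v) := \int_H \langle h_* u, h_* v\rangle_0 \, d\nu(h),
\]
I obtain an inner product on $T_{p_0} M$ that is $H$-invariant. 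Compactness of $H$ guarantees convergence, and positive definiteness is preserved because each integrand is nonnegative and strictly positive when $u=v\neq 0$.

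Next I would propagate $g_{p_0}$ by defining, for each $q \in M$ and each $g \in G$ with $g \cdot p_0 = q$,
\[
g_q(u,v) := g_{p_0}\bigl((g^{-1})_* u,\, (g^{-1})_* v\bigr), \qquad u,v \in T_q M.
\]
This is well-defined: if $g_1 \cdot p_0 = g_2 \cdot p_0 = q$, then $g_2^{-1} g_1 \in H$, and $H$-invariance of $g_{p_0}$ shows that the two prescriptions agree. By construction the field $q \mapsto g_q$ is $G$-invariant, and pointwise positive definiteness is inherited from $g_{p_0}$.

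The only genuine technicality—and the step I expect to require the most care—is verifying smoothness of $q \mapsto g_q$. For this I would invoke the fact that the quotient map $\pi : G \to G/H \cong M$ is a smooth principal $H$-bundle and therefore admits local smooth sections: for every $q_0 \in M$ there is a neighborhood $U$ and a smooth map $s : U \to G$ with $s(q) \cdot p_0 = q$. On $U$ the definition above becomes
\[
g_q(u,v) = g_{p_0}\bigl((s(q)^{-1})_* u,\, (s(q)^{-1})_* v\bigr),
\]
which depends smoothly on $q$ because $s$, inversion in $G$, and the differential of the $G$-action are all smooth. Thus $g$ is a smooth, positive-definite, $G$-invariant section of $\mathrm{Sym}^2 T^*M$, i.e. the desired Riemannian metric. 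All other steps are routine, so the existence of local sections of $G \to G/H$ is the crux of the argument.
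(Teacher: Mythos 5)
Your proof is correct and is the standard averaging argument; the paper itself does not prove this statement but cites it as a standard fact from Riemannian geometry (referring to a textbook reference), and your argument is precisely the one found in such references: average an inner product over the compact isotropy group via Haar measure to get an $H$-invariant inner product on $T_{p_0}M$, transport it by transitivity, check well-definedness via $H$-invariance, and verify smoothness using local sections of the principal $H$-bundle $G \to G/H$. Nothing to add.
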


Suppose $(M,g)$ is an $n$-dimensional Riemannian manifold. The {\bf Ricci curvature} is the covariant $2$-tensor field defined as the trace of the
curvature endomorphism on its first and last indices, and the {\bf scalar curvature} is the function $S$ defined as the trace of the Ricci curvature. 
The next fact shows that the volume of a infinitesimal ball is controlled by the scalar curvature.
\begin{fact}\label{fact: ball}
Let $(M, g)$ be an  $n$-dimensional Riemannian manifold with constant scalar curvature $S$. Then when $r\to 0$, a ball of radius $r$ at the identity point has volume 
\[
\frac{1}{n}\alpha_{n-1}r^n\left(1-\frac{Sr^2}{6(n+2)}+O(r^3)\right),
\]
where
\[
\alpha_n=
\begin{cases}
\frac{2^{2k+1}\pi^kk!}{(2k)!}\qquad &\text{when } n=2k, k\in\ZZ, \\
\frac{2\pi^{k+1}}{k!}, &\text{when } n=2k+1, k\in\ZZ.
\end{cases}
\]
\end{fact}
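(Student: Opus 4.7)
The plan is to compute the volume in geodesic normal coordinates centered at the identity point $p$. Let $\exp_p : T_pM \to M$ be the Riemannian exponential map, and fix an orthonormal basis $\{e_1,\ldots,e_n\}$ of $T_pM$, which identifies $T_pM$ with $\RR^n$. For $r$ smaller than the injectivity radius at $p$, the geodesic ball of radius $r$ about $p$ coincides with $\exp_p(B_r)$, where $B_r \subseteq \RR^n$ is the Euclidean ball of radius $r$. In these coordinates the volume form is $\sqrt{\det g_{ij}(x)}\,dx$, so the task reduces to Taylor expanding this density to sufficient order and integrating.

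The first key input is the standard Taylor expansion of the metric in normal coordinates,
\[
g_{ij}(x) = \delta_{ij} - \tfrac{1}{3}\, R_{ikjl}(p)\, x^k x^l + O(|x|^3),
\]
where $R_{ikjl}$ is the Riemann curvature tensor at $p$. This is derived from $g_{ij}(p)=\delta_{ij}$, vanishing of Christoffel symbols at $p$, and the fact that radial geodesics are coordinate lines, so that second derivatives of $g_{ij}$ at $p$ are expressible in curvature. Taking determinant and square root via $\sqrt{\det(I+A)} = 1 + \tfrac12\mathrm{tr}(A) + O(\|A\|^2)$, and using the contraction $R_{ikjl}\delta^{ij} = -R_{kl}$ (where $R_{kl}$ is the Ricci tensor, signs depending on convention but giving the stated numerical factor), I get
\[
\sqrt{\det g_{ij}(x)} = 1 - \tfrac{1}{6}\, R_{kl}(p)\, x^k x^l + O(|x|^3).
\]

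Next, integrate in polar coordinates $x = s\omega$ with $\omega \in S^{n-1}$:
\[
\mathrm{Vol}_g(B_r) = \int_0^r s^{n-1}\!\int_{S^{n-1}}\!\Bigl(1 - \tfrac{1}{6} R_{kl}(p)\, s^2 \omega^k\omega^l + O(s^3)\Bigr) d\sigma(\omega)\, ds.
\]
Using the elementary identity $\int_{S^{n-1}} \omega^k\omega^l\, d\sigma(\omega) = \tfrac{\alpha_{n-1}}{n}\,\delta_{kl}$ (by $\mathrm{O}(n)$-symmetry and $\sum_k (\omega^k)^2 = 1$), and the fact that the scalar curvature is $S = \mathrm{tr}(\mathrm{Ric}) = \delta^{kl}R_{kl}$, the angular integral becomes $\alpha_{n-1} - \tfrac{\alpha_{n-1} S}{6n}\, s^2 + O(s^3)$. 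Carrying out the $s$-integration then gives
\[
\mathrm{Vol}_g(B_r) = \frac{\alpha_{n-1}}{n}\, r^n - \frac{\alpha_{n-1}\,S}{6n(n+2)}\, r^{n+2} + O(r^{n+3}) = \frac{1}{n}\alpha_{n-1}\,r^n\Bigl(1 - \frac{S r^2}{6(n+2)} + O(r^3)\Bigr),
\]
as claimed. The explicit formulas for $\alpha_n$ recorded in the statement are just the classical surface areas of unit spheres obtained from the $\Gamma$-function.

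The main obstacle is the justification of the second-order Taylor expansion of $g_{ij}$ in normal coordinates with the exact coefficient $-\tfrac13 R_{ikjl}$; this is the one genuinely nontrivial step and typically proceeds via Jacobi fields along the radial geodesic or via differentiating the geodesic equation twice at $p$. Once that expansion is in hand, contracting indices correctly (exploiting the symmetries $R_{ikjl} = R_{jlik}$ and $R_{ikjl} = -R_{kijl}$) to arrive at the Ricci tensor, and then to the scalar curvature after the spherical average, is a mechanical exercise. The polar integration and the spherical moment identity are standard.
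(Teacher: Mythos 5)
The paper does not supply a proof of this fact; it states it as a standard result of Riemannian geometry and cites a textbook reference. Your proof is precisely the classical computation (Bertrand--Diguet--Puiseux for $n=2$, Gray in general dimension) behind that citation: Taylor-expand $g_{ij}$ to second order in normal coordinates, take $\sqrt{\det g}$, integrate in polar coordinates using the spherical moment identity $\int_{S^{n-1}}\omega^k\omega^l\,d\sigma = \tfrac{\alpha_{n-1}}{n}\delta_{kl}$, and contract to the scalar curvature. The only blemish is a sign bookkeeping issue in the intermediate step: under a single consistent convention, the contraction $R_{ikjl}\delta^{ij}$ and the subsequent coefficient in $\sqrt{\det g_{ij}} = 1-\tfrac16 R_{kl}x^kx^l+\cdots$ do not both carry the signs you wrote; you flag this yourself, and the final coefficient $-S/(6(n+2))$ is correct, so this is not a substantive gap.
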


\section{Constructions and conjectures} 
\label{sec: constructionandexamples}

In this section, we will go through in more details the earlier mentioned example of open $A \subseteq \Sot$ with measure smaller than $1/2$ such that 
$$\mu(A^2)= 4\mu(A) -4( \mu(A))^2 < 4\mu(A).$$ 
We will state the relevant conjectures and generalizations for semisimple Lie groups.

Before stating the example, we need a couple of lemmas about some basic properties of $\Sot$. Let $u$, $v$, and $w$ range over unit vectors in $\RR^3$, and let $\alpha(u,v):= \arccos(u\cdot v)$ be the angle between $u$ and $v$. Let $\phi$ and $\theta$ range over $\RR$. Use  $R_u^\phi$ to denote the counter-clockwise rotation with signed angle $\phi$ where the axis and the positive direction (under the right-hand rule) is specified by the unit vector $u$. We first recall Euler's rotation theorem:

\begin{fact} \label{fact: axisangle}
    Each nontrivial element  $g \in \Sot$ is of the form $R_u^\phi$ with $\phi \in (0, 2\pi)$. Moreover, the set of elements in $\RR^3$ fixed by such $g$ is  $\mathrm{span}(u) =\{\lambda u \mid \lambda \in \RR\}$.    
\end{fact}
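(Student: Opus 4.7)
The plan is to prove Euler's rotation theorem by eigenvalue analysis. Regarding $g$ as a real orthogonal $3 \times 3$ matrix with $\det(g) = 1$, the characteristic polynomial is real cubic, so $g$ has at least one real eigenvalue; orthogonality forces all eigenvalues onto the unit circle, and the product equals $1$ with complex eigenvalues occurring in conjugate pairs. A short case check (three real eigenvalues each $\pm 1$ of product $1$, or one real eigenvalue together with a conjugate pair $e^{\pm i\phi}$) shows that $1$ must lie in the spectrum of $g$ in every case.

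Picking a unit eigenvector $u$ with $gu = u$, I would then exploit the fact that $g$ preserves the orthogonal complement $u^\perp \subseteq \RR^3$ (by orthogonality) and has determinant $1$ on it (from the block splitting $\RR^3 = \mathrm{span}(u) \oplus u^\perp$). So $g$ acts on the plane $u^\perp$ as an orthogonal transformation of determinant one, i.e.\ a planar rotation by some angle $\phi$. After possibly flipping the sign of $u$, the right-hand rule realizes $g$ as $R_u^\phi$ with $\phi \in [0, 2\pi)$, and nontriviality of $g$ excludes $\phi = 0$, giving $\phi \in (0, 2\pi)$. For the moreover clause, decomposing a fixed vector $v = cu + w$ with $w \in u^\perp$ and using linearity gives $gw = w$; since a planar rotation by a nonzero angle fixes only the origin, this forces $w = 0$ and hence $v \in \mathrm{span}(u)$.

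There is no genuine obstacle here, as this is the classical theorem of Euler. The only points that deserve care are the eigenvalue case analysis, verifying that the determinant-one and conjugate-pair constraints really do force $1$ into the spectrum in every scenario, and the sign choice for the axis $u$ needed to realize the conventional signed angle as an element of $(0, 2\pi)$ under the right-hand rule.
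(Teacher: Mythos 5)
Your proposal is correct, but note that the paper gives no proof of this statement at all: it is presented (with the preceding words ``We first recall Euler's rotation theorem'') as a standard classical fact, so there is nothing in the paper to compare against. Your eigenvalue argument is the textbook proof: the real characteristic polynomial of an orthogonal matrix with determinant $1$ forces $1$ into the spectrum (either spectrum $\{1,1,1\}$, or $\{1,-1,-1\}$, or $\{1,e^{i\phi},e^{-i\phi}\}$), the invariance of $u^\perp$ under an orthogonal map fixing $u$ reduces to a planar rotation of determinant $1$, and the sign convention for $u$ and the right-hand rule then realize $g = R_u^\phi$ with $\phi \in (0,2\pi)$ once $g \ne \mathrm{id}$. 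The fixed-space computation in the ``moreover'' clause is also complete: decomposing $v = cu + w$, linearity forces $gw = w$, and a planar rotation by $\phi \in (0,2\pi)$ fixes only the origin, so $w = 0$. No gaps.
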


 The following lemma is essentially a variation of the above fact.

\begin{lemma} \label{lem: good representation}
Let $u$ be a fixed unit vector. Then every $g\in \Sot$ is of the form $R^\theta_v R^\phi_u $ with $v$ a unit vector orthogonal to $u$. Likewise, every $g\in \Sot$ is of the form $R^{\phi'}_u R^{\theta'}_{w} $ with $w$ a unit vector orthogonal to $u$.
\end{lemma}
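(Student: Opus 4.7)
The plan is to reduce both statements to Fact~\ref{fact: axisangle} by peeling off a suitable first factor so that the remaining element fixes $u$.

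For the first decomposition, fix $g\in\Sot$ and set $u' := gu$. I would like to find a unit vector $v$ orthogonal to $u$ and an angle $\theta$ such that $R^\theta_v(u) = u'$; once this is done, the element $h := (R^\theta_v)^{-1} g$ fixes $u$, hence by Fact~\ref{fact: axisangle} either $h = \mathrm{id}$ or $h = R^\phi_u$ for some $\phi \in (0, 2\pi)$, which gives $g = R^\theta_v R^\phi_u$ as required. To produce $v$ and $\theta$: in the generic case $u'\neq \pm u$, take $v := (u\times u')/|u\times u'|$, which is a unit vector orthogonal to $u$ (and to $u'$), and set $\theta := \alpha(u,u') \in (0,\pi)$; an elementary calculation with the rotation formula shows $R^\theta_v(u)=u'$. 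In the degenerate case $u'=u$, take any unit $v\perp u$ and $\theta=0$. In the degenerate case $u'=-u$, take any unit $v\perp u$ and $\theta=\pi$; since $v\perp u$, the rotation $R^\pi_v$ sends $u$ to $-u = u'$.

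For the second decomposition, the cleanest route is to apply the first part to $g^{-1}$. Writing $g^{-1}=R^\theta_v R^\phi_u$ with $v\perp u$, and using that rotations satisfy $(R^\alpha_x)^{-1}=R^{-\alpha}_x$, we obtain
\[
g = (R^\phi_u)^{-1}(R^\theta_v)^{-1} = R^{-\phi}_u R^{-\theta}_v,
\]
and setting $\phi' := -\phi$, $\theta' := -\theta$, $w := v$ gives $g = R^{\phi'}_u R^{\theta'}_w$ with $w\perp u$.

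No serious obstacle is expected; the only subtlety is handling the two degenerate cases $gu=\pm u$, and the identification of the rotation axis as $u\times u'$ in the generic case. Neither requires anything beyond Fact~\ref{fact: axisangle} and the standard axis–angle formula, so the proof should be short.
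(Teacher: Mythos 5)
Your proof is correct and follows essentially the same approach as the paper: choose $v$ orthogonal to $u$ and $\theta$ so that $R^\theta_v(u)=g(u)$, observe that the remaining factor fixes $u$ and hence is a rotation about $u$ by Fact~\ref{fact: axisangle}, and obtain the second decomposition by applying the first to $g^{-1}$. Your version is slightly more careful than the paper's in spelling out the degenerate cases $g(u)=\pm u$, where the normal to $\mathrm{span}(u,g(u))$ is not well-defined.
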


\begin{proof}
We prove the first assertion. Choose $v$ to be the normal vector of $\mathrm{span}(u, g(u))$. Then $v$ is orthogonal to $u$, and there is $\theta$ such that $g(u)  = R^{\theta}_v (u) $. Now, $g^{-1} R^\theta_v$ fixes $u$, so by Fact~\ref{fact: axisangle}, $g^{-1} R^\theta_v = R^{-\phi}_u$ for some $\phi \in \RR$. Thus, $R^\theta_v R^\phi_u$.

The second assertion can be obtained by applying the first assertion to $g^{-1}$. 
\end{proof}

We will need the following inequality:

\begin{lemma}\label{lem: example 2} Let $u$ be a fixed unit vector. 
    Suppose $g_1, g_2 \in \mathrm{SO_3}(\RR)$ are such that $\alpha(u, g_i(u)) \leq  \pi/2$ for $i \in {1, 2}$. Then we have $\alpha(z, g_1g_2(z)) \leq \alpha(z, g_1(z)) + \alpha(z, g_2(z)) .$
\end{lemma}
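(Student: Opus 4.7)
The plan is to recognize this as an instance of the spherical triangle inequality on the unit sphere $S^2 \subseteq \RR^3$, together with the fact that every element of $\Sot$ is an isometry for the angular metric $\alpha(\cdot,\cdot)$. Reading $z$ in the conclusion as the fixed unit vector $u$ introduced at the start (presumably a typographical slip, since no other unit vector is in scope), the argument is short and transparent.

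First I would apply the spherical triangle inequality to the three unit vectors $u$, $g_1(u)$, and $g_1 g_2(u)$ on $S^2$, which gives
\[
\alpha(u, g_1 g_2(u)) \;\leq\; \alpha(u, g_1(u)) + \alpha(g_1(u), g_1 g_2(u)).
\]
Then I would invoke that $g_1$ acts by an orthogonal transformation on $\RR^3$, and therefore preserves the standard inner product and the angle function $\alpha$ between any pair of unit vectors. This immediately yields
\[
\alpha(g_1(u), g_1 g_2(u)) = \alpha(u, g_2(u)),
\]
and substituting into the previous display produces the desired bound.

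The only non-routine ingredient is the spherical triangle inequality itself. To keep the lemma self-contained I would either (i) appeal to Fact~\ref{fact: curve} applied to the round Riemannian metric on $S^2$, whose induced distance coincides with $\alpha$, so that the ordinary metric triangle inequality suffices; or (ii) give a short direct derivation via the addition formula for $\cos$ applied to an orthonormal frame aligned with the intermediate vector $g_1(u)$. I do not expect a real obstacle: the hypothesis $\alpha(u, g_i(u)) \leq \pi/2$ is in fact not needed to obtain the inequality as stated (the spherical triangle inequality holds without restriction), and is presumably retained because later applications in Section~\ref{sec: constructionandexamples} require the right-hand side to remain at most $\pi$ and the relevant triples of vectors to lie in a common hemisphere.
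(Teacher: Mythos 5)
Your proof is correct, and it is genuinely simpler and more direct than the one in the paper. The paper first invokes Lemma~\ref{lem: good representation} to write $g_1 = R^{\phi_1}_u R^{\theta_1}_w$ and $g_2 = R^{\theta_2}_v R^{\phi_2}_u$ with $v,w \perp u$, reduces to bounding $\alpha(u, R^{\theta_1}_w R^{\theta_2}_v(u))$, applies the spherical triangle inequality through the intermediate vector $R^{\theta_2}_v(u)$, and then bounds the second summand $\alpha(R^{\theta_2}_v(u), R^{\theta_1}_w R^{\theta_2}_v(u)) \leq \alpha(u, R^{\theta_1}_w(u))$ by a separate geometric observation: a rotation about $w$ displaces a vector orthogonal to $w$ (namely $u$) at least as much as any other vector. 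Your route instead applies the spherical triangle inequality to the triple $u$, $g_1(u)$, $g_1 g_2(u)$, and closes with the one-line fact that $g_1 \in \Sot$ preserves the angle metric, so $\alpha(g_1(u), g_1 g_2(u)) = \alpha(u, g_2(u))$. This sidesteps the axis–angle decomposition of Lemma~\ref{lem: good representation} and the ``maximal displacement for orthogonal vectors'' observation entirely. Both proofs ultimately rest on the spherical triangle inequality; what your version buys is a shorter argument, no dependence on Lemma~\ref{lem: good representation}, and — as you correctly observe — a proof that the hypothesis $\alpha(u,g_i(u)) \le \pi/2$ is superfluous for this inequality (it is relevant only to how the lemma is used afterward). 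You are also right that the $z$ in the stated conclusion is a typographical slip for $u$.
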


\begin{proof}
Applying Lemma~\ref{lem: good representation}, we can write $g_2$ as $R^{\theta_2}_vR^{\phi_2}_u $ and $g_1$ as $R^{\phi_1}_uR^{\theta_1}_w  $ with $v$ and $w$ orthogonal to $u$. 
It is easy to see that 
$$ \alpha(u, g_1(u)) = \alpha(u, R^{\theta_1}_w(u)) \text{ and } \alpha(u, g_2(u)) = \alpha(u, R^{\theta_2}_v(u)).$$
On the other hand, 
$$ \alpha(u, g_1g_2(u)) = \alpha(u, (R^{\theta_1}_wR^{\theta_2}_v)(u)). $$ 
The triangle inequality in term of angles gives us 
$$\alpha(u, (R^{\theta_1}_wR^{\theta_2}_v)(u)) \leq \alpha(u, R^{\theta_2}_v(u)) +  \alpha(R^{\theta_2}_v(u), (R^{\theta_1}_wR^{\theta_2}_v)(u)).   $$
As $w$ is orthogonal to $u$ and possibly not to to $R^{\theta_2}_v(u)$, we have 
$$\alpha(R^{\theta_2}_v(u), (R^{\theta_1}_wR^{\theta_2}_v)(u)) \leq \alpha(u, R^{\theta_1}_w(u)). $$
The desired conclusion follows.
\end{proof}

\begin{lemma} \label{lem: example}
Let $u$ be a unit vector, $\theta\in (0,\pi/2)$, and $A = \{g \in \Sot \mid \alpha(u, g(u)) \leq \pi/2 -\varepsilon\} $. Then 
$$A^2 = \{g \in \Sot \mid \alpha(u, g(u)) \leq \pi -2\varepsilon \}. $$
\end{lemma}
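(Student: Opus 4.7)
The statement splits into two set-theoretic inclusions, and both should follow cleanly from the two preceding lemmas. Write $B := \{g \in \Sot \mid \alpha(u, g(u)) \leq \pi - 2\varepsilon\}$ for the claimed value of $A^2$. (I will take the variable in the hypothesis to be $\varepsilon \in (0, \pi/2)$, matching its use in the definition of $A$.)

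For the forward inclusion $A^2 \subseteq B$, the plan is to apply Lemma~\ref{lem: example 2} directly. If $g_1, g_2 \in A$, then by definition $\alpha(u, g_i(u)) \leq \pi/2 - \varepsilon \leq \pi/2$ for $i = 1, 2$, so the hypothesis of Lemma~\ref{lem: example 2} is satisfied. The lemma then yields
\[
\alpha(u, g_1 g_2(u)) \leq \alpha(u, g_1(u)) + \alpha(u, g_2(u)) \leq 2(\pi/2 - \varepsilon) = \pi - 2\varepsilon,
\]
so $g_1 g_2 \in B$. This direction requires no further work.

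For the reverse inclusion $B \subseteq A^2$, the plan is to use the canonical decomposition provided by Lemma~\ref{lem: good representation} and then bisect the rotation that moves $u$. Given $g \in B$, write $g = R^\theta_v R^\phi_u$ with $v$ a unit vector orthogonal to $u$; by replacing $(v, \theta)$ with $(-v, -\theta)$ if necessary we may assume $\theta \in [0, \pi]$. Since $v \perp u$, a direct coordinate computation gives $\alpha(u, R^\theta_v(u)) = \theta$, and $R^\phi_u$ fixes $u$, so $\alpha(u, g(u)) = \theta$. The hypothesis $g \in B$ therefore says $\theta \leq \pi - 2\varepsilon$, i.e.\ $\theta/2 \leq \pi/2 - \varepsilon$. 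Now set
\[
g_1 := R^{\theta/2}_v, \qquad g_2 := R^{\theta/2}_v R^\phi_u.
\]
Since rotations about a common axis compose additively, $g_1 g_2 = R^\theta_v R^\phi_u = g$. Moreover $\alpha(u, g_1(u)) = \theta/2 \leq \pi/2 - \varepsilon$, and since $R^\phi_u$ fixes $u$ we also get $\alpha(u, g_2(u)) = \alpha(u, R^{\theta/2}_v(u)) = \theta/2 \leq \pi/2 - \varepsilon$. Thus $g_1, g_2 \in A$ and $g \in A^2$.

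There is no real obstacle here: the only thing one must be a little careful about is normalizing the signed angle $\theta$ so that $\alpha(u, R^\theta_v(u))$ really equals $\theta$ (rather than $|\theta|$ reduced mod $2\pi$), and verifying that $\alpha(u, R^{\theta/2}_v R^\phi_u(u)) = \alpha(u, R^{\theta/2}_v(u))$ because $R^\phi_u$ fixes $u$. Both points are immediate from Fact~\ref{fact: axisangle}.
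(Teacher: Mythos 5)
Your proof is correct and follows essentially the same route as the paper: the forward inclusion via Lemma~\ref{lem: example 2}, and the reverse inclusion via the decomposition $g = R^\theta_v R^\phi_u$ from Lemma~\ref{lem: good representation} followed by the bisection $g = R^{\theta/2}_v\,(R^{\theta/2}_v R^\phi_u)$. You merely spell out a few details the paper leaves implicit (normalizing the sign of $\theta$, verifying both factors lie in $A$) and correctly note the typo $\theta$ for $\varepsilon$ in the lemma's hypothesis.
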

\begin{proof}
The inclusion $A^2 \subseteq \{g \in \Sot \mid \alpha(u, g(u)) \leq \pi -2\varepsilon \}$ is immediate from Lemma~\ref{lem: example 2}. Now suppose $g \in \Sot$ satisfies $\alpha(u, g(u)) \leq \pi -2\varepsilon$. Then, Lemma~\ref{lem: good representation} yields $g= R^\theta_v R^\phi_u $ with $v$ orthogonal to $u$ and $\theta \in [2\varepsilon- \pi, \pi -2\varepsilon]$. We can rewrite $g = R^{\theta/2}_v (R^{\theta/2}_vR^\phi_u) $.
The other inclusion follows.
\end{proof}

We need the following facts about the unit sphere:
\begin{fact}
    Let $S^2 = \{ u \in \RR^3 : \|u\| =1 \}$. Then there is a complete Radon measure $\nu$ of $S^2$ satisfying the following conditions:
    \begin{enumerate}
        \item $\nu$ is invariant with respect to the left action by $\Sot$ and $\nu(S^2)=1$.
        \item If $X \subseteq S^2$ is the set $\{ u \in \RR^3 : \theta_1 \leq \theta(u) < \theta_2, \phi_1 , \leq \phi(u), < \phi_2 \}$, then $\nu(X)$ is given by the Riemann integral
        $$ \int_{\theta_1}^{\theta_2} \int_{\phi_1}^{\phi_2} \sin(\theta)\d\theta \d\phi.$$
               
    \end{enumerate}
\end{fact}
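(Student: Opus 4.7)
The plan is to identify $S^2$ with the homogeneous space $\Sot/H$ for $H$ a point-stabilizer, invoke Fact~\ref{fact: Haarmeasurenew} for existence and uniqueness of an invariant Radon measure, and then verify the explicit spherical-coordinate formula by computing the Riemannian volume of the round metric from Section~\ref{sec: PrelimRiemannianmetricandLiegroup}.

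\textbf{Step 1 (Existence via homogeneous-space Haar).} Fix the north pole $u_0 = (0,0,1)$, and let $H = \mathrm{Stab}_{\Sot}(u_0)$, the group of rotations fixing the $z$-axis; $H$ is isomorphic to $\mathrm{SO}(2,\RR)$ and hence compact. By Fact~\ref{fact: axisangle}, every unit vector lies in a single $\Sot$-orbit, so the orbit map $g \mapsto gu_0$ descends to a $\Sot$-equivariant homeomorphism $\Sot/H \to S^2$. Fact~\ref{fact: Haarmeasurenew} then supplies a nonzero left-invariant Radon measure $\nu_0$ on $\Sot/H \cong S^2$, unique up to a positive constant. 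Fact~\ref{Fact: FurtherpropertiesHaar}, combined with compactness of $S^2$, gives $0 < \nu_0(S^2) < \infty$; setting $\nu = \nu_0/\nu_0(S^2)$ (or the appropriate rescaling matching the integral formula) and passing to the completion produces a complete, $\Sot$-invariant Radon probability measure on $S^2$.

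\textbf{Step 2 (Spherical-coordinate formula).} On the chart $u(\theta, \phi) = (\sin\theta\cos\phi, \sin\theta\sin\phi, \cos\theta)$ with $(\theta, \phi) \in (0, \pi) \times (0, 2\pi)$, a direct computation of differentials shows that the round metric $g_{\mathrm{round}}$ — obtained by restricting the Euclidean inner product on $\RR^3$ to the tangent planes of $S^2$ — takes the form $\d\theta^2 + \sin^2\theta\,\d\phi^2$. Because $\Sot$ acts by linear isometries of $\RR^3$, it acts by isometries of $(S^2, g_{\mathrm{round}})$, so the Riemannian volume measure $\mathrm{Vol}_{g_{\mathrm{round}}}$ defined in Section~\ref{sec: PrelimRiemannianmetricandLiegroup} is $\Sot$-invariant; in this chart its density is $\sqrt{\det(g_{ij})} = \sin\theta$. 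Uniqueness in Fact~\ref{fact: Haarmeasurenew} forces $\mathrm{Vol}_{g_{\mathrm{round}}}$ and $\nu$ to be proportional. The chart covers $S^2$ up to a null set, so the values on rectangles $X = \{\theta_1 \le \theta < \theta_2,\ \phi_1 \le \phi < \phi_2\}$ determine the measure uniquely by Carath\'eodory (Fact~\ref{Caratheodory}), and plugging the density $\sin\theta$ into the Riemann integral over such a rectangle gives the displayed formula.

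\textbf{Main obstacle.} There is no genuine difficulty; all ingredients are available in the Preliminaries. The only bookkeeping subtlety is the normalization: the integral $\int_0^\pi\!\int_0^{2\pi}\sin\theta\,\d\theta\,\d\phi = 4\pi$, which is inconsistent with $\nu(S^2) = 1$ unless one inserts a $1/(4\pi)$ factor in the formula. I would resolve this either by including such a factor explicitly, or by reading condition~(1) as fixing the total mass only up to scaling and letting $\nu$ denote the unnormalized spherical area; both conventions are harmless for the applications that follow, since only relative measures of subsets of $S^2$ are used later.
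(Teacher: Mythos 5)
The paper states this as a \emph{Fact} without proof, so there is no internal argument to compare against. Your approach---realizing $S^2$ as the homogeneous space $\Sot/H$ with $H \cong \mathrm{SO}(2,\RR)$ the north-pole stabilizer, invoking Fact~\ref{fact: Haarmeasurenew} for existence and uniqueness-up-to-scalar of an invariant Radon measure, and then pinning down the explicit density via the Riemannian volume of the round metric---is the standard way to establish this, and every step you cite is available in the paper's Preliminaries. (One tiny shortcut: the paper's definition of Radon measure already builds in completeness via condition (BM2), so the extra ``passing to the completion'' clause is redundant but harmless; also, transitivity of the $\Sot$-action on $S^2$ does not literally follow from Fact~\ref{fact: axisangle}, though it is elementary enough to need no citation.)

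More importantly, your ``main obstacle'' paragraph catches a genuine flaw in the statement as printed: conditions (1) and (2) are mutually inconsistent, since (2) applied to the full sphere forces $\nu(S^2) = 4\pi$ rather than $1$. This is a real oversight in the paper's Fact, not a gap in your argument. Either the integrand in (2) should carry a $\tfrac{1}{4\pi}$ factor, or (1) should drop the normalization; your observation that downstream uses in Proposition~\ref{prop: example} only ever involve ratios of measures of subsets of $S^2$ is exactly why the slip is benign there, but the Fact itself should be corrected.
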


The next proposition is our construction. This can be seen as a generalization of the example given in~\cite{JT} by the first two authors. 
\begin{proposition} \label{prop: example}
Suppose an open $A \subseteq \Sot$ is of  the form 
$  \{ g\in \Sot : \angle (u, gu) < \theta \} $
with $u \in \RR^3$ a unit vector and $\theta \in (0, \pi/2] $. Then we have
$$\mu(A^2) = 4\mu(A)(1 - \mu(A)).$$
In particular, $\mu(A^2) < 4\mu(A)$ and the measure of $A$ can be arbitrarily small.
\end{proposition}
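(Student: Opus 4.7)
The plan is to reduce the computation of both $\mu(A)$ and $\mu(A^2)$ to the computation of spherical caps on $S^2$ and then verify the algebraic identity $\mu(A^2)=4\mu(A)(1-\mu(A))$ directly.

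First, I would describe $A^2$ explicitly. Running the proof of Lemma~\ref{lem: example} verbatim but with strict inequalities (the triangle-inequality argument and the splitting $g = R^{\theta}_{v} \cdot R^{\theta}_{v} R^{\phi}_{u}$ both respect strict bounds) yields
\[
A^2 \;=\; \{g \in \Sot : \alpha(u, g(u)) < 2\theta\},
\]
which makes sense because $2\theta \in (0,\pi]$.

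Next, I would project down to $S^2$. The evaluation map $\pi:\Sot \to S^2$, $g\mapsto g(u)$, is a continuous surjection whose fibers are the cosets of the compact subgroup $\mathrm{Stab}(u)\cong \mathrm{SO}(2,\RR)$. The pushforward $\pi_{\ast}\mu$ is therefore a $\Sot$-invariant Borel probability measure on the homogeneous space $S^2\cong \Sot/\mathrm{SO}(2,\RR)$. By the uniqueness clause in Fact~\ref{fact: Haarmeasurenew}, $\pi_{\ast}\mu$ must coincide with the normalized surface measure $\nu$ on $S^2$. Consequently,
\[
\mu(A) \;=\; \nu\bigl(\{v\in S^2 : \alpha(u,v)<\theta\}\bigr), \qquad \mu(A^2) \;=\; \nu\bigl(\{v\in S^2 : \alpha(u,v)<2\theta\}\bigr).
\]

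Placing $u$ at the north pole and using the formula for $\nu$ in spherical coordinates (appropriately normalized so that $\nu(S^2)=1$), each spherical cap integrates elementarily to
\[
\nu\bigl(\{v : \alpha(u,v)<\alpha\}\bigr) \;=\; \tfrac{1}{2}\bigl(1-\cos\alpha\bigr),
\]
so $\mu(A)=\tfrac{1}{2}(1-\cos\theta)$ and $\mu(A^2)=\tfrac{1}{2}(1-\cos 2\theta)=\sin^2\theta$. The identity then closes by the double-angle formula:
\[
4\mu(A)\bigl(1-\mu(A)\bigr) \;=\; (1-\cos\theta)(1+\cos\theta) \;=\; 1-\cos^2\theta \;=\; \sin^2\theta \;=\; \mu(A^2).
\]
The ``in particular'' clauses are immediate since $\sin^2\theta < 2(1-\cos\theta) \cdot 2 = 4\mu(A)$ for $\theta\in(0,\pi/2]$, and $\mu(A)\to 0$ as $\theta\to 0$. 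There is no serious obstacle here: the only nonroutine step is identifying $\pi_\ast\mu$ with $\nu$, which is an immediate application of uniqueness of invariant measure on the homogeneous space.
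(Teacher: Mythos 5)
Your proof follows essentially the same route as the paper's: describe $A^2$ via Lemma~\ref{lem: example}, push forward to $S^2 \cong \Sot/\mathrm{SO}(2,\RR)$, and compute the spherical caps. You fill in two things the paper leaves implicit — verifying the strict-inequality version of Lemma~\ref{lem: example} (the paper's lemma uses $\leq$ while the proposition uses $<$), and the explicit cap formula $\tfrac12(1-\cos\alpha)$ with the double-angle check — and you have a harmless typo in the last line ($4\mu(A) = 2(1-\cos\theta)$, not $2(1-\cos\theta)\cdot 2$, though the inequality $\sin^2\theta < 2(1-\cos\theta)$ is what's actually needed and holds).
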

\begin{proof}
 Let $u$ and $A$ be as in Lemma~\ref{lem: example}. 
 Recall that the group $\Sot$ acts transitively on the 2-sphere $S^2$ consisting of all unit vectors in $\RR^3$. Let $ T \leq \Sot$  be the stabilizer of $u$. Then, $\Sot/T$ can be identified with $S^2$. With $\pi A$ and $\pi A^2$ be the projections of $A$ and $A^2$ in $\Sot/T$ respectively, we have 
 \[
 \pi A =\Big\{v \mid \alpha(u,v) \leq \frac{\pi}{2}-\varepsilon\Big\} \text{ and } \pi A^2 =\{v \mid \alpha(u,v) \leq \pi-2\varepsilon\}. 
 \]
 Let $\nu$ be the Radon measure induced by $\mu$ on $\Sot/T$ via the quotient integral formula.
From the way we construct $A$ and Lemma~\ref{lem: example}, we have $A=AT$ and $A^2=A^2T$.  Hence, $\mu(A)= \nu(\pi A)$ and $\mu(A^2)=\nu(\pi A^2)$. Finally, note that $\nu$ is the normalized Euclidean measure, so an obvious computation yields the desired conclusion.
\end{proof}

In light of Proposition~\ref{prop: example}, it is natural to generalize the aforementioned Breuillard--Green conjecture to sets with all possible measures in $\Sot$. 

\begin{conjecture}[Strong Breuillard--Green Conjecture]
Suppose $A \subseteq \Sot$ is open. Then 
$$  \mu(A^2) \geq \min \{1, 4 \mu(A) (1-\mu(A))\}. $$
Moreover, if $\mu(A)< 1/2$, the equality happens if and only if $A$ is of the form
$$  \{ g\in \Sot: \angle (u, gu) < \arccos(1 -\mu(A)) \}  $$
where $u \in \RR^3$ is a unit vector.
\end{conjecture}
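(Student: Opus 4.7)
Since $4x(1-x) \leq 1$ for all $x \in [0,1]$, the conjectured lower bound is always $4\mu(A)(1-\mu(A))$ and the $\min$ is redundant. For $\mu(A) \geq 1/2$, Kemperman's inequality $\mu(A^2) \geq \min\{1, 2\mu(A)\}$ already yields $\mu(A^2) = 1 \geq 4\mu(A)(1-\mu(A))$, so one may assume $\mu(A) < 1/2$. The strategy is to treat two sub-ranges by different means and to recover the equality case as a rigidity statement for whichever inequality is tight at the end.

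\textbf{Small-measure regime and stability.} For $\mu(A)$ sufficiently small, Theorem~\ref{thm: maingrowth} gives $\mu(A^2) > (4-\epsilon)\mu(A)$. To upgrade this to the sharp factor $4(1-\mu(A))$, one needs a \emph{quantitative} version in which the admissible $\epsilon$ scales like $\mu(A)$: revisit the ultraproduct and Lie-model argument, track the effective dependence $\delta = \delta(\epsilon)$ through the Gleason--Yamabe step, the nonabelian Brunn--Minkowski inequality, and the measure-growth gap from~\cite{JT}, and verify $\delta(\epsilon) \gtrsim \epsilon$. Alongside this, one wants a \emph{stability} statement: if $\mu(A^2) \leq (4-\epsilon)\mu(A)$, then $A$ is close in $L^1$ to some spherical cap $\{g : \alpha(u,gu) < \theta\}$. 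Such stability should emerge by tracing the equality cases in the ultraproduct argument through the one-dimensional quotient $L/H \cong \RR$ obtained there: an almost-extremal set becomes essentially an interval in $\RR$ by the rigidity of one-dimensional Brunn--Minkowski, and pulls back through the Lie model to an approximate cap in $\Sot$.

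\textbf{Moderate-measure regime.} For $\mu(A)$ bounded away from both $0$ and $1/2$ the ultraproduct technology (which crucially uses $\mu(A_n) \to 0$) is not directly applicable, and a purely geometric argument on $\Sot$ is needed. The natural candidate is a \emph{spherical symmetrization}: associate to $A$ a spherical cap $A^* = \{g : \alpha(u, gu) < \theta_u\}$ of equal measure for a well-chosen axis $u \in S^2$, and prove $\mu((A^*)^2) \leq \mu(A^2)$; combined with Proposition~\ref{prop: example} this yields the sharp bound. Using the identity $\pi_u(A^2) = A \cdot \pi_u(A)$, where $\pi_u: g \mapsto gu$, one could try to combine the classical isoperimetric inequality on $S^2 \cong \Sot/T$ (where $T$ is a maximal torus) with Kemperman-type estimates along the $T$-fibers, after showing that extremal configurations must be $T$-invariant.

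\textbf{Main obstacle.} Once the two regimes overlap and a stability/symmetrization result is in hand, the equality case follows from the rigidity of the classical isoperimetric inequality on $S^2$ (whose unique extremizers are spherical caps) and the rigidity of one-dimensional Brunn--Minkowski. The principal obstacle is the moderate-measure regime: no analogue of Steiner symmetrization is known to monotonically decrease the product-set measure in a nonabelian compact Lie group, and the existing ultraproduct/Lie-model techniques are inherently asymptotic in $\mu(A)$. Closing the gap between the small-measure regime governed by Theorem~\ref{thm: maingrowth} and the large-measure regime governed by Kemperman — by constructing or circumventing such a symmetrization — is, in my view, the hardest part of the conjecture and by itself would constitute a substantial advance.
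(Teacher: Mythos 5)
This statement is explicitly a \emph{conjecture} in the paper, not a theorem: the authors formulate the Strong Breuillard--Green Conjecture in the introduction and again in Section~\ref{sec: constructionandexamples}, and prove only the extremal construction (Proposition~\ref{prop: example}) showing that spherical caps achieve equality, plus the much weaker Theorem~\ref{thm: maingrowth}. There is no proof in the paper to compare your proposal against, and you correctly treat it as open: what you have written is a research plan with an honest identification of the gap, not a proof.

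Your plan is reasonable as far as it goes. The preliminary reductions are correct: $4x(1-x)\le 1$ on $[0,1]$, so the $\min$ is harmless for the inequality, and Kemperman's inequality (Fact~\ref{fact: Kemperman}) disposes of $\mu(A)\ge 1/2$ outright. The identity $\pi_u(A^2)=A\cdot\pi_u(A)$ for the orbit map $\pi_u:g\mapsto gu$ is also correct and is morally the same device the paper uses in Proposition~\ref{prop: example} (projecting to $\Sot/T\cong S^2$). However, you should be aware that the small-measure step as you describe it is considerably harder than ``track constants'': Theorem~\ref{thm: maingrowth} is proved by a compactness (ultraproduct) argument that gives no effective $\delta(\epsilon)$ at all, and both the Massicot--Wagner and Gleason--Yamabe steps are non-quantitative as stated. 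Extracting a bound $\delta(\epsilon)\gtrsim\epsilon$ would require replacing these black boxes by effective versions, which is itself a substantial open problem. Similarly the stability statement you invoke (``almost extremal implies $L^1$-close to a cap'') is not established anywhere in the paper and does not follow from the rigidity of one-dimensional Brunn--Minkowski alone, since the passage from the Lie model back to $\Sot$ loses the structure of the kernel. You have correctly diagnosed that the moderate-measure regime is the hardest part and that no nonabelian analogue of Steiner symmetrization for product sets is currently known; this is indeed why the statement remains a conjecture.
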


The aforementioned Breuillard--Green Conjecture for $\Sot$ also has a natural generalization to all compact simple Lie groups. 

\begin{conjecture}[Breuillard--Green Conjecture for compact simple Lie groups]\label{conj: generalization}
Let $G$ be a compact simple Lie group with dimension $d$ equipped with a normalized Haar measure $\mu$, and the dimension of the maximal compact proper subgroup is $m$. Then for every $\varepsilon>0$, every compact $A\subseteq G$ with sufficiently small measures, 
\[
\mu(A^2)>(2^{d-m}-\varepsilon)\mu(A).
\]
\end{conjecture}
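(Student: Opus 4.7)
The plan is to mimic the ultraproduct/Lie-model scheme of the proof of Theorem~\ref{thm: maingrowth} for $\mathrm{SO}(3,\mathbb{R})$, replacing the invariant ``$1$'' that appears there (the codimension of the compact normal subgroup $H$ of the Lie model $L$) by the general codimension $d-m$. Arguing by contradiction, fix $\varepsilon>0$ and suppose there is a sequence of compact sets $(A_n)\subseteq G$ with $\mu(A_n)\to 0$ and $\mu(A_n^2)\leq (2^{d-m}-\varepsilon)\mu(A_n)$. Pass to an ultraproduct $(G_\infty,A_\infty,\mu_\infty)$ and run the approximate-group/Massicot--Wagner Lie-model construction of Sections~5 and~6 verbatim, since those sections work for any compact semisimple Lie group, not only $\mathrm{SO}(3,\mathbb{R})$. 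The output is a connected noncompact unimodular Lie model $\pi\colon \langle\widetilde A_\infty\rangle\to L$ associated to an asymmetric modification $\widetilde A_\infty$ of $A_\infty$, and the density-function arguments of Sections~8 and~9 transport the hypothesis into $\lambda(X^2)<(2^{d-m}-\varepsilon)\lambda(X)$ on $L$, where $X=\pi(\widetilde A_\infty)$ and $\lambda$ is a Haar measure on $L$.

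The central obstacle is now to apply the full nonabelian Brunn--Minkowski conjecture of~\cite{JTZ} in $L$, namely $\lambda(X^2)\geq 2^{d_L-m_L}\lambda(X)$, where $d_L=\dim L$ and $m_L$ is the dimension of a maximal compact subgroup of $L$. At present only the weaker bound with exponent $d_L-m_L-\lfloor(d_L-m_L)/3\rfloor$ is available. When $d-m=2$ this weak bound still forces $d_L-m_L\leq 1$, which is precisely what the $\mathrm{SO}(3,\mathbb{R})$ proof exploits; for $d-m\geq 3$ it is too weak, and the sharp nonabelian Brunn--Minkowski conjecture appears to be essential for any treatment along these lines.

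Granting the sharp Brunn--Minkowski conjecture, one concludes $d_L-m_L\leq d-m-1$. Standard Lie theory applied to the connected noncompact unimodular group $L$ (using Gleason--Yamabe together with the absence of arbitrarily small compact subgroups in a connected Lie group, as in the $\mathrm{SO}(3,\mathbb{R})$ argument) then yields a compact normal subgroup $H_L\trianglelefteq L$ of codimension $d_L-m_L$ with $L/H_L\cong \mathbb{R}^{d_L-m_L}$. Pulling back a small cube $I^{d_L-m_L}\subseteq \mathbb{R}^{d_L-m_L}$ through $\rho\colon\langle \widetilde A_\infty\rangle\to L\to\mathbb{R}^{d_L-m_L}$ and descending to finite stages produces open sets $B_n\subseteq G$ of arbitrarily small measure satisfying $\mu(B_n^2)\leq(2^{d_L-m_L}+o(1))\mu(B_n)\leq(2^{d-m-1}+o(1))\mu(B_n)$.

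The final step --- converting this into a contradiction --- is the hardest part and the second place where new input is needed. The natural route is induction on $d-m$, with base case $d-m=2$ given by Theorem~\ref{thm: maingrowth}, but the sets $B_n$ still live in the same group $G$ with the same value of $d-m$, so one cannot directly invoke a smaller instance of the conjecture. A plausible fix is a Lie-algebraic rigidity statement: for any near-extremal sequence in a compact simple $G$, the quotient $\mathfrak{l}/\mathfrak{h}_L$ of the Lie model must embed into $\mathfrak{g}/\mathfrak{h}$ for a compact subalgebra $\mathfrak{h}\subseteq \mathfrak{g}$, after which simplicity of $G$ forces $d_L-m_L\geq d-m$, contradicting the preceding paragraph. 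Proving this rigidity (together with the sharp nonabelian Brunn--Minkowski conjecture) is what stands between the framework developed in this paper and a complete proof of Conjecture~\ref{conj: generalization}.
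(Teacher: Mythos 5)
Be aware that the statement you were given is labelled a \emph{conjecture} in the paper; it is not proved there, and the only guidance the paper gives is Remark~\ref{remark: section 10}, which discusses what additional inputs the framework would need. With that said, your high-level plan agrees with what the authors have in mind, and you correctly identify the sharp Nonabelian Brunn--Minkowski Conjecture (Conjecture~\ref{conj: Nonabelian BM}) as the first essential missing ingredient; that matches Remark~\ref{remark: section 10}, including your observation that the available weak exponent $d_L-m_L-\lfloor (d_L-m_L)/3\rfloor$ only closes the case $d-m\leq 2$.

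However, the step asserting that the connected noncompact unimodular Lie model $L$ has a compact \emph{normal} subgroup of codimension $d_L-m_L$ with quotient isomorphic to $\RR^{d_L-m_L}$ is false in general. Maximal compact subgroups are usually not normal: for $L=\mathrm{SL}(2,\RR)$ one has $d_L-m_L=2$, yet $\mathrm{SO}(2,\RR)$ is not normal and $L$ has no compact normal subgroup of codimension two at all, since its only proper normal subgroups are central. What the paper actually does in Section~10 (as opposed to the informal overview) is invoke Fact~\ref{fact: AJTZ}, the Kemperman inverse theorem for noncompact groups, to get a homomorphism $L\to\RR$ with compact kernel from the hypothesis $\BM(X,Y)<2$. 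That theorem is substantive --- it rests on the Brunn--Minkowski work of~\cite{JTZ} --- and its extension to the range $d_L-m_L\geq 2$ is precisely what Remark~\ref{remark: section 10} describes as contingent on removing the helix-dimension loss, i.e.\ on the sharp Brunn--Minkowski Conjecture itself. You have silently folded a second hard open problem into a ``standard Lie theory'' step.

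Finally, the closing ingredient the paper points to is not a Lie-algebraic rigidity lemma but a quantitative measure expansion gap. Remark~\ref{remark: section 10} says the method would give Conjecture~\ref{conj: generalization} provided one has (a) the sharp Nonabelian Brunn--Minkowski Conjecture and (b) a generalization of Fact~\ref{fact:expansiongap} of the form $\mu(A^2)\geq (2^{d-m-1}+\eta)\mu(A)$ for some $\eta>0$ and all compact $A\subseteq G$ of sufficiently small measure. Given your sets $B_n$ with doubling $\leq 2^{d-m-1}+o(1)$ in $G$, item (b) yields the contradiction immediately, exactly parallel to how $2+10^{-12}$ beats $2+o(1)$ in the $\Sot$ argument. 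Your intuition that one cannot invoke a smaller instance of the conjecture is correct; the resolution the paper proposes is a quantitatively stronger expansion gap in the \emph{same} group, not a structural rigidity constraint forcing $d_L-m_L\geq d-m$.
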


We now discuss a more general construction. We start with some fact about compact simple Lie group and symmetric space~\cite{Riemann}.

\begin{fact}\label{fact: compact group metric}
Suppose $G$ is a compact simple Lie group, then up to a constant, there is a unique bi-invariant Riemannian metric on $G$ inducing a distance function $\Tilde{d}$ on $G$.
Moreover, let  $H \leq G$ be a closed and hence compact subgroup of $G$. Then there is a unique left-invariant Riemannian metric on $G/H$ such that if $d$ is the distance function it induces on $G/H$, then 
$$ d(g_1H, g_2H): = \EE_{h \in H} \Tilde{d}(g_1h, g_2 h).$$
The Riemannian metric $d$ has positive constant scalar curvature.
\end{fact}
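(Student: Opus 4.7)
The plan is to prove each of the three assertions of the fact separately, using standard tools from Riemannian geometry and representation theory.

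For the uniqueness (up to positive scalar) of a bi-invariant Riemannian metric on the compact simple group $G$: such metrics correspond bijectively to $\mathrm{Ad}(G)$-invariant inner products on the Lie algebra $\mathfrak g$. Existence is supplied by $-B$, the negative of the Killing form, which is positive definite exactly because $G$ is compact semisimple. Given two $\mathrm{Ad}(G)$-invariant inner products, the associated self-adjoint intertwiner $T$ on $\mathfrak g$ commutes with $\mathrm{Ad}(G)$; since $G$ is simple, the adjoint representation is $\mathbb R$-irreducible, so the spectral decomposition of $T$ together with a Schur-type argument forces $T$ to be a positive scalar.

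For the canonical metric $d$ on $G/H$: Fact~2.7 already gives the existence of some $G$-invariant Riemannian metric on $G/H$, which corresponds to an $\mathrm{Ad}(H)$-invariant inner product on $T_{eH}(G/H)$. To pin down the canonical choice, I would use the bi-invariant inner product on $\mathfrak g$ to form the $\mathrm{Ad}(H)$-stable orthogonal decomposition $\mathfrak g = \mathfrak h \oplus \mathfrak m$, identify $T_{eH}(G/H)$ with $\mathfrak m$, and restrict to $\mathfrak m$; this yields the normal-homogeneous metric. Verifying the averaging formula would then reduce, via bi-invariance of $\Tilde d$, to checking that the resulting quantity descends to $G/H\times G/H$ and agrees with the Riemannian-submersion distance coming from $\pi\colon(G,\Tilde g)\to(G/H, d)$. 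For the positive constant scalar curvature: constancy is immediate from $G$-invariance of $d$ and transitivity of $G$ on $G/H$, while positivity follows from O'Neill's formula applied to the submersion,
\begin{equation*}
\mathrm{sec}_{G/H}(X, Y) = \tfrac{1}{4}\|[\Tilde X, \Tilde Y]\|^2 + \tfrac{3}{4}\bigl\|[\Tilde X, \Tilde Y]^{\mathfrak h}\bigr\|^2 \ \geq\ 0,
\end{equation*}
where $\Tilde X, \Tilde Y \in \mathfrak m$ are the horizontal lifts of orthonormal $X, Y$ and we have used the classical bi-invariant formula $\mathrm{sec}_G = \tfrac{1}{4}\|[\cdot,\cdot]\|^2$; tracing over an orthonormal basis gives $S \geq 0$, and simplicity of $\mathfrak g$ prevents $[\mathfrak m, \mathfrak m]$ from collapsing across all pairs, forcing strict positivity.

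The main obstacle I anticipate is making rigorous sense of the averaged formula for $d$. A naive reading, using only bi-invariance of $\Tilde d$, gives $\Tilde d(g_1 h, g_2 h) = \Tilde d(g_1, g_2)$, so the supposed average equals $\Tilde d(g_1, g_2)$, which is not a function on $G/H \times G/H$. One must therefore reinterpret the right-hand side, for instance as an average over independent pairs $h_1, h_2 \in H$, as an average at the level of length functionals along admissible curves joining the cosets, or as a shorthand for the Riemannian-submersion infimum $\inf_{h \in H}\Tilde d(g_1, g_2 h)$, and then verify consistency with the normal-homogeneous metric constructed above.
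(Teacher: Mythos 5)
The paper offers no proof of this Fact; it is stated with a citation to the Riemannian geometry reference, so there is no in-paper argument to compare against. On the mathematics: your outlines of the first and third assertions are the standard ones and are correct. Bi-invariant metrics on $G$ biject with $\mathrm{Ad}(G)$-invariant inner products on $\mathfrak g$; the negative Killing form gives existence, and the spectral decomposition of a symmetric intertwiner together with irreducibility of the adjoint representation (the only ideals of a simple $\mathfrak g$ are $0$ and $\mathfrak g$) gives uniqueness up to a positive scalar. Constancy of scalar curvature follows from homogeneity of $G/H$. Your O'Neill computation gives nonnegative sectional curvature; strict positivity of the trace requires $[\mathfrak m,\mathfrak m]\neq 0$, which you gesture at but should spell out: if $[\mathfrak m,\mathfrak m]=0$, then since $[\mathfrak h,\mathfrak m]\subseteq\mathfrak m$ by $\mathrm{Ad}(H)$-invariance one gets $[\mathfrak g,\mathfrak m]\subseteq\mathfrak m$, so $\mathfrak m$ is an abelian ideal of the simple algebra $\mathfrak g$, forcing $\mathfrak m=0$ or $\mathfrak m=\mathfrak g$, both impossible when $H$ is a proper nontrivial quotient direction.

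The obstacle you raise is not a gap in your argument but a genuine defect in the Fact as written. By bi-invariance, $\Tilde{d}(g_1 h, g_2 h)=\Tilde{d}(g_1,g_2)$ for every $h\in H$, so the right-hand side of the displayed formula equals $\Tilde{d}(g_1,g_2)$, which depends on the chosen coset representatives and is not a function on $G/H\times G/H$. The intended construction is the normal-homogeneous one you describe: restrict the $\mathrm{Ad}(G)$-invariant inner product to the orthogonal complement $\mathfrak m$ of $\mathfrak h$, so that $\pi\colon G\to G/H$ becomes a Riemannian submersion, and the induced distance is $d(g_1H,g_2H)=\inf_{h\in H}\Tilde{d}(g_1,g_2 h)$, which is your third reinterpretation and is the correct one. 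The independent-pair average $\EE_{h_1,h_2\in H}\Tilde{d}(g_1h_1,g_2h_2)$ is well-defined and a metric, but there is no reason for it to be the length metric of any Riemannian metric, so it does not meet the statement's requirements; discard it. With the infimum in place of the expectation, your proposal is correct and essentially complete.
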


The following example shows that Conjecture~\ref{conj: generalization} if true, the $2^{d-m}$ factor is also sharp, and the $-\varepsilon$ term is necessary. 
\begin{proposition}\label{prop: general example}
   Let $G$ be a compact simple Lie group with dimension $d$ equipped with a normalized Haar measure $\mu$. Let $m$ be the dimension of a maximal compact proper subgroup. Then  for sufficiently $c>0$ there is $A\subseteq G$ with $\mu(A)=c$ and
   \[
   \mu(A^2)<2^{d-m}\mu(A).
   \]
\end{proposition}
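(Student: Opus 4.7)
My plan is to generalize the spherical-cap example of Proposition~\ref{prop: example}: take $A$ to be the preimage under $\pi\colon G\to G/H$ of a small geodesic ball in $G/H$, where $H\le G$ is a maximal compact proper subgroup, so $\dim H=m$. By Fact~\ref{fact: compact group metric}, $G/H$ carries a $G$-left-invariant Riemannian metric of positive constant scalar curvature $S>0$; let $d$ be the induced distance and $p_0=eH$ the base point. For $r>0$ set
\[
B_r=\{v\in G/H:d(v,p_0)\le r\},\qquad A_r=\pi^{-1}(B_r).
\]
The pushforward $\pi_{*}\mu$ is a left-$G$-invariant Radon measure on $G/H$, so by the uniqueness clause of Fact~\ref{fact: Haarmeasurenew} it equals $c_0\,\mathrm{Vol}$ for some constant $c_0>0$, where $\mathrm{Vol}$ is the Riemannian volume. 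Hence $\mu(A_r)=c_0\,\mathrm{Vol}(B_r)$, which is continuous in $r$ and tends to $0$ as $r\to 0$, so for every sufficiently small $c$ I can choose $r$ with $\mu(A_r)=c$.

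Next, I would show $A_r^2\subseteq \pi^{-1}(B_{2r})$. For $g_1,g_2\in A_r$, the triangle inequality and $G$-left-invariance of $d$ give
\[
d(g_1g_2H,p_0)\le d(g_1g_2H,g_1H)+d(g_1H,p_0)=d(g_2H,p_0)+d(g_1H,p_0)\le 2r,
\]
so $\pi(g_1g_2)\in B_{2r}$. Consequently $\mu(A_r^2)\le c_0\,\mathrm{Vol}(B_{2r})$, and the normalizing constant $c_0$ cancels in the ratio.

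Finally, I would apply Fact~\ref{fact: ball} with $n=d-m$. Writing $K=\alpha_{n-1}/n$,
\begin{align*}
\mathrm{Vol}(B_r)&=Kr^n\Big(1-\tfrac{Sr^2}{6(n+2)}+O(r^3)\Big),\\
\mathrm{Vol}(B_{2r})&=2^nKr^n\Big(1-\tfrac{4Sr^2}{6(n+2)}+O(r^3)\Big),
\end{align*}
and expanding the quotient,
\[
\frac{\mathrm{Vol}(B_{2r})}{\mathrm{Vol}(B_r)}=2^n\Big(1-\tfrac{Sr^2}{2(n+2)}+O(r^3)\Big)<2^n=2^{d-m}
\]
for all sufficiently small $r>0$. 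Combined with the previous step this yields $\mu(A_r^2)<2^{d-m}\mu(A_r)$, as required.

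The only substantive input is the positivity of the scalar curvature, which Fact~\ref{fact: compact group metric} supplies from the structure theory of compact simple Lie groups; it is precisely this positivity that converts the leading-order equality $\mathrm{Vol}(B_{2r})\approx 2^n\mathrm{Vol}(B_r)$ into a strict deficit at the next order. Apart from that the argument is a straightforward homogeneous-space analogue of the $S^2$ computation driving Proposition~\ref{prop: example}, with the triangle inequality replacing Lemma~\ref{lem: example 2} and Fact~\ref{fact: ball} replacing the explicit spherical-cap area formula.
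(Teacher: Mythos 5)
Your argument is correct and is essentially the paper's proof: both pull back small geodesic balls in $G/H$ under $\pi$, show $A_r^2\subseteq\pi^{-1}(B_{2r})$ via the $G$-invariant metric, and then invoke Fact~\ref{fact: ball} together with the positive scalar curvature from Fact~\ref{fact: compact group metric} to beat $2^{d-m}$ at second order. The only cosmetic difference is that you establish the inclusion directly from the triangle inequality plus $G$-invariance of $d$, whereas the paper concatenates geodesics, and you spell out the intermediate-value step for hitting an exact value $\mu(A)=c$.
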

\begin{proof}
Let $H\leq G$ be a maximal proper compact subgroup. Then $\dim H=m$. 
By Fact~\ref{fact: compact group metric}, $G/H$ is a symmetric space and hence it has a unique $G$-invariant Riemannian metric, and this metric induces a volume measure $\lambda$ on $G/H$. 

Note that the projection measure of $\mu$ on $G/H$ equals to $\lambda$ up to a constant scalar as the projection measure of $\mu$ on $G/H$ is also a $G$-invariant Borel measure. Let $B_r$ be an open ball of radius $r$ in $G/H$, and $D_r=\pi^{-1}(B_r)$ where $\pi:G\to G/H$ is the projection map. We claim that $D_r^2\subseteq D_{2r}$. 

To see the claim, let $g_1$ and $g_2$ be two arbitrary elements in $D_{r}$, and let $\pi g_1$ and $\pi g_2$ be the projections of $g_1$ and $g_2$ in $G/H$. For $i=1,2$, let $\gamma_i$ be the geodesic curve connecting $\pi g_i$ and the identity in $G/H$, and the length of each $\gamma_i$ is strictly smaller than $r$ by the choice of $g_i$ and Fact~\ref{fact: curve}. As the metric on $G/H$ is $G$-invariant, $(\pi g_1) \gamma_2$ has the same length as $\gamma_2$. Now let $\gamma$ be the curve formed by $(\pi g_1) \gamma_2$ after $\gamma_1$, then it is a curve connecting $\pi g_1g_2$ and the identity, and has length strictly smaller than $2r$. Thus $g_1g_2\in D_{2r}$ and hence $D_r^2\subseteq D_{2r}$. 

Using Fact~\ref{fact: ball} when $r$ is sufficiently small, we have
\[
\frac{\mu(D_r^2)}{\mu(D_r)}\leq \frac{\mu(D_{2r})}{\mu(D_r)}=\frac{\lambda(B_{2r})}{\lambda(B_r)}\to 2^{d-m} \quad(\text{as }r\to 0). 
\]
By Fact~\ref{fact: compact group metric} the metric on $G/H$ has a positive constant scalar curvature. Thus using Fact~\ref{fact: ball} we have $\frac{\mu(D_r^2)}{\mu(D_r)}<2^{d-m}$ when $r$ is sufficiently small. 
\end{proof}

One may compare Conjecture~\ref{conj: generalization} with the recently developed Brunn--Minkowski phenomenon by the authors~\cite{JTZ}:

\begin{fact}[Symmetric Brunn--Minkowski for simple Lie groups with finite center]\label{fact: symm BM}
Let $G$ be a simple Lie group with finite center, and $\mu$ a Haar measure on $G$. Let $d$ be the topological dimension of $G$, and $m$ be the dimension of a maximal compact subgroup of $G$. Then for every compact $A\subseteq G$,
\[
\mu(A^2)\geq 2^{d-m}\mu(A). 
\]
\end{fact}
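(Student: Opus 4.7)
The plan is to follow the Iwasawa-reduction strategy hinted at earlier in the excerpt, reducing the Brunn--Minkowski-type bound on $G$ to a classical Brunn--Minkowski-type inequality on its solvable ``Iwasawa part.'' Since $G$ is a simple Lie group with finite center, it admits a global Iwasawa decomposition $G = KAN$, where $K$ is a maximal compact subgroup of dimension $m$, $A$ is an $\mathbb{R}$-split torus, and $N$ is nilpotent, with the product map $K \times A \times N \to G$ a diffeomorphism. Thus $S := AN$ is a closed, simply connected, solvable subgroup with $\dim S = d - m$, and $G/K$ is diffeomorphic to $S$.

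First I would express the Haar measure in Iwasawa coordinates: up to normalization, $d\mu = dk \, e^{2\rho(H)} \, dH \, dn$, where $\rho$ is half the sum of positive restricted roots. This factorization lets me transfer the problem to $S$: given compact $A \subseteq G$, I would use the quotient integral formula along the fibration $G \to K\backslash G$ (with compact fibers of bounded volume) to replace $A$ by an associated compact set $\tilde{A} \subseteq S$, whose left-Haar measure in $S$ is comparable to $\mu(A)$. The containment $A \cdot A$ projects to a set containing the pointwise product $\tilde{A} \cdot \tilde{A}$ in $S$, reducing the desired $2^{d-m}$-doubling on $G$ to a $2^{\dim S}$-doubling on the solvable group $S$.

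The core step is then to establish a nonabelian Brunn--Minkowski inequality on the solvable Lie group $S$ of dimension $n = d - m$: for compact $X \subseteq S$ with left Haar measure $\mu_S$, one has $\mu_S(X^2) \geq 2^{n} \mu_S(X)$. I would prove this by induction on $\dim S$. The base case $\dim S = 0$ is trivial, and when $S$ is abelian (isomorphic to $\mathbb{R}^{n}$) this is exactly the classical Brunn--Minkowski inequality. For the inductive step, I pick a codimension-one normal subgroup $S' \triangleleft S$ (always available since $S$ is solvable), apply the quotient integral formula along $S \to S/S' \cong \mathbb{R}$ to write $\mu_S(X)$ as an integral of slice-measures $\mu_{S'}(X_t)$ against the pushforward, apply one-dimensional Brunn--Minkowski (i.e.\ Pr\'ekopa--Leindler) in the quotient direction, and close the induction by applying the hypothesis to each slice inside $S'$.

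The main obstacle is the non-unimodularity of $S$: the modular function on $AN$ is the nontrivial character $e^{2\rho}$, so the quotient integral formula produces weighted slice-integrals rather than clean Fubini-style ones, and a naive slicing degrades the constant. Overcoming this requires setting up the slicing so that the modular weights are log-concave along the quotient direction and can be absorbed by a weighted Pr\'ekopa--Leindler inequality, together with ensuring that the product set $X^2$ contains the Minkowski-sum of suitably twisted slices. This is the delicate technical heart of~\cite{JTZ}, and once in place it yields the sharp exponent $d-m$ and hence Fact~\ref{fact: symm BM}.
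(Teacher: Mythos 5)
You are attempting to prove a statement that the paper itself does not prove: Fact~\ref{fact: symm BM} is imported verbatim from \cite{JTZ} as a black box, and the only in-paper discussion of it is the conditional derivation from Conjecture~\ref{conj: minimal measure doubling} via the canonical metric on $G/H$, the negative scalar curvature of Fact~\ref{fact: noncompact}, and the ball-volume asymptotics of Fact~\ref{fact: ball} --- a geometric route quite different from your Iwasawa-slicing plan. Your write-up ends by conceding that the key step ``is the delicate technical heart of~\cite{JTZ}''; since \cite{JTZ} is exactly the source being quoted for the whole statement, the proposal is circular as a proof and amounts to a plausible-sounding outline of where the difficulty lives rather than an argument.

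Beyond that structural issue, the outline has concrete gaps. (i) The reduction from $G$ to $S=AN$ does not work as stated: the Iwasawa projection $G\to G/K\cong S$ is not a group homomorphism. Writing $a_i=s_ik_i$, the product $a_1a_2=s_1k_1s_2k_2$ has $S$-part equal to the $S$-part of $s_1k_1s_2$, not $s_1s_2$, so the image of $A\cdot A$ need not contain $\tilde{A}\cdot\tilde{A}$, and no containment of the kind you invoke is available. (ii) ``Left-Haar measure comparable to $\mu(A)$'' is fatal for a sharp result: the fiber-volume factors enter with opposite signs at the two ends of the chain $\mu(A^2)\gtrsim\mu_S(\tilde{A}^2)\geq 2^{d-m}\mu_S(\tilde{A})\gtrsim\mu(A)$ and do not cancel, so any multiplicative loss destroys the constant $2^{d-m}$; this is precisely why the arguments of \cite{JT} and \cite{JTZ} work with density functions and superlevel sets (``spillover'') rather than with set-to-set projections. (iii) The asserted core inequality $\mu_S(X^2)\geq 2^{\dim S}\mu_S(X)$ for the nonunimodular solvable group $S=AN$ with the left Haar measure alone is not established by your induction: the codimension-one slicing runs exactly into the modular function, as you acknowledge, and the correct Brunn--Minkowski-type formulation for nonunimodular groups in \cite{JTZ} involves both left and right Haar measures (as Section~4 of this paper itself remarks). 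Without the weighted Pr\'ekopa--Leindler step and the twisted-slice containment made precise, the proposal does not yield Fact~\ref{fact: symm BM}.
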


For a more general Brunn--Minkowski result we refer to~\cite{JTZ}. We also remark that the Brunn--Minkowski inequality became trivial when the ambient group is compact, as in this case $m=d$ in Fact~\ref{fact: symm BM}, while in Conjecture~\ref{conj: generalization} we have $m<d$.

Now we look at a fact in noncompact simple Lie groups with a finite center~\cite{hilgert}. 

\begin{fact}\label{fact: noncompact}
Suppose $G$ is a noncompact simple Lie group with a finite center, and let $H\leq G$ be a closed and connected compact subgroup of $G$ having maximal dimension.
Then there is a unique left-invariant Riemannian metric on $G/H$. This Riemannian metric has negative constant scalar curvature.
\end{fact}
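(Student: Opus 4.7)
The plan is to realize $G/H$ as a Riemannian symmetric space of noncompact type and extract the claims from the Cartan decomposition. Let $\mathfrak{g}$ and $\mathfrak{h}$ denote the Lie algebras of $G$ and $H$ and $B$ the Killing form on $\mathfrak{g}$. Since $G$ is noncompact simple with finite center and $H$ is a maximal connected compact subgroup, the Cartan decomposition yields a splitting $\mathfrak{g} = \mathfrak{h} \oplus \mathfrak{p}$ satisfying $[\mathfrak{h}, \mathfrak{h}] \subseteq \mathfrak{h}$, $[\mathfrak{h}, \mathfrak{p}] \subseteq \mathfrak{p}$, $[\mathfrak{p}, \mathfrak{p}] \subseteq \mathfrak{h}$, with $B$ negative definite on $\mathfrak{h}$ and positive definite on $\mathfrak{p}$. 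Identifying the tangent space $T_{eH}(G/H)$ with $\mathfrak{p}$, any $G$-invariant Riemannian metric on $G/H$ corresponds to an $\mathrm{Ad}(H)$-invariant positive definite inner product on $\mathfrak{p}$.

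Next I would establish existence and uniqueness. The form $B|_{\mathfrak{p}}$ is itself $\mathrm{Ad}(H)$-invariant (since $B$ is $\mathrm{Ad}(G)$-invariant) and positive definite, giving existence. For uniqueness up to positive scalar, I would show the isotropy representation of $H$ on $\mathfrak{p}$ is irreducible: if $\mathfrak{p}_0 \subsetneq \mathfrak{p}$ were a nontrivial $\mathrm{Ad}(H)$-invariant subspace, then $\mathfrak{h} \oplus \mathfrak{p}_0$ together with its $B$-orthogonal complement would decompose $\mathfrak{g}$ into proper ideals, contradicting simplicity of $\mathfrak{g}$. Schur's lemma for real irreducible representations then forces any two $\mathrm{Ad}(H)$-invariant symmetric bilinear forms on $\mathfrak{p}$ to be proportional, giving uniqueness of the $G$-invariant Riemannian metric up to positive scalar.

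For the curvature, use the classical symmetric space formula $R(X, Y)Z = -[[X, Y], Z]$ for $X, Y, Z \in \mathfrak{p}$, identified with tangent vectors at $eH$. Tracing over an orthonormal basis of $\mathfrak{p}$ with respect to the chosen metric $g = cB|_{\mathfrak{p}}$ for some $c > 0$, one obtains $\mathrm{Ric}(X, Y) = -\tfrac{1}{2} B(X, Y)$, so the manifold is Einstein with Ricci tensor $-\tfrac{1}{2c} g$. Taking the trace yields a constant negative scalar curvature $S = -\dim(\mathfrak{p})/(2c)$. Since the metric is $G$-invariant and $G$ acts transitively, this constant is attained at every point.

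The main obstacle I expect is the irreducibility step: one must tie the codimension maximality of $H$ to the statement that $\mathfrak{p}$ carries no nontrivial $\mathrm{Ad}(H)$-invariant subspace, which implicitly uses that the Cartan involution makes $\mathfrak{h}$ maximal among subalgebras on which $B$ is negative definite. A further point requiring care is the Ricci computation: one must verify that the trace $\sum_i B([X, e_i], [e_i, Y])$ over a $B$-orthonormal basis of $\mathfrak{p}$ produces the factor $-\tfrac{1}{2}$ claimed, using $\mathrm{ad}$-invariance of $B$ together with $[\mathfrak{p}, \mathfrak{p}] \subseteq \mathfrak{h}$ and the sign conventions from the Cartan decomposition.
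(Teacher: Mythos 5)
The paper cites this statement to Hilgert--Neeb without giving a proof, so there is no internal proof to compare against; your approach via the Cartan decomposition, the Killing form, irreducibility of the isotropy representation, and the symmetric-space curvature formula $R(X,Y)Z=-[[X,Y],Z]$ is exactly the standard treatment found in references on symmetric spaces of noncompact type. The existence step (take $g=cB|_{\mathfrak{p}}$), the Schur-type uniqueness argument (a positive-definite self-adjoint intertwiner has an invariant eigenspace, hence is scalar by irreducibility), and the curvature computation $\mathrm{Ric}=-\tfrac12 B|_{\mathfrak{p}}$ leading to $S=-\dim(\mathfrak{p})/(2c)<0$ are all correct.

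There is, however, a genuine gap in the irreducibility step. You claim that a proper nonzero $\mathrm{Ad}(H)$-invariant subspace $\mathfrak{p}_0\subsetneq\mathfrak{p}$ would make $\mathfrak{h}\oplus\mathfrak{p}_0$ and its $B$-orthogonal complement into proper ideals of $\mathfrak{g}$. This is false: $\mathfrak{h}\oplus\mathfrak{p}_0$ is a subalgebra (since $[\mathfrak{p}_0,\mathfrak{p}_0]\subseteq\mathfrak{h}$ and $[\mathfrak{h},\mathfrak{p}_0]\subseteq\mathfrak{p}_0$), but it is not an ideal, because for the complementary piece $\mathfrak{p}_1$ of $\mathfrak{p}$ one has $[\mathfrak{p}_1,\mathfrak{h}]\subseteq\mathfrak{p}_1$, which lies outside $\mathfrak{h}\oplus\mathfrak{p}_0$. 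Its $B$-orthogonal complement $\mathfrak{p}_1$ is not even a subalgebra in general. The correct argument is: let $\mathfrak{p}_1$ be the $B$-orthogonal complement of $\mathfrak{p}_0$ in $\mathfrak{p}$ (also $\mathrm{Ad}(H)$-invariant), and first show $[\mathfrak{p}_0,\mathfrak{p}_1]=0$ by using $\mathrm{ad}$-invariance of $B$ and its nondegeneracy on $\mathfrak{h}$: for $X\in\mathfrak{p}_0$, $Y\in\mathfrak{p}_1$, $W\in\mathfrak{h}$, $B([X,Y],W)=B(X,[Y,W])=0$ because $[Y,W]\in\mathfrak{p}_1\perp X$. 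Then $\mathfrak{g}_0:=[\mathfrak{p}_0,\mathfrak{p}_0]+\mathfrak{p}_0$ is a nonzero proper ideal of $\mathfrak{g}$ (Jacobi plus $[\mathfrak{p}_1,\mathfrak{p}_0]=0$ kills the terms that would escape), contradicting simplicity. With this repair the rest of your proof goes through unchanged.
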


We will refer to the metric obtained in Facts~\ref{fact: compact group metric} and \ref{fact: noncompact} as the canonical metric on $G/H$. We now state a  general conjecture:

\begin{conjecture}[Minimal measure doubling Conjecture] \label{conj: minimal measure doubling}
Suppose $G$ is a simple Lie group equipped with a Haar measure $\mu$, and $A\subseteq G$ is open. Let $H \leq G$ be a proper compact connected subgroup of $G$ with maximal dimension, $\pi: G \to G/H$ is the quotient map, $B_r \subseteq G/H$ is the ball with radius $r$ centered at the coset $H$, and $B_{2r}$ is defined similarly. Assuming $\mu(A) = \mu(\pi^{-1}(B_r))$, we have 
$$ \mu(A^2) \geq \min\{ \mu(\pi^{-1}(B_{2r})), \mu(G)\}.$$
Moreover, the equality happens if an only if $A$ is a conjugate of $\pi^{-1}(B_r)$.
\end{conjecture}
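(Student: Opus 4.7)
The plan is to combine the ultraproduct/Lie-model bootstrapping developed in this paper with a sharp symmetric-space Brunn--Minkowski inequality carrying a ball-rigidity statement. The starting observation is that $\pi^{-1}(B_r)$ is precisely the preimage of a metric ball in the homogeneous space $M = G/H$, which carries a canonical invariant Riemannian metric of constant scalar curvature (positive when $G$ is compact and negative when $G$ is noncompact, as in Facts~\ref{fact: compact group metric}--\ref{fact: noncompact}). The conjecture thus translates into a Riemannian Brunn--Minkowski statement on $M$ together with an equality characterization.

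First, I would run the bootstrapping scheme of this paper. If the inequality failed for some open $A$, I would pass to an ultraproduct of violators, modify to a definably amenable approximate group, and construct a Lie model $\rho:\langle A_\infty\rangle\to L$ via Massicot--Wagner (Sections~5--6), descending to a connected Lie group $L$ by Gleason--Yamabe. Combined with the Brunn--Minkowski inequality of Fact~\ref{fact: symm BM} in the noncompact case and the measure expansion gap of~\cite{JT} in the compact case, this reduces the problem to sets $A$ already close to the claimed extremizer, so that only the rigidity step of pinning down the ball shape remains. Second, with $\nu$ a Haar measure on $H$ and $\lambda$ the $G$-invariant measure on $M=G/H$ satisfying $d\mu=d\nu\,d\lambda$ via the quotient integral formula, I would introduce the slice density $f_A(xH)=\nu(H\cap x^{-1}A)$ and its mollifications $f_A^\varepsilon$ as in Section~8, so that $\mu(A)=\int_M f_A\,d\lambda$ and similarly for $\mu(A^2)$. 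Since $\pi^{-1}(B_r)$ is right $H$-invariant while a general $A$ need not be, the slicing introduces an intrinsic loss which must be controlled via a symmetric-decreasing rearrangement pushing $f_A$ to a radially decreasing profile about a suitable pole.

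Third, I would establish the sharp Riemannian Brunn--Minkowski inequality on $M$ with ball rigidity. In the compact case, the natural route is a L\'evy--Gromov-style isoperimetric comparison on $M$: symmetrize $\pi(A)$ to a geodesic ball without decreasing its measure, and use that $B_r\cdot B_r\supseteq B_{2r}$, which is Lemma~\ref{lem: example 2} in the $\Sot$ case and extends to general $M$ by the triangle inequality for the canonical metric. In the noncompact case, one invokes a $\mathsf{CD}(K,N)$-type Brunn--Minkowski inequality (in the spirit of Cordero-Erausquin--McCann--Schmuckenschl\"ager) on the nonpositively curved $M$, whose equality case must be strengthened to identify geodesic balls as the only extremizers. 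Combining with the slicing step converts the Riemannian inequality on $M$ into $\mu(A^2)\geq\min\{\mu(\pi^{-1}(B_{2r})),\mu(G)\}$, and the equality case forces $f_A\equiv \nu(H)\,\mathbf{1}_{\pi(A)}$ with $\pi(A)$ a geodesic ball, so $A=g\pi^{-1}(B_r)g^{-1}$ for some $g\in G$.

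The hard part will be Step~3: the sharp ball-rigidity in the Brunn--Minkowski inequality on $M$. Even in the $\Sot$ setting where $M=S^2$, the rigidity beyond the purely isoperimetric statement is delicate, and for higher-dimensional compact simple $G$ the canonical extremizer might turn out to be an orbit of a slightly larger subgroup rather than a literal metric ball, in which case the statement of the conjecture itself would need to be refined. A further technical obstacle is the non-descent of $A^2$ to a clean Minkowski-type sum on $M$ when $A$ is not right $H$-invariant; retaining sharp constants in the slicing of Sections~8--9 (which were only required to yield an $\varepsilon$-room inequality in the proof of Theorem~\ref{thm: maingrowth}) is where the most delicate technical work will lie.
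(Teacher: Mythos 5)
This statement is a \emph{conjecture} in the paper, not a theorem: the authors explicitly label it the Minimal Measure Doubling Conjecture and do not prove it. They only prove weaker asymptotic results (Theorems~\ref{thm: maingrowth} and~\ref{thm: mainBM}, which give $4-\varepsilon$ rather than the exact $4\mu(A)(1-\mu(A))$ bound for $\Sot$), deduce a few consequences \emph{assuming} the conjecture (Proposition after Conjecture~\ref{conj: minimal measure doubling}), and exhibit matching examples (Propositions~\ref{prop: example} and~\ref{prop: general example}). So there is no "paper's own proof" to compare against, and a proposal for this statement cannot be verified as correct in the sense asked.

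Beyond that mismatch, your proposal is a research plan with acknowledged gaps rather than a proof. The central obstruction is precisely the one you flag in Step~3: the paper's ultraproduct/Lie-model machinery is intrinsically lossy — it introduces $\varepsilon$-slack at the stage of replacing $A$ by a semialgebraic set (Lemma~\ref{lem: semialgebraicaproximate0}), at the Massicot--Wagner step, and again in the density-function analysis of Sections~8--9 — and so cannot be made to yield an exact sharp inequality, let alone a rigidity statement with a full equality characterization. You also concede that the conjectured extremizer might not even be correct as stated in higher rank ("the statement of the conjecture itself would need to be refined"), which makes the proposal an argument that the conjecture is plausible, not a proof that it holds. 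Finally, the claimed $\mathsf{CD}(K,N)$-type Brunn--Minkowski inequality with ball rigidity on the symmetric space $M=G/H$, and the passage from a sharp bound on $\pi(A)\cdot\pi(A)\subseteq M$ back to a sharp bound on $A^2\subseteq G$ when $A$ is not right-$H$-invariant, are both genuinely open and cannot be taken as available.
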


We will next deduce a number of consequences of the above conjecture.

\begin{proposition} Assuming the minimal measure doubling Conjecture holds. Then we have the following:
\begin{enumerate}
    \item The Strong Breuillard--Green conjecture holds.
    \item If $A\subseteq \mathrm{SL}(2, \RR)$ is open, then there is a Haar measure $\mu$ that
    $$ \mu(A^2) \geq 4\mu(A)( 1+\mu(A)) $$
    Moreover, consider the action of $\mathrm{SL}(2, \RR)$ on the upper half plane by linear fractional transformation. The equality happens if and only if $A$ is of the form $$ \{ g\in \mathrm{SL}(2, \RR) : d( p, gp)<r  \}.$$ 
\item Fact~\ref{fact: symm BM} holds.
    \end{enumerate}
\end{proposition}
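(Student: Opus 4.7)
The strategy in all three parts is to feed the Minimal Measure Doubling Conjecture into a direct computation (or bound) of the ratio $\mu(\pi^{-1}(B_{2r}))/\mu(\pi^{-1}(B_r))$ in the canonical Riemannian geometry on $G/H$ supplied by Facts~\ref{fact: compact group metric} and \ref{fact: noncompact}. Writing $\pi\colon G\to G/H$ for the quotient and choosing $r$ so that $\mu(\pi^{-1}(B_r))=\mu(A)$, the conjecture gives
\[
\mu(A^2)\geq\min\{\mu(\pi^{-1}(B_{2r})),\mu(G)\},
\]
with equality iff $A$ is conjugate to $\pi^{-1}(B_r)$. So each part reduces to a volume-of-balls computation in the appropriate symmetric space.

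For part (1), I take $H=\mathrm{SO}(2)$, so that $G/H=S^2$ carries the normalized round area measure $\nu$. A direct polar-coordinate integration gives $\mu(\pi^{-1}(B_r))=\nu(B_r)=(1-\cos r)/2$, and writing $c=\mu(A)$ I get
\[
\mu(\pi^{-1}(B_{2r}))=\tfrac{1-\cos 2r}{2}=1-\cos^2 r=(1-\cos r)(1+\cos r)=4c(1-c).
\]
Taking the minimum with $\mu(G)=1$ is exactly the inequality of the Strong Breuillard--Green Conjecture. The equality clause follows because conjugating $H=\mathrm{SO}(2)$ inside $\Sot$ amounts exactly to varying the axis $u\in S^2$, so the conjugates of $\pi^{-1}(B_r)$ are precisely the sets $\{g\in\Sot:\alpha(u,gu)<r\}$.

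For part (2), I take $H=\mathrm{SO}(2)\leq G=\mathrm{SL}(2,\RR)$, so that $G/H=\mathbb{H}^2$ is the hyperbolic plane of constant curvature $-1$. I fix the Haar measure on $G$ whose induced measure on $\mathbb{H}^2$ assigns the geodesic disk of radius $r$ the mass $\sinh^2(r/2)$, and using the double-angle identity $\sinh r=2\sinh(r/2)\cosh(r/2)$ together with $\cosh^2(r/2)=1+\sinh^2(r/2)$ I get
\[
\mu(\pi^{-1}(B_{2r}))=\sinh^2 r=4\sinh^2(r/2)\cosh^2(r/2)=4\mu(A)(1+\mu(A)).
\]
The same conjugacy argument as in (1), together with the transitive action of $G$ on $\mathbb{H}^2$, identifies the conjugates of $\pi^{-1}(B_r)$ with the hyperbolic disks $\{g:d(p,gp)<r\}$, giving the ``moreover'' clause.

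For part (3), I let $G$ be noncompact simple with finite center and $H$ a maximal compact subgroup, so that $G/H$ is a symmetric space of noncompact type of dimension $n=d-m$. Such a manifold has everywhere nonpositive sectional curvature, so G\"unther's volume comparison theorem tells me that $V(r)/r^n$ is nondecreasing for $V(r):=\mathrm{Vol}(B_r)$, and in particular $V(2r)\geq 2^n V(r)$. Substituting this into the conjecture yields
\[
\mu(A^2)\geq V(2r)\geq 2^{d-m}V(r)=2^{d-m}\mu(A),
\]
which is exactly Fact~\ref{fact: symm BM}. The main obstacle I anticipate is precisely this step: Fact~\ref{fact: noncompact} as stated only gives negative \emph{scalar} curvature, whereas G\"unther comparison needs nonpositive \emph{sectional} curvature. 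This stronger statement is a classical feature of symmetric spaces of noncompact type, coming from the sign of the Killing form on the non-compact part of the Cartan decomposition, and I would invoke it as a black box (or use a direct Jacobi-field argument along radial geodesics in $G/H$) to complete the proof.
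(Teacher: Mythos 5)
Your approach is the same as the paper's in spirit: feed the Minimal Measure Doubling Conjecture into an explicit ball-volume computation in the canonical symmetric-space geometry on $G/H$. Two places where your write-up is actually more careful than the printed proof deserve note. For part (2), you normalize the Haar measure so that the hyperbolic disk of radius $r$ has mass $\sinh^2(r/2)$, and then $\sinh^2 r = 4\sinh^2(r/2)\cosh^2(r/2) = 4\mu(A)(1+\mu(A))$ comes out cleanly. The paper instead normalizes so the disk has mass $\int_0^r\sinh t\,\mathrm{d}t=\cosh r-1$, and then asserts $\cosh 2r-1=2\cosh^2 r-2=4\mu(A)(1+\mu(A))$; with that normalization, $2\cosh^2 r-2=2(\cosh r-1)(\cosh r+1)=2\mu(A)(\mu(A)+2)$, which is \emph{not} $4\mu(A)(1+\mu(A))$, so the paper's displayed identity only holds with your factor-of-two rescaling (your $\sinh^2(r/2)=(\cosh r-1)/2$). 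For part (3), the paper only argues asymptotically: negative scalar curvature plus Fact~\ref{fact: ball} gives $\mathrm{Vol}(B_{2r})/\mathrm{Vol}(B_r)\to 2^{d-m}$ from above as $r\to 0$, which establishes the bound only for sufficiently small $r$; you correctly observe that for all $r$ one should instead use that a symmetric space of noncompact type has nonpositive \emph{sectional} (not merely scalar) curvature and invoke G\"unther's monotonicity $V(r)/r^n\nearrow$, which is the cleaner and complete argument. Part (1) is identical to the paper's intent, relying on Proposition~\ref{prop: example}, and your spherical-cap computation matches. Overall the proposal is correct and, if anything, closes two small gaps in the paper's terse version.
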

\begin{proof}
Statement (1) is clear from our construction, as well as the uniqueness of the $G$-invariant metric by Fact~\ref{fact: compact group metric}. 
Similarly, Statement (3) follows from Fact~\ref{fact: noncompact}, the fact that the scalar curvature is negative, and Fact~\ref{fact: ball} when the radius of ball approaches to $0$. We will only show (2) here, which can also be viewed as the first non-trivial case of Conjecture~\ref{conj: minimal measure doubling} for noncompact groups. 

Let $H\cong\mathrm{SO}(2,\RR)$ be a maximal compact subgroup of $G:=\mathrm{SL}(2,\RR)$. Then $G/H$ is isometric to the hyperbolic plane $\mathbb{H}_2$. Choose a Haar measure $\mu$ on $G$ so that under the induced metric  a ball of radius $r$ in $\mathbb{H}_2$ has volume
\[
\int_{0}^r\sinh(t)\d t. 
\]
Let $\pi:G\to G/H$ be the quotient map and $A=\pi^{-1}(B_r)$. Using the same proof as in Proposition~\ref{prop: general example} together with Fact~\ref{fact: noncompact} it is easy to see that $A^2\subseteq \pi^{-1}(B_{2r})$. Thus
\[
\mu(A^2)\leq \cosh(2r)-1=2\cosh(r)^2-2=4\mu(A)(1+\mu(A))
\]
as desired. 
\end{proof}

\section{Small growth in ultraproducts}

In this section, we will start from a decreasing size sequence of small growth sets in $\Sot$ and construct infinitesimal small growth set in an ultraproduct of $\Sot$.  We will treat more generally ultraproducts of compact groups. As it was mentioned earlier in the introduction, our proof necessitate considering the asymmetric problem for products of two sets.

Let $G$ be a unimodular locally compact group, and $\mu$ a complete Haar measure. Suppose $A,B \subseteq G$ are such that $A$, $B$, and $AB$  are measurable, and $0<\mu(A), \mu(B), \mu(AB)< \infty$.
We define the {\bf Brunn--Minkowski growth } $\BM(A,B)$ to be the unique $r >0$ such that 
$$ \ \mu(AB)^{\frac{1}{r}} = \mu(A)^{\frac{1}{r}} +\mu(B)^{\frac{1}{r}}.$$
In particular, with $A=B$, we have $\BM(A,A)\leq r$ if and only if $\mu(A^2)\leq  2^r \mu(A)$. Note that this definitions is independent of the choice of the complete Haar measure $\mu$ due to the up-to-constant uniqueness of complete Haar measure (Fact~\ref{fact: Haarmeasurenew}(6)). At first sight, this definition seem to also work in then nonunimodular setting with left Haar measure. However, the correct definition for nonunimodular locally compact groups involves both the left and right Haar measure; see~\cite{JTZ} for details. We will not discuss this issue further here as it will not be useful for the current purpose.

We now recall some element of the theory of ultrafilters and ultraproducts from logic. For a systematic treatment of ultrafilter and applications, see~\cite{Isaacbook}. A {\bf nonprincipal ultrafilter} $\ult$ on $\NN$ is a collection of infinite subsets of $\NN$ which satisfy the following conditions:
\begin{enumerate}
    \item[(NU1)] If $I, J \in \ult$, then $I \cap J \in \ult$.
    \item[(NU2)] If $I \in \ult$ and $I \subseteq J$, then $J \in \ult$.
    \item[(NU3)] For all $I \subseteq \NN$, either $I$ or $\NN \setminus I$ is in $\ult$. 
\end{enumerate}
One can think of a nonprincipal ultrafilter on $\NN$ as choosing a notion of ``almost everywhere/almost every'' on  $\NN$, where $I \in \ult$ if and only if ``almost every'' natural number $n$ is in $I$. 
This can be made precise by considering the finitely additive measure
$$ \cP(\NN) \to \{0, 1\}, \quad  I \mapsto 
\begin{cases}
1 & \text{ if } I \in \ult,\\
0 & \text{ if } I \notin \ult.
\end{cases} $$
Note that this assignment is not $\sigma$-additive, and hence not a measure. Indeed, for each $n$, the set $\{n\}$ is in $\ult$ as it is finite, but their union $\NN$ is not in $\ult$ being cofinite. 
Nevertheless, this is a very useful heuristic, and we will facilitate it further. For a fixed nonprincipal ultrafilter $\ult$, when $P$ is a property for natural number, we write a.e.\! $n$ satisfies $P$ if  $\{ n : P(n) \text{ holds}\}$ is in $\ult$. The existence of a non-principal ultrafilter depends on the axiom of choice. However, the truth of our theorem is independent of the axiom of choice by a Shoenfied's absoluteness argument.

Throughout, we fix an ambient set $V$ which contains all the mathematical object we care about; for instance, we can choose $V$ to be an initial segment of the set-theoretic universe. Fix a nonprincipal ultrafilter $\ult$ on $\NN$.
Two sequences $(a_n)$ and $(a'_n)$ of elements of $V$ are {\bf  $\ult$-equal} if $a_n = b_n$ for a.e.\! $n$. It is easy to see that $\ult$-equality is an equivalent relation on $V$. For a sequence $(a_n)$ of elements of $V$, we let $(a_n)/\ult$ denote its equivalence class under $\ult$-equality. The {\bf ultraproduct} $\prod_\ult A_n$ of a sequence $(A_n)$ of subsets of $V$ with respect to $\ult$ is the set 
 $$ \{ (a_n)/\ult : a_n \in A_n \text{ for each } n \}. $$
 The ultraproduct $\prod_\ult A_n$ can be seen as an averaging of $(A_n)$ reflecting certain types of behaviors that hold in $\ult$-a.e.\! $A_n$. The following fact makes this intuition precise.

\begin{fact} \label{fact:ultrafacts} Fix a nonprincipal ultrafilter $\ult$.
Suppose $(A_n)$, $(B_n)$, and $(C_n)$ are a sequence of sets, $\ult$ is a nonprincipal ultrafilter on $\NN$, $A =\prod_\ult A_n$,  $B =\prod_\ult B_n$, and $C =\prod_\ult C_n$. Then 
\begin{enumerate}
    \item  We have $A =\emptyset$ if and only if $A_n =\emptyset$ for a.e.\! $n$. 
    \item We have $A \subseteq B$ if and only if $A_n \subseteq B_n$ for a.e.\! $n$. 
    \item $A\cup B = \prod_{\ult} (A_n \cup B_n)$, $A\cap B = \prod_{\ult} (A_n \cap B_n)$, $A \setminus B = \prod_{\ult} (A_n \setminus B_n)$.
    \item With $\times$ denoting the Cartesian product, $A \times B = \prod_{\ult} (A_n \times B_n)$.

    \item Suppose $\pi: B \times C \to B$ and $\pi_n: B_n \times C_n \to B_n$ for all $n$ are the projections to the first coordinates, and assume $A \subseteq B \times C$ and $A_n \subseteq B_n \times C_n$ for all $n$. Then 
    $$  \pi(A) = \prod_\ult \pi_n(A_n). $$
\end{enumerate}
\end{fact}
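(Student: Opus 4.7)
The plan is to reduce all five items to a single elementary fact about nonprincipal ultrafilters: if $I_1 \cup \cdots \cup I_k \in \ult$, then $I_j \in \ult$ for some $j$. This follows from (NU1)--(NU3): if every $I_j$ lies outside $\ult$, then (NU3) puts each $\NN \setminus I_j$ in $\ult$, (NU1) then gives $\NN \setminus \bigcup_j I_j \in \ult$, and a further application of (NU1) with the hypothesis $\bigcup_j I_j \in \ult$ forces $\emptyset \in \ult$, contradicting the requirement that members of $\ult$ be infinite.

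With this in hand the verifications are routine. Item (1) is immediate: a witness $(a_n)/\ult \in A$ requires $a_n \in A_n$ for every $n$, hence forces every $A_n$ to be nonempty; conversely, if $A_n \neq \emptyset$ for a.e.\! $n$, pick $a_n \in A_n$ on that set, extend arbitrarily, and read off a class in $A$. For (2), the forward direction transports $a_n \in A_n \subseteq B_n$ on an $\ult$-large set, and the converse chooses $a_n \in A_n \setminus B_n$ on the $\ult$-large set where $A_n \not\subseteq B_n$. Item (3) is where the key lemma actually bites: the nontrivial inclusion $\prod_\ult(A_n \cup B_n) \subseteq A \cup B$ amounts to splitting the statement that $a_n \in A_n \cup B_n$ for a.e.\! $n$ into one of the two statements that $a_n \in A_n$ for a.e.\! $n$ or $a_n \in B_n$ for a.e.\! $n$, which is the two-set case of the lemma; intersections and set differences are analogous and easier.

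For (4), one identifies $((a_n)/\ult, (b_n)/\ult)$ with $((a_n,b_n))/\ult$; both sides then parametrise classes with $a_n \in A_n$ and $b_n \in B_n$ on $\ult$-large sets, and (NU1) shows these two conditions describe the same set. For (5), the inclusion $\pi(A) \subseteq \prod_\ult \pi_n(A_n)$ follows from the pointwise inclusion $A_n \subseteq \pi_n(A_n) \times C_n$ together with (2) and (4). The reverse inclusion is the only place that requires even a mild use of choice: given $(b_n)/\ult$ with $b_n \in \pi_n(A_n)$ on an $\ult$-large $I$, pick $c_n \in C_n$ with $(b_n, c_n) \in A_n$ for each $n \in I$, extend $(c_n)$ arbitrarily outside $I$, and observe that $((b_n, c_n))/\ult \in A$ projects to $(b_n)/\ult$. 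I expect no genuine obstacles in the argument; the only care needed is in (5), where one must be explicit about the pointwise selection of witnesses.
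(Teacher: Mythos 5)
The paper states Fact~\ref{fact:ultrafacts} without proof, treating it as standard background on ultrafilters (with a pointer to a textbook), so there is no internal argument to compare against. Your proof is the expected elementary one and is correct: organizing everything around the pigeonhole lemma (``if $I_1\cup\cdots\cup I_k\in\ult$ then some $I_j\in\ult$'') is exactly the right reduction, and your handling of the pointwise witness selection in (5) is appropriately careful.

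One small caveat worth flagging, since it affects the precise statement of your argument for (1): the paper defines $\prod_\ult A_n$ as $\{(a_n)/\ult : a_n\in A_n \text{ for each } n\}$, and read literally this makes the ultraproduct empty as soon as any single $A_n$ is empty, which would falsify item (1). The intended (and standard) convention when the $A_n$ are subsets of a common ambient set $V$ is that $(a_n)$ ranges over $V^{\NN}$ with $a_n\in A_n$ for a.e.\ $n$; your converse step for (1) (``pick $a_n\in A_n$ on that set, extend arbitrarily'') already relies on this reading, while your forward step uses the literal one. This is a wrinkle in the paper's definition rather than a defect in your proof, but you should be aware that the two readings of the definition are not interchangeable, and only the a.e.\ reading makes all five parts true.
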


The following ``compactness'' property (also called $\aleph_1$-saturated property)  is a new feature of sets obtained by ultraproducts. It is behind a major advantage with working with ultraproduct: One can replace approximation by actual equality if willing to give up some quantitative information.

\begin{fact} \label{fact:saturationofultraproduct}
    Suppose $(A_n)$ is a sequence of sets, each obtained by taking ultraproducts of a sequence of sets in $V$. If for each finite $I \subseteq \NN$, the intersection $\bigcap_{n \in I} A_n$ is nonempty, then the intersection  $\bigcap_n A_n$ is nonempty.
\end{fact}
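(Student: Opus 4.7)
The plan is to prove this $\aleph_1$-saturation property via a diagonal construction. First I would write each $A_n$ as an ultraproduct $A_n = \prod_\ult A_{n,k}$ of sets $A_{n,k}$ in $V$. Applying the finite-intersection hypothesis to $I = \{0,1,\ldots,m\}$ for each $m \in \NN$ produces an element $x^{(m)} \in A_0 \cap \cdots \cap A_m$; I fix a representative sequence $(x^{(m)}_k)_{k \in \NN}$ for each such $x^{(m)}$.

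Since $x^{(m)} \in A_n$ for every $n \leq m$, the set $L_{m,n} = \{k : x^{(m)}_k \in A_{n,k}\}$ lies in $\ult$ for each such $n$, so the finite intersection $J_m = \bigcap_{n \leq m} L_{m,n}$ also lies in $\ult$ by NU1, and so does $K_m = J_0 \cap \cdots \cap J_m$. The central step is then to form a diagonal element $y = (y_k)/\ult$: for each $k$, set $m(k) = \max\{m \leq k : k \in K_m\}$ (or $m(k) = 0$ if this set is empty), and put $y_k = x^{(m(k))}_k$. To verify $y \in A_n$ for arbitrary $n$, I would observe that whenever $k$ lies in $K_n \cap \{k : k \geq n\}$, maximality forces $m(k) \geq n$, and then the inclusion $k \in K_{m(k)} \subseteq J_{m(k)}$ forces $y_k = x^{(m(k))}_k \in A_{n,k}$. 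Since $K_n$ and the cofinite set $\{k : k \geq n\}$ both lie in $\ult$ (the latter because $\ult$ is nonprincipal), their intersection does too, so $y \in A_n$. As $n$ was arbitrary, $y \in \bigcap_n A_n$, which is therefore nonempty.

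The main subtlety, and the reason a diagonal construction is needed rather than a single ``limit'' element witnessing all intersections at once, is that $\ult$ is only finitely additive: a countable intersection of sets in $\ult$ need not lie in $\ult$, and indeed $\bigcap_m K_m$ may well be empty. The diagonal trick sidesteps this by letting $m(k)$ grow with $k$, so that every fixed $n$ is eventually dominated by $m(k)$ on a set in $\ult$. The argument uses only that the hypothesized list $(A_n)$ is countable, which is precisely what the ``$\aleph_1$'' in the saturation name refers to, and it closely parallels the classical countable-saturation proof for first-order ultraproducts.
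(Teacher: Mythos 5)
The paper presents Fact~\ref{fact:saturationofultraproduct} as a standard result with no proof supplied, so there is no argument in the text to compare against. Your diagonal argument is correct and is indeed the standard proof of $\aleph_1$-saturation (countable saturation) of ultraproducts over a nonprincipal ultrafilter on $\NN$: the nested sets $K_m\in\ult$ encode the finite-intersection witnesses, the diagonal choice $m(k)=\max\{m\leq k : k\in K_m\}$ correctly circumvents the failure of countable additivity of $\ult$, and the verification that $y\in A_n$ uses only that $K_n\cap\{k:k\geq n\}\in\ult$ (the second factor being cofinite, hence in $\ult$ since by NU3 a finite set cannot be in a filter of infinite sets). One small point worth making explicit when you fix representatives $(x^{(m)}_k)_k$: because $x^{(m)}\in A_n=\prod_\ult A_{n,k}$ means $x^{(m)}$ agrees $\ult$-a.e.\ with some sequence lying in the $A_{n,k}$, the set $L_{m,n}=\{k:x^{(m)}_k\in A_{n,k}\}$ does land in $\ult$ regardless of which representative you picked, which is what your argument silently uses.
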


The following remark, on the other hand, highlight a difficulty of working with ultraproduct, namely, we completely lose meaningful topological data.

\begin{remark} \label{remark: notopology}
Let $A_n =\{ 0,1\} $, and $A=\prod_{\ult}A_n$. Using (the dual of) Fact~\ref{fact:saturationofultraproduct}, it can be shown that $A$ is uncountable.  Suppose we equip $A_n$ with the discrete topology. Then $A$ must be ``pseudo-compact'', and the singleton set $\{a\}$ for each $a\in A$ is ``pseudo-open''. The collection $\{  \{a\} : a\in A\}$ forms a cover of $A$, but there is no finite subcover.
\end{remark}

As we are interested in handling group and products of sets on them, the following fact is needed:

\begin{fact}  \label{fact: Groupunderultraproduct}
    Fix a nonprincipal ultrafilter $\ult$. Suppose $(G_n)$ is a sequence of groups,  $G$ is the ultraproduct $\prod_\ult G_n$ of the underlying set. 
    \begin{enumerate}
        \item Then the map
    $$ \left(\prod G_n\right)^{[2]} \to \left(\prod G_n\right),   ((a_n), (b_n)) \mapsto (a_nb_n) $$
    induces a binary operation from $G\times G$ to $G$ with $((a_n)/\ult, (b_n)/\ult) \mapsto (a_nb_n)/\ult $, and the set $G$ together with this binary operation map is a group.
    \item Suppose $(A_n)$ and $(B_n)$ are sequences with $A_n, B_n\subseteq G_n$, $A =\prod_\ult A_n$, $B =\prod_\ult B_n$, and $AB$ is the product set of $A$ and $B$ with respect to the group operation defined in (1). Then $AB = \prod_\ult A_nB_n$.
    \end{enumerate}
\end{fact}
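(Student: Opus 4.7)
The plan is to reduce everything to elementary properties of $\ult$ together with the axiom of choice. For part (1) I would first verify that the coordinatewise multiplication descends to a well-defined operation on $G$: if $(a_n) \sim_\ult (a'_n)$ and $(b_n) \sim_\ult (b'_n)$, then $\{n : a_n = a'_n\}$ and $\{n : b_n = b'_n\}$ both lie in $\ult$, so by (NU1) so does their intersection, on which $a_nb_n = a'_nb'_n$; hence $(a_nb_n)/\ult = (a'_nb'_n)/\ult$. The group axioms then transfer coordinatewise from each $G_n$: associativity holds on every index, the identity of $G$ is the class of the constant sequence $(1_{G_n})$, and the inverse of $(a_n)/\ult$ is the class of $(a_n^{-1})$. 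None of these steps requires more than (NU1) because the underlying equalities hold for \emph{every} $n$.

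For part (2), the inclusion $AB \subseteq \prod_\ult A_nB_n$ is immediate: given $a = (a_n)/\ult \in A$ and $b = (b_n)/\ult \in B$, the sets $\{n : a_n \in A_n\}$ and $\{n : b_n \in B_n\}$ both belong to $\ult$, hence by (NU1) and (NU2) so does $\{n : a_nb_n \in A_nB_n\}$, giving $ab \in \prod_\ult A_nB_n$. For the reverse inclusion, let $c = (c_n)/\ult$ with $I := \{n : c_n \in A_nB_n\} \in \ult$. Using the axiom of choice, for each $n \in I$ pick $a_n \in A_n$ and $b_n \in B_n$ with $c_n = a_nb_n$, and for $n \notin I$ pick arbitrary elements $a_n \in G_n$ and $b_n \in G_n$. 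Then $(a_n)/\ult \in A$ and $(b_n)/\ult \in B$ because $I \in \ult$, and their product equals $(c_n)/\ult$ since the equation $a_nb_n = c_n$ holds on the $\ult$-large set $I$.

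The main (and essentially only) subtlety is the appeal to the axiom of choice in the reverse inclusion above, which is harmless in our set-theoretic setup; the rest of the argument uses only the formal properties of ultrafilters recorded in (NU1)--(NU3) and Fact~\ref{fact:ultrafacts}. Conceptually the statement is a routine instance of the transfer principle for ultraproducts: the group axioms transfer to $G$ because they hold at every coordinate, and the set-theoretic identity $AB = \{ab : a \in A, b \in B\}$ transfers because membership in $AB$ and in $A_nB_n$ is witnessed by a factorization which, by choice, can be selected compatibly across coordinates.
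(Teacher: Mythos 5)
The paper states this as a \emph{Fact} and gives no proof of its own; facts in this paper's convention are standard results imported as black boxes. Your argument is the canonical one: part (1) is the usual verification that coordinatewise multiplication is well defined on $\ult$-equivalence classes and that the group axioms transfer (all of them are first-order equations holding at every coordinate, so only (NU1) is used); part (2) is the usual two-inclusion argument, with the forward inclusion using only (NU1)--(NU2) and the reverse inclusion using the axiom of choice to select compatible factorizations on the $\ult$-large index set. One cosmetic point: when you define $a_n, b_n$ arbitrarily for $n \notin I$, the resulting class $(a_n)/\ult$ lies in $A$ only after invoking that membership in $\prod_\ult A_n$ depends solely on the $\ult$-large behaviour of the representing sequence (equivalently, modify $a_n$ on $\NN\setminus I$ to lie in $A_n$ whenever $A_n \neq\emptyset$, which happens for a.e.\ $n$ by Fact~\ref{fact:ultrafacts}(1)); this is implicit in your appeal to ``$I\in\ult$'' and does not constitute a gap. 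Overall the proposal is correct and is exactly the proof the authors expect the reader to supply.
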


For a sequence $(G_n)$ of groups and  a nonprincipal ultrafilter $\ult$ on $\NN$, the {\bf  ultraproduct} of $(G_n)$ with respect to $\ult$, denoted by $\prod_\ult G_n$, is the group $G$ whose underlying set is the ultraproduct of the sequence of underlying sets of $(G_n)$,  and whose group operation on $G$ is given by Fact~\ref{fact: Groupunderultraproduct}(1).

A {\bf pseudo-compact group} $G$ is a group equipped with the additional data of a sequence $(G_n)$ of compact groups and a nonprincipal ultrafilter $\ult$ on $G$ such that $G =\prod_\ult G_n$. Very often, we will treat $G$ just as a group and suppress the data about $(G_n)$ and $\ult$. We will say the pseudo-compact group $G$ is built from a sequence $(G_n)$ of local groups and an ultrafilter $\ult$ if we want to make the other pieces of information clear. Note that by Remark~\ref{remark: notopology}, there is no obvious topology one can equip on such $G$. There is also an obvious notion of pseudo-locally-compact groups, but we will not discuss this further as it will not be used.

Unlike topological information, we will see that measure-theoretic information are meaningfully preserved by ultraproducts. Suppose $G$ is a pseudo-locally-compact group built from a sequence $(G_n)$ of groups and a nonprincipal ultrafilter $\ult$. We say that $A \subseteq G$ is {\bf pseudo-measurable} $A$ if there is a sequence $(A_n)$ with $A_n \subseteq G_n$ measurable such that $A = \prod_\ult A_n$. It follows from Fact~\ref{fact:ultrafacts}(4) that the collection of pseudo-measurable subgroups of $G$ forms a Boolean algebra (closed under taking finite intersection, finite union, and relative complement).

A pleasure working with ultrafilter is the following easy fact:

\begin{fact} \label{fact:welldefinedoflimit}
   Fix a nonprincipal ultrafilter $\ult$ on $\NN$. Let $(r_n)$ be a sequence of real numbers. Then exactly one of the following three scenarios hold:
   \begin{enumerate}
       \item There is $r\in \RR$ such that for all $\epsilon>0$, we have $|r_n -r|< \epsilon$ for a.e.\! $n$.
       \item for all $C>0$, we have $r_n >C$ for a.e.\! $n$.
       \item for all $C>0$, we have $r_n <-C$ for a.e.\! $n$.
   \end{enumerate}
\end{fact}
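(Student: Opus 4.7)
The plan is to use the defining properties of a nonprincipal ultrafilter, in particular (NU3) together with the fact that for any partition $\NN = I_1 \sqcup \cdots \sqcup I_k$ into finitely many pieces, exactly one $I_j$ lies in $\ult$. This is immediate since if none were in $\ult$, their complements would all lie in $\ult$, and by (NU1) their finite intersection $\emptyset$ would also lie in $\ult$, contradicting the fact that $\ult$ only contains infinite sets.

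First I would verify mutual exclusivity. If (1) holds with witness $r$, then picking $C = |r|+1$ and $\epsilon = 1$ shows $|r_n| \leq |r|+1 < C+1$ for a.e.\! $n$, ruling out (2) and (3). If both (2) and (3) held for some fixed $C > 0$, then intersecting $\{n : r_n > C\}$ and $\{n : r_n < -C\}$ via (NU1) would give $\emptyset \in \ult$, absurd.

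Next, the existence part splits into a bounded and an unbounded case. \textbf{Bounded case:} suppose there exists $M > 0$ with $I_0 := \{n : r_n \in [-M,M]\} \in \ult$. I would run a bisection: writing $[-M, M] = [-M, 0] \sqcup (0, M]$, the preimage sets under $n \mapsto r_n$ partition $I_0$, so by the partition remark exactly one of them lies in $\ult$; call it $I_1$, and let $[a_1, b_1]$ be the corresponding half-interval. Iterate to produce nested intervals $[a_k, b_k]$ of length $M \cdot 2^{1-k}$ and sets $I_k \in \ult$ with $I_k \subseteq \{n : r_n \in [a_k, b_k]\}$. By completeness of $\RR$, $\bigcap_k [a_k, b_k] = \{r\}$ for a unique $r$. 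Given any $\epsilon > 0$, choose $k$ with $b_k - a_k < \epsilon$; then $\{n : |r_n - r| < \epsilon\} \supseteq I_k \in \ult$, proving (1).

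\textbf{Unbounded case:} suppose for every $M$, $\{n : |r_n| \leq M\} \notin \ult$, so by (NU3) the set $J_M := \{n : |r_n| > M\}$ lies in $\ult$. Decompose $J_M = J_M^+ \sqcup J_M^-$ with $J_M^\pm := \{n : \pm r_n > M\}$; by the partition remark, exactly one of $J_M^+, J_M^-$ lies in $\ult$. The key observation is that the sign is consistent across all $M$: if $J_{M_1}^+ \in \ult$ and $J_{M_2}^- \in \ult$ for some $M_1, M_2 > 0$, then by (NU1) their intersection, which is empty since the two conditions $r_n > M_1$ and $r_n < -M_2$ are incompatible, would lie in $\ult$, a contradiction. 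Thus either $J_M^+ \in \ult$ for all $M$, giving case (2), or $J_M^- \in \ult$ for all $M$, giving case (3).

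The only slightly delicate point is the bisection step, where one must check that at each stage the new set $I_{k+1}$ remains in $\ult$; this is exactly the finite-partition observation above applied inside $I_k$, noting that (NU2) allows us to replace $\NN$ by any element of $\ult$ as our "ambient" set. Everything else is a direct unwinding of the ultrafilter axioms.
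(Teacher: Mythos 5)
The paper records this statement as a background \emph{Fact} and supplies no proof of its own, so there is no argument in the source to compare against; your proof stands on its own and is correct. The finite-partition observation, the bisection in the bounded case, and the sign-consistency check in the unbounded case are all standard and sound. Two small cosmetic points: in the mutual-exclusivity paragraph the inequality you actually want is $|r_n| < |r|+1 = C$, so that $r_n < C$ clashes with $r_n > C$ from (2) -- the ``$< C+1$'' you wrote does not directly produce the contradiction, though the fix is immediate; and the intervals produced by your bisection are of mixed open/closed type, so one should formally pass to their closures before invoking the nested-interval theorem (the closures remain nested with lengths tending to zero, so they still pin down a unique $r$, and the estimate $I_k \subseteq \{n : |r_n - r| < \epsilon\}$ survives). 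Neither affects validity. For what it is worth, the slickest packaging of the same idea is to note that $I \mapsto \mathbbm{1}[I \in \ult]$ is a finitely additive $\{0,1\}$-valued measure on $\NN$, push it forward along $n \mapsto r_n$ to the compact space $[-\infty,+\infty]$, and observe that a $\{0,1\}$-valued finitely additive Borel measure on a compact Hausdorff space is a point mass; your bisection and unbounded-case analysis are a hands-on unwinding of exactly this compactness, specialised to the three cases $r \in \RR$, $r = +\infty$, $r = -\infty$.
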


For a sequence $(r_n)$ of real numbers and nonprincipal ultrafilter $\ult$, the {\bf limit of $(r_n)$ under $\ult$}, denoted by $\lim_\ult r_n$, is defined to be the unique element in $\RR \cup \{\pm \infty\}$ such that one of the three cases of Fact~\ref{fact:welldefinedoflimit} holds. This notion of limit behaves as it should. Fact~\ref{fact:behavioroflimit} records some behavior that we will actually use.

\begin{fact} \label{fact:behavioroflimit}
 Fix a nonprincipal ultrafilter $\ult$ on $\NN$.  Suppose  $(r_n)$, $(s_n)$, and $(t_n)$ are sequences of nonnegative real number, and $K$ is a constant. Then we have the following
 \begin{enumerate}
     \item $\lim_\ult (r_n +s_n) =\lim_\ult r_n +\lim_\ult s_n$
     \item  $\lim_\ult Kr_n = K\lim_\ult r_n$
     \item if $r_n \leq s_n$ for a.e.\! $n$, then $\lim_\ult r_n \leq  \lim_\ult s_n$.
     \item If $f: \RR^3 \to \RR$ is a continuous function, and $\lim_\ult r_n, \lim_\ult s_n, \lim_\ult t_n < \infty$, then 
     $$f\left(\lim_\ult r_n, \lim_\ult s_n, \lim_\ult t_n\right) = \lim_\ult f(r_n, s_n, t_n).$$ 
 \end{enumerate}
 Here, we hold the convention that $r+\infty= \infty +\infty = K\cdot \infty =\infty$ and $r < \infty$ for $r \in \RR^{>0}$.
\end{fact}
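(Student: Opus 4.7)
The plan is to reduce each assertion to a statement about membership of explicit subsets of $\NN$ in $\ult$, and then combine these memberships using the intersection property (NU1) and the upward-closure property (NU2) of the ultrafilter. Throughout, I will appeal to Fact~\ref{fact:welldefinedoflimit} to know that $\lim_\ult$ is well-defined and falls into exactly one of the three cases: convergence to a finite real, divergence to $+\infty$, or divergence to $-\infty$ (the last of which is ruled out by the nonnegativity hypothesis). All four items then split naturally into a ``finite'' regime and an ``infinite'' regime, corresponding to which case of Fact~\ref{fact:welldefinedoflimit} the various input sequences sit in.

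For (1) in the finite regime, set $r = \lim_\ult r_n$ and $s = \lim_\ult s_n$. Given $\epsilon > 0$, the two sets $\{n : |r_n - r| < \epsilon/2\}$ and $\{n : |s_n - s| < \epsilon/2\}$ lie in $\ult$ by definition; their intersection is in $\ult$ by (NU1), and on this intersection $|(r_n + s_n) - (r+s)| < \epsilon$. An analogous triangular argument handles (2) with $\epsilon/K$ in place of $\epsilon/2$ (after trivially disposing of the case $K = 0$), and handles (3) by intersecting the three sets $\{r_n \le s_n\}$, $\{|r_n - r| < \epsilon\}$, $\{|s_n - s| < \epsilon\}$ and concluding $r \le s + 2\epsilon$ for every $\epsilon > 0$.

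For the infinite regime, the key is to exploit nonnegativity. If $\lim_\ult r_n = \infty$, then for every $C > 0$ the set $\{n : r_n > C\}$ lies in $\ult$; since $s_n \ge 0$ we have the inclusion $\{n : r_n + s_n > C\} \supseteq \{n : r_n > C\}$, so (NU2) places the larger set in $\ult$, proving (1). For (2), the identity $\{n : K r_n > C\} = \{n : r_n > C/K\}$ does the job. For (3), if $\lim_\ult r_n = \infty$, then $\{r_n > C\} \cap \{r_n \le s_n\}$ lies in $\ult$ by (NU1) and forces $\{s_n > C\} \in \ult$ by (NU2), so $\lim_\ult s_n = \infty$ as well and the inequality holds in the extended-real sense.

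For (4), the plan is to combine continuity of $f$ with the triple-intersection trick. Writing $(r,s,t) = (\lim_\ult r_n, \lim_\ult s_n, \lim_\ult t_n)$, all three are finite by hypothesis; given $\epsilon > 0$, continuity of $f$ at $(r,s,t)$ furnishes $\delta > 0$ such that $\max(|x-r|,|y-s|,|z-t|) < \delta$ implies $|f(x,y,z) - f(r,s,t)| < \epsilon$. Each of the three sets $\{n : |r_n - r| < \delta\}$, $\{n : |s_n - s| < \delta\}$, $\{n : |t_n - t| < \delta\}$ belongs to $\ult$, so their triple intersection does as well by two applications of (NU1), and on this intersection $|f(r_n, s_n, t_n) - f(r,s,t)| < \epsilon$. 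The only mild obstacle throughout is keeping track of which case of Fact~\ref{fact:welldefinedoflimit} each input sequence falls into; once that bookkeeping is in place, the entire fact is essentially a repackaging of the standard metric-space proofs that ordinary limits commute with sums, scalar multiples, monotone comparisons, and continuous functions, with (NU1) taking the place of the ``eventually'' quantifier from the Fr\'echet filter.
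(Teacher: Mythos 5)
The paper states this as a \emph{Fact} without proof, treating it as a routine property of ultralimits that ``behaves as it should.'' Your verification is correct and follows the standard route one would take: split into the finite and infinite regimes according to Fact~\ref{fact:welldefinedoflimit}, use (NU1) to intersect finitely many $\ult$-large sets of indices where each sequence is close to its limit (or large), use (NU2) and nonnegativity to propagate largeness in the infinite case, and observe that $\ult$-large sets are nonempty so that the pointwise conclusions on the intersection can be extracted. The argument for (4) correctly exploits uniform control via continuity of $f$ on a $\delta$-box around the limit point, again combined with a triple intersection. Nothing is missing; the only cosmetic note is that in the paper's conventions ``constant'' already means a positive real, so the special case $K=0$ you flag in (2) never arises.
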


Using Fact~\ref{fact:behavioroflimit}(1), one can deduce Lemma~\ref{lem:welldefinedofmeasure} below. We omit the obvious proof.
\begin{lemma} \label{lem:welldefinedofmeasure}
    Suppose $G$ is a pseudo-locally-compact group built from a sequence $(G_n)$ of locally compact groups and ultrafilter $\ult$. Let $(\mu_n)$ be a sequence with $\mu_n$ a left Haar measure on $G_n$. Then we have the following:
    \begin{enumerate}
        \item  If $A \subseteq G$ is pseudo-measurable,  $A = \prod_\ult A_n = \prod_\ult A'_n$ for two sequences $(A_n)$ and $(A'_n)$ where $A_n, A'_n \subseteq G_n$ are measurable, then 
        $$ \lim_\ult \mu_n(A_n) = \lim_\ult \mu_n(A'_n).$$
        \item  If $A, B \subseteq G$ are pseudo-measurable and disjoint,  $A = \prod_\ult A_n$ and $B = \prod_\ult B_n$ for two sequences $(A_n)$ and $(B_n)$ where $A_n, B_n \subseteq G_n$ are measurable, then 
        $$ \lim_\ult \mu_n(A_n \cup B_n) = \lim_\ult \mu_n(A_n) + \lim_\ult \mu_n(B_n).$$
    \end{enumerate}

\end{lemma}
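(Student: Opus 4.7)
My plan is to reduce both claims to statements that hold for $\ult$-almost every $n$ about the representing sequences, and then invoke the already-established compatibility of $\lim_\ult$ with equality and finite sums.

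For part (1), the plan is to exploit the fact that the equality $\prod_\ult A_n = \prod_\ult A'_n$ of subsets of $G$ is equivalent to a pair of inclusions, each of which reduces to an $\ult$-almost-everywhere statement via Fact~\ref{fact:ultrafacts}(2). Concretely, I would apply (2) to $\prod_\ult A_n \subseteq \prod_\ult A'_n$ to get $A_n \subseteq A'_n$ for $\ult$-a.e.\ $n$, and symmetrically $A'_n \subseteq A_n$ for $\ult$-a.e.\ $n$. The set of $n$ where both inclusions hold is the intersection of two members of $\ult$, hence lies in $\ult$ by (NU1). On this set $A_n = A'_n$ as measurable subsets of $G_n$, so $\mu_n(A_n) = \mu_n(A'_n)$ for $\ult$-a.e.\ $n$, and the definition of $\lim_\ult$ together with Fact~\ref{fact:welldefinedoflimit} forces $\lim_\ult \mu_n(A_n) = \lim_\ult \mu_n(A'_n)$.

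For part (2), the plan is similar. From $A \cap B = \emptyset$ and Fact~\ref{fact:ultrafacts}(3), I get $\prod_\ult (A_n \cap B_n) = \emptyset$, and then Fact~\ref{fact:ultrafacts}(1) yields $A_n \cap B_n = \emptyset$ for $\ult$-a.e.\ $n$. On this $\ult$-large set of indices, finite additivity of the Haar measure $\mu_n$ (a measure on $G_n$) gives $\mu_n(A_n \cup B_n) = \mu_n(A_n) + \mu_n(B_n)$. The conclusion then follows from Fact~\ref{fact:behavioroflimit}(1), which states that $\lim_\ult$ is additive on sequences of nonnegative reals; note that the convention at the end of Fact~\ref{fact:behavioroflimit} handles the boundary case in which one of $\lim_\ult \mu_n(A_n)$ or $\lim_\ult \mu_n(B_n)$ equals $+\infty$.

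Honestly, there is no substantial obstacle here; the entire content is to match the Boolean operations on ultraproducts (Fact~\ref{fact:ultrafacts}) with the algebraic operations on $\lim_\ult$ (Fact~\ref{fact:behavioroflimit}), which is exactly why the authors chose to omit the proof. The only minor subtlety to flag is that one must stay careful with the $+\infty$ convention in part (2), and that in part (1) one must take the intersection of two $\ult$-large sets rather than appeal to either inclusion in isolation.
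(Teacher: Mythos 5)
Your argument is correct and is precisely the routine reduction that the paper omits with the remark ``We omit the obvious proof'': you pass to $\ult$-a.e.\ $n$ via Fact~\ref{fact:ultrafacts} and then invoke the compatibility of $\lim_\ult$ with equality (Fact~\ref{fact:welldefinedoflimit}) and with finite sums including the $+\infty$ case (Fact~\ref{fact:behavioroflimit}(1)). Nothing to add.
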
 
 
Suppose $G$ is a pseudo-compact group built from a sequence $(G_n)$ of compact groups and an ultrafilter $\ult$. A measure $\mu$ on $G$ is a {\bf pseudo-Haar measure} if the following holds:
\begin{enumerate}
    \item[(PH1)] there is a sequence $(\mu_n)$ with $\mu_n$ a Haar measure on $G_n$ such that for every pseudo-measurable $A \subseteq G$, we have
$$  \mu(A)= \lim_{\ult} \mu_n(A_n)$$
where $(A_n)$ is a sequence with $A_n \subseteq G$ measurable and $A =\prod_\ult A_n$
    \item[(PH2)] $\mu$ is the completion of its restriction to the $\sigma$-algebra generated by pseudo-measurable sets.
\end{enumerate}
To simplify notation, we will write (PH1) as $\mu = \lim_\ult \mu_n$ on pseudo-measurable sets.  By Fact~\ref{fact:behavioroflimit}(2), a constant multiple of a pseudo-Haar measure is a pseudo-Haar measure. We note that every pseudo-measurable set is a measurable set  with respect to a pseudo-Haar measure, but the converse is not true. The next lemma shows that a pseudo-Haar measure can be constructed a sequence of Haar measure.

\begin{lemma} \label{lem: ExistenceofpseudoHaar}
    Suppose $G$ is a pseudo-compact group built from a sequence $(G_n)$ of compact groups and an ultrafilter $\ult$. Let $(\mu_n)$ be a sequence where $\mu_n$ is a Haar measure on $G_n$. Then there is a pseudo-Haar measure on $G$ such that $\mu = \lim_\ult \mu_n$ on pseudo-measurable subsets of $G$.
\end{lemma}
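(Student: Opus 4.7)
The plan is to construct $\mu$ by first defining a premeasure $\mu_0$ on the Boolean algebra $\mathcal{B}$ of pseudo-measurable subsets of $G$, then applying Carathéodory's extension theorem (Fact~\ref{Caratheodory}) to obtain a measure on the $\sigma$-algebra generated by $\mathcal{B}$, and finally taking the completion to satisfy (PH2). For $A \in \mathcal{B}$, choose any sequence $(A_n)$ of measurable $A_n \subseteq G_n$ with $A = \prod_\ult A_n$ and set
\[
\mu_0(A) := \lim_{\ult} \mu_n(A_n).
\]
Well-definedness is exactly Lemma~\ref{lem:welldefinedofmeasure}(1), and $\mu_0(\emptyset) = 0$ is immediate from Fact~\ref{fact:ultrafacts}(1).

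The main task is to verify $\sigma$-additivity on $\mathcal{B}$. Let $(A^{(k)})_{k \in \NN}$ be a sequence of pairwise disjoint pseudo-measurable sets such that $A := \bigcup_k A^{(k)}$ is also pseudo-measurable. The key observation is that by the $\aleph_1$-saturation in Fact~\ref{fact:saturationofultraproduct} (applied to its dual, to the pseudo-measurable sets $B_k := A \setminus (A^{(1)} \cup \cdots \cup A^{(k)})$, which are pseudo-measurable by closure of $\mathcal{B}$ under Boolean operations and have empty total intersection), there exists some finite $N$ with $B_N = \emptyset$, so $A^{(k)} = \emptyset$ for all $k > N$. Thus the countable disjoint union is actually finite, and $\sigma$-additivity reduces to finite additivity, which in turn follows by induction from the two-set case provided by Lemma~\ref{lem:welldefinedofmeasure}(2). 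Hence $\mu_0$ is a premeasure on $\mathcal{B}$.

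Applying Fact~\ref{Caratheodory} extends $\mu_0$ to a measure on the $\sigma$-algebra $\Sigma_0$ generated by $\mathcal{B}$, and passing to the completion yields a measure $\mu$ on a $\sigma$-algebra $\Sigma \supseteq \Sigma_0$ satisfying (PH2). To verify (PH1), note that for pseudo-measurable $A = \prod_\ult A_n$ we have $\mu(A) = \mu_0(A) = \lim_\ult \mu_n(A_n)$ by construction, which is exactly the required identity $\mu = \lim_\ult \mu_n$ on pseudo-measurable sets.

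The only genuinely nontrivial step is the reduction of $\sigma$-additivity to finite additivity, and that is handled cleanly by the saturation property of ultraproducts; after that, the construction is a mechanical application of Carathéodory plus completion. Nothing in the argument used the group structure of $G$ beyond the fact that the pseudo-measurable sets form a Boolean algebra, so left-invariance (if later desired) must be checked separately from the defining limit formula using Fact~\ref{fact: Groupunderultraproduct}(2), but that is not part of the present statement.
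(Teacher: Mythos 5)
Your proof is correct and follows the same route as the paper's: define $\mu_0$ on the Boolean algebra of pseudo-measurable sets via the ultralimit, invoke Lemma~\ref{lem:welldefinedofmeasure}(1) for well-definedness, reduce $\sigma$-additivity to finite additivity using Fact~\ref{fact:saturationofultraproduct}, and finish with Carath\'eodory plus completion. The only difference is that you spell out the saturation step more explicitly (showing the tails $B_k$ eventually vanish), which is what the paper leaves implicit when it asserts that a countable pseudo-measurable union is actually a finite union.
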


\begin{proof}
    Let $\mu_0$ be the function on the Boolean algebra of pseudo-measurable subsets of $G$ given by
     $$  \mu_0(A)= \lim_{\ult} \mu_n(A_n)$$
where $(A_n)$ is a sequence with $A_n \subseteq G$ measurable and $A =\prod_\ult A_n$. We note that $\mu_0$ is well-defined by  Fact~\ref{lem:welldefinedofmeasure}(1). From the definition of limit and ultraproduct, we have $\mu_0(\emptyset)=0$. If a pseudo-measurable $A\subseteq G$ is a countable union of pseudo-measurable subsets of $G$, it follows from Fact~\ref{fact:saturationofultraproduct} that $A$ is equal to a finite union of these subsets of $G$. Hence, it follows from  Fact~\ref{lem:welldefinedofmeasure}(2) that $\mu_0$ is a premeasure. Thus, by Carath\'eodory's extension theorem (Fact~\ref{Caratheodory}) $\mu_0$ can be extended to a measure on the $\sigma$ algebra of subsets of $G$ generated by the pseudo-measurable sets. Using completion, we obtain a pseudo-Haar measure with the desired properties.
\end{proof}

The next lemma shows that a pseudo-compact group is in a sense unimodular:

\begin{lemma} \label{invarianceofpseudoHaar}
    Suppose $G$ is pseudo-compact,  $A \subseteq G$ is pseudo-measurable, and $\mu$ is a pseudo-Haar measure on $G$. Then for all $g \in G$, the sets $gA$ and $Ag$ are pseudo-measurable, and $\mu(gA)=\mu(A) =\mu(Ag)$.
\end{lemma}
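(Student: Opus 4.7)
The plan is to reduce the claim to the componentwise level and exploit the fact that Haar measures on compact groups are bi-invariant. Fix the presenting data: write $G = \prod_\ult G_n$ where each $G_n$ is compact, let $(\mu_n)$ be the sequence of Haar measures with $\mu = \lim_\ult \mu_n$ on pseudo-measurable sets as in (PH1), and pick representatives $g = (g_n)/\ult$ with $g_n \in G_n$ and $A = \prod_\ult A_n$ with each $A_n \subseteq G_n$ measurable.

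First I would verify that $gA$ and $Ag$ are pseudo-measurable. Since $\{g\} = \prod_\ult \{g_n\}$, Fact~\ref{fact: Groupunderultraproduct}(2) gives
\[
gA \;=\; \{g\}\cdot A \;=\; \prod_\ult \bigl(\{g_n\}\cdot A_n\bigr) \;=\; \prod_\ult g_n A_n,
\]
and symmetrically $Ag = \prod_\ult A_n g_n$. Because each $\mu_n$ is a complete Haar measure on the compact (hence unimodular) group $G_n$, the translates $g_n A_n$ and $A_n g_n$ are measurable in $G_n$, so $gA$ and $Ag$ are pseudo-measurable.

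Next, applying property (PH1) to these pseudo-measurable sets and using the bi-invariance of each $\mu_n$, I get
\[
\mu(gA) \;=\; \lim_\ult \mu_n(g_n A_n) \;=\; \lim_\ult \mu_n(A_n) \;=\; \mu(A),
\]
and similarly $\mu(Ag) = \lim_\ult \mu_n(A_n g_n) = \lim_\ult \mu_n(A_n) = \mu(A)$.

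There is essentially no obstacle here: the only minor subtlety is that a pseudo-measurable set can be presented by many representing sequences, but Lemma~\ref{lem:welldefinedofmeasure}(1) guarantees $\lim_\ult \mu_n(\cdot)$ is independent of the representative chosen, so the two applications of (PH1) above are unambiguous. Since $gA$ and $Ag$ are already pseudo-measurable, we never need to invoke the completion step (PH2), and no issue arises from sets lying outside the Boolean algebra of pseudo-measurable sets.
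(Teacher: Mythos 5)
Your proof is correct and follows the same strategy as the paper's: express $g$ and $A$ via representing sequences, identify $gA$ as the ultraproduct $\prod_\ult g_n A_n$, and apply $\lim_\ult$ together with the translation-invariance of each $\mu_n$. The only cosmetic difference is that you carry out both the left- and right-translate cases explicitly, whereas the paper handles only $gA$ and declares the other case analogous.
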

\begin{proof}
    We will only show $gA$ is pseudo-measurable and $\mu(gA) =\mu(A)$, as the remaining parts are similar. Suppose $G$ is built from the sequence of compact subgroup $G_n$ and a nonprincipal ultrafilter $\ult$. Let $(g_n)$, $(A_n)$, and $(\mu_n)$ be sequences such that $g_n \in G_n$,  $g = (g_n)/\ult$, $A_n \in A$ is measurable, $A = \prod_\ult A_n$, $\mu_n$ is a Haar measure on $G_n$,  and $\mu(A) = \lim_\ult \mu_n(A_n)$. One can then check that $gA = \prod_\ult (g_n A_n)$, so $gA$ is pseudo-measurable. The equality $\mu(gA)=\mu(A)$ follows from the equality $\mu_n(A_n) =\mu_n(g_nA_n)$.
\end{proof}

Let $G$ be a pseudo-compact group. We say that pseudo-measurable sets $A, B \subseteq G$ are {\bf commensurable} if there is a pseudo-Haar-measure $\mu$ on $G$ such that 
$$ 0< \mu(A), \mu(B) < \infty. $$
We say that $A$ is {\bf infinitesimal} compared to $B$ if there is a pseudo-Haar-measure $\mu$ on $G$ such that 
$$ 0 < \mu(A) < \infty \text{ and } \mu(B) = \infty. $$

The following lemma is the counterpart of the uniqueness up to constant of Haar measures.

\begin{lemma} \label{fact:almostuniqueness}
    Let $G$ be a pseudo-compact group, and $\mu$ and $\mu'$ be pseudo-Haar measures on $G$. Suppose there is a pseudo-measurable set $A \subseteq G$ such that $0< \mu(A) < \infty$ and $0 <  \mu'(A) < \infty$.  Then there is  $K \in \RR^{\geq 0}$ such that $\mu'= K \mu$.
\end{lemma}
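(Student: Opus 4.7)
The plan is to exploit the uniqueness (up to a positive scalar) of Haar measure on each compact factor $G_n$, identify the single scalar $K$ via the witnessing set $A$, and propagate the resulting relation from pseudo-measurable sets to the full common domain through the Carath\'eodory extension.

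Write $G = \prod_\ult G_n$ and choose sequences $(\mu_n)$, $(\mu'_n)$ of Haar measures realizing $\mu = \lim_\ult \mu_n$ and $\mu' = \lim_\ult \mu'_n$ on pseudo-measurable sets. Since each $G_n$ is compact, Fact~\ref{fact: Haarmeasurenew} yields constants $K_n > 0$ with $\mu'_n = K_n \mu_n$. Pick a presentation $A = \prod_\ult A_n$; the hypotheses read $\mu(A) = \lim_\ult \mu_n(A_n) \in (0,\infty)$ and $\mu'(A) = \lim_\ult K_n \mu_n(A_n) \in (0,\infty)$. In particular $\mu_n(A_n) > \mu(A)/2$ for a.e.\ $n$, so $K_n = \mu'_n(A_n)/\mu_n(A_n)$ is well-defined along the ultrafilter. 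If $\lim_\ult K_n$ were $+\infty$ then $\lim_\ult K_n \mu_n(A_n)$ would also be $+\infty$, contradicting $\mu'(A) < \infty$; so $\lim_\ult K_n$ is finite, and applying Fact~\ref{fact:behavioroflimit}(4) to $(x,y) \mapsto xy$ gives $K := \lim_\ult K_n = \mu'(A)/\mu(A) \in (0,\infty)$.

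Next I would check $\mu' = K\mu$ on an arbitrary pseudo-measurable $B = \prod_\ult B_n$. If $\mu(B) < \infty$, Fact~\ref{fact:behavioroflimit}(4) gives $\mu'(B) = \lim_\ult K_n \mu_n(B_n) = K\mu(B)$; if $\mu(B) = \infty$, then Fact~\ref{fact:welldefinedoflimit} forces $\mu_n(B_n) \to \infty$ along $\ult$, while $K_n > K/2$ for a.e.\ $n$, so $\mu'(B) = \infty = K\mu(B)$. Hence $\mu'$ and $K\mu$ agree on the Boolean algebra $\Sigma_0$ of pseudo-measurable subsets of $G$. Since both $\mu$ and $\mu'$ are, by axiom (PH2), completions of the Carath\'eodory extensions (Fact~\ref{Caratheodory}) of their restrictions to $\Sigma_0$, and since multiplying a premeasure by a positive constant multiplies its outer measure by the same constant without changing the classes of Carath\'eodory-measurable or null sets, the identity $\mu' = K\mu$ propagates to the full common domain. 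The main technical point is this last extension step; the heart of the argument is the two-line observation that a Haar measure on a compact group is unique up to a positive scalar, and that the correct scalar is readable off any single set of finite positive measure for both $\mu$ and $\mu'$.
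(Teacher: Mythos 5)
Your proof is correct and follows the same essential route as the paper: exploit the uniqueness of Haar measure up to a positive constant on each compact $G_n$, pin down the scalar $K$ from the single witnessing set $A$, verify the identity on the Boolean algebra of pseudo-measurable sets, and then push it through the Carath\'eodory/completion construction. The only organizational difference is that you explicitly name the ratios $K_n = \mu'_n/\mu_n$ and take their ultralimit, whereas the paper keeps the ratio implicit and instead propagates an $\epsilon$-tight two-sided inequality from $A$ to an arbitrary pseudo-measurable $B$ before letting $\epsilon\to 0$; your last step, observing that scaling a premeasure by a positive constant scales the resulting outer measure without changing the Carath\'eodory-measurable or null sets, is if anything a slightly more explicit justification of the extension step than the paper's appeal to ``uniqueness via completion and Carath\'eodory.'' Both versions are sound.
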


\begin{proof}
Suppose $G$ is built from the sequence $(G_n)$ and the ultrafilter $\ult$. 
Let $K = \mu'(A)/\mu(A)$. We will show $\mu'(B) = K\mu(B)$ for an arbitrary pseudo-measurable $B\subseteq G$. This is enough because $\mu$ can be obtained uniquely via completion and Carath\'eodory's theorem from the premeasure $\mu_0$ which is the restriction of $\mu$ to the pseudo-measurable subsets of $B$. Let $(A_n)$ and $(B_n)$ be  sequences such that $A_n, B_n \subseteq G_n$ are measurable,  $A = \prod_\ult A_n$, and $B = \prod_\ult B_n$. Let $(\mu_n)$ and $(\mu'_n)$ be two sequences such that $\mu_n$ and $\mu'_n$ are Haar measure on $G_n$, and 
\[
\mu(A) = \lim_\ult \mu_n(A_n) \quad\text{and}\quad  \mu'(A) = \lim_\ult \mu'_n(A_n).
\]
By definition, for any $\epsilon>0$, for a.e.\! $n$, we have 
\[
\mu_n(A_n) < (1+\epsilon) \mu(A) \quad\text{and}\quad \mu'(A)< (1+\epsilon)\mu_n(A_n).
\]
Hence, for these $n$, $K\mu_n(A_n)< (1+\epsilon)^2 \mu'_n(A_n)$. Since, $\mu_n$ and $\mu'_n$ differs by a constant, it leads to $K \mu_n(B_n) < (1+\epsilon)^2 \mu'_n(B_n)$. It follows that $\mu(B)\leq  K(1+\epsilon)^2\mu'(B)$. Since $\epsilon$ can be taken arbitrarily, we get $\mu'(B) \leq K (B)$. A similar argument yields, $\mu'(B) \geq K \mu $. Thus, $\mu'(B) =K \mu(B)$ completing the proof.
\end{proof}

We deduce some consequence for commensurability and being infinitesimal. There are other facts along this line, but we leave those to interested readers.

\begin{corollary} \label{cor: commensurability}
Suppose $G$ is a pseudo-compact group, and $A, B, C \subseteq G$ are pseudo-measurable. Then we have the following:
\begin{enumerate}
    \item If $A$ and $B$ are commensurable, and $B$ and $C$ are commensurable, then $A$ and $C$ are commensurable.
    \item If $A$ and $B$ are commensurable, and $A$ is infinitesimal compared to $C$, then $B$ is infinitesimal compared to $C$.
\end{enumerate}
\end{corollary}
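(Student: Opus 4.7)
The plan is to deduce both statements directly from Lemma~\ref{fact:almostuniqueness}, which says that any two pseudo-Haar measures that are simultaneously finite and positive on a common pseudo-measurable set must agree up to a positive constant. The key observation is that commensurability and being infinitesimal are defined in terms of the existence of some pseudo-Haar measure with prescribed finiteness behavior, and Lemma~\ref{fact:almostuniqueness} lets us transfer such behavior between different choices of pseudo-Haar measure via a common ``pivot'' set.

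For (1), I would fix witnesses: let $\mu_1$ be a pseudo-Haar measure with $0 < \mu_1(A), \mu_1(B) < \infty$, and let $\mu_2$ be a pseudo-Haar measure with $0 < \mu_2(B), \mu_2(C) < \infty$. Since $B$ is pseudo-measurable and satisfies $0 < \mu_1(B), \mu_2(B) < \infty$, Lemma~\ref{fact:almostuniqueness} gives a constant $K > 0$ with $\mu_2 = K\mu_1$ (positivity of $K$ comes from $\mu_2(B) = K\mu_1(B) > 0$). Then $\mu_1(C) = \mu_2(C)/K$ is finite and positive, so $\mu_1$ witnesses that $A$ and $C$ are commensurable.

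For (2), the argument is symmetric: pick $\mu_1$ witnessing commensurability of $A$ and $B$, and pick $\mu_2$ witnessing that $A$ is infinitesimal compared to $C$, so $0 < \mu_2(A) < \infty$ and $\mu_2(C) = \infty$. Applying Lemma~\ref{fact:almostuniqueness} to $A$ yields $\mu_2 = K\mu_1$ for some $K > 0$, whence $\mu_2(B) = K\mu_1(B)$ lies in $(0,\infty)$ while $\mu_2(C)$ remains infinite. Thus $\mu_2$ itself witnesses that $B$ is infinitesimal compared to $C$.

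There is no real obstacle here; the only subtlety is to confirm that the constant $K$ produced by Lemma~\ref{fact:almostuniqueness} is strictly positive (not merely nonnegative), which is automatic because in each application the common pivot set has strictly positive measure under both pseudo-Haar measures involved. Everything else is bookkeeping.
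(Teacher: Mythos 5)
Your proof is correct and takes essentially the same route as the paper: apply Lemma~\ref{fact:almostuniqueness} to the common pivot set ($B$ for part (1), $A$ for part (2)) to obtain $\mu_2 = K\mu_1$, then transfer finiteness or infiniteness through $K$. You are in fact slightly more careful than the paper's terse write-up in explicitly noting that $K>0$ (the lemma as stated only gives $K\ge 0$), which is a worthwhile precision since the argument needs to divide by $K$.
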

\begin{proof}
    We will only prove (1), as the proof of (2) is similar. Let $\mu$ and $\mu'$ be pseudo-Haar measure on $G$ such that $0< \mu(A), \mu(B)< \infty$ and $0< \mu'(B), \mu'(C)<0$. Then by Lemma~\ref{fact:almostuniqueness}, $\mu$ and $\mu'$ differs by a constant $K$. It follows that $0< \mu(A)< \infty$, which yields the desired conclusion.
\end{proof}

Let $G$ be a pseudo-compact group. Suppose $A,B \subseteq G$ are such that $A$, $B$, and $AB$  are pseudo-measurable, and $A$ and $B$ are commensurable. Let $\mu$ be a pseudo-Haar measure on $G$ such that $0< \mu(A), \mu(B)< \infty$. Note that $0< \mu(AB)$ by Fact~\ref{invarianceofpseudoHaar}.
We define the {\bf Brunn--Minkowski growth } $\BM(A,B)$ to be the unique $r >0$ such that 
$$ \ \mu(AB)^{\frac{1}{r}} = \mu(A)^{\frac{1}{r}} +\mu(B)^{\frac{1}{r}}$$
if $\mu(AB)< \infty$, otherwise we set $\BM(A,B)=\infty$.
 By Fact~\ref{fact:almostuniqueness}, this definition is independent of the choice of the pseudo-Haar measure $\mu$ as long as $0< \mu(A)< \mu(B)< \infty$. 

We now prove the main result of this section:

\begin{proposition} \label{prop: smallexpansionultraproduct}
     Suppose $G$ is a pseudo-compact group built from a sequence $(G_n)$ of compact groups and a nonprincipal ultrafilter $\ult$. Let $(A_n)$, $(B_n)$ and $(\mu_n)$ be sequences such that $A_n, B_n, A_nB_n \subseteq G_n$ has positive measure, $\mu_n$ is a Haar measure on $G_n$. Let $A, B \subseteq G$ be pseudo-measurable sets and $\mu$ a pseudo-Haar measure on $G$ such that $A =\prod_\ult A_n$ and $B=\prod_\ult B_n$. Then we have the following.
\begin{enumerate}
    \item If there is a constant $N$ such that $\mu_n(A_n)/N \leq \mu_n(B_n) \leq N\mu_n(A)_n$,  then $A$ and $B$ are commensurable. 
     \item If $\lim_{n \to \infty} \mu_n(A_n)/\mu_n(G_n) = \lim_{n \to \infty} \mu_n(B_n)/\mu_n(G_n)=0$, then $A$ and $B$ are infinitesimal compared to $G$.
    \item If $A$ and $B$ are commensurable and  $r$ is a constant such that  $\BM(A_n,B_n) \leq r$, then $\BM(A,B) \leq r$.
   \end{enumerate}
\end{proposition}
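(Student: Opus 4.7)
The uniform approach for all three parts is to build an appropriate pseudo-Haar measure on $G$ by rescaling the given sequence $(\mu_n)$ and invoking Lemma~\ref{lem: ExistenceofpseudoHaar}; the crucial property I would exploit is that both commensurability/infinitesimality and the value of $\BM$ are invariant under rescaling each $\mu_n$ by a positive constant (since Haar measure is unique up to scalar and the $\BM$ equation $\mu(AB)^{1/r}=\mu(A)^{1/r}+\mu(B)^{1/r}$ is homogeneous).

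For part (1), I would rescale $\tilde{\mu}_n := \mu_n/\mu_n(A_n)$ (still a Haar measure since $\mu_n(A_n)>0$) and let $\tilde{\mu} = \lim_\ult \tilde{\mu}_n$ be the resulting pseudo-Haar measure given by Lemma~\ref{lem: ExistenceofpseudoHaar}. Then $\tilde{\mu}(A) = \lim_\ult 1 = 1$, and by Fact~\ref{fact:behavioroflimit}(3) applied to the squeeze $1/N \leq \mu_n(B_n)/\mu_n(A_n) \leq N$, the quantity $\tilde{\mu}(B)$ lies in $[1/N,N]$, giving commensurability. For part (2), the same rescaling produces $\tilde{\mu}(A) = 1$, while $\tilde{\mu}(G) = \lim_\ult \mu_n(G_n)/\mu_n(A_n) = \infty$, because the reciprocal sequence $\mu_n(A_n)/\mu_n(G_n)$ has ordinary limit $0$ and hence $\ult$-limit $0$. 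This shows $A$ is infinitesimal compared to $G$, and the symmetric rescaling by $\mu_n(B_n)$ handles $B$.

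For part (3), commensurability provides some pseudo-Haar $\mu$ with $\mu(A), \mu(B)$ both positive and finite; I would again form $\tilde{\mu}_n = \mu_n/\mu_n(A_n)$ and $\tilde{\mu} = \lim_\ult \tilde{\mu}_n$, so that $\tilde{\mu}(A) = 1$. Applying Lemma~\ref{fact:almostuniqueness} to the two pseudo-Haar measures $\mu$ and $\tilde{\mu}$ (which agree up to a positive constant on $A$) gives $y := \tilde{\mu}(B) < \infty$. Scale-invariance of $\BM$ recasts the hypothesis $\BM(A_n,B_n)\leq r$ as $\tilde{\mu}_n(A_nB_n)^{1/r} \leq 1 + \tilde{\mu}_n(B_n)^{1/r}$ for every $n$, whence $\tilde{\mu}_n(A_nB_n) \leq (1 + \tilde{\mu}_n(B_n)^{1/r})^r$. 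By Fact~\ref{fact: Groupunderultraproduct}(2), $AB = \prod_\ult A_nB_n$ is pseudo-measurable, and combining the previous bound with $y<\infty$ and Fact~\ref{fact:behavioroflimit}(3) gives $\tilde{\mu}(AB) = \lim_\ult \tilde{\mu}_n(A_nB_n) \leq (1+y^{1/r})^r < \infty$. Finally, applying Fact~\ref{fact:behavioroflimit}(4) to the continuous map $t\mapsto t^{1/r}$ on these now-bounded sequences lets me pass to the $\ult$-limit in the scaled $\BM$ inequality, yielding $\tilde{\mu}(AB)^{1/r} \leq \tilde{\mu}(A)^{1/r}+\tilde{\mu}(B)^{1/r}$, i.e., $\BM(A,B) \leq r$. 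The one subtle point, and the only real obstacle, is exactly this finiteness step securing $\tilde{\mu}(AB) < \infty$ before invoking continuity of the limit; once that is in hand, everything else is bookkeeping and scale-invariance.
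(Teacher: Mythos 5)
Your proof is correct and follows essentially the same route as the paper: rescale each $\mu_n$ by $\mu_n(A_n)^{-1}$ to normalize, form the associated pseudo-Haar measure via Lemma~\ref{lem: ExistenceofpseudoHaar}, and pass the hypotheses through the ultralimit using Fact~\ref{fact:behavioroflimit}, taking care to secure finiteness of $\tilde{\mu}(AB)$ before applying continuity of $t \mapsto t^{1/r}$. The paper states the rescaling and the finiteness check more tersely (``by suitably scaling, assume $\mu = \lim_\ult \mu_n$''), but the underlying argument, including the crucial intermediate bound on $\mu(AB)$, is the same as yours.
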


\begin{proof}
We first prove (1). For each $n$, scaling  $\title{\mu}_n$ by a constant factor, we can assume that $\mu_n(A_n)=1$. Let $\mu$ be a pseudo-Haar measure on $G$ such that  $\mu = \lim_\ult \mu_n$. We can check that $\mu(A)=1$ and $1/N<\mu(B)< N$. This shows $A$ and $B$ are commensurable.

The proof for (2) can be obtained similarly. We now prove (3). Let $\mu$ be a pseudo-Haar measure on $G$ such that $0< \mu(A), \mu(B)< \infty $. By suitably scaling,  assume $\mu = \lim_\ult \mu_n$. Then we have 
\[
\lim_\ult \mu_n(A_n) = \mu(A),\, \lim_\ult \mu_n(B_n) =\mu(B), \text{ and }\lim_\ult \mu_n(A_nB_n) =\mu(AB).
\]
The condition that $\BM(A_n, B_n) \leq r$ implies $\mu_n(A_n B_n) \leq (\mu_n(A_n)^{\frac1r} +\mu_n(B_n)^{\frac1r})^r$. It follows from Fact~\ref{fact:behavioroflimit}(3,4) that we will also get $\mu(AB) \leq (\mu(A)^{\frac1r}+\mu(B)^{\frac1r})^r$. In particular, this implies $\mu(AB)<\infty$. Apply Fact~\ref{fact:behavioroflimit}(4) again, we get  $\BM(A,B) = \lim_\ult \BM(A_n, B_n)$. The conclusion then follows from Fact~\ref{fact:behavioroflimit}(3). 
\end{proof}

\section{Approximate groups from small growth}

In this section, we will link the sets in the ultraproduct having small measure product with approximate groups, the intermediate object used to connect to noncompact Lie groups.

For a constant $K$, we call $S \subseteq G$ a {\bf $K$-approximate group} if 
\begin{enumerate}
\item $\id_G \in S$,
\item $S= S^{-1}$,
\item $S^2$ can be covered by $K$-many left translates of $S$ (equivalently, $K$-many right translates of $S$).
\end{enumerate} 
Clearly, there is no $K$-approximate group if $K<1$, and a $1$-approximate group is a subgroup. We say that $S \subseteq G$ is an {\bf approximate group} if it is a $K$-approximate group for some $K$. For instance, a open subset of a compact group is an approximate group. 

 If $G$ is a compact and connected group and $S \subseteq G$ is a $2^r$-approximate group such that $S^2$ is also measurable, then $\BM(S,S) \leq r$. Hence, under rather mild assumption that $S^2$ is still measurable, an approximate group has small measure doubling.

The following fact from~\cite{T08} establish a partial converse: Approximate groups arise from sets with small measure growth.

\begin{fact}  \label{fact: FromTaopaper}
Let $G$ be a compact group equipped with a Haar measure $\mu$, and $A, B \subseteq G$ are such that $A$, $B$, and $AB$ are measurable. Suppose $\mu(A)/N < \mu(B) < N\mu(A)$, and $\BM(A,B)\leq r$. Then there is an open $O_{N,r}(1)$-approximate group $S$ such that $\mu(S) \sim_{N,r} \mu(A)$, $A$ is contained in $O_{N, r}(1)$ left translates of $S$, and $B$ is contained in $O_{N, r}(1)$ right translates of $S$.
\end{fact}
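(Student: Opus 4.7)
The plan is to transcribe Tao's noncommutative Pl\"unnecke--Ruzsa scheme from finite sets with counting measure to Haar measure on a compact group, and then to build $S$ explicitly as a bounded iterated product. The starting input is that $\BM(A,B)\le r$ together with $\mu(A)/N<\mu(B)<N\mu(A)$ yields $\mu(AB)\le(\mu(A)^{1/r}+\mu(B)^{1/r})^r\lesssim_{N,r}\mu(A)\sim_N\mu(B)$. Form the symmetric set $C=A\cup A^{-1}\cup B\cup B^{-1}\cup\{\id_G\}$; then $\mu(C)\sim_N\mu(A)$, $\mu(C^2)\lesssim_{N,r}\mu(A)$, and iterated applications of the non-abelian Ruzsa triangle inequality $\mu(X)\mu(YZ)\le\mu(Y^{-1}X)\mu(XZ)$ give $\mu(C^k)\lesssim_{N,r,k}\mu(A)$ for every fixed $k\ge 1$. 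Every inequality used depends only on the invariance of Haar measure under translation, so the passage from the cardinality case to the measure case is direct.

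With these Pl\"unnecke-type bounds in hand, I would take the approximate group to be $S=C^k$ for a specific bounded $k$ given by Tao's construction. Symmetry and $\id_G\in S$ are automatic from the definition of $C$, and the approximate-group axiom $S\cdot S\subseteq X\cdot S$ with $|X|=O_{N,r}(1)$ follows from the bound $\mu(C^{2k})/\mu(C^k)\lesssim_{N,r}1$ via the non-abelian Ruzsa covering lemma, which transcribes verbatim to the Haar-measure setting. The same covering lemma applied to the pairs $(A,C^k)$ and $(C^k,B)$, together with the bounds $\mu(A\cdot C^k),\,\mu(C^k\cdot B)\lesssim_{N,r}\mu(A)$, then produces the coverings of $A$ by $O_{N,r}(1)$ left translates of $S$ and of $B$ by $O_{N,r}(1)$ right translates of $S$.

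Finally, to promote $S$ from a measurable to an \emph{open} approximate group, I would replace it by $S':=U\cdot S\cdot U$, where $U$ is a symmetric open neighborhood of $\id_G$ chosen small enough that $\mu(U)\ll_{N,r}\mu(A)$. Then $S'$ is open, symmetric, contains $\id_G$, satisfies $\mu(S')\sim_{N,r}\mu(A)$, and a short computation gives $\mu((S')^2)\le \mu(U^2\cdot S^2\cdot U^2)\lesssim_{N,r}\mu(S')$; the approximate-group axiom and both coverings persist after this thickening, at the cost of replacing each covering constant by a slightly larger one.

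The main obstacle is the genuinely non-commutative Pl\"unnecke--Ruzsa step: unlike the abelian case, one cannot freely reorder factors in $C^k$, so each bound on $\mu(C^k)$, $\mu(A\cdot C^k)$, and $\mu(C^k\cdot B)$ must be obtained by carefully chaining the Ruzsa triangle inequality in a specific order. This combinatorial core is precisely the content of Tao's paper; once it is available, the transcription to Haar measure on a compact group and the final thickening to an open set are both routine.
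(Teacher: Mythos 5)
The paper states this as a black-box Fact citing Tao's paper, so there is no internal proof to compare against; I will assess your proposal on its own terms. There is a genuine gap at the very first substantive step, and it is a characteristically non-commutative one.

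You form $C=A\cup A^{-1}\cup B\cup B^{-1}\cup\{\id_G\}$ and claim that the Ruzsa triangle inequality yields $\mu(C^2)\lesssim_{N,r}\mu(A)$. But $C^2\supseteq A^2$, and $\mu(A^2)$ is \emph{not} controlled by $\mu(AB)$ together with $\mu(A)\sim_N\mu(B)$. A concrete counterexample inside $\Sot$ itself: let $H_\epsilon$ be an $\epsilon$-tubular neighborhood of a maximal torus $H$ and let $g$ lie off the normalizer of $H$. Take $A=H_\epsilon g$ and $B=g^{-1}H_\epsilon$. Then $AB=H_\epsilon^2$ has $\mu(AB)\sim 4\mu(A)$ so $\BM(A,B)\leq 2$, $\mu(A)=\mu(B)\sim\epsilon^2$, yet $A^2=H_\epsilon g H_\epsilon g$ has $\mu(A^2)\sim\epsilon$, so $\mu(A^2)/\mu(A)\to\infty$ as $\epsilon\to 0$. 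The same example shows $\mu(AB^{-1})$, $\mu(B^{-1}A)$, $\mu(BA)$, etc., are uncontrolled; only a handful of second-order products are controllable from $\mu(AB)$ alone, namely $\mu(AA^{-1})\leq\mu(AB)^2/\mu(B)$ and $\mu(B^{-1}B)\leq\mu(AB)^2/\mu(A)$ (both via the Ruzsa triangle inequality $\mu(X)\mu(YZ)\leq\mu(YX)\mu(X^{-1}Z)$ with the right choice of $X$). In the non-commutative world, small $\mu(AB)$ simply does not imply small $\mu(A^2)$, so no amount of careful "chaining of the Ruzsa triangle inequality in a specific order" can rescue the bound on $\mu(C^k)$. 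Tao's actual construction avoids $A$ and $B$ themselves and builds the approximate group out of the controllable products $AA^{-1}$ and $B^{-1}B$ (and their bounded powers), which is also why the conclusion distinguishes \emph{left} translates for $A$ from \emph{right} translates for $B$: that left/right asymmetry is dictated by which Ruzsa bounds are available, and your symmetric $C$ erases it.

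A secondary problem is the thickening step. In a compact group, choosing an open $U\ni\id_G$ with $\mu(U)\ll\mu(A)$ does not give $\mu(USU)\lesssim\mu(S)$; an arbitrary measurable $S$ of positive measure can have $US=G$ for every open $U$. The correct route is via Steinhaus: since $S$ contains a positive-measure symmetric set times its inverse, $S$ already contains an open neighborhood $V$ of $\id_G$, and then $VSV\subseteq S^3$ is controlled by the approximate-group property. This fix is available, but it is not the argument you gave, and it only kicks in once the earlier (broken) bounds on $\mu(S^k)$ have been established by the correct construction.
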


The following lemma provides the first sign that approximate groups are robust and easier to handle.

 \begin{lemma} \label{lem: interativesmallgrowthapproximategroups}
     Suppose $S \subseteq G$ is a $K$-approximate group. Then $S^n$ can be covered by $K^{n-1}$ left translates of $S$ when $n >0$.
 \end{lemma}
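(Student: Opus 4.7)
The plan is to prove this by straightforward induction on $n \geq 1$, using only the definition of $K$-approximate group. The base case $n=1$ is trivial: $S^1 = S$ is covered by a single left translate of itself (namely, $\id_G \cdot S$), and $K^{1-1} = K^0 = 1$.

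For the inductive step, suppose $S^n$ can be covered by $K^{n-1}$ left translates of $S$, say $S^n \subseteq g_1 S \cup \cdots \cup g_{K^{n-1}} S$ for some $g_i \in G$. Since $S$ is a $K$-approximate group, $S^2$ can be covered by $K$ left translates of $S$, say $S^2 \subseteq h_1 S \cup \cdots \cup h_K S$ for some $h_j \in G$. Then
\[
S^{n+1} = S^n \cdot S \subseteq \bigcup_{i} g_i S \cdot S = \bigcup_{i} g_i S^2 \subseteq \bigcup_{i,j} g_i h_j S,
\]
which is a cover of $S^{n+1}$ by $K^{n-1} \cdot K = K^n$ left translates of $S$, completing the induction.

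There is no real obstacle here; the entire content is bookkeeping on the covering numbers, and no further structure on $G$ (topology, measurability) is needed. The only mild subtlety is the interpretation of the exponent when $K$ is not an integer, but since the defining property is taken to mean ``covered by at most $K$ translates,'' the product bound $K^n$ at the inductive step is obtained by direct composition of the two covers and requires no rounding argument.
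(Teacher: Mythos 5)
Your proof is correct and follows essentially the same inductive argument as the paper: the inductive hypothesis gives a cover of $S^n$ by $K^{n-1}$ left translates of $S$, these give a cover of $S^{n+1}$ by $K^{n-1}$ left translates of $S^2$, and expanding each $S^2$ into $K$ left translates of $S$ yields $K^n$ translates. The added remark about non-integer $K$ is harmless but not needed.
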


 \begin{proof}
For $n=1$, the statement follows from the definition. Suppose we have shown the conclusion for $n$, then $S^{n+1} = S^n S$ can be covered by $K^{(n-1)}$ left translates of $S^2$, which can in turn be covered by $K^n$ left translates of $S$.
 \end{proof}

In order to be able to use the model-theoretic machinery later on,  we need special kind of approximate groups.  Let $\Omega$ be an ambient set. A {\bf structure} $\Sigma$ on $\Omega$ is a sequence $(\Sigma_n)$ satisfying the following conditions:
\begin{enumerate}
    \item [(S1)]  For each $n$, $\Sigma_n$ is a Boolean subalgebra of $\cP( \Omega^{[n]})$.
    \item [(S2)] The diagonal $\{(a,a) : a\in \Omega\}$ is in $\Sigma_2$.
    \item [(S3)] If $m \leq n$ and $\pi: \Omega^{[n]} \to \Omega^{[m]}$ is the projection to some $m$ out of  $n$ coordinates, $A \subseteq \Omega^{[m]}$ is in $\cD_m$, then $\pi^{-1}(A)$ is in $\cD_n$.
    \item[(S4)] If $m \leq n$ and $\pi: \Omega^{[n]} \to \Omega^{[m]}$ is the projection to some $m$ out of  $n$ coordinates, $A \subseteq \Omega^{[n]}$ is in $\cD_n$, then $\pi(A)$ is in $\cD_m$.
\end{enumerate}

We say that $D \subseteq \Omega^{[n]}$ is {\bf definable} in $\Sigma$ if $D$ is an element of $\Sigma_n$. A subset of $\Omega^{[n]}$ is {\bf $\delta$-definable} in $\Sigma$ if it is a countable intersection of subsets of $\Omega^{[n]}$ definable in $\Sigma$, and  a subset of $G^{[n]}$ is {\bf $\sigma$-definable} in $\Sigma$  if it is a countable union of subsets of $G^{[n]}$ definable in $\Sigma$.

A structure $\cD$ on an ambient set $\Omega$ is {\bf $\aleph_1$-saturated} if whenever $(D_n)$ is a sequence of subsets of $\Omega^k$ is definable in $\cD$, and every finite intersection is nonempty, then $(D_n)$ has nonempty intersection.
The following easy observation about $\aleph_1$-saturation will become very useful later on.

\begin{lemma} \label{lem: Interpolation}
    Suppose $\Sigma$ is an $\aleph_1$-saturated structure on an ambient set $\Omega$. If $A \subseteq \Omega^{[n]}$ is $\delta$-definable in $\Sigma$, $B \subseteq  \Omega^{[n]}$  is $\sigma$-definable in $\Sigma$, and $A \subseteq B$, then there is $D \subseteq \Omega^{[n]}$ definable in $\Sigma$ such that $A \subseteq D \subseteq B$.
\end{lemma}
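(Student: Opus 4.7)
The plan is to use the $\aleph_1$-saturation of $\Sigma$ in contrapositive form: if a countable intersection of definable sets is empty, then some finite sub-intersection must already be empty. The statement then becomes a routine ``compactness'' argument, entirely analogous to the interpolation lemma one proves for type-definable and $\bigvee$-definable sets in model theory.

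First I would unpack the hypotheses. Write $A = \bigcap_{i \in \NN} A_i$ with each $A_i \in \Sigma_n$, and $B = \bigcup_{j \in \NN} B_j$ with each $B_j \in \Sigma_n$. Since each $\Sigma_n$ is a Boolean subalgebra of $\cP(\Omega^{[n]})$, the complements $B_j^c := \Omega^{[n]} \setminus B_j$ are all definable. The containment $A \subseteq B$ rewrites as
\[
\Big(\bigcap_{i \in \NN} A_i\Big) \cap \Big(\bigcap_{j \in \NN} B_j^c\Big) \;=\; \emptyset.
\]
This is precisely a countable intersection of sets definable in $\Sigma$ that is empty.

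Next I would apply the contrapositive of $\aleph_1$-saturation to this family $\{A_i\}_{i\in\NN} \cup \{B_j^c\}_{j\in\NN}$. Some finite sub-intersection must be empty: there exist indices $i_1,\dots,i_p$ and $j_1,\dots,j_q$ with
\[
A_{i_1} \cap \cdots \cap A_{i_p} \cap B_{j_1}^c \cap \cdots \cap B_{j_q}^c \;=\; \emptyset.
\]
Setting $D := A_{i_1} \cap \cdots \cap A_{i_p}$, this empty-intersection statement rearranges to $D \subseteq B_{j_1} \cup \cdots \cup B_{j_q} \subseteq B$, while $A \subseteq A_{i_k}$ for each $k$ gives $A \subseteq D$. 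Finally, $D$ is a finite intersection of elements of the Boolean algebra $\Sigma_n$, hence definable, which is exactly what we need.

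There is no real obstacle here: everything depends on being able to invoke saturation, and the only mild care needed is that $B_j^c \in \Sigma_n$, which follows immediately from condition (S1) stating $\Sigma_n$ is a Boolean subalgebra (closed under complements). The whole argument is short and formal; the work has already been done in setting up $\aleph_1$-saturation.
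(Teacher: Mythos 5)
Your proof is correct and follows essentially the same route as the paper: rewrite $A \subseteq B$ as the countable intersection $\bigcap_i A_i \cap \bigcap_j (\Omega^{[n]}\setminus B_j) = \emptyset$, apply the contrapositive of $\aleph_1$-saturation to extract a finite empty sub-intersection, and take $D$ to be the corresponding finite intersection of the $A_i$'s. The only cosmetic difference is that the paper truncates both families at a single index $N$ while you allow two separate finite index sets, which changes nothing.
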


\begin{proof}
    Suppose $A = \bigcap_m A_m$ and $B = \bigcup_m B_m$  with $A_m, B_m \subseteq \Omega^{[n]}$ definable in $\Sigma$ for each $m$. Then 
    $$ \left( \bigcap_m A_m\right) \cap \left( \bigcap_m (\Omega^{[n]  }\setminus B_m\right) = \emptyset. $$
    By $\aleph_1$-saturation, there is $N$ such that $ \left( \bigcap_{m=1}^N A_m\right) \cap \left( \bigcap_{m=1}^N (\Omega^{[n]  }\setminus B_m\right) = \emptyset. $ it is easy to check that $D = \left( \bigcap_{m=1}^N A_m\right)$ satisfies desired conditions.
\end{proof}

An {\bf expansion} of a group is a pair $(G, \Sigma)$ such that $G$ is a group,  $\Sigma$  is a structure  on the underlying set of $G$, and the graph $\Gamma= \{ (a,b,c) \in G^3: ab=c\}$  of  multiplication in $G$ is definable in $\Sigma$. 
The following lemma will be useful later on.

\begin{lemma} \label{Lemma: productaredefinable}
Let $(G, \Sigma)$ be an expansion of a group. If $A,B \subseteq G$ are definable in $\Sigma$, then $AB \subseteq G$ is definable in $\Sigma$. Hence, if $A \subseteq G$ is definable in $\Sigma$, then $A^n \subseteq G$ is definable in $\Sigma$ for all $n$.    
\end{lemma}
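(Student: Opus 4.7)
The plan is to assemble $AB$ out of the three ingredients we already have in the structure: the definable sets $A$ and $B$, the graph $\Gamma = \{(a,b,c) \in G^3 : ab = c\}$ of multiplication (definable by the hypothesis that $(G,\Sigma)$ is an expansion of a group), and the closure properties (S3) and (S4) for pullbacks and pushforwards along coordinate projections.

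Concretely, first I would lift $A$ and $B$ to $G^{[3]}$. Let $\pi_1, \pi_2 : G^{[3]} \to G$ be the projections to the first and second coordinates respectively. By (S3), the sets $\pi_1^{-1}(A) = A \times G \times G$ and $\pi_2^{-1}(B) = G \times B \times G$ are both in $\Sigma_3$. Intersecting these with each other and with $\Gamma$ (using that $\Sigma_3$ is a Boolean algebra), I get
\[
E := \{(a,b,c) \in G^{[3]} : a \in A,\ b \in B,\ ab = c\} \in \Sigma_3.
\]
Then I apply (S4) with the projection $\pi_3 : G^{[3]} \to G$ to the third coordinate, which gives $\pi_3(E) = AB \in \Sigma_1$, as desired.

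For the second assertion, I proceed by induction on $n$. The case $n = 1$ is trivial (and $A^0 = \{\id_G\}$ is definable since the diagonal is in $\Sigma_2$ by (S2), and projecting the intersection of the diagonal with $\{(\id_G,\id_G)\}$—or more cleanly, one handles $n \geq 1$ only, which is all we need). For the inductive step, assuming $A^n$ is definable, apply the first part of the lemma to the pair $(A^n, A)$ to conclude that $A^{n+1} = A^n \cdot A$ is definable. No step here is an obstacle; the lemma is essentially bookkeeping with the axioms of a structure, and the only mild subtlety is making sure to use (S3) to lift unary definable sets into a three-variable ambient space before intersecting with $\Gamma$ and projecting back down via (S4).
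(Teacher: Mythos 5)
Your argument is correct and is essentially identical to the paper's: lift $A$ and $B$ into $G^{[3]}$ via (S3), intersect with the graph $\Gamma$ of multiplication, and project to the last coordinate via (S4), then induct on $n$ for the second claim. The paper states the induction step even more tersely ("immediate from the first"), but the content is the same.
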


\begin{proof}
    Let $A,B$ be as in the statement of the lemma. By (S3) $A \times G \times G \subseteq G^{[3]}$ and $G \times B \times G \subseteq G^{[3]}$ are definable in $\Sigma$. Set 
    $$C :=(A \times G \times G) \cap (G \times B \times G) \cap \Gamma.$$
    Then $C \subseteq G^{[3]}$ is definable in $\Sigma$ by (S3) and the definition of an expansion.
  Note that $AB \subseteq G$ is the projection $C\subseteq G^{[3]}$ to the last coordinate, hence $AB$ is definable in $\Sigma$ by (S4). The second statement of the lemma is immediate from the first.
\end{proof}

For a collection $(A_i)_{i \in I}$
of subsets of $G$ there is clearly a smallest expansion $(G, \Sigma)$ of the group $G$ where, for each $i \in I$, the set $A_i$ is definable in $\Sigma$. We call this $(G, \Sigma)$ the expansion {\bf generated by} $(A_i)_{i \in I}$. An expansion $(G, \Sigma)$ of a group is {\bf $\aleph_1$-saturated} if the structure $\Sigma$ is $\aleph_1$-saturated. This property appears naturally here due to its relationship with ultra-product construction.

\begin{lemma} \label{lem: Saturatedofgeneratedexpansion}
    Suppose $(G, \Sigma)$ is an expansion of a group generated by a collection of pseudo-measurable subsets of $G$. Then $\Sigma$ is $\aleph_1$-saturated.
\end{lemma}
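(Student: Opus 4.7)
The plan is to show that every $\Sigma$-definable subset of $G^{[k]}$ is itself an ultraproduct of the form $\prod_\ult D_n$ with $D_n \subseteq G_n^{[k]}$, and then deduce $\aleph_1$-saturation directly from Fact~\ref{fact:saturationofultraproduct}. Write $G = \prod_\ult G_n$ and, for each $k$, let
\[
\Sigma'_k := \{ \textstyle\prod_\ult D_n : D_n \subseteq G_n^{[k]} \text{ for each } n \} \subseteq \cP(G^{[k]}).
\]
I will verify that $\Sigma' := (\Sigma'_k)_k$ is a structure on $G$ in the sense of (S1)--(S4), that it contains the graph of multiplication on $G$, and that each of the pseudo-measurable generators lies in $\Sigma'_1$. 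Since $\Sigma$ is the smallest such structure, this forces $\Sigma_k \subseteq \Sigma'_k$ for every $k$.

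For the structure axioms, I would use Fact~\ref{fact:ultrafacts} throughout. Boolean closure (S1) is exactly Fact~\ref{fact:ultrafacts}(3). The diagonal of $G^{[2]}$ is the ultraproduct of the diagonals of $G_n^{[2]}$, giving (S2). For (S3) and (S4), under the identification $G^{[n]} \cong \prod_\ult G_n^{[n]}$ coming from Fact~\ref{fact:ultrafacts}(4), any coordinate projection $\pi : G^{[n]} \to G^{[m]}$ is the ultraproduct of the coordinate projections $\pi_n : G_n^{[n]} \to G_n^{[m]}$; then $\pi^{-1}(\prod_\ult A_n) = \prod_\ult \pi_n^{-1}(A_n)$ is immediate, and $\pi(\prod_\ult A_n) = \prod_\ult \pi_n(A_n)$ is Fact~\ref{fact:ultrafacts}(5). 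The multiplication graph $\Gamma = \{(a,b,c) \in G^{[3]} : ab = c\}$ equals $\prod_\ult \Gamma_n$ where $\Gamma_n$ is the graph of multiplication in $G_n$, by construction of the group operation on $G$ in Fact~\ref{fact: Groupunderultraproduct}. The generators are in $\Sigma'_1$ by the very definition of pseudo-measurable sets.

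With $\Sigma_k \subseteq \Sigma'_k$ in hand, $\aleph_1$-saturation follows at once: given a sequence $(D^m)_{m \in \NN}$ of sets definable in $\Sigma_k$ with the finite intersection property, each $D^m$ is an ultraproduct $\prod_\ult D^m_n$, so Fact~\ref{fact:saturationofultraproduct} (applied to the sequence of ultraproducts $(D^m)_m$) yields $\bigcap_m D^m \neq \emptyset$.

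The only delicate point I anticipate is the bookkeeping around the identification $G^{[n]} \cong \prod_\ult G_n^{[n]}$ and the compatibility of coordinate projections with this identification; once that is cleanly set up, every step reduces to a direct invocation of Fact~\ref{fact:ultrafacts} or Fact~\ref{fact:saturationofultraproduct}. I do not foresee any genuine obstacle beyond this verification, since the key saturation input is already available off the shelf from the ultraproduct construction.
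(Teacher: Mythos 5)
Your proof is correct and takes essentially the same route as the paper: define $\Sigma'_k$ as the collection of ultraproducts $\prod_\ult D_n$ with $D_n \subseteq G_n^{[k]}$, verify via Fact~\ref{fact:ultrafacts} and Fact~\ref{fact: Groupunderultraproduct} that $(G,\Sigma')$ is an expansion containing the generators, conclude $\Sigma_k \subseteq \Sigma'_k$ by minimality, and then invoke Fact~\ref{fact:saturationofultraproduct}. You spell out the verification of axioms (S1)--(S4) in more detail than the paper (which simply cites Fact~\ref{fact:ultrafacts}), but the argument is the same.
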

\begin{proof}
Suppose $G$ is built from the sequence $(G_n)$ of compact groups and a nonprincipal ultraproduct $\ult$.
For each $k$, let $\Sigma'_k$ be the collection of subsets of $G^{[k]}$ of the form $C = \prod_\ult C_n$    with $C_n \subseteq (G_n)^{[k]}$. It follows from Fact~\ref{fact:ultrafacts}, that $\Sigma': =(\Sigma'_k)$ is a structure. Note that the graph $\Gamma$ is definable in $\Sigma'$, and by definition $A$, $B$, and $S$ are also definable in $\Sigma'$ as they are pseudo-measurable. Thus, we have $\Sigma_k \subseteq \Sigma'_k$ for all $k$. By Fact~\ref{fact:saturationofultraproduct}, $\Sigma'$ is $\aleph_1$-saturated, which implies $\Sigma$ is $\aleph_1$-saturated.
\end{proof}

We now arrive at the main concept which will allow us to use the model-theoretic machinery later on.
Suppose $G$ be a group, and $(G, \Sigma)$ is an expansion of $G$. We say that $S$ is {\bf definably amenable} in $(G, \Sigma)$ if there is a finitely-additive left-invariant measure $\mu$ on $\langle S \rangle$ satisfying:
\begin{enumerate}
    \item[(DA1)] $\mu(S)=1$
    \item[(DA2)] Every $D \subseteq \langle S \rangle$ definable in $\Sigma$ is $\mu$-measurable.
\end{enumerate}
In view of Lemma~\ref{Lemma: productaredefinable}, one can see this as a strengthening of the condition that $S^n$ is measurable for each $n >0$. 
The next lemma explain why this can arise in our setting.

\begin{lemma} \label{lem: Definablyamenablearise}
Suppose $(G, \Sigma)$ is an expansion of a pseudo-compact group such that every $D \subseteq G$ definable in $\Sigma$ is pseudo-measurable, and there is a pseudo Haar measure $\mu$ such that $0< \mu(S)< \infty$. 
 Then $S$ is definably amenable in $(G, \Sigma)$.
\end{lemma}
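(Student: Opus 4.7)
The plan is to take the given pseudo-Haar measure $\mu$ and normalize it by $\mu(S)$, then restrict to subsets of $\langle S \rangle$. Concretely, define
\[
\nu(A) := \frac{\mu(A)}{\mu(S)} \quad \text{for every pseudo-measurable } A \subseteq \langle S \rangle,
\]
taking the domain of $\nu$ to be the Boolean algebra of pseudo-measurable subsets of $\langle S \rangle$. Since $0 < \mu(S) < \infty$ by hypothesis, this is well-defined. By Lemma~\ref{lem: ExistenceofpseudoHaar} together with (PH2), $\mu$ is a countably additive complete measure on the $\sigma$-algebra it generates, so $\nu$ inherits $\sigma$-additivity, and in particular finite additivity, on its domain.

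Next I would verify (DA1) and (DA2). Condition (DA1) is the immediate calculation $\nu(S) = \mu(S)/\mu(S) = 1$. For (DA2), note that any $D \subseteq \langle S \rangle$ that is definable in $\Sigma$ is, a fortiori, a definable subset of $G$; the standing hypothesis then forces $D$ to be pseudo-measurable, and hence to lie in the domain of $\nu$. The last item is left-invariance: given $g \in \langle S \rangle$ and a pseudo-measurable $A \subseteq \langle S \rangle$, the translate $gA$ is contained in $\langle S \rangle$ because $\langle S \rangle$ is a subgroup, and Lemma~\ref{invarianceofpseudoHaar} guarantees that $gA$ is pseudo-measurable with $\mu(gA) = \mu(A)$; dividing by $\mu(S)$ yields $\nu(gA) = \nu(A)$, as required.

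The argument is essentially bookkeeping, and there is no real obstacle: the substantive content is already present in Lemma~\ref{lem: ExistenceofpseudoHaar} and Lemma~\ref{invarianceofpseudoHaar}. The one point worth flagging is that the hypothesis "every $\Sigma$-definable subset of $G$ is pseudo-measurable" is exactly what makes (DA2) automatic, so no Hahn--Banach-type extension of $\nu$ beyond the pseudo-measurable sets is needed; without this hypothesis, one would have to invoke an invariant-mean extension on $\langle S \rangle$ to capture the definable sets.
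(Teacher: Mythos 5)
Your proof is correct and takes the same route as the paper: both normalize the pseudo-Haar measure by $\mu(S)$ on the Boolean algebra of (pseudo-)measurable subsets of $\langle S\rangle$ and observe that (DA1), (DA2), and left-invariance (via Lemma~\ref{invarianceofpseudoHaar}) follow immediately. The paper's version is more terse — it simply defines $\tilde\mu(D)=\mu(D)/\mu(S)$ and declares the conclusions immediate — whereas you spell out the verification of each condition, which is a reasonable expansion of the same argument.
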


\begin{proof}
Recall that every pseudo measurable set is $\mu$-measurable.  Define the finitely additive measure $\nu$ on $\langle S \rangle$ by setting $\Tilde{\mu}(D) = \mu(D)/\mu(S)$ for $D \subseteq \langle S \rangle$ definable in $\Sigma$.  The desired conclusions are immediate.
\end{proof}

We now discuss how to obtain the conditions for Lemma~\ref{lem: Definablyamenablearise} in our setting. Recall that $X$ is {\bf semialgebraic} if it is a finite union of the solution sets of systems of algebraic inequalities. 
The following fact is a restatement of the Tarski-Seidenberg theorem~\cite[(2.10)]{Loubook}.

\begin{fact} \label{Fact: Tarski-Seidenberg}
Let  $\cD_n$ be the collection of semialgebraic subsets of $\RR^n$. Then $\Sigma: =(\Sigma_n)$ is a structure on $\RR$.
\end{fact}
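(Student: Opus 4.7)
The plan is to verify the four axioms (S1)--(S4) for $\Sigma = (\Sigma_n)$, where $\Sigma_n$ consists of the semialgebraic subsets of $\RR^n$. Three of these axioms follow essentially from the definition, while the fourth is the substantive content and the reason this statement is attributed to Tarski--Seidenberg in the first place.

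For (S1), semialgebraic sets are by definition finite unions of basic semialgebraic sets, i.e., solution sets of finite systems of polynomial equations and strict inequalities, so closure of $\Sigma_n$ under finite union is built in. Closure under finite intersection follows by concatenating defining systems, and closure under complement reduces to observing that the negation of $p > 0$ is $\{p < 0\} \cup \{p = 0\}$ and the negation of $p = 0$ is $\{p > 0\} \cup \{p < 0\}$, both semialgebraic. For (S2), the diagonal $\{(a,a): a \in \RR\} \subseteq \RR^2$ is cut out by the single polynomial equation $x_1 - x_2 = 0$, hence belongs to $\Sigma_2$. For (S3), pulling back a semialgebraic $A \subseteq \RR^m$ along a coordinate projection $\pi: \RR^n \to \RR^m$ just reinterprets the Boolean combination of polynomial inequalities defining $A$ as a Boolean combination of polynomial inequalities in the chosen $m$ of the $n$ coordinates, which is still semialgebraic.

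The main obstacle is (S4), closure of semialgebraic sets under coordinate projection. This is the substantive assertion of Tarski--Seidenberg and is equivalent to quantifier elimination for the first-order theory of real closed fields. I would reduce it to the one-variable case: by iterating, it suffices to show that the image of a semialgebraic subset of $\RR^{n+1}$ under the projection forgetting the last coordinate is semialgebraic in $\RR^n$. For this, one partitions the given set according to the sign patterns of the finite collection of polynomials $p_1, \dots, p_k \in \RR[x_1,\dots,x_n, t]$ appearing in its defining formula, viewed as polynomials in $t$ with coefficients in $\RR[x_1,\dots,x_n]$. Then, using Sturm's theorem or a Thom encoding, one expresses the condition ``there exists $t \in \RR$ realizing the given sign pattern'' as a Boolean combination of polynomial conditions on the coefficients, hence on $(x_1,\dots,x_n)$. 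Since every semialgebraic subset of $\RR^{n+1}$ is a finite union of such sign-pattern strata and projection commutes with finite union, this yields (S4).

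With (S1)--(S4) in hand, $\Sigma$ satisfies the defining conditions of a structure on $\RR$, completing the verification. The details of the (S4) step are classical and are exactly what~\cite[(2.10)]{Loubook} provides, so in practice the proof reduces to the trivial checks for (S1)--(S3) plus a citation for the projection step.
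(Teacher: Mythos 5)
The paper offers no proof of this fact; it simply observes that it is a restatement of the Tarski--Seidenberg theorem and cites \cite[(2.10)]{Loubook}. Your verification of (S1)--(S3) and your reduction of (S4) to the one-variable projection step (via sign patterns and Sturm/Thom encodings) is the standard argument underlying that citation, so your proposal is correct and essentially coincides with the paper's approach.
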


We say $X \subseteq \RR^n$ is {\bf algebraic} if it is the solution set of a system of polynomial equation with coefficient in $\RR$. Identifying the underlying set of the general linear group $\Gld$ in the obvious way as an algebraic subset of $\RR^{d^2}$, we say that $G \leq \Gld$ is an {\bf algebraic subgroup} of $\Gld$ if its underlying set is algebraic. 
A subset of $G^{[n]}$ is {\bf semi-algebraic} if it is semialgebraic under the obvious identification of $G^{[n]}$ with a subset of $\RR^{nd^2}$. The Tarski-Seidenberg theorem translates into the following lemma for $G$:

\begin{lemma} \label{Lemma: Tarski-Seidenberg2}
Let $G$ is an algebraic subgroup of $\Gld$, and $\cD_n$ the collection of semialgebraic subsets of $G^{[n]}$. Then $(G, \Sigma)$ is an expansion of the group $G$ where we set $\Sigma: =(\Sigma_n)$.
\end{lemma}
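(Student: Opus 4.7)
The plan is to deduce Lemma~\ref{Lemma: Tarski-Seidenberg2} directly from Fact~\ref{Fact: Tarski-Seidenberg} by verifying the axioms (S1)--(S4) together with definability of the multiplication graph, transferring each condition from $\RR^{nd^2}$ down to $G^{[n]}$. Concretely, fix the inclusions $G \hookrightarrow \Gld \hookrightarrow M_{d,d}(\RR) = \RR^{d^2}$, and for each $n$ the induced inclusion $G^{[n]} \hookrightarrow \RR^{nd^2}$. Because $G$ is assumed algebraic, $G^{[n]}$ is an algebraic subset of $\RR^{nd^2}$, so in particular semialgebraic. Observe that $\Sigma_n$ (the semialgebraic subsets of $G^{[n]}$) equals $\{X\cap G^{[n]} : X \subseteq \RR^{nd^2} \text{ semialgebraic}\}$.

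First I would verify (S1): since intersecting with the fixed semialgebraic set $G^{[n]}$ commutes with Boolean operations, the transfer from Fact~\ref{Fact: Tarski-Seidenberg} shows $\Sigma_n$ is a Boolean subalgebra of $\cP(G^{[n]})$. For (S2), the diagonal $\{(a,a) : a\in G\}$ is cut out inside $G^{[2]}$ by the polynomial equations $x_{ij}=y_{ij}$ on the matrix entries, hence is even algebraic. For (S3) and (S4), any coordinate projection $\pi \colon G^{[n]} \to G^{[m]}$ is the restriction of the corresponding linear (hence polynomial) coordinate projection $\widetilde{\pi} \colon \RR^{nd^2}\to \RR^{md^2}$. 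Preimages of semialgebraic sets under polynomial maps are semialgebraic (a direct consequence of Fact~\ref{Fact: Tarski-Seidenberg} applied to the semialgebraic graph of $\widetilde{\pi}$), giving (S3). Images of semialgebraic sets under polynomial maps are semialgebraic, which is precisely the content of the Tarski--Seidenberg theorem, giving (S4), after one notes that if $A\subseteq G^{[n]}$ is semialgebraic in $\RR^{nd^2}$, then $\widetilde{\pi}(A) \subseteq G^{[m]}$ automatically.

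Finally, the multiplication graph $\Gamma = \{(a,b,c)\in G^{3} : ab=c\}$ is cut out of $G^{[3]}$ by the $d^2$ polynomial equations
\[
c_{ij} = \sum_{k=1}^{d} a_{ik} b_{kj}, \qquad 1 \leq i,j \leq d,
\]
in the entries of $a,b,c$. Thus $\Gamma$ is algebraic in $\RR^{3d^2}$, hence semialgebraic in $G^{[3]}$, i.e.\ an element of $\Sigma_3$, and so $(G,\Sigma)$ meets the definition of an expansion of the group $G$.

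There is no serious obstacle; the whole argument is a routine transfer from $\RR^{nd^2}$ to its algebraic subset $G^{[n]}$, once one unpacks the definition of ``semialgebraic in $G^{[n]}$'' and applies Fact~\ref{Fact: Tarski-Seidenberg}. The only mild care required is to check that the coordinate projections on $G^{[n]}$ really are restrictions of polynomial maps on $\RR^{nd^2}$ so that Tarski--Seidenberg applies verbatim, and that matrix multiplication is genuinely polynomial in the entries, which gives definability of $\Gamma$.
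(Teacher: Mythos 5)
Your proof is correct and follows essentially the same route as the paper's: embed $G^{[n]}$ into $\RR^{nd^2}$, observe that the diagonal, the coordinate projections, and the multiplication graph are all (restrictions of) polynomial objects, and then invoke the Tarski--Seidenberg structure on $\RR$ (Fact~\ref{Fact: Tarski-Seidenberg}) to transfer (S1)--(S4). Your writeup is somewhat more explicit about checking each axiom, but the underlying argument is the same.
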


\begin{proof}
Note that the diagonal $\{(g, g): g \in G\}$ can be obtained as the intersection of $G \times G$ and an $n$-times Cartersian product of the diagonal $\{(x, x): x \in \RR \}$. Also note that the projection $G^{[n]} \to G^{[m]}$ to $m$ out of $n$ coordinates can be obtained from a suitable projection $\RR^{nd^2} \to \RR^{md^2}$. Finally, the graph of multiplication in $G$ is semialgebraic, in fact, algebraic.
The conclusion then follows easily from Fact~\ref{Fact: Tarski-Seidenberg}.
\end{proof}

We next lemma allow us to replace small-growth pairs $(A, B)$ with semialgebraic ones.

\begin{lemma} \label{lem: semialgebraicaproximate0}
    Suppose $G$ is an algebraic subgroup of $\Gld$, $\mu$ is a Haar measure of $G$, and $A, B \subseteq G$ are such that $A$,  $B$, and $AB$ are measurable,  $\mu(A)/N < \mu(B) < N \mu(A)$, and $\BM(A,B) \leq r$. Then for all $\epsilon>0$, there are semialgebraic $A', B' \subseteq G$ such that $\mu(A)/(1+\epsilon) < \mu(A') < (1+\epsilon) \mu(A)$,  $\mu(B)/(1+\epsilon) < \mu(B') < (1+\epsilon) \mu(B)$, and $\BM(A',B') \leq r+\epsilon$.
\end{lemma}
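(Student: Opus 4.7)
The plan is to approximate $A$ and $B$ from inside by compact subsets, and then approximate these compact subsets from outside by semialgebraic open sets in $G$, chosen so that the product $A'B'$ stays inside a prescribed slightly larger neighborhood of $AB$. The semialgebraic sets will be built as finite unions of open Euclidean balls in $\RR^{d^2}$ intersected with $G$; such intersections are semialgebraic in $G$ by Lemma~\ref{Lemma: Tarski-Seidenberg2} together with Fact~\ref{Fact: Tarski-Seidenberg}, since open balls are semialgebraic and $G$ is algebraic in $\Gld$.

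Fix an auxiliary $\delta > 0$ to be chosen small at the end. By inner regularity of the Haar measure, pick compact $K_A \subseteq A$ and $K_B \subseteq B$ with $\mu(A) - \delta < \mu(K_A) \leq \mu(A)$ and $\mu(B) - \delta < \mu(K_B) \leq \mu(B)$. The product $K_A K_B$ is compact, satisfies $\mu(K_A K_B) \leq \mu(AB)$, and by outer regularity is contained in some open $W$ with $\mu(W) < \mu(AB) + \delta$. Continuity of multiplication together with compactness of $K_A \times K_B$ (via the tube lemma applied to $m^{-1}(W) \supseteq K_A \times K_B$) yields an open symmetric neighborhood $\Omega_0$ of $\id_G$ such that $(K_A \Omega_0)(\Omega_0 K_B) \subseteq W$; shrinking $\Omega_0$ further along a decreasing sequence of neighborhoods and using continuity of $\mu$, we may moreover arrange $\mu(K_A \Omega_0) < (1+\delta)\mu(K_A)$ and $\mu(\Omega_0 K_B) < (1+\delta)\mu(K_B)$. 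Cover $K_A$ by finitely many open Euclidean balls $U_1, \ldots, U_k$ in $\RR^{d^2}$ with $U_i \cap G \subseteq K_A \Omega_0$, and cover $K_B$ similarly by balls $V_1, \ldots, V_\ell$ with $V_j \cap G \subseteq \Omega_0 K_B$. Define
\[
A' := \left(\bigcup_{i=1}^{k} U_i\right) \cap G \quad \text{and} \quad B' := \left(\bigcup_{j=1}^{\ell} V_j\right) \cap G.
\]

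Both $A'$ and $B'$ are semialgebraic; by construction $K_A \subseteq A' \subseteq K_A \Omega_0$ and $K_B \subseteq B' \subseteq \Omega_0 K_B$, so $A'B' \subseteq (K_A \Omega_0)(\Omega_0 K_B) \subseteq W$, giving $\mu(A'B') < \mu(AB) + \delta$. The measure bounds yield $\mu(A) - \delta < \mu(A') < (1+\delta)\mu(A)$ and the analogous inequalities for $B'$. Since the Brunn--Minkowski growth is a continuous function of the three measures involved on the region where these measures stay bounded away from $0$ and $\infty$, choosing $\delta$ small enough (depending on $\epsilon$, $r$, $N$, $\mu(A)$) forces simultaneously $\mu(A)/(1+\epsilon) < \mu(A') < (1+\epsilon)\mu(A)$, the analogous bounds for $B'$, and $\BM(A', B') \leq r + \epsilon$.

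The main obstacle is the choice of the neighborhood $\Omega_0$: compactness of $K_A, K_B$ together with continuity of multiplication lets us control, simultaneously, the product $A'B'$ (so that it stays inside $W$) and the individual measures $\mu(A')$, $\mu(B')$ (so they stay close to $\mu(A)$, $\mu(B)$). Everything else is routine Borel-regularity bookkeeping together with the observation that open sets in $\RR^{d^2}$ can be exhausted by finite unions of semialgebraic balls.
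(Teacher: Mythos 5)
Your proof is correct and follows essentially the same route as the paper's: pass to compact subsets by inner regularity, control $\mu(AB)$ from above by outer regularity, use a compactness argument to choose neighborhoods whose product stays inside that outer approximation, cover by Euclidean balls to get semialgebraic sets, and finish with continuity of the Brunn--Minkowski exponent. The only cosmetic difference is that the paper organizes this as two separate claims (compact approximation, then compact-to-semialgebraic) and uses an invariant metric on $G$ where you invoke a tube-lemma/uniform-continuity argument for the multiplication map; these are interchangeable.
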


\begin{proof}
   We will deduce the lemma from two weaker statements.\medskip

   \noindent {\bf Claim 1.} The statement of the lemma holds if we replace the semialgebraic requirement on $A'$ and $B'$ with the requirement that they are compact.

\medskip

   \noindent{\it Proof of claim 1.}  Recall that $\mu$ is the completion of its restriction to Borel sets and $\mu$ has outer regularity. Applying these properties to $G\setminus A$ and $G\setminus B$, for any $\delta$ can we obtain compact $A' \subseteq A$ and $B' \subseteq B$ such that $\mu(A') > \mu(A)/(1+\delta)$ and $\mu(B') > \mu(A)/(1+\delta)$. Choosing $\delta$ sufficiently small, we see that the desired properties are satisfied.
   \hfill $\bowtie$
\medskip

    \noindent {\bf Claim 2.} The statement of the lemma holds if we add the assumption that $A$ and $B$ are compact.

\medskip

   \noindent{\it Proof of claim 2.} By a similar argument as in the proof of Claim $1$, we choose open $U \subseteq G$ containing $AB$ such that 
   $\mu(U)< (1+\delta) \mu(AB)$
   where we will determine $\delta$ later.
   Let $d$ be an invariant distance on $G$. Set $ d_0 = d(AB, G \setminus U)$. Let $U_A= \{ x \in G: d(x, A)  < d_0/ 2\}$ and  $U_B= \{ x \in G: d(x, A)  < d_0/2\}$. Then $U_AU_B \subseteq U$. As $A$ is compact, we can choose $A'$ which is a union of finitely many Euclidean open balls in $\Gld$ intersecting $G$ such that $A \subseteq A' \subseteq U_A$. Choose $B'$ similarly such that $B \subseteq B' \subseteq U_B$. With $\delta$ small enough, we see that the conditions are satisfied.
   \hfill $\bowtie$\medskip

Apply Claim 1 and then Claim 2 suitably, we get the statement of the Lemma.
\end{proof}

The next lemma allow us to replace approximate groups with semialgebraic ones.

\begin{lemma} \label{lem: semialgebraicaproximate}
    Suppose $G$ is an algebraic subgroup of $\Gld$, and $S \subseteq G$ is an open $K$-approximate group. Then there is a semialgebraic open $K^3$-approximate group $S'$ such that  $S \subseteq S'\subseteq S^2$. The set $S'$ is definably amenable with respect to some left Haar measure of $G$.
\end{lemma}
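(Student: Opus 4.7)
The plan is to build $S'$ as a finite union of translates of a small semialgebraic open symmetric neighborhood $V$ of the identity. Since $G$ is an algebraic subvariety of $\Gld$, it carries a natural structure of a semialgebraic manifold, and in particular $\id_G$ admits a semialgebraic chart. Using that $S$ is open and $\id_G\in S$, this lets us pick a semialgebraic open neighborhood $V$ of $\id_G$ with $V\subseteq S$; symmetry ($V=V^{-1}$) can be enforced by intersecting with $V^{-1}$, which is semialgebraic because inversion is a polynomial map on $\Gld$.

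With $V$ in hand, for each $s\in S$ the translate $sV$ is an open semialgebraic set contained in $SV\subseteq S^2$, and the family $\{sV\}_{s\in S}$ covers $S$. Provided $\overline{S}$ is compact---automatic in the compact-group setting driving the paper---I would extract a finite subcover $\overline{S}\subseteq \bigcup_{i=1}^n s_iV$ with $s_i\in S$ (including $s_1=\id_G$) and put
\[
S':=\bigcup_{i=1}^n s_iV \ \cup\ \bigcup_{i=1}^n V s_i^{-1}.
\]
This $S'$ is open, symmetric by construction, and semialgebraic (a finite union of left and right translates of a semialgebraic set, both operations being given by polynomial maps in $G$). Moreover $\id_G \in S\subseteq S'$, the cover gives $S\subseteq\overline{S}\subseteq S'$, and $S'\subseteq S^2\cup(S^2)^{-1}=S^2$ using $S=S^{-1}$.

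To verify the $K^3$-approximate group condition I would apply Lemma~\ref{lem: interativesmallgrowthapproximategroups} with $n=4$ to obtain $g_1,\dots,g_{K^3}\in G$ with $S^4\subseteq \bigcup_j g_j S$, and then conclude $(S')^2\subseteq S^4 \subseteq \bigcup_j g_j S \subseteq \bigcup_j g_j S'$. For the definable amenability clause, I would fix a left Haar measure $\lambda$ on $G$ (finite on $S'$ since $S'$ has compact closure and $\lambda$ is locally finite) and set $\mu(D):=\lambda(D)/\lambda(S')$ for definable $D\subseteq\langle S'\rangle$: this is finitely additive and left-invariant with $\mu(S')=1$, and every semialgebraic (hence definable) subset of $G$ is Borel and thus $\mu$-measurable, so (DA1) and (DA2) both hold.

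The main obstacle I anticipate is guaranteeing the existence of a finite subcover of $\overline{S}$ by translates of $V$: this is transparent for compact $G$ (the paper's central case), but in the general noncompact algebraic setting one must first intersect $S$ with a large compact set and track how the approximate-group constants degrade, which is why a modest loss from $K$ to $K^3$ appears in the statement.
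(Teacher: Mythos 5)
Your construction matches the paper's in all essentials: both cover $\overline{S}$ by finitely many semialgebraic open sets contained in $S^2$ (the paper uses Euclidean balls intersected with $G$, you use translates of a small semialgebraic identity neighborhood $V$), then symmetrize, then get the $K^3$ bound from $(S')^2\subseteq S^4$ and Lemma~\ref{lem: interativesmallgrowthapproximategroups}, and finally normalize a Haar measure on $S'$ for definable amenability. Two small points worth tightening: (1) you assert that $\{sV\}_{s\in S}$ covers $S$ and then extract a finite subcover of $\overline{S}$; this works, but requires the observation that for $a\in\overline{S}$, the open neighborhood $aV^{-1}=aV$ of $a$ meets $S$, so $a\in sV$ for some $s\in S$ --- i.e.\ the family genuinely covers $\overline{S}$, not just $S$. (2) Your final remark misattributes the $K\to K^3$ loss to noncompactness concerns; it comes purely from $(S')^2\subseteq S^4$ needing $K^3$ translates of $S$ by Lemma~\ref{lem: interativesmallgrowthapproximategroups}, and the paper's proof does in fact assume $G$ compact (as it is in the only application, where the $G_n$ are compact Lie groups).
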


\begin{proof}
    Let $\overline{S}$ be the closure of $S$ with respect to the topology in $G$. We first note that $\overline{S} \subseteq S^2$. Indeed, the open neighborhood $aS^{-1}$ of $a \in \overline{S}$ contains a point in $a' \in S$, so $a \in a'S \subseteq S^2$. 
    
    Choose an open cover $(U_i)_{i \in I}$ of $\overline{S}$ such that for each $i$ each $U_i \subseteq S^2$,  and $U_i$ is of the form $B_\epsilon \cap G$ with $B_\epsilon$ a ball of radius $\epsilon$ in $\RR^{d^2}$. Since $G$ is compact, $\overline{S}$ is also compact. Choose a finite subcover of  $(U_i)_{i \in I}$  and take $S'$ to be their union. It is clear that $S \subseteq S' \subseteq S^2$, and $S'$ is semialgebraic. Replace $S'$ by $S' \cup (S')^{-1}$, we can make $S'$ symmetric.

    Finally, note that $(S')^2 \subseteq S^4$, which can be covered by $K^3$ left-translates of $S$, and in turn covered by $K^3$ left-translates of $S'$.   

    We note that $S'$ is an open subset of $G$, so we can choose a left Haar measure $\mu$ of $G$ such that $\mu(S')=1$. By Lemma~\ref{Lemma: Tarski-Seidenberg2}, every subset of $G$ definable in $(G,S)$ is semi-algebraic. In particular, they are Borel, and hence $\mu$-measurable.
\end{proof}

To link to compact Lie groups, we also need the following fact~\cite[Theorem 12.3.9]{hilgert}.

\begin{fact} \label{Fact: compactLierealalgebraic}
    Every compact Lie group is isomorphic as a topological group to an algebraic subgroup of $\Gld$ for some $d$.
\end{fact}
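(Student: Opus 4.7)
The plan is to invoke the Peter--Weyl theorem to realize $G$ as a compact matrix group, and then show that such a realization is automatically cut out by polynomial equations. First, by Peter--Weyl, the matrix coefficients of finite-dimensional continuous unitary representations of $G$ separate points, so finitely many irreducible representations can be assembled into a faithful continuous embedding $\rho: G \hookrightarrow \mathrm{U}(V)$ for some finite-dimensional complex vector space $V$. Viewing $V$ as a real vector space of dimension $d$, this gives a faithful continuous homomorphism $\rho: G \hookrightarrow \Gld$, and compactness of $\rho(G)$ together with the standard averaging-of-inner-products trick lets us conjugate so that $\rho(G)\subseteq \mathrm{O}(d,\RR)$. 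Since $\mathrm{O}(d,\RR)$ is itself algebraic, it suffices to show that $\rho(G)$ is Zariski-closed in $\mathrm{O}(d,\RR)$.

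The route I would take for the algebraicity is Tannakian reconstruction. For each pair $(a,b)$ of nonnegative integers, consider the tensor representation of $G$ on $W_{a,b} := V^{\otimes a}\otimes (V^*)^{\otimes b}$, and let $W_{a,b}^G$ denote the subspace of $G$-invariant tensors. Define
\[
H \ :=\ \{\, g \in \mathrm{GL}(V) \ :\ g \text{ fixes every vector in } W_{a,b}^G \text{ for all } a,b \geq 0 \,\}.
\]
Each defining condition is polynomial in the matrix entries of $g$, and an arbitrary intersection of algebraic sets is algebraic by Noetherianity, so $H$ is a real algebraic subgroup of $\mathrm{GL}(V)$. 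The inclusion $\rho(G)\subseteq H$ is immediate from the definition of $W_{a,b}^G$, so the whole game reduces to the reverse inclusion $H\subseteq \rho(G)$.

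For this reverse inclusion I would argue as follows. Because $G$ is compact, every finite-dimensional continuous representation of $G$ is completely reducible, and by Peter--Weyl combined with the theorem of the highest weight (or a direct orthogonality computation) every irreducible representation of $G$ appears as a subrepresentation of some $\rho^{\otimes a}\otimes (\rho^*)^{\otimes b}$. Now suppose $h\in H\setminus \rho(G)$. Since $\rho(G)$ is closed in the compact Hausdorff space $\mathrm{O}(d,\RR)$, we can separate $h$ from $\rho(G)$ by a continuous function on $\mathrm{O}(d,\RR)$, and restricting to $\rho(G)\cup\{h\}$ and using Peter--Weyl density of matrix coefficients in $C(\rho(G))$, we can arrange the separating function to be a matrix coefficient of some tensor representation $W_{a,b}$. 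But the existence of such a matrix coefficient contradicts that $h$ acts trivially on every $G$-invariant in every $W_{a,b}$; unwinding this via the standard Tannakian dictionary (identifying $\rho(G)$ with the stabilizer of the full invariants functor on tensor representations) produces the desired contradiction.

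The main obstacle is precisely this last implication: converting the purely algebraic condition ``$h$ fixes all $G$-invariant tensors in all $W_{a,b}$'' into the stronger conclusion ``$h\in \rho(G)$''. This uses in an essential way both the compactness of $G$ (to guarantee complete reducibility and density of matrix coefficients) and the full strength of the Peter--Weyl theorem; neither can be dispensed with, which is why the analogous statement for noncompact real Lie groups fails. Once this step is in place, combining it with the Peter--Weyl embedding in the first paragraph yields the stated realization of $G$ as an algebraic subgroup of $\Gld$, with the continuous isomorphism $G\to \rho(G)$ being automatically a homeomorphism by compactness.
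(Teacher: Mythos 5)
The paper does not prove this statement; it is cited as \cite[Theorem 12.3.9]{hilgert} and used as a black box. So there is no paper proof to compare against, and the question is whether your outline is sound on its own.

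Your first paragraph is fine: Peter--Weyl plus the no-small-subgroups property of Lie groups gives a faithful finite-dimensional representation, and averaging the inner product lets you assume $\rho(G)\subseteq \mathrm{O}(d,\RR)$, which is algebraic. The genuine gap is in the second half, where you need $H\subseteq\rho(G)$. You define $H$ by an explicit polynomial condition and then wave at ``the standard Tannakian dictionary'' to close the argument, but this step is precisely where all the content lives and it is not filled in. Two concrete things are missing. First, you invoke ``$\rho(G)$ is closed in the compact Hausdorff space $\mathrm{O}(d,\RR)$'' when separating $h$ from $\rho(G)$, but you never establish $h\in\mathrm{O}(d,\RR)$; you should observe that the Euclidean inner product on $V$ is itself a $G$-invariant tensor in $W_{0,2}$, so any $h\in H$ fixes it and hence $H\subseteq\mathrm{O}(d,\RR)$. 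Second, and more seriously, separating the single point $h$ from $\rho(G)$ and approximating the separating function by a matrix coefficient does not directly contradict ``$h$ fixes all $G$-invariant tensors'': an arbitrary matrix coefficient $x\mapsto\xi(x\cdot w)$ of $W_{a,b}$ need not have $w\in W_{a,b}^G$, so knowing $h$ fixes $W_{a,b}^G$ tells you nothing about its value. The missing mechanism is right-$G$-averaging: take a Urysohn function separating the compact sets $h\,\rho(G)$ and $\rho(G)$ (cosets, not the single point), approximate it uniformly on $\mathrm{O}(d,\RR)$ by a polynomial $p$ via Stone--Weierstrass, and then form $q(x):=\int_G p(xg)\,dg$. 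Since right translation by $\mathrm{O}(d,\RR)$ preserves degree, $q$ is again a polynomial, and its decomposition into matrix coefficients involves only vectors $w\in W_{a,b}^G$; in particular $q(h)=q(\mathrm{id})$ for every $h\in H$. But $q(h)>2/3$ and $q(\mathrm{id})<1/3$ by construction, a contradiction. Alternatively (and more economically, bypassing Tannaka altogether), one can take the Zariski closure $\bar G$ of $\rho(G)$, note $\bar G(\RR)\subseteq\mathrm{O}(d,\RR)$ is compact because preserving the inner product is a Zariski-closed condition, and run exactly the same averaging-plus-Stone--Weierstrass argument directly against a hypothetical $h\in\bar G(\RR)\setminus\rho(G)$: the resulting $q-q(\mathrm{id})$ vanishes on the Zariski-dense subset $\rho(G)$ of $\bar G$ and hence on $h$, which is the contradiction. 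Either route is fine, but as written the ``Tannakian dictionary'' step is a placeholder, not a proof, and this is where a reader would get stuck.
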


Let $G$ be a pseudo-compact group built from a sequence $(G_n)$ of compact groups and a nonprincipal ultrafiter $\ult$.
We say that $G$ is {\bf pseudo-Lie} if each $G_n$ is also a Lie group.
We now prove the main proposition of this section.

\begin{proposition} \label{Prop: Approximategroupsfromsmallgrowth}
    Suppose $G$ is a pseudo-Lie pseudo-compact group, and $A, B \subseteq G$ are such that $A$,$B$, and $AB$ are pseudo-measurable, $A$ and  $B$ are commensurable, and $\BM(A,B) \leq r$. Then for each $\epsilon>0$, there are $A', B', S \subseteq G$ such that the following conditions hold:
    \begin{enumerate}
        \item Let $(G, \Sigma)$ be the expansion of the group $G$ generated by $A'$, $B'$, and $S$. Then every $D \subseteq G$ definable in $\Sigma$ is pseudo-measurable.  It follows that $S$ is definably amenable in $(G, \Sigma)$.
        \item $A, B, A', B', S$ are all commensurable to one another
        \item $BM(A', B') \leq r+\epsilon$
        \item $S$ is an approximate groups
        \item  $A'$ can be covered by finitely many left-translates of $S$, and $B'$ can be covered by finitely many right-translates of $S$
    \end{enumerate}
\end{proposition}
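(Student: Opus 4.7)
The plan is to work pointwise in each factor $G_n$ of the ultraproduct representation $G = \prod_\ult G_n$, producing semialgebraic approximations of $A_n$, $B_n$, and of the associated approximate group, and then to pass back to the ultraproduct. By hypothesis each $G_n$ is a compact Lie group, so Fact~\ref{Fact: compactLierealalgebraic} lets us identify $G_n$ with an algebraic subgroup of some $\mathrm{GL}(d_n, \RR)$, and Lemma~\ref{Lemma: Tarski-Seidenberg2} equips each $G_n$ with the structure of semialgebraic sets. Write $A = \prod_\ult A_n$, $B = \prod_\ult B_n$ with $A_n, B_n \subseteq G_n$ measurable, and let $\mu$ be a pseudo-Haar measure witnessing commensurability, so there is a constant $N$ with $\mu_n(A_n)/N \leq \mu_n(B_n) \leq N \mu_n(A_n)$ for a.e.\! $n$. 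Applying Fact~\ref{fact:behavioroflimit}(4) to the continuous function computing $\BM$ from $\mu_n(A_n)$, $\mu_n(B_n)$, $\mu_n(A_n B_n)$, the hypothesis $\BM(A,B)\leq r$ yields $\BM(A_n, B_n) \leq r + \epsilon/4$ for a.e.\! $n$.

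In each $G_n$ I carry out three successive steps. First, apply Lemma~\ref{lem: semialgebraicaproximate0} to $(A_n, B_n)$ to obtain semialgebraic $\widetilde{A}_n, \widetilde{B}_n \subseteq G_n$ whose measures agree with those of $A_n, B_n$ up to factor $1 + \epsilon/4$ and with $\BM(\widetilde{A}_n, \widetilde{B}_n) \leq r + \epsilon/2$. Second, apply Fact~\ref{fact: FromTaopaper} to $(\widetilde{A}_n, \widetilde{B}_n)$ to obtain an open $K$-approximate group $S_n \subseteq G_n$ with $K = O_{N, r, \epsilon}(1)$ independent of $n$, with $\mu_n(S_n) \sim_K \mu_n(\widetilde{A}_n)$, and with $\widetilde{A}_n$ covered by $O_K(1)$ left translates of $S_n$ and $\widetilde{B}_n$ by $O_K(1)$ right translates. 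Third, apply Lemma~\ref{lem: semialgebraicaproximate} to $S_n$ to produce a semialgebraic open $K^3$-approximate group $S'_n$ with $S_n \subseteq S'_n \subseteq S_n^2$; the covering statements of step two then hold with $S'_n$ in place of $S_n$.

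Now set $A' = \prod_\ult \widetilde{A}_n$, $B' = \prod_\ult \widetilde{B}_n$, $S = \prod_\ult S'_n$, all pseudo-measurable. Property (3) follows from Proposition~\ref{prop: smallexpansionultraproduct}(3) and the bound $\BM(\widetilde{A}_n, \widetilde{B}_n) \leq r + \epsilon/2$. Property (2) follows since the sequences $\mu_n(\widetilde{A}_n)$, $\mu_n(\widetilde{B}_n)$, $\mu_n(S'_n)$ are all comparable to $\mu_n(A_n) \sim \mu_n(B_n)$, whence $\mu(A')$, $\mu(B')$, $\mu(S)$ are simultaneously finite and positive. Property (4) follows because each $S'_n$ being a $K^3$-approximate group supplies elements $g_{n,1}, \ldots, g_{n,K^3} \in G_n$ covering $(S'_n)^2$, whose ultraproducts $g_1,\ldots,g_{K^3} \in G$ witness $S^2 \subseteq \bigcup_i g_i S$ via Fact~\ref{fact: Groupunderultraproduct}(2) and Fact~\ref{fact:ultrafacts}. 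Property (5) follows by the same lifting argument applied to the covering witnesses from step two.

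The core of property (1) is that every $D \subseteq G$ definable in the expansion $\Sigma$ generated by $A', B', S$ is pseudo-measurable. On each factor $G_n$, the semialgebraic sets form a structure (Lemma~\ref{Lemma: Tarski-Seidenberg2}) containing the graph of multiplication as well as $\widetilde{A}_n$, $\widetilde{B}_n$, $S'_n$. By Fact~\ref{fact:ultrafacts}, the structure operations (Boolean operations, projections, Cartesian products) commute with $\prod_\ult$, so every definable set in $\Sigma$ has the form $\prod_\ult D_n$ for semialgebraic $D_n \subseteq G_n^{[k]}$ and is therefore pseudo-measurable. Since $0 < \mu(S) < \infty$, Lemma~\ref{lem: Definablyamenablearise} delivers the definable amenability of $S$ in $(G, \Sigma)$. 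The main obstacle I anticipate is keeping all covering, measure, and $\BM$ constants uniform in $n$, so that the ultraproduct of $K^3$-approximate groups is again a bounded approximate group; this succeeds because the constants produced by Fact~\ref{fact: FromTaopaper}, Lemma~\ref{lem: semialgebraicaproximate0}, and Lemma~\ref{lem: semialgebraicaproximate} are effective and depend only on $N$, $r$, $\epsilon$.
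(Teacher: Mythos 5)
Your proposal is correct and follows essentially the same three-step construction as the paper's own proof: pass to the factors $G_n$, apply Lemma~\ref{lem: semialgebraicaproximate0} to semialgebraify $(A_n, B_n)$, use Fact~\ref{fact: FromTaopaper} to extract an approximate group, and then apply Lemma~\ref{lem: semialgebraicaproximate} to semialgebraify that approximate group, before lifting everything back through the ultraproduct; the verification of (1) via the structure $\Sigma'$ of ultraproducts of semialgebraic sets also matches the paper. The only very minor slip is that commensurability of $A$ and $B$ gives $\mu(A)/N < \mu(B) < N\mu(A)$ at the ultraproduct level, which translates under $\lim_\ult$ to $\mu_n(A_n)/(2N) < \mu_n(B_n) < 2N\mu_n(A_n)$ for a.e.\! $n$ (with a spare factor of $2$), rather than to the bound with $N$ itself as you wrote — but this does not affect the argument since the constant still depends only on $N$.
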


\begin{proof}
Suppose $G$ is built from a sequence $(G_n)$ of compact Lie groups and a nonprincipal ultrafilter $\ult$. Using Fact~\ref{Fact: compactLierealalgebraic}, we can arrange that for each $n$, $G_n$ is a closed subgroup of a general linear group. Let $(A_n)$, $(B_n)$, and $(C_n)$ be sequences such that $A_n, B_n, C_n \subseteq G_n$ are measurable such that $A =\prod_\ult A_n$, $B= \prod_\ult B_n$, and $AB =\prod_\ult C_n$. Recall that $AB=\prod_\ult(A_nB_n)$ by Fact~\ref{fact: Groupunderultraproduct}(2). Hence, by fact~\ref{fact:ultrafacts}(1,3) we must have $A_nB_n = C_n$ for a.e.\! $n$. In particular, for a.e.\! $n$, the product $A_nB_n$ is measurable.

Now let $\mu$ be a pseudo-Haar measure such that $0< \mu(A), \mu(B)< \infty$. Then there is $N$ such that $\mu(A)/N < \mu(B) < N \mu(A)$.
Let $(\mu_n)$ be a sequence such that $\mu_n$ is a Haar measure on $G_n$ and $\mu =\lim_{\ult} \mu_n$. Using Fact~\ref{fact:behavioroflimit}, for every $\epsilon>0$ and for a.e.\! $n$, 
$$\mu_n(A_n)/(2N)< \mu_n(B_n)< (2N) \mu_n(A_n) \text{ and } \BM(A_n, B_n) \leq (r+\epsilon/2).$$

Using Lemma~\ref{lem: semialgebraicaproximate0}, for a.e. $n$, we obtain semialgebraic $A'_n$ and $B'_n$ such that 
$$\mu(A_n)/2 < \mu(A'_n) < 2 \mu(A_n), mu(B_n)/2 < \mu(B'_n) < 2 \mu(B_n), \text{ and }
 \BM(A'_n, B'_n) \leq (r+\epsilon).$$  Using Fact~\ref{fact: FromTaopaper} together with Lemma~\ref{lem: semialgebraicaproximate}, 
we produce for a.e.\! $n$ an $O_{N,r}(1)$-approximate group $S_n$ such that 
$S_n$ is open, $S_n$ is definably amenable with respect to some Haar measure of $G_n$, we have $\mu_n(S_n) \sim_{N,r} \mu_n(A'_n)$, the set $A'_n$ can be covered by $O_{N,r}(1)$-left translates of $S_n$, and $B'_n$ can be covered by $O_{N, r}(1)$-right translates of $S_n$. 

Let $A' = \prod_\ult A'_n $, $B' = \prod_\ult B'_n $, and   $S = \prod_\ult S_n$. It remains to verify that the conditions of the lemma are satisfied. We note that (3) is immediate from the construction. The set $S$ is an $O_{N,r}(1)$-approximate group, and in particular, an approximate group. Also, $\mu(S) = O_{N,r}(1) \mu(A)$, we see that $S$ is commensurable with $A'$. By Lemma~\ref{cor: commensurability}, $S$ is also commensurable with $A$, $B$, and $B'$. Hence, we also get (2) and (4).

We now verify (1). By scaling $\mu_n$ suitably, we can harmlessly arrange that $\mu_n(S_n)=1$. Then $\mu(S)=1$.  Let $\Sigma'_m$ be the subcollection of $\cP( G^{[m]})$ consisting of $D \subseteq G^{[m]}$ of the form
$$ D= \prod_\ult D_n  $$
where $D_n \subseteq  (G_n)^{[m]} $ is semialgebraic. It follows from Fact~\ref{fact:ultrafacts}, Fact~\ref{fact: Groupunderultraproduct}, and Lemma~\ref{Lemma: Tarski-Seidenberg2} that  with $\Sigma'= (\cD'_m)$ the pair $(G, \Sigma')$ is an expansion of the group $G$. 
As $A'$, $B'$, and $S$ are definable in $\Sigma'$, we have $\Sigma$ is a substructure of $\Sigma'$. In particular, if $D \subseteq G$ is definable in $\Sigma$, then $D$ is also definable in $\Sigma'$. Hence, such $D$ pseudo-measurable, and so $\mu$-measurable. Thus, $S$ is definably amenable by Lemma~\ref{lem: Definablyamenablearise}.

Finally, we verify (5). We can choose $m= m(N,r)$ such that for a.e.\! $n$, there are at most $m$-many left-translates of $S_n$ needed to cover $A'_n$. For these $n$,  adding more left-translates of $S_n$ if necessary, we assume that the $m$ translates $a_{n,1}S_n, \ldots, a_{n, m} S_n$ can be used to cover $A'_n$. For $i \in [m]$, set  $a_i = (a_{n,i})/\ult$. We claim that $(a_i S)_{i \in [m]}$ is a cover of $A'$. This is the case because, 
\[
A'\setminus (a_1S \cup a_mS) = \prod_\ult A'_n\setminus (a_{n,1}S \cup a_{n,m}S)  = \emptyset.
\]
A similar argument shows that finitely many right translates of $S$ is needed to cover $B'$.
\end{proof}

\section{Lie models from approximate groups}

In this section, we will define locally compact models of approximate groups and describe the Hrushovski's locally compact model theorem~\cite{Hrushovski}, which allows us to construct them from  definably amenable approximate groups. In fact, an improvement of Hrushovski's theorem  by Massicot and Wagner~\cite{MW} for the definably amenable case will be discussed. This suits our purpose better even though the original theorem  by Hrushovski also suffices with some extra steps. We also supplement this result with an additional step which allows us to build connected Lie models. This later step is folkloric, so we only gather the facts together. (Note that  Hrushovski's locally compact model theorem is also called Hrushovski's Lie model theorem. This is in viewed of the fact that this theorem can be used in combination with the Gleason--Yamabe theorem to build Lie models. We split them apart here as it is done so in~\cite{MW}, and we also think it is conceptually clearer.)

Let $G$ be a group, and $S \subseteq G$ is an approximate subgroup. A {\bf locally compact model} of $S$ is a surjective group homomorphism such that the following two conditions are satisfied: 
\begin{enumerate}
    \item[(LM1)] (Thick image) There is an open neighborhood $U$ of $\id_L$ such that $\pi^{-1}(U) \subseteq S$ (consequently, $\ker \pi \subseteq S$ and $U \subseteq \pi(S)$).
    \item[(LM2)] (Compact image) $\pi(S)$ in $L$ is precompact.
\end{enumerate}
Behind this definition is the observation that approximate groups arise naturally from group homomorphisms into locally compact groups. This can be seen through the lemma below:
\begin{lemma} \label{lem: motivationforlocallycompactmodel}
    Suppose $G$ is a group, $S$ is a subset of $G$  with  $\id_G \in S$ and $S =S^{-1}$,  $L$ is a locally compact group,  and $\pi: \langle S \rangle \to L$ is such that conditions (LM1) and (LM2) are satisfied,
          Then $S$ is an approximate group. In particular, an open an precompact subset of a locally compact group is an approximate group.
\end{lemma}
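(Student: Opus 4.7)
The plan is to leverage the precompactness of $\pi(S)^2$ in $L$ together with the thick-image condition (LM1) to produce finitely many elements of $S^2$ whose left $S$-translates cover $S^2$. Since $\id_G \in S$ and $S = S^{-1}$ are already given, this would finish the proof that $S$ is a $K$-approximate group for some finite $K$.

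First, I would choose a symmetric open neighborhood $V$ of $\id_L$ small enough that $V^2 \subseteq U$, where $U$ is the open set promised by (LM1) with $\pi^{-1}(U) \subseteq S$; this is possible by joint continuity of multiplication and inversion in $L$. Next, I would use (LM2) together with the identity $\pi(S)^2 = \pi(S^2)$ to cover the compact closure of $\pi(S^2)$ by finitely many left translates $h_1 V, \ldots, h_k V$. For each index $i$ such that $h_i V$ meets $\pi(S^2)$, I would pick $s_i \in S^2$ with $\pi(s_i) \in h_i V$, so that $h_i V \subseteq \pi(s_i) V^{-1} V \subseteq \pi(s_i) U$.

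Then, for any $s \in S^2$, some translate $\pi(s_i) U$ will contain $\pi(s)$, giving $\pi(s_i^{-1} s) \in U$, and by (LM1) this forces $s_i^{-1} s \in \pi^{-1}(U) \subseteq S$. Hence $s \in s_i S$, and we obtain $S^2 \subseteq \bigcup_{i \leq k} s_i S$. For the ``in particular'' clause, I would apply the main conclusion with $G = L$, $\pi = \id_L$, and $S$ any open precompact symmetric neighborhood of $\id_L$, for which (LM1) and (LM2) hold trivially with $U = S$.

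I do not anticipate a genuine obstacle here; the argument is essentially a compactness-plus-pullback computation, and the only delicate point is arranging $V^2 \subseteq U$ so that the cover of $\pi(S^2)$ by $V$-translates transfers cleanly to a cover by $\pi(s_i) U$ after upgrading the $h_i$'s to images of actual elements of $S^2$.
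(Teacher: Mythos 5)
Your proposal is correct and follows essentially the same route as the paper: invoke (LM2) to get compactness of $\overline{\pi(S^2)}$, cover by finitely many translates, and pull back through (LM1) using $\pi^{-1}(U)\subseteq S$. The one place you go beyond the paper is the $V^2\subseteq U$ shrinking trick, which lets you arrange that the translating elements $s_i$ lie in $S^2$; this refinement is unnecessary under the paper's definition of a $K$-approximate group (which only asks for $K$ left translates of $S$, with no constraint on where the translating elements live), so the paper simply covers the compact set $F\supseteq\overline{\pi(S^2)}$ by translates $\ell_i U$ and lifts the $\ell_i$ to arbitrary preimages in $\langle S\rangle$. Your extra care would matter for the stricter variant of the definition where covers must come from $S^3$ or similar, but here it just adds a step.
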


\begin{proof}
    Indeed, by (LM2), such $S^2$ will be contained in an inverse image $\pi^{-1}(F)$ of a compact set $F \subseteq L$, so $S^2$ is covered by finitely many left translates of $\pi^{-1}(U) \subseteq S$ in (LM1). The last sentence is a special case with $G=L$ and $\pi$ the identity map.
\end{proof}

We caution the reader that the word ``model'' in ``locally compact model'' is unrelated to model-theory. This word used comes from the Freiman homomorphism and Ruzsa's model lemma in the abelian settings. 

Let  $G$ be a group,  $S \subseteq G$ is an approximate group, and $\pi: \langle S  \rangle \to L$ is a locally compact model of $S$. If $L$ is a Lie group, we call $\pi$ a {\bf Lie  model} of $S$. We say that $\pi: \langle S  \rangle \to L$ is {\bf noncompact} if $L$ is noncompact. We define {\bf unimodular} and {\bf connected} for $\pi$ similarly.

Let $(G, \Sigma)$ be an $\aleph_1$-saturated expansion of a group $G$, and $H \subseteq G$ is $\sigma$-definable in $\Sigma$. A surjective group homomorphism  $\pi: \langle H \rangle \to L$ with $L$ a locally compact group is {\bf definable} (in the sense of continuous logic) in $\Sigma$ if the topology on $L$ is induced by $\Sigma$ as follows:
\begin{enumerate}
    \item [(CD)] $X \subseteq L$  is compact in  $L$ if and only if $\pi^{-1}(X)$ is $\delta$-definable in $\Sigma$, and $X \subseteq L$  is open in $L$ if and only if $\pi^{-1}(X)$ is $\sigma$-definable in $\Sigma$.
\end{enumerate}

It is known that (CD) is equivalent to 
\begin{enumerate}
    \item[(CD$'$)] Whenever $F \subseteq U \subseteq L$ are such that $F$ is compact and $U$ is open, there is  $D \subseteq \langle S \rangle$ that is definable in $\Sigma$ and $\pi^{-1}(F) \subseteq D \subseteq \pi^{-1}(U)$.
\end{enumerate}
We will not be using this equivalence, so we will leave it to the interested reader.
The the definition good model given in~\cite{BGT} is essentially combines the definition of locally compact model and (CD$'$), so it is in a sense the original definition.

Our definition of locally compact model is not the definition given in~\cite{MW}. (We chose it due to conceptual clarity, and its later use for constructing of Lie models).  To discuss the definition of locally compact models in~\cite{MW},  we introduce two further conditions:
\begin{enumerate}
            \item[(LM1$'$)] (Containing the kernel) $\ker \pi \subseteq S$ 
            \item[(LM2$'$)] (Bounded index) If $A, B \subseteq \langle S \rangle $ are definable in $\Sigma$ and $\ker \pi \subseteq A$, then finitely many left-translates of $A$ by elements in $\langle S  \rangle $ can be used to cover $B$.
    \end{enumerate}

The equivalence between the definition is given by the following lemma:

\begin{lemma} \label{lemma: equivalentLiemodeldefinition}
   Suppose $(G, \Sigma)$ is an $\aleph_1$-saturate expansion of a group, and $S \subseteq G$ is definable in $\Sigma$. Let $L$ be a locally compact group, and $\pi:  \langle S \rangle \to L$ a surjective group homomorphism  continuously definable in $\Sigma$. Then (LM1) is equivalent to (LM1$'$) and  (LM2) is equivalent to (LM2$'$).
\end{lemma}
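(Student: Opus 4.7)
The plan is to establish both equivalences from a single key saturation lemma. Since $\{\id_L\}$ is compact in $L$, condition (CD) yields that $\ker\pi = \pi^{-1}(\{\id_L\})$ is $\delta$-definable; write $\ker\pi = \bigcap_n D_n$ with $D_n$ definable and decreasing in $n$. The central claim is that for every definable $A \subseteq G$, the identity $A\ker\pi = \bigcap_n AD_n$ holds. The inclusion $\subseteq$ is trivial, and for $\supseteq$, given $x \in \bigcap_n AD_n$, the definable sets $A \cap xD_n^{-1}$ form a decreasing sequence of nonempty sets, so by $\aleph_1$-saturation their intersection contains some $a$; then $a^{-1}x \in \bigcap_n D_n = \ker\pi$, whence $x \in A\ker\pi$. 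Since each $AD_n$ is definable by Lemma~\ref{Lemma: productaredefinable}, it follows that $A\ker\pi$ is always $\delta$-definable, and dually $(G\setminus A)\ker\pi$ is $\delta$-definable for any definable $A$ as well.

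For (LM1) $\Rightarrow$ (LM1'), the inclusion $\ker\pi = \pi^{-1}(\{\id_L\}) \subseteq \pi^{-1}(U) \subseteq S$ is immediate from $\id_L \in U$. For (LM1') $\Rightarrow$ (LM1), assuming $\ker\pi \subseteq S$, I would introduce the $\pi$-saturated core $S^\pi := \{x \in S : x\ker\pi \subseteq S\} = S \setminus (G\setminus S)\ker\pi$. By the saturation identity above, $(G\setminus S)\ker\pi$ is $\delta$-definable, so $S^\pi$ is $\sigma$-definable. A direct computation shows $S^\pi \ker\pi = S^\pi$, so $\pi^{-1}(\pi(S^\pi)) = S^\pi$; by (CD) the set $U := \pi(S^\pi)$ is then open in $L$. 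Since $\ker\pi \subseteq S^\pi$ (both defining conditions hold on $\ker\pi$), one has $\id_L \in U$, and by construction $\pi^{-1}(U) = S^\pi \subseteq S$, which gives (LM1).

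Turning to the second equivalence, applying the saturation identity to $A = S$ shows that $\pi^{-1}(\pi(S)) = S\ker\pi$ is $\delta$-definable, so by (CD) the image $\pi(S)$ is compact and (LM2) holds automatically; in particular (LM2') $\Rightarrow$ (LM2) is immediate. For (LM2) $\Rightarrow$ (LM2'), let $A, B \subseteq \langle S\rangle$ be definable with $\ker\pi \subseteq A$. The definable sets $B \cap (G \setminus S^n)$ have empty intersection (as $B \subseteq \langle S\rangle = \bigcup_n S^n$), so $\aleph_1$-saturation yields $B \subseteq S^n$ for some $n$; applying the saturation identity to $B$ then shows $\pi(B)$ is compact. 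The (LM1') $\Rightarrow$ (LM1) argument applied to $A$ produces an open $U \ni \id_L$ with $\pi^{-1}(U) \subseteq A$. Covering the compact $\pi(B)$ by finitely many left translates $y_1 U, \ldots, y_m U$ and choosing $x_i \in \pi^{-1}(y_i)$ yields $B \subseteq \pi^{-1}(\pi(B)) \subseteq \bigcup_i x_i \pi^{-1}(U) \subseteq \bigcup_i x_i A$, as required.

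The main technical obstacle is the central saturation identity $A\ker\pi = \bigcap_n AD_n$. Its proof rests on the group-theoretic translation $x \in AD_n \iff A \cap xD_n^{-1} \neq \emptyset$, combined with $\aleph_1$-saturation and the definability of products and inverses (which follows from Lemma~\ref{Lemma: productaredefinable} and the assumption that $\Sigma$ is an expansion of a group). Once this identity is in hand, the $\pi$-saturated core construction converts definable-structural data into topological data on $L$ via the full bi-implication in (CD), and the remaining verifications are essentially algebraic manipulations using normality of $\ker\pi$ and surjectivity of $\pi$.
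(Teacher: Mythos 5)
Your proof is correct (modulo a subtlety I discuss below) and takes a genuinely different route from the paper's. Where the paper proves (LM1$'$)~$\Rightarrow$~(LM1) by a non-constructive contradiction argument (take a shrinking neighbourhood basis $(U_n)$ of $\id_L$; if no $\pi^{-1}(U_n) \subseteq S$, build by induction a sequence of definable sets witnessing this and apply $\aleph_1$-saturation to land in $\ker\pi \setminus S$), you instead construct the open set explicitly via the $\pi$-saturated core $S^\pi = \{x \in S : x\ker\pi \subseteq S\}$, verify it is $\sigma$-definable through the identity $A\ker\pi = \bigcap_n AD_n$, and read off the open neighbourhood as $\pi(S^\pi)$. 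This is cleaner and more robust than the paper's inductive construction (which is under-specified, and whose base case $n=0$ does not obviously make sense as written). Likewise, where the paper proves (LM2$'$)~$\Rightarrow$~(LM2) using Lemma~\ref{lem: Interpolation} to squeeze a definable $D$ between $\ker\pi$ and $\pi^{-1}(U)$ for a precompact open $U$ and then invokes (LM2$'$) to cover $S$, you bypass (LM2$'$) entirely. The saturation identity $A\ker\pi = \bigcap_n AD_n$ is essentially the same mechanism the paper uses later in Claim~1 of the proof of Lemma~\ref{lem: Link between measures}; you have simply extracted it, stated it once, and reused it systematically.

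One observation worth making explicit: your argument shows that (LM2) and (LM2$'$) hold \emph{unconditionally} under the hypotheses of the lemma, so the second ``equivalence'' is degenerate. This is a valid consequence of (CD) as literally stated (compact iff $\delta$-definable), since $S\ker\pi = \bigcap_n SD_n$ is $\delta$-definable. However, the paper itself is internally inconsistent on this point: (CD) says ``compact iff $\delta$-definable'', while the proof of Claim~1 of Lemma~\ref{lem: Link between measures} invokes only ``closed iff $\delta$-definable'' and then supplements it with a boundedness argument (showing $D \subseteq S^n$ for some $n$) to upgrade closed to compact. If the weaker ``closed'' reading of (CD) is what is really available from the Massicot--Wagner construction --- which is the standard logic-topology statement --- then your appeal to (CD) to conclude ``$\pi(S)$ compact'' needs the additional observation that $S\ker\pi$ is bounded; this is easy (since $\ker\pi \subseteq \langle S\rangle = \bigcup_n S^n$ is $\delta$-definable, $\aleph_1$-saturation forces $\ker\pi \subseteq S^N$ for some $N$, hence $S\ker\pi \subseteq S^{N+1}$), but you should insert it rather than lean on the strong form of (CD). Finally, like the paper's own Claim~1, your proof of the saturation identity applies $\aleph_1$-saturation to the parametrized sets $A \cap xD_n^{-1}$; the stated definition of $\aleph_1$-saturation is parameter-free, so this relies on the implicit convention (which the paper also uses) that parameters are allowed --- fine in the ultraproduct models actually used, but worth noting as a shared gap rather than one specific to your argument.
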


\begin{proof}
We first show the equivalent between (LM1) and (LM1'), and clearly only the backward implication is needed. Choose a sequence $(U_n)$ of open neighborhood of $\id_G$. Then $\bigcap_n \pi^{-1}(U_n) = \ker \pi$ is a subset of $S$. It suffices to show there is $n$ such that $\pi^{-1}(U_n) \subseteq S$. Suppose this is not the case. By (4), for each $n$, $\pi^{-1}(U_n)$ is a countable union of subsets of $G$ definable in $(G;S)$. Using induction, one can construct a sequence $(D_n)$ such that $D_n\subseteq \pi^{-1}(U_n)$ is definable in $\Sigma$ and 
    $$  \bigcap^n_{i=1} D_i \cap \bigcap_{j>n} \pi^{-1}(U_n) $$
is not a subset of $S$. In particular,  any finite intersection of $(D_n)$ is not a subset of $S$. Finally, the $\aleph_1$-saturation of $(G;\Sigma)$ yields the desired conclusion.

Next we show that (LM2) implies (LM2'). For the forward direction, let $A, B \subseteq \langle S \rangle$ be definable in $\Sigma$, and $\ker \pi \subseteq A$. An argument like in the equivalence between (LM1) and (LM1') shows that there is an open neighborhood $U$ of $\id_L$ such that $\pi^{-1}(U) \subseteq A$. On the other hand, by $\aleph_1$-saturation or Lemma~\ref{lem: Interpolation}, $B$ is contained in $S^n$ for some $n$. This implies that $\pi(B)$ is precompact. It follows that finitely many left-translates of $A$ can be used to cover $B$.

Finally, we show (LM2') implies (LM2). Take $U$ a precompact open neighborhood of $\id_L$, which exists due to the fact that $L$ is locally compact. Then $\pi^{-1}(U)$ is $\sigma$-definable and contains $\ker \pi$. By $\aleph_1$-saturation via Lemma~\ref{lem: Interpolation}, there is $D \subseteq \pi^{-1}(U)$ such that $\ker \pi \subseteq D$. By (LM2'), finitely many left translates of $D$ covers $S$. This implies that finitely many left translates of $U$ covers $\pi(S)$, which implies that $\pi(S)$ is precompact.
 \end{proof}

The following fact~\cite{MW} is the main model-theoretic input that will allow us construct Lie model. We use this as a black box in this paper.

\begin{fact}[Hrushovski's Locally Compact Model Theorem via Massicot--Wagner]\label{fact: locallycompactmodel}
   Suppose $(G, \Sigma)$ is an $\aleph_1$-saturated expansion of a group, and  $S \subseteq G$ is an approximate group definable in $\Sigma$ and definably amenable in $(G, \Sigma)$.  Then there is a locally group $L$, and surjective group homomorphism $\pi: \langle S \rangle \to L $ satisfying  (LM1'), (LM2'), and (CD) with $S$ replaced by $S^4$.
\end{fact}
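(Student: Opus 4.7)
The plan is to follow Massicot--Wagner's refinement of Hrushovski's construction, whose conceptual advantage is that ordinary definable amenability suffices (in place of Hrushovski's strong form) because his Lie-model stabiliser argument can be replaced by a Sanders--Croot--Sisask style near-stabiliser lemma adapted to the definable setting. The first step is to build a decreasing chain $(X_n)_{n \in \NN}$ of symmetric $\Sigma$-definable subsets of $S^4$ containing $\id_G$ that satisfy $X_{n+1}^2 \subseteq X_n$, $s X_n s^{-1} \subseteq X_{n-1}$ for every $s \in S$, and $\mu(X_n) > 0$, where $\mu$ is the finitely-additive left-invariant measure provided by (DA1)--(DA2). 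The recipe is to imitate the Croot--Sisask almost-stabiliser using $\mu$ in place of counting measure: one extracts a definable $T$ with $\mu(T)$ comparable to $\mu(S)$ and considers the set $\{g : \mu(gT \tri T) < \varepsilon \mu(T)\}$; its iterated products stay inside $S^4$ because such translations shift mass by only a bounded amount. The measure-theoretic condition defining this almost-stabiliser is not itself $\Sigma$-definable, but by Lemma~\ref{lem: Interpolation} it can be sandwiched between $\delta$- and $\sigma$-definable approximations and replaced by a genuinely definable set with the same essential properties; symmetrising and conjugating by a bounded family of representatives of $S$ (possible because $S$ is an approximate group) enforces $S$-invariance.

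Next, set $H := \bigcap_n X_n$; the relations $X_{n+1}^2 \subseteq X_n$ and $s X_n s^{-1} \subseteq X_{n-1}$ make $H$ a normal subgroup of $\langle S^4 \rangle = \langle S \rangle$. Define $L := \langle S \rangle / H$ with quotient map $\pi$, and equip $L$ with the group topology whose neighbourhood basis at $\id_L$ is $\{\pi(X_n) : n \in \NN\}$. Symmetry, nesting, and the square condition verify the standard group-topology axioms, and $\aleph_1$-saturation supplies Hausdorffness: any $g \notin H$ is missed by some $X_n$, so the cosets $gH$ and $H$ are separated by the open set $\pi(X_{n+1})$. Condition (LM1') with $S$ replaced by $S^4$ is immediate since $\ker \pi = H \subseteq X_1 \subseteq S^4$, and (CD) follows by unpacking the definition of the topology: a set in $L$ is open (respectively compact) precisely when its preimage under $\pi$ is $\sigma$-definable (respectively $\delta$-definable), which one proves in both directions by Lemma~\ref{lem: Interpolation}.

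The hard part will be jointly establishing local compactness of $L$ and the covering condition (LM2'). Local compactness reduces to showing $\pi(X_1)$ is compact: any open cover pulls back to a $\sigma$-definable cover of the $\delta$-definable set $X_1$, and $\aleph_1$-saturation delivers a finite subcover. For (LM2'), given definable $A, B \subseteq \langle S \rangle$ with $\ker \pi \subseteq A$, one uses $\aleph_1$-saturation via Lemma~\ref{lem: Interpolation} to conclude $A \supseteq X_n$ for some $n$ and $B \subseteq S^k$ for some $k$; the approximate group property together with the lower bound on $\mu(X_n)$ and left-invariance of $\mu$ then bound the number of $\langle S \rangle$-translates of $X_n$ (hence of $A$) needed to cover $S^k$. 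The principal technical obstacle throughout is maintaining $\Sigma$-definability at each step where an intrinsically measure-theoretic or topological condition is imposed, and this is consistently handled by combining the finitely-additive measure supplied by definable amenability with $\aleph_1$-saturation and Lemma~\ref{lem: Interpolation}.
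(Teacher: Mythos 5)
The paper does not prove this statement --- it is a Fact cited from Massicot--Wagner~\cite{MW} (the non-amenable original being Hrushovski~\cite{Hrushovski}) and used as a black box, so there is no internal proof to compare against.

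As a sketch of the cited proof, your outline reproduces the Massicot--Wagner architecture correctly: a near-stabiliser chain $(X_n)$ of definable symmetric sets inside $S^4$ built \`a la Sanders/Croot--Sisask, the type-definable normal subgroup $H=\bigcap_n X_n$, the logic topology on $\langle S\rangle/H$ with basis $\{\pi(X_n)\}$, local compactness via $\aleph_1$-saturation (note $\pi^{-1}(\pi(X_n))=X_nH=\bigcap_m X_nX_m$ is $\delta$-definable, which makes the (CD) verification go through), and (LM1$'$), (LM2$'$) by covering arguments. The one step that is materially thinner than the real argument is the near-stabiliser lemma. Your triangle-inequality observation that products of the $\varepsilon$-near-stabiliser of $T$ land in the $n\varepsilon$-near-stabiliser is fine, but the hard part is extracting from this a genuinely $\Sigma$-definable set with positive $\mu$-measure and controlled growth. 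Your appeal to Lemma~\ref{lem: Interpolation} does not do this by itself: the condition $\mu(gT\tri T)<\varepsilon\mu(T)$ is a statement about $\mu$, not about $\Sigma$, so it is not literally $\delta$- or $\sigma$-definable; one must first produce suitable type-definable approximations to the near-stabiliser (which uses the invariance of $\mu$ and a pigeonhole on measure levels), and then show the definable interpolant inherits both a uniform measure lower bound and the small-growth property. That pigeonhole argument is the heart of~\cite{MW} and is entirely elided by the phrase \emph{with the same essential properties}. With that caveat, the sketch is on target.
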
 

To obtain connected Lie models from locally compact model, we need the solution of Hilbert's 5th problem, which is known as the Gleason--Yamabe Theorem~\cite{Gleason,Yamabe}. Here is a version of this result, which is another major black box.

\begin{fact}[Gleason--Yamabe Theorem]\label{fact: Gleason}
Suppose  $L$ is a locally compact group and $U$ is an open neighborhood of $\id_L$. Then there is an open subgroup $L'$ of $L$ and a compact normal subgroup $K$ of $L'$ such that $K \subseteq U$ and the $ L'/K$  is a connected Lie group.
\end{fact}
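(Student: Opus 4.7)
The plan is to establish this via the classical solution of Hilbert's fifth problem, following the route codified by Gleason and Yamabe. The first reduction is to pass to a more tractable open subgroup: choose a symmetric precompact open neighborhood $V \subseteq U$ of $\id_L$, and let $L_1 := \bigcup_{n \geq 1} V^n$. Then $L_1$ is an open, compactly generated, $\sigma$-compact subgroup of $L$, and every open subgroup of $L_1$ is automatically open in $L$. It therefore suffices to produce $L'$ and $K$ inside $L_1$, since whatever compact normal subgroup we extract will remain inside $U$.

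The two key technical inputs are Gleason's theorem and Yamabe's theorem. Gleason's theorem asserts that any locally compact group with no small subgroups (NSS) --- meaning some neighborhood of the identity contains no nontrivial subgroup --- is a Lie group. The proof constructs one-parameter subgroups via Gleason's escape lemma, equips the group with an adapted left-invariant metric, and then recovers an analytic structure through a topological version of the exponential map. Yamabe's theorem then asserts that in any compactly generated locally compact group, every neighborhood $W$ of the identity contains a compact normal subgroup $K$ of some open subgroup $L' \leq L_1$ with $K \subseteq W$ and $L'/K$ NSS; the construction of $K$ uses a Peter--Weyl-style averaging inside the maximal compact subgroup of the identity component, followed by further quotienting to eliminate small subgroups.

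Combining these, apply Yamabe's theorem to $L_1$ and the neighborhood $U \cap L_1$ to produce an open $L' \leq L_1$ and a compact normal subgroup $K$ of $L'$ with $K \subseteq U$ and $L'/K$ NSS; Gleason's theorem then gives that $L'/K$ is a Lie group. To upgrade to a \emph{connected} Lie quotient, replace $L'$ by the preimage in $L'$ of the identity component of $L'/K$: this is open in $L'$ (hence in $L$), still contains $K$ as a compact normal subgroup, and its quotient by $K$ is connected by construction.

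The main obstacle is Gleason's theorem itself --- the delicate construction of one-parameter subgroups in a purely topological NSS setting and proving they assemble into an analytic structure. Yamabe's theorem, while intricate, is essentially a bootstrap from Gleason's result together with approximation of the identity by compact normal subgroups via Peter--Weyl-type averaging. A modern streamlined account of the entire argument appears in Tao's book on Hilbert's fifth problem.
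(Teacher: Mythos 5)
The paper treats this as a cited black box (with references to Gleason and Yamabe, and a pointer to the fact that it is the solution of Hilbert's fifth problem), and does not give a proof; so there is no "paper's own proof" to compare against. Your sketch is a correct high-level outline of the classical route: reduce to the compactly generated, $\sigma$-compact case by replacing $L$ with the open subgroup $L_1 = \bigcup_n V^n$ for a symmetric precompact $V \subseteq U$; invoke Yamabe's theorem to extract, inside an arbitrary neighborhood, a compact normal subgroup $K$ of some open subgroup $L'$ with NSS quotient; invoke Gleason's theorem (NSS $\Rightarrow$ Lie) to conclude $L'/K$ is a Lie group. The one place where the statement in the paper deviates from the textbook formulation is the word "connected," and you correctly supply the missing step: replace $L'$ by the preimage $L'' = \phi^{-1}\bigl((L'/K)^\circ\bigr)$ under the quotient map $\phi: L' \to L'/K$. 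Since $(L'/K)^\circ$ is open in the Lie group $L'/K$, the set $L''$ is open in $L'$ and hence in $L$; it contains $K$ because $\phi(K) = \{\id\}$; $K$ remains compact and normal in $L'' \leq L'$; and $L''/K \cong (L'/K)^\circ$ is a connected Lie group. This matches the role the fact plays in the paper, and the upgrade-to-connected argument is exactly what is needed to justify the precise form stated there.
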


Suppose $G$ is  a group. Two approximate groups $S$ and $S'$  of $G$ are {\bf equivalent} if each is contained in finitely many left (equivalently, right) translations of the other. It is easy to see that this is indeed an equivalence relation. Moreover, by Lemma~\ref{lem: interativesmallgrowthapproximategroups}, if $S$ is an approximate group, then $S^n$ is an approximate group equivalent to $S$ for all $n$.

\begin{proposition} \label{Prop: ChangingtoAPwithLiemodel}
   Suppose $(G, \Sigma)$ is an $\aleph_1$-saturated expansion of a group,  and $S \subseteq G$ is an approximate subgroup definable in $\Sigma$ and definably amenable in $(G, \Sigma)$. Then there is approximate subgroup $S' \subseteq G$ definable in $\Sigma$ and definably amenable in $(G, \Sigma)$  such that  $S'$ is equivalent to $S$, and  $S'$ has a connected Lie model definable in $\Sigma$. 
\end{proposition}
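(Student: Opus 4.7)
The strategy is to apply Fact~\ref{fact: locallycompactmodel} and Fact~\ref{fact: Gleason} in sequence, then carefully extract a connected Lie model from the resulting locally compact one. First, invoke Fact~\ref{fact: locallycompactmodel} to produce a locally compact group $L$ and a surjective homomorphism $\pi\colon \langle S\rangle \to L$ definable in $\Sigma$ and satisfying (LM1'), (LM2'), (CD) (with $S$ replaced by $S^4$). By Lemma~\ref{lemma: equivalentLiemodeldefinition} there is an open neighbourhood $U$ of $\id_L$ with $\pi^{-1}(U)\subseteq S^4$. Feeding this $U$ into Fact~\ref{fact: Gleason} produces an open subgroup $L'\leq L$ and a compact normal subgroup $K\trianglelefteq L'$ with $K\subseteq U$ and $L'/K$ a connected Lie group. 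Writing $\phi\colon L'\to L'/K$ for the quotient map, I take
\[
S' := S^4 \cap \pi^{-1}(L'), \qquad \pi' := \phi\circ\pi|_{\langle S'\rangle}\colon \langle S'\rangle \to L'/K,
\]
and claim that $S'$ and $\pi'$ witness the conclusion.

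The main obstacle is showing that $S'$ is definable in $\Sigma$, since a priori $\pi^{-1}(L')$ is only $\sigma$-definable. The workaround exploits that $L'$ is simultaneously open and closed in $L$. Let $F := \overline{\pi(S^4)} \cap L'$, which is compact in $L$ (as $\pi(S^4)$ is precompact by (LM2)), and $G' := L \setminus \bigl(\overline{\pi(S^4)} \cap (L\setminus L')\bigr)$, which is open in $L$. A direct verification using $\pi(S^4)\subseteq\overline{\pi(S^4)}$ gives
\[
S^4\cap\pi^{-1}(F) \;=\; S^4\cap\pi^{-1}(L') \;=\; S^4\cap\pi^{-1}(G').
\]
The leftmost expression is $\delta$-definable in $\Sigma$ by (CD) applied to the compact $F$, while the rightmost is $\sigma$-definable by (CD) applied to the open $G'$. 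Since these sets are equal, Lemma~\ref{lem: Interpolation} supplies a definable set sandwiched between them, which coincides with $S'$.

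Once $S'$ is known to be definable, the remaining properties follow in turn. The set $S'$ is an approximate group: symmetry and containing $\id_G$ are clear, and a finite cover $S^8\subseteq\bigcup c_iS$ can be adjusted, by replacing each $c_i$ by some $d_i\in c_iS\cap\pi^{-1}(L')$ (whenever nonempty), to give $(S')^2\subseteq\bigcup d_iS'$. For the equivalence $S\sim S'$, the inclusion $S'\subseteq S^4$ handles one direction; for the other, precompactness of $\pi(S)$ in $L$ and openness of $L'$ force $\pi(S)$ to meet only finitely many cosets of $L'$, and choosing representatives $s_i\in S$ gives $S\subseteq\bigcup_i s_iS'$. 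Definable amenability of $S'$ follows by restricting the finitely additive left-invariant measure $\mu$ witnessing definable amenability of $S$ to $\langle S'\rangle$ and renormalising: a cover of $S$ by $N$ translates of $S'$ yields $\mu(S')\geq 1/N>0$, and $\mu/\mu(S')$ is the required measure.

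It remains to verify $\pi'$ is a connected Lie model definable in $\Sigma$. Surjectivity: $\pi(S')\supseteq U\cap L'$, whose image $\phi(U\cap L')$ is an open neighbourhood of $\id_{L'/K}$ and hence generates the connected Lie group $L'/K$. For (CD), given $X\subseteq L'/K$ compact, $\phi^{-1}(X)$ is compact in $L'$ (properness of $\phi$, using compactness of $K$) and hence in $L$, so $\pi^{-1}(\phi^{-1}(X))$ is $\delta$-definable by (CD) for $\pi$; a compactness argument (covering $X$ by finitely many translates of $\phi(U\cap L')$ and lifting the translating elements to $\langle S'\rangle$) shows $(\pi')^{-1}(X)\subseteq (S')^n$ for some $n$, so $(\pi')^{-1}(X) = \pi^{-1}(\phi^{-1}(X))\cap(S')^n$ is indeed $\delta$-definable; the open case is analogous. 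With (CD) in hand, Lemma~\ref{lemma: equivalentLiemodeldefinition} reduces (LM1') and (LM2') for $\pi'$ to (LM1) and (LM2), which follow respectively from $K\subseteq U$ forcing $\ker\pi'=\pi^{-1}(K)\subseteq S'$, and from precompactness of $\pi'(S')=\phi(\pi(S'))$ in $L'/K$.
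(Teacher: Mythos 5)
Your proposal is correct and follows the same strategy as the paper: apply Massicot--Wagner (Fact~\ref{fact: locallycompactmodel}) and then Gleason--Yamabe (Fact~\ref{fact: Gleason}), set $S' = S^4 \cap \pi^{-1}(L')$ and $\pi' = \phi \circ \pi|_{\langle S'\rangle}$, and verify the Lie-model axioms, with only local variations in the verifications (e.g.\ you use a direct covering argument where the paper invokes (LM2$'$) via Lemma~\ref{lemma: equivalentLiemodeldefinition}). Your definability argument for $S'$ --- interpolating between the $\delta$-definable $S^4\cap\pi^{-1}(F)$ with $F=\overline{\pi(S^4)}\cap L'$ compact and the $\sigma$-definable $S^4\cap\pi^{-1}(G')$ --- is in fact slightly more scrupulous than the paper's, which asserts $\pi^{-1}(L')$ is $\delta$-definable merely because $L'$ is closed; (CD) as stated gives $\delta$-definability only from compactness, and the quick repair (equivalent to what your $F,G'$ construction achieves) is to intersect with the definable $S^4$ before complementing, so that $S^4\cap\pi^{-1}(L')=\bigcap_n\bigl(S^4\setminus D_n\bigr)$ with each $S^4\setminus D_n$ definable.
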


\begin{proof}
By Lemma~\ref{lemma: equivalentLiemodeldefinition} and Fact~\ref{fact: locallycompactmodel}, $S^4$ has a locally compact model $\pi: \langle S^4 \rangle \to L$. Since $S^4$ is an approximate group equivalent to $S$, it is harmless to replace $S$ with $S^4$ and assume $\pi$ is already the locally compact model of $S$. In particular, using (LM1),  we can choose an open neighborhood $U$ of $\id_L$ such that $\pi^{-1}(U) \subseteq S$. Applying the Gleason--Yamabe theorem, we get an open subgroup $L'$ of $L$ and a compact normal subgroup $K$ of $L'$ such that $K \subseteq U$ and the quotient group $L'/K$ is a connected Lie group.

Now, the group $L'$ is  open in $L$. Hence, $L'$ is also closed in $L$. Therefore, $\pi^{-1}(L')$ is both $\delta$-definable and $\sigma$-definable in $\Sigma$. By $\aleph_1$-saturation via Lemma~\ref{lem: Interpolation}, we learn that $\pi^{-1}(L')$ is definable in $\Sigma$. Set $S' = S \cap  \pi^{-1}(L')$. It remains to verify that $S'$ has the desired property.

As both $S$ and $\pi^{-1}(L)$ are symmetric and contain $\id_G$, so is  $S'$. Moreover, $S'$ contains $\ker \pi$ as a subset, and $(S')^2$ is definable in $\Sigma$. It follows from (LM2$'$) in Lemma~\ref{lemma: equivalentLiemodeldefinition} that finitely many left-translates of $S'$ covers $(S')^2$. We have verified that $S'$ is an approximate group.
Note that both $S$ and $S'= S \cap \pi^{-1}(L')$ contains $\ker \pi = \pi^{-1}(\id_L)$. Hence, it again follows from (LM2$'$) in Lemma~\ref{lemma: equivalentLiemodeldefinition} that $S$ and $S'$ are equivalent approximate groups. 

The set $S' = S \cap \pi^{-1}(L')$ is definable in $\Sigma$ as it is the intersection of two sets definable in $\Sigma$.  Let $\mu$ be the finitely additive measure on $\langle S \rangle $ witnessing the fact that $S$ is definably amenable in $(G, \Sigma)$, that is $\mu$ is left-invariant,  $\mu(S)=1$, and every subset of $\langle S \rangle$ definable in $\Sigma$ is $\mu$-measurable. 
Because $S' \subseteq \langle S \rangle$, $S'$ is definable in $\Sigma$, and $S'$ is an equivalent approximate group to $S$, we have $0< \mu(S')< \infty$. Scaling $\mu$ appropriately, we get a measure witnessing the fact that $S'$ is definably amenable in $(G, \Sigma)$

Now, as $\pi$ is a locally compact model of $S$, by (LM1), $\pi(S')= \pi(S) \cap L'$ contains an open subset of $L'$.   Since $L'$ is connected,  $L' = \langle \pi(S') \rangle $. Hence, $\pi|_{\langle S' \rangle} : \langle S' \rangle \to L'$ is surjective. We now define 
\[
\pi': \langle S' \rangle \to L'/K,\qquad \pi'= \phi \circ\pi|_{\langle S' \rangle}
\]
with $\phi: L' \to L'/K$ the quotient map. By construction $\ker \pi' = \pi^{-}(H)$, and recall that $H \subseteq U \cap  L'$ with $\pi^{-1}(U) \subseteq S$. Hence, $ \ker \pi' \subseteq S'$. It follows that $\pi'$ satisfy (LM1') from Lemma~\ref{lemma: equivalentLiemodeldefinition}. Since, $\ker \pi \subseteq \ker \pi'$, it follows that $\pi'$ satisfy (LM2') from Lemma~\ref{lemma: equivalentLiemodeldefinition} too. The group homomorphism $\pi'$ satisfy (CD) because $\pi$ satisfy (CD) and $\phi$ is continuous. Thus, by Lemma~\ref{lemma: equivalentLiemodeldefinition}, $\pi'$ is a locally compact model. Finally, $L'/K$ is a connected Lie group, so $\pi'$ is a connected Lie model.
\end{proof}

We end with a remark about plausible alternative approach:
\begin{remark}
 
Here, we are using the Massicot--Wagner version of Hrushovski's locally compact model theorem. This is easier for us as it is compatible with the setting with measure. However, one can also use the original version of the theorem by Hrushovski~\cite{Hrushovski}. For that one will need to verify in addition that the collection of null sets with respect to the pseudo-Haar measure form an S1-ideal, which is possible.

    In his thesis~\cite{thesis}, Carolino provides an alternative way to link pseudo-measurable approximate groups to connected Lie group. Instead of using definably amenable approximate groups as we do here, he works with a notion of pseudo-open approximate groups and follow the strategy in~\cite{BGT} to get a suitable version of Hrushovski's Lie model theorem. Even though we believe his results are by-and-large correct, the thesis is not peer-reviewed and we think some details must be further clarified. For example, in Lemma~6.8 of~\cite{thesis}, it was not verified that $\pi^{-1}(G')$ is pseudo-open and deduce that $\Tilde{A}$ is an ultra-approximate group in the sense he defined. This is particular important for us because without the knowledge of pseudo-open, the product sets might fail to be measurable with respect to a pseudo-Haar measure. If such issues can be addressed, Carolino's thesis will provide an alternative path to a connected Lie model.
\end{remark}

\section{Small growth above Lie models}

In the previous sections, we construct a suitable approximate group $S$ based on $A$ and $B$. However, the Brunn--Minkowski growth $\BM(S,S)$ may be much larger than $\BM(A, B)$. The purpose of the section is to find two sets  $A',B' \subseteq \langle S \rangle$ so that $\BM(A',B')$ is very close to $\BM(A,B)$. This step is the essential reason we also need to treat the asymmetric case.

Let $G$ be a pseudo-compact group built from a sequence $(G_n)$ of compact groups and a nonprincipal ultrafilter $\ult$. We say that $G$ is {\bf pseudo-connected} if each $G_n$ is connected. 

The primary use of the connectedness assumption is the  inequality below by Kemperman~\cite{Kemperman}. Note that this inequality does not hold when the group is disconnected by choosing the set to be the identity component.

\begin{fact}[Kemperman Theorem] \label{fact: Kemperman}
Suppose $G$ is a connected unimodular locally compact groups equipped with a Haar measure $\mu$. If $A, B\subseteq G$ are such that $A$,$B$, and $AB$ are measurable, then $\mu(AB) \geq \min\{\mu(A)+\mu(B),\mu(G)\}$.
\end{fact}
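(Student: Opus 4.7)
The plan is to follow the structure of Kemperman's 1964 argument for products in a connected locally compact group. First I would reduce to the case where $A$ and $B$ are compact with finite positive measure, using inner regularity of the Haar measure $\mu$: pick compact $A_0 \subseteq A$ and $B_0 \subseteq B$ with $\mu(A_0)$ and $\mu(B_0)$ approaching $\mu(A)$ and $\mu(B)$; then $A_0 B_0 \subseteq AB$ is automatically compact, hence measurable, and taking a limit reduces the general measurable case to the compact case. The trivial regime $\mu(A) + \mu(B) \geq \mu(G)$ can be dispatched separately by a Fubini computation: $\mu(A)\mu(B) = \int_G \mu(A \cap gB^{-1})\, \mathrm{d}\mu(g)$, and if $\mu(AB) < \mu(G)$ then the integrand vanishes on the positive-measure set $G \setminus AB$, forcing $\mu(A) + \mu(B) \leq \mu(G)$ and giving the desired bound.

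For the remaining case I would argue by contradiction: assume $A$, $B$ are compact with $\mu(AB) < \mu(A) + \mu(B) < \mu(G)$, so $U := G \setminus AB$ is a nonempty open set of positive measure. The key tool is the continuity of the maps $t \mapsto \mu(tA \cap B)$ and $t \mapsto \mu(tA \cup B)$ for $t \in G$, which follows from continuity of left translation in $L^1(G)$ together with unimodularity. The idea is that by connectedness of $G$ these continuous functions take all intermediate values along any path in $G$, which allows us to select a translate $tA$ with a prescribed overlap with $B$. Combining such a selection with translation-invariance of $\mu$ and the inclusion--exclusion identity $\mu(tA \cup B) = \mu(A) + \mu(B) - \mu(tA \cap B)$ should force $\mu(AB)$ to be at least $\mu(A) + \mu(B)$ minus an arbitrarily small overlap term, contradicting the standing hypothesis.

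The main obstacle is making the ``prescribed overlap'' step rigorous in the nonabelian setting: one cannot simply translate $A$ so that its ``top'' meets the ``bottom'' of $B$ as in ordered abelian groups. Kemperman's way around this is to introduce the stabilizer $H := \{h \in G : hAB = AB\}$, a closed subgroup of $G$, and to show that the defect $\mu(A) + \mu(B) - \mu(AB)$ is controlled by $\mu(H)$, in the spirit of Kneser's theorem for abelian groups. Since $G$ is connected, any closed subgroup of $G$ with positive Haar measure must equal $G$; so either $H = G$, in which case $AB$ is both compact and a union of $G$-cosets, forcing $\mu(AB) \geq \mu(G)$ and contradicting our assumption, or $\mu(H) = 0$, which yields the desired inequality. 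Carrying out the stabilizer analysis cleanly in the nonabelian connected setting, and reconciling it with the continuity and approximation arguments of the first two steps, is the technical core of the proof and the step I expect to be most delicate.
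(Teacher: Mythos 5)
This is Fact~\ref{fact: Kemperman} in the paper, which is \emph{cited} to Kemperman~\cite{Kemperman} and used as a black box; the paper contains no proof of it, so there is no in-paper argument to compare against. Judged on its own terms as a reconstruction of Kemperman's 1964 theorem, your outline has a genuine gap exactly where you flag uncertainty. The reduction to compact $A,B$ via inner regularity and the handling of the degenerate case $\mu(A)+\mu(B)>\mu(G)$ are both standard and essentially fine (though in the Fubini step you need to spell out that for $g\notin AB$ the sets $A$ and $gB^{-1}$ are literally disjoint, so $\mu(A)+\mu(B)=\mu(A\cup gB^{-1})\leq\mu(G)$; ``the integrand vanishes'' alone does not yield the conclusion). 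Also, the intermediate-value step needs only connectedness, not path-connectedness, so the ``along any path'' phrasing is a red herring.

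The real problem is the core step. You propose to control the deficiency $\mu(A)+\mu(B)-\mu(AB)$ by the stabilizer $H=\{h:hAB=AB\}$ ``in the spirit of Kneser's theorem.'' The dichotomy you invoke (a closed subgroup of a connected group with positive Haar measure is open, hence all of $G$) is correct, but the inequality $\mu(A)+\mu(B)-\mu(AB)\leq\mu(H)$ is precisely the entire content of the theorem, and the Kneser mechanism for proving it does not transfer to nonabelian groups. Kneser's argument rests on the Dyson $e$-transform: replacing $(A,B)$ by $(A\cap tB,\,A\cup t^{-1}B)$ or similar shrinks one set and grows the other while keeping the sumset inside $A+B$. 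In a nonabelian $G$ the analogous product $(A\cap tB)(A\cup t^{-1}B)$ is not contained in $AB$ in general, so the transform simply does not preserve what it must preserve. Kemperman's actual proof does not go through such a transform or a stabilizer at all; it is a considerably more intricate argument that applies Zorn's lemma to a carefully chosen partial order on pairs $(A',B')$ with controlled deficiency, together with local ``elementary-pair'' reductions and compactness of the selection. As written, your proposal identifies the correct statement and the correct dichotomy for connected groups, but replaces the hard step with an appeal to an abelian technique that is known not to apply directly, so it is not a proof.
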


The following lemma is the version of Kemperman theorem for pseudo-connected pseudo-compact groups.

\begin{lemma}\label{lem: pseudo-Kemperman}
Suppose $G$ is pseudo-compact and pseudo-connected with a pseudo-Haar measure $\mu$. If $A, B\subseteq G$ are such that $A$,$B$, and $AB$ are pseudo-measurable, then $\mu(AB) \geq \min\{\mu(A)+\mu(B),\mu(G)\}$.
\end{lemma}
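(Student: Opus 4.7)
The plan is to reduce the assertion to the classical Kemperman inequality applied slicewise in the sequence defining $G$, and then transport the inequality through the ultralimit. Write $G = \prod_\ult G_n$ where each $G_n$ is compact (so unimodular) and connected; by (PH1), fix a sequence $(\mu_n)$ of Haar measures on $G_n$ such that $\mu = \lim_\ult \mu_n$ on pseudo-measurable sets. Pick representing sequences $(A_n)$, $(B_n)$ of measurable sets with $A = \prod_\ult A_n$ and $B = \prod_\ult B_n$. By Fact~\ref{fact: Groupunderultraproduct}(2), $AB = \prod_\ult (A_n B_n)$; since $AB$ is also pseudo-measurable, there is a sequence $(C_n)$ of measurable sets with $AB = \prod_\ult C_n$, and Fact~\ref{fact:ultrafacts} then forces $A_n B_n = C_n$ for a.e.\! $n$, so $A_n B_n$ is measurable for a.e.\! $n$.

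For each such $n$, Fact~\ref{fact: Kemperman} applied in the connected unimodular locally compact group $G_n$ yields
\[
\mu_n(A_n B_n) \;\geq\; \min\{\mu_n(A_n) + \mu_n(B_n),\, \mu_n(G_n)\}.
\]
Now apply $\lim_\ult$ to both sides. The left side equals $\mu(AB)$ by choice of $(\mu_n)$. When both $\mu(A) + \mu(B)$ and $\mu(G)$ are finite, all the $\ult$-limits involved are finite; then Fact~\ref{fact:behavioroflimit}(1) handles the sum and Fact~\ref{fact:behavioroflimit}(4) applied to the continuous function $\min$ gives $\lim_\ult \min\{\mu_n(A_n) + \mu_n(B_n), \mu_n(G_n)\} = \min\{\mu(A)+\mu(B), \mu(G)\}$, so monotonicity (Fact~\ref{fact:behavioroflimit}(3)) closes the argument.

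The one delicate point is the case where $\mu(A) + \mu(B) = \infty$ or $\mu(G) = \infty$, since Fact~\ref{fact:behavioroflimit}(4) is stated only for finite limits. I would handle this by a direct truncation: for any constant $M$, I want to show $\mu(AB) \geq M$ whenever $\min\{\mu(A)+\mu(B), \mu(G)\} \geq M$. Under this assumption, for a.e.\! $n$ we have both $\mu_n(A_n) + \mu_n(B_n) \geq M$ and $\mu_n(G_n) \geq M$ (using the definition of $\lim_\ult$ for sequences approaching $\infty$ via Fact~\ref{fact:welldefinedoflimit}(2), and, when $\mu(A) + \mu(B)$ is a finite number $\geq M$, a standard $\epsilon$-approximation combined with Fact~\ref{fact:behavioroflimit}(1)). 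In either case the slicewise Kemperman bound gives $\mu_n(A_n B_n) \geq M$ on a set in $\ult$, so $\mu(AB) = \lim_\ult \mu_n(A_n B_n) \geq M$ by Fact~\ref{fact:behavioroflimit}(3). Letting $M$ range over all constants below $\min\{\mu(A)+\mu(B), \mu(G)\}$ gives the conclusion. I do not expect any serious obstacle beyond this bookkeeping with $\infty$: the whole argument is essentially a transfer of Kemperman through the ultralimit, relying only on facts already established in Section~4 of the paper.
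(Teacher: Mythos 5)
Your proof is correct and takes essentially the same route as the paper's: apply Kemperman's inequality in each coordinate $G_n$ and pass to the $\ult$-limit. You have simply filled in the bookkeeping the paper leaves implicit (the a.e.\ measurability of $A_nB_n$ via Fact~\ref{fact: Groupunderultraproduct}(2) and Fact~\ref{fact:ultrafacts}, and the truncation-by-$M$ argument to handle the cases where $\mu(A)+\mu(B)$ or $\mu(G)$ is infinite).
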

\begin{proof}
Suppose $G$ is the pseudo-compact group built from a sequence $(G_n)$ of compact and connected groups and a nonprincipal ultrafilter $\ult$. 
Let $(A_n)$ and  $(B_n)$, and sequences such that $A_n,B_n\subseteq G_n$ is measurable for each $n$. Applying the Kemperman theorem and taking limit we get the desired conclusion. 
\end{proof}

We now prove the main result of this section.

\begin{proposition} \label{Prop:SmallgrowthaboveLiemodel}
Suppose  $(G, \Sigma)$ is an expansion of a pseudo-connected pseudo-compact group such that every $D \subseteq G$ definable in $\Sigma$ is pseudo-measurable. Let $A, B \subseteq G$ be definable in $\Sigma$, commensurable to one another, and infinitesimal compared to $G$,  let  $S \subseteq G$ be an approximate group definable in $\Sigma$ and commensurable to both $A$ and $B$, and assume that $A$ can be covered by finitely many left-translates of $S$, and $B$ can be covered by finitely many right-translates of $S$. Then for all $\epsilon>0$, there are  $A', B' \subseteq \langle S \rangle $ definable in $\Sigma$, commensurable to $A$, $B$, and $S$ such that $\BM(A',B') \leq \BM(A,B)+\epsilon$. 
\end{proposition}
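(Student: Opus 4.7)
The plan is to slice $A$ and $B$ according to cosets of $H := \langle S \rangle$, translate the pieces into $H$, and then use a pigeonhole argument to find a pair of translated pieces whose Brunn--Minkowski growth is within $\epsilon$ of $\BM(A, B)$. Since $A$ is covered by finitely many left translates of $S \subseteq H$, $A$ meets only finitely many left cosets $c_1 H, \ldots, c_m H$; set $A_j := A \cap c_j H$ and $A_j^\ast := c_j^{-1} A_j \subseteq H$, and symmetrically let $B_l := B \cap H d_l$ and $B_l^\ast := B_l d_l^{-1} \subseteq H$. Each $A_j$ is simultaneously $\sigma$-definable (being $\bigcup_n (A \cap c_j S^n)$) and $\delta$-definable (being $A \setminus \bigcup_{j' \neq j} c_{j'} H$), so Lemma~\ref{lem: Interpolation} ensures $A_j$, and hence $A_j^\ast$, is definable in $\Sigma$; likewise for $B_l$, $B_l^\ast$. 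Left and right invariance of pseudo-Haar measure (Lemma~\ref{invarianceofpseudoHaar}) then give $\mu(A_j^\ast) = \mu(A_j) =: a_j$, $\mu(B_l^\ast) = \mu(B_l) =: b_l$, and $\mu(A_j^\ast B_l^\ast) = \mu(A_j B_l)$.

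Let $r := \BM(A, B)$, which is at least $1$ by pseudo-Kemperman (Lemma~\ref{lem: pseudo-Kemperman}). A direct computation shows that for any $c \geq 0$ the function $x \mapsto (x^{1/r} + c)^r - c^r$ is concave on $[0, \infty)$ and vanishes at $x = 0$, hence is subadditive. Iterating this first in $j$ (with $c = b_l^{1/r}$) and then in $l$ (with $c = \mu(A)^{1/r}$) yields the superadditivity inequality
\[
\sum_{j, l}(a_j^{1/r} + b_l^{1/r})^r \;\geq\; \bigl(\mu(A)^{1/r} + \mu(B)^{1/r}\bigr)^r + (n-1)\mu(A) + (m-1)\mu(B).
\]
On the other hand, for fixed $l$ the sets $c_j H d_l$ are pairwise disjoint across $j$, so $\sum_j \mu(A_j B_l) = \mu(AB_l) \leq \mu(AB)$, and summing over $l$ gives $\sum_{j, l}\mu(A_j B_l) \leq \min(m, n)\,\mu(AB)$. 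Comparing these two estimates by pigeonhole produces a pair $(j^\ast, l^\ast)$ with $a_{j^\ast}, b_{l^\ast} > 0$ and $\mu(A_{j^\ast} B_{l^\ast}) \leq C(m, n, r)\bigl(a_{j^\ast}^{1/r} + b_{l^\ast}^{1/r}\bigr)^r$.

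The main obstacle is absorbing this multiplicative constant $C(m, n, r)$ into an $\epsilon$-perturbation of the exponent. The tool is the monotonicity of $r \mapsto (a^{1/r} + b^{1/r})^r$: for fixed $a, b \in (0, \mu(A) + \mu(B)]$ it is strictly increasing in $r$, with increment rate bounded below once $a, b$ are bounded below. Restricting the pigeonhole to the subregion $\{(j, l) : a_j \geq \mu(A)/(2m),\ b_l \geq \mu(B)/(2n)\}$---which is nonempty, and whose complement consumes only a bounded fraction of the superadditivity slack---furnishes $(j^\ast, l^\ast)$ with $\mu(A_{j^\ast} B_{l^\ast}) \leq \bigl(a_{j^\ast}^{1/(r+\epsilon)} + b_{l^\ast}^{1/(r+\epsilon)}\bigr)^{r+\epsilon}$. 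Setting $A' := A_{j^\ast}^\ast$ and $B' := B_{l^\ast}^\ast$ gives the required sets: they lie in $\langle S \rangle$, are definable in $\Sigma$, have measures $a_{j^\ast}, b_{l^\ast} > 0$ (hence commensurable to $A, B, S$), and satisfy $\BM(A', B') \leq \BM(A, B) + \epsilon$.
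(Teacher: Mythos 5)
Your coset decomposition $A_j = A \cap c_jH$, $B_l = B \cap Hd_l$, the translated pieces $A_j^\ast, B_l^\ast \subseteq \langle S \rangle$, and the definability/commensurability bookkeeping all match the paper's setup exactly (the paper writes $A_\alpha, B_\beta$). The concavity of $x \mapsto (x^{1/r}+c)^r - c^r$ for $r \geq 1$ is also correct, so the superadditivity inequality you derive is valid.

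The gap is in the pigeonhole step. Your superadditivity lower bound
\[
\sum_{j,l}(a_j^{1/r} + b_l^{1/r})^r \;\geq\; \mu(AB) + (n-1)\mu(A) + (m-1)\mu(B)
\]
is generically \emph{smaller} than your upper bound $\sum_{j,l}\mu(A_jB_l) \leq \min(m,n)\mu(AB)$ whenever Kemperman is not tight: for $m=n$ the needed comparison collapses to $(n-1)(\mu(A)+\mu(B)) \geq (n-1)\mu(AB)$, which fails as soon as $\mu(AB) > \mu(A)+\mu(B)$. So the pigeonhole only produces a pair with $\mu(A_{j^\ast}B_{l^\ast}) \leq C\,(a_{j^\ast}^{1/r}+b_{l^\ast}^{1/r})^r$ for a constant $C$ that, with $m=n$ large, $\mu(A)=\mu(B)$, $\mu(AB)=2^r\mu(A)$, tends to $2^{r-1}$. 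The absorption you propose cannot cover this: the gain from moving the exponent from $r$ to $r+\epsilon$ is
\[
\frac{(a^{1/(r+\epsilon)}+b^{1/(r+\epsilon)})^{r+\epsilon}}{(a^{1/r}+b^{1/r})^r} \;\leq\; 2^\epsilon,
\]
with equality only at $a=b$ and the ratio tending to $1$ as $a/b \to 0$ or $\infty$. For small $\epsilon$ and $r > 1+\epsilon$, $C > 2^\epsilon$, so the constant cannot be absorbed. The restriction to $\{a_j \geq \mu(A)/(2m),\ b_l \geq \mu(B)/(2n)\}$ does not help: it bounds $a_j/b_l$ only in terms of $m$ and $n$, which are unbounded, so the monotonicity gain is still not bounded away from $1$ uniformly.

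The paper's proof avoids the overcount entirely by weighting. It introduces the probability measures $\pr_A(\alpha)=a_\alpha/\mu(A)$ and $\pr_B(\beta)=b_\beta/\mu(B)$, uses H\"older's inequality together with the \emph{single-index} bound $\sum_\alpha \mu(A_\alpha B_\beta) \leq \mu(AB)$ (for each fixed $\beta$), and then argues by contradiction: if $\BM(A_\alpha,B_\beta) > r+\epsilon$ for all $\alpha,\beta$, averaging against $\pr_A \times \pr_B$ gives
\[
1 \;\geq\; \EE_{\pr_A}\EE_{\pr_B}\!\left[\Big(\tfrac{a_\alpha}{\mu(A_\alpha B_\beta)}\Big)^{\frac{1}{r+\epsilon}} + \Big(\tfrac{b_\beta}{\mu(A_\alpha B_\beta)}\Big)^{\frac{1}{r+\epsilon}}\right] \;\geq\; \Big(\tfrac{\mu(A)}{\mu(AB)}\Big)^{\frac{1}{r+\epsilon}} + \Big(\tfrac{\mu(B)}{\mu(AB)}\Big)^{\frac{1}{r+\epsilon}} \;>\; 1,
\]
a contradiction. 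The measure-weighted averaging plus H\"older is the essential mechanism that the uniform pigeonhole cannot replicate.
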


\begin{proof}
 Let $\mu$ be a pseudo-Haar measure on $G$ such that $0< \mu(A), \mu(B), \mu(S)< \infty$, so $\mu(G)= \infty$. 
As $G$ is a pseudo-connected and pseudo-compact group, applying Lemma~\ref{lem: pseudo-Kemperman}, we get 
$$  \mu(AB) \geq \min\{\mu(A)+\mu(B), \mu(G)\}.$$
As $\mu(G) =\infty$, we have $\BM(A,B) \geq 1$.

For each left coset $\alpha \in G/\langle S \rangle$, choose a representative $g(\alpha) \in G$, and for each right coset $\beta \in \langle S \rangle \backslash G$, choose a representative $g(\beta) \in G$.
By the relationship between $A$, $B$, and $S$, there are only finitely many left cosets of $\langle S \rangle$ intersecting $A$ nonemptily, and a similar claim holds for $B$ with right cosets of $\langle S \rangle$. Therefore, 
\begin{equation}\label{eq: integral formula in G}
\mu(A)=\sum_{\alpha\in G/\langle S \rangle} \mu_{\langle S \rangle}(g(\alpha)^{-1}A\cap \langle S \rangle),\quad \mu(B)=\sum_{\beta\in \langle S \rangle\backslash G} \mu_{\langle S \rangle}(B g(\beta)^{-1}\cap \langle S \rangle).
\end{equation}
For simplicity, for every $\alpha\in G/ \langle S \rangle$ and $\beta\in  \langle S \rangle\backslash G$, we write
\[
A_\alpha := g(\alpha)^{-1}A\cap \langle S \rangle,\quad\text{and}\quad B_\beta:=B g(\beta)^{-1}\cap \langle S \rangle. 
\]
As $A,B$ are definable in $\Sigma$, for each  $\alpha\in G/ \langle S \rangle$ and $\beta\in  \langle S \rangle\backslash G$, the sets $A_\alpha, B_\beta \subseteq \langle S \rangle$ are also definable in $\Sigma$. It is harmless to delete such $A_\alpha, B_\beta$ when they have $\mu$-measure $0$ and replacing $A$ and $B$ with subsets accordingly. Hence, we can assume for each  $\alpha\in G/ \langle S \rangle$ and $\beta\in  \langle S \rangle\backslash G$, the sets $A_\alpha, B_\beta$ are commensurable to $S$.

Let $r=\BM(A,B)$. Then $r\geq 1$. We will show that there are $\alpha\in G/ \langle S \rangle$ and $\beta\in  \langle S \rangle\backslash G$ such that $\BM(A_\alpha,B_\beta)\leq r + \varepsilon$ for all positive $\varepsilon$. From now on we assume that there is $\varepsilon>0$ such that for all $\alpha\in G/ \langle S \rangle$ and $\beta\in  \langle S \rangle\backslash G$, $\BM(A_\alpha,B_\beta)> r+\varepsilon$. Equivalently, we have
\begin{equation}\label{eq: assumption}
\left(\frac{\mu_{\langle S \rangle}(A_\alpha)}{\mu_{\langle S \rangle}(A_\alpha B_\beta)}\right)^{\frac{1}{r+\varepsilon}}+ \left(\frac{\mu_{\langle S \rangle}(B_\beta)}{\mu_{\langle S \rangle}(A_\alpha B_\beta)}\right)^{\frac{1}{r+\varepsilon}}\leq 1.
\end{equation}

We now define two probability measures on $G/\langle S \rangle$ and $\langle S \rangle\backslash G$ based on the shapes of $A$ and $B$ respectively as follows. For $\alpha\in G/\langle S \rangle$, define
\[
\pr_A(\alpha)=\frac{\mu_{\langle S \rangle}(A_\alpha)}{\mu(A)}.
\]
And similarly, for $\beta\in \langle S \rangle\backslash G$, let 
\[
\pr_B(\beta)=\frac{\mu_{\langle S \rangle}(B_\beta)}{\mu(B)}.
\]

Choose $\alpha$ from $G/\langle S \rangle$ randomly with respect to $\pr_A$. We obtain that 
\begin{equation}\label{eq: about to use Holder}
\mathbb{E}_{\pr_A(\alpha)}\left(\frac{\mu_{\langle S \rangle}(A_\alpha)}{\mu_{\langle S \rangle}(A_\alpha B_\beta)}\right)^{\frac{1}{r+\varepsilon}}=\frac{1}{\mu(A)}\sum_{\alpha \in G/\langle S \rangle}\frac{\left(\mu_{\langle S \rangle}(A_\alpha)\right)^{\frac{r+\varepsilon+1}{r+\varepsilon}}}{\left(\mu_{\langle S \rangle}(A_\alpha B_\beta)\right)^{\frac{1}{r+\varepsilon}}}. 
\end{equation}
On the other hand, by H\"older's inequality (with exponents $(r+\varepsilon)/(r+\varepsilon +1)$ and $1/(r+\varepsilon +1)$) we get
\[
\left(\sum_{\alpha\in G/\langle S \rangle}\mu_{\langle S \rangle}(A_\alpha)\right)^{\frac{r+\varepsilon+1}{r+\varepsilon}} \leq \sum_{\alpha\in G/\langle S \rangle} \frac{\mu_{\langle S \rangle}(A_\alpha)^{\frac{r+\varepsilon+1}{r+\varepsilon}}}{\mu_{\langle S \rangle}(A_\alpha B_\beta)^{\frac{1}{r+\varepsilon}}}\left(\sum_{\alpha\in G/\langle S \rangle}\mu_{\langle S \rangle}(A_\alpha B_\beta)\right)^{\frac{1}{r+\varepsilon}}. 
\]
Using \eqref{eq: integral formula in G} and the fact that
\[
A_\alpha B_\beta = \left(g(\alpha)^{-1}A\cap \langle S \rangle \right)\left(B g(\beta)^{-1}\cap \langle S \rangle\right) \subseteq \left(g(\alpha)^{-1}A B g(\beta)^{-1}\right)\cap \langle S \rangle,
\]
together with the unimodularity of $G$ we conclude that
\[
\sum_{\alpha\in G/\langle S \rangle}\mu_{\langle S \rangle}(A_\alpha) = \mu(A), \quad\text{and}\quad \sum_{\alpha\in G/\langle S \rangle}\mu_{\langle S \rangle}(A_\alpha B_\beta)\leq \mu(ABg(\beta)^{-1})=\mu(AB). 
\]
Finally, by using \eqref{eq: about to use Holder}, we get
\begin{equation}\label{eq: the A part}
    \mathbb{E}_{\pr_A(\alpha)}\left(\frac{\mu_{\langle S \rangle}(A_\alpha)}{\mu_{\langle S \rangle}(A_\alpha B_\beta)}\right)^{\frac{1}{r+\varepsilon}}\geq \left(\frac{\mu(A)}{\mu(A B)}\right)^{\frac{1}{r+\varepsilon}}.
\end{equation}

Similarly by choosing $\beta$ from $\langle S \rangle \backslash G$ randomly with respect to $\pr_B$, we will get
\begin{equation}\label{eq: the B part}
    \mathbb{E}_{\pr_B(\beta)}\left(\frac{\mu_{\langle S \rangle}(B_\beta)}{\mu_{\langle S \rangle}(A_\alpha B_\beta)}\right)^{\frac{1}{r+\varepsilon}}\geq \left(\frac{\mu(B)}{\mu(A B)}\right)^{\frac{1}{r+\varepsilon}}.
\end{equation}
Inequalities \eqref{eq: the A part} and \eqref{eq: the B part} together with the Fubini and \eqref{eq: assumption} give us
\begin{align*}
&\,\left(\frac{\mu(A)}{\mu(AB)}\right)^{\frac{1}{r+\varepsilon}}+\left(\frac{\mu(B)}{\mu(AB)}\right)^{\frac{1}{r+\varepsilon}}\\
\leq&\, \mathbb{E}_{\pr_A(\alpha)}\mathbb{E}_{\pr_B(\beta)}\left[\left(\frac{\mu_{\langle S \rangle}(A_\alpha)}{\mu_{\langle S \rangle}(A_\alpha B_\beta)}\right)^{\frac{1}{r+\varepsilon}}+ \left(\frac{\mu_{\langle S \rangle}(B_\beta)}{\mu_{\langle S \rangle}(A_\alpha B_\beta)}\right)^{\frac{1}{r+\varepsilon}} \right]\leq1.
\end{align*}
This contradicts the choice of $r$. 
\end{proof}

\section{Density functions of definable sets over Lie models}

When $H$ is a locally compact group, and $\pi: H \to L$ is a continuous and surjective group homomorphism, a left Haar measure $\mu$ of  $H$ and a left Haar measure $\lambda$ of $L$ are linked together by the corresponding left Haar measure $\nu$ of $\ker \pi$ via a quotient integral formula. This allows us to relate the measure of a measurable set $A \subseteq G$ and the measure of its image $\pi(A)$ using the density function $f_A(x) = \nu ( \ker \pi \cap x^{-1}A  )$. 

There is no similar measure on the kernel of a locally compact model or Lie model. Nevertheless, we will now show there are still  density functions which are   well-behaved enough for our later purpose.

Suppose $H$ is a group equipped with a left-invariant measure $\mu$, the set $A \subseteq H$ is measurable, $\pi:H \to L$ is a  surjective group homomorphism, and the group $L$ is equipped with a left-invariant measure $\lambda$. We say that $f_A: H \to \RR^{\geq 0}$ is a {\bf density function} for $A$ with respect to $\pi$ if for all measurable $X \subseteq L$, the set $\pi^{-1}(X)$ is $\mu$-measurable, and 
$$ \mu( A \cap \pi^{-1} (X)) = \int_X f_A \d\lambda. $$
We have a similar definition replacing left by right.

It is clear from the above definition that for densities to exist, the measure on $H$ must be sufficiently rich. The following lemma verifies that this is the case when we are handling locally compact models.

\begin{lemma} \label{lem: Link between measures}
    Suppose $(G, \Sigma)$ is an $\aleph_1$-saturated expansion of a group,  $S \subseteq G$ is an approximate group definable in $\Sigma$, $S$ is definably amenable in $(G, \Sigma)$ witnessed by a left-invariant measure $\mu$  on $\langle S \rangle$, and $S$ has a locally compact model  $\pi: \langle S \rangle \to L$ continuously definable in $\Sigma$. Then $\pi^{-1}(X) \subseteq \langle S \rangle$ is $\mu$-measurable whenever $X \subseteq L$ is measurable. Moreover, if $\lambda$ is the pushforward of $\mu$ on $L$, given by 
    $$ \lambda(X):= \mu(\pi^{-1}(X)) $$
    for measurable $X \subseteq L$, then $\lambda$ is a left Haar measure on $L$.

\end{lemma}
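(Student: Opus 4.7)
The plan is to first verify that preimages of Borel (and hence measurable) subsets of $L$ are $\mu$-measurable, and then check directly that the pushforward $\lambda_0(X) := \mu(\pi^{-1}(X))$ is a left Haar measure on $L$.

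For measurability, the key input is the definability condition (CD): for every open $U \subseteq L$, the preimage $\pi^{-1}(U)$ is $\sigma$-definable in $\Sigma$, i.e.\ a countable union of $\Sigma$-definable subsets of $\langle S \rangle$. By definable amenability of $S$, each such piece is $\mu$-measurable, and since $\mu$-measurable sets form a $\sigma$-algebra, so is $\pi^{-1}(U)$. The class $\mathcal{B} := \{X \subseteq L : \pi^{-1}(X) \text{ is } \mu\text{-measurable}\}$ is a $\sigma$-algebra (preimages commute with Boolean operations), hence contains every Borel set. Extension to general measurable sets then follows by passing to completions: any $\lambda$-measurable $X$ differs from a Borel $B$ by a subset of a $\lambda$-null Borel set $N'$, and since $\lambda_0(N') = \mu(\pi^{-1}(N')) = 0$, completeness of $\mu$ ensures that $\pi^{-1}(X)$ is $\mu$-measurable with the correct mass.

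To identify $\lambda_0$ as a left Haar measure, I would check its defining properties in turn. \emph{Left-invariance}: given $g \in L$, pick any lift $h \in \pi^{-1}(g) \subseteq \langle S \rangle$ (possible by surjectivity of $\pi$); since $\pi$ is a homomorphism, $\pi^{-1}(gX) = h \pi^{-1}(X)$, and left-invariance of $\mu$ gives $\lambda_0(gX) = \lambda_0(X)$. \emph{Local finiteness}: (LM1), reformulated via Lemma~\ref{lemma: equivalentLiemodeldefinition}, supplies an open neighborhood $V$ of $\id_L$ with $\pi^{-1}(V) \subseteq S$, yielding $\lambda_0(V) \leq \mu(S) = 1$; translating handles every other point. \emph{Non-triviality}: (LM2) makes $\overline{\pi(S)}$ compact, hence covered by finitely many translates $g_1 V, \ldots, g_k V$, so $S \subseteq \bigcup_{i=1}^{k} h_i \pi^{-1}(V)$ for any lifts $h_i$, and subadditivity with left-invariance of $\mu$ forces $\mu(\pi^{-1}(V)) \geq 1/k > 0$. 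Together with the uniqueness-up-to-constant of Haar measure (Fact~\ref{fact: Haarmeasurenew}), these properties identify $\lambda_0$ on Borel sets as the restriction of a genuine left Haar measure.

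The main obstacle I anticipate is ensuring the countable additivity of $\lambda_0$ when the witnessing measure $\mu$ from definable amenability is only finitely additive. One route is to invoke the $\aleph_1$-saturation of $(G, \Sigma)$: any countable increasing union of $\sigma$-definable sets with bounded total $\mu$-mass can be compactly approximated by a single $\Sigma$-definable set via Lemma~\ref{lem: Interpolation}, so a failure of $\sigma$-additivity would contradict finite additivity of $\mu$ on definable sets. A cleaner alternative, adequate for the applications in this paper, is to observe that in practice $\mu$ arises from a genuine pseudo-Haar measure via Lemma~\ref{lem: Definablyamenablearise}, for which $\sigma$-additivity on the relevant $\sigma$-algebra is built in and propagates through $\pi^{-1}$.
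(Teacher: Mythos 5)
Your first half (measurability of $\pi^{-1}(X)$) matches the paper's argument: preimages of open sets are $\sigma$-definable by (CD), hence $\mu$-measurable by (DA2) and the fact that $\mu$-measurable sets form a $\sigma$-algebra, and one then extends through Borel sets and completion. Your verification of left-invariance and local finiteness is also fine, and your argument for non-triviality of $\lambda_0$ --- covering $\overline{\pi(S)}$ by finitely many translates of $V$ and deducing $\mu(\pi^{-1}(V)) \ge 1/k$ --- is a welcome detail the paper's own proof leaves implicit. Your diagnosis of the $\sigma$-additivity issue is also correct: the lemma implicitly takes $\mu$ to be $\sigma$-additive, which is what Lemma~\ref{lem: Definablyamenablearise} actually provides.

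However, there is a genuine gap in the second half. You conclude by saying that left-invariance, local finiteness and non-triviality ``identify $\lambda_0$ on Borel sets as the restriction of a genuine left Haar measure'' via the uniqueness clause of Fact~\ref{fact: Haarmeasurenew}. But that uniqueness statement applies only to \emph{Radon} measures, i.e.\ measures that are inner regular on open sets and outer regular. A locally compact group that is not second countable can carry left-invariant, locally finite, nonzero Borel measures that fail to be Radon, so you cannot invoke uniqueness before establishing regularity. This is exactly where the bulk of the paper's work lies: it proves (using $\aleph_1$-saturation and the continuous-definability condition (CD)) that every $\Sigma$-definable subset of $\langle S\rangle$ has compact image (Claim~1), hence every open subset of $L$ is $\sigma$-compact (Claim~2), and then propagates inner and outer regularity through the Borel hierarchy (Claims~3 and~4), finishing by observing that $L$ itself is a countable union of precompact open sets. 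That $L$ and its open sets are $\sigma$-compact is a nontrivial structural consequence of $\pi$ being continuously definable, not an automatic feature of locally compact groups, and omitting it leaves your argument incomplete. If you wanted to keep the shortcut, you would need to argue separately that $L$ is second countable (for example, when $L$ is a connected Lie group, as in the paper's later applications), but the lemma as stated does not assume this.
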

\begin{proof}

We first note that if $X \subseteq L$ is Borel, then $\pi^{-1}(X)$ is $\mu$-measurable. Indeed, it is enough to check for open $X \subseteq L$ , that $\pi^{-1}(X)$ is $\mu$-measurable. This is the case because from the condition that $\pi$ is definable in $\Sigma$, $\pi^{-1}(U)$ is open.

Now define a Borel measure $\Tilde{\lambda}$ on $L$ as follows. When $X \subseteq L$ is Borel, set
$$ \Tilde{\lambda}(X) = \mu( \pi^{-1} (X)). $$
The $\sigma$-additivity of $\Tilde{\lambda}$ on Borel sets and $\Tilde{\lambda}(\emptyset)=0$ follow from the corresponding facts for $\mu$.
Moreover, $\Tilde{\lambda}$ is the completion of its restriction to the collection of Borel subsets.

It will turn out that $\Tilde{\lambda}$ is a Haar measure, but will postpone this proof and show why the desired conclusions follow from it. By the uniqueness of Haar measure, we get constant $\alpha$ such that $ \lambda(X) = \alpha \Tilde{\lambda}(X)$ for all measurable $X$. When $B$ is Borel, from our construction, this already gives us that $\lambda(B) = \alpha \mu(\pi^{-1}(B))$.  If $X$ is measurable, and $\lambda(X)=0$, then $X$ is a subset of Borel $B \subseteq L$ with $\lambda(B)=0$ as $\lambda$ is the completion of its restriction to the Borel sets. For such $X$, the set $\pi^{-1}(X)$ is a subset of $\pi^{-1}(B)$ which has $\mu(\pi^{-1}(B))=0$, so $\pi^{-1}(X)$ is measurable and has zero $\mu$ measure by the completeness of $\mu$. For a general measurable $X \subseteq L$, a standard argument produces Borel $B\subseteq L$ such that $\lambda(X \triangle B)=0$. It then follows from the earlier cases that $\pi^{-1}(X)$ is measurable and $ \lambda(X) = \alpha \mu(\pi^{-1}(X)). $

 Finally, we verify that $\Tilde{\lambda}$ is a left Haar measure of $L$. It is clear that $\Tilde{\lambda}$ is left-invariant. So we only need to show that $\Tilde{\lambda}$ is a Radon measure. By (LM1) of the definition of locally compact model, there is an open neighborhood $U$ of $\id_L$ such that $\pi^{-1}(U) \subseteq S$. Then for each $x \in U$, $xU$ is is open neighborhood with 
$$\Tilde{\lambda}(U)  = \mu( \pi^{-1}( U)) < \mu (S) =1.$$
From this we established the locally finiteness (RM1) property of $\Tilde{\lambda}$. It remains to show that $\Tilde{\lambda}$ is inner regular for open sets and outer regular. We will show this through several claims.

\medskip

\noindent {\bf Claim 1:} If $D \subseteq \langle S \rangle$ is definable in $\Sigma$, then $\pi(D)$ is compact in $L$.

\medskip

\noindent{\it Proof of Claim 1:} Recall that the definability of $\pi$ implies that $C\subseteq L$ is closed if and only if $\pi^{-1}(C)$ is $\delta$-definable. So we need to show  $\pi^{-1}( \pi(D)) = D (\ker \pi)$ is $\delta$-definable. As $\ker \pi = \pi^{-1}(\{\id_L \})$ is $\delta$-definable, we obtain a decreasing sequence $(E_n)$ definable in $\Sigma$ such that 
$$ \ker \pi = \bigcap_{n} E_n.$$
Clearly, we have $D (\ker \pi) \subseteq \bigcap_{n} DE_n$. We will now show that $  \bigcap_{n} DE_n \subseteq D (\ker \pi) $. Take $a \in \bigcap_{n} DE_n$. Then $a= d_ne_n$ with $d_n \in D$ and $e_n \in E_n$. Hence,   $D \cap (a(E_n)^{-1}) $ containing $a(e_n)^{-1}=d_n$ is nonempty. By $\aleph_1$-saturation, $D \cap \bigcap_n a(E_n)^{-1}$ is nonempty. It follows that there is $d\in D$ and $e^{-1} \in \bigcap_n (E_n)^{-1}$ such that $d =a e$. Thus, $a = de$ is in $D \ker \pi$.

Finally, by $\aleph_1$-saturation, $D$ is a subset of $S^n$ for some $n$. Hence, $\pi(D)$ is a subset of $(\pi(S))^n$, which is precompact. Therefore, $\pi(D)$ is compact.
\hfill$\bowtie$
\medskip

\noindent {\bf Claim 2:} Suppose $U \subseteq L$ is open. Then $U$ is a countable union of compact sets. In particular, $\Tilde{\lambda}$ has inner regularity for open sets. 

\medskip

\noindent{\it Proof of Claim 2:}  Recall that the definability of $\pi$ implies that $U\subseteq L$ is open if and only if $\pi^{-1}(U)$ is $\sigma$-definable in $\Sigma$. Hence, $\pi^{-1}(U) = \bigcup_n D_n$ with $D_n$ definable in $\Sigma$ for each $n$. It then follows that  
$$U = \bigcup_n \pi( D_n).$$
From claim 1, it follows that $U$ is a countable union of compact subsets of $L$. 
\hfill$\bowtie$

\medskip

\noindent {\bf Claim 3:} Suppose $U \subseteq L$ is open and precompact, and $B\subseteq U$ is Borel. Then $\Tilde{\lambda}$ is both inner regular and outer regular on $U$. 

\medskip

\noindent{\it Proof of Claim 3:} We will prove using induction on the Borel complexity of $B$. When $B$ is open, the inner regularity follows from Claim~2 and the outer regularity is immediate. If $B = U \setminus B_1$ and we have proven the statement for $B_1$, then the statement for $B$ will simply follows by switching inner and outer regularity. Suppose $B= \cup_n B_n$, and we have proven the statement for $B_n$ for each $n$. Fix $\epsilon>0$. Let  
$B_N = \bigcup_{n=1}^N B_n$ such that $\Tilde{\lambda}(B \setminus B_n)< \epsilon/2$. For $n \in [N]$, using the induction hypothesis, we choose compact $K_n \subseteq B_n$ such that $\Tilde{\lambda} (B_n \setminus K_n) < (\epsilon/2) (1/2)^n$. Then, it is easy to see that $\Tilde{\lambda}B \setminus \bigcup_{n=1}^N K_n$. This shows the inner regularity of $\Tilde{\lambda}$ for $B$. A similar approximation argument shows the outer regularity of $\Tilde{\lambda}$ for $B$.  
\hfill$\bowtie$
\medskip

\noindent {\bf Claim 4:} Suppose $B \subseteq L$ is Borel. Then $\Tilde{\lambda}$ is outer regular on $B$. 

\medskip

\noindent{\it Proof of Claim 4:}   From Claim 3, it is enough to establish that $L$ is a countable union of open an precompact subsets of it. As $\pi$ is a locally compact model, there is a neighborhood $U$ of $\id_L$ such that $U \subseteq \pi S$ and   $\pi(S)$ is precompact. Hence, this set $U$ is open and precompact. Note that $L = \bigcup_{n} \pi(S^n)$. For each $n$, $\pi(S^n)$ is precompact, and hence can be covered by finitely many translation of $U$. Therefore, $L$ can be covered by countably many translates of $U$, which completes the proof. 
\hfill$\bowtie$
\medskip

This completes the proof.
\end{proof}

We recall some concept from analysis. Let $\Omega$ be a topological space equipped with a measure $\lambda$. A measure $\kappa$ with the same $\sigma$-algebra as $\lambda$ is {\bf absolutely continuous} with respect to $\lambda$ if and only if for each measurable $X \subseteq L$, we have $\mu(X)=0$ whenever $\lambda(X)=0$. 


We need the following fact:

\begin{fact}[Radon--Nikodym Theorem] \label{RadonNikodym}
Suppose $\kappa$ and $\lambda$ are $\sigma$-finite Borel measure on $L$ and $\kappa$ is absolutely continuous with respect to $\lambda$ (i.e, for all Borel $X\subseteq L$, $\lambda(X) =0$ implies $\kappa(X)=0$). Then there is a $\lambda$-measurable function $f$ such that for all Borel $X\subseteq L$ we have
$$ \kappa(X) =\int_{X} f \d\lambda x. $$
\end{fact}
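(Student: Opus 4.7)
The plan is to prove this via the classical von Neumann trick using Hilbert space methods, reducing first to the finite case and then extracting the density through the Riesz representation theorem.

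First I would reduce to the case where both $\kappa$ and $\lambda$ are finite. Since both measures are $\sigma$-finite, one can choose a countable Borel partition $L = \bigsqcup_n L_n$ with $\lambda(L_n), \kappa(L_n) < \infty$, construct $f_n$ on each piece, and patch them together via $f = \sum_n f_n \mathbbm{1}_{L_n}$. Measurability and the integral identity transfer over by monotone convergence, so the essential content is the finite case.

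Next, assume $\kappa, \lambda$ are finite Borel measures and consider the auxiliary finite measure $\nu := \kappa + \lambda$. Define the linear functional
\[
T : L^2(\nu) \to \RR, \qquad T(g) := \int_L g \d \kappa.
\]
By Cauchy--Schwarz, $|T(g)| \leq \nu(L)^{1/2} \|g\|_{L^2(\nu)}$, so $T$ is bounded. The Riesz representation theorem then yields a Borel function $h \in L^2(\nu)$ with $\int g \d \kappa = \int g h \d\nu$ for all $g \in L^2(\nu)$. Rearranging using $\d\nu = \d\kappa + \d\lambda$ produces the key identity
\[
\int_L g(1-h) \d \kappa = \int_L g h \d \lambda \qquad \text{for all } g \in L^2(\nu).
\]
Plugging in $g = \mathbbm{1}_{\{h < 0\}}$ and $g = \mathbbm{1}_{\{h \geq 1\}}$ and using non-negativity of the measures shows that $\nu(\{h < 0\}) = 0$ and $\lambda(\{h \geq 1\}) = 0$, so up to a $\nu$-null modification we may assume $0 \leq h < 1$ everywhere on the support of $\lambda$, while $\{h = 1\}$ is a $\lambda$-null set.

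It is here that the absolute continuity hypothesis is used: since $\lambda(\{h=1\}) = 0$, absolute continuity gives $\kappa(\{h=1\}) = 0$ as well, so the set $\{h=1\}$ can be safely discarded from both sides. Define $f := h/(1-h)$ on $\{h < 1\}$ and $f := 0$ on $\{h = 1\}$; this is a nonnegative Borel-measurable function. Substituting $g = \mathbbm{1}_X / (1-h)$ (truncated and then passed to the limit by monotone convergence to stay in $L^2(\nu)$) into the identity above yields
\[
\kappa(X) = \int_X \frac{h}{1-h} \d \lambda = \int_X f \d \lambda
\]
for every Borel $X \subseteq L$, as desired. The main obstacle is the routine but delicate step of verifying that $\{h = 1\}$ is handled properly and that the substitution $g = \mathbbm{1}_X/(1-h)$ can be justified (via truncation at level $N$ and monotone convergence as $N \to \infty$); once this is done the rest is bookkeeping, and the reduction from $\sigma$-finite to finite is standard.
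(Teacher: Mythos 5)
The paper states this as a \textit{Fact} and gives no proof; it is a standard reference to the classical Radon--Nikodym theorem, not a result the authors establish themselves. Your proposal is a correct reconstruction of the standard von Neumann $L^2$ argument: the reduction to finite measures, the auxiliary measure $\nu = \kappa + \lambda$, the bounded functional on $L^2(\nu)$ handled by Riesz representation, the derivation of $\int g(1-h)\,\mathrm{d}\kappa = \int gh\,\mathrm{d}\lambda$, the confinement of $h$ to $[0,1]$ by testing against indicator functions, the invocation of absolute continuity precisely to discard $\{h=1\}$, and the truncation-plus-monotone-convergence step to justify $g = \mathbbm{1}_X/(1-h)$. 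All of these are sound, and the output $f = h/(1-h)$ is Borel (hence $\lambda$-measurable) as required. Since the paper itself offers no argument to compare against, there is nothing to contrast; you have simply supplied the textbook proof that the paper implicitly defers to.
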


We now prove the promised existence:


\begin{proposition} \label{prop: existenceanduniqueness}
    Suppose $G$ is a group equipped with a complete left invariant measure $\mu$, and $S\subseteq G$ is an approximate subgroup with $\pi: \langle S \rangle \to L$ is a locally compact model. If $A \subseteq \langle S \rangle$ is definable in $(G,S)$, then $A$ has a density function $f_A$ with respect to $\pi$ which is a.e. bounded. Moreover, if $g_A$ is another density function of $A$ with respect to $\pi$, then $f_A(x) = g_A(x)$ for a.e. $x\in L$.

\end{proposition}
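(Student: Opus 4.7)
My plan is to produce $f_A$ as the Radon--Nikodym derivative of the measure $X \mapsto \mu(A \cap \pi^{-1}(X))$ with respect to a fixed left Haar measure $\lambda$ on $L$. First I would invoke Lemma~\ref{lem: Link between measures} to ensure that $\pi^{-1}(X)$ is $\mu$-measurable for every $\lambda$-measurable $X$ and that the pushforward $\mu \circ \pi^{-1}$ is a left Haar measure on $L$, hence equals $\alpha \lambda$ for a constant $\alpha > 0$. Since $A$ is definable in $(G,S)$ and (implicitly in our setting) $\mu$ witnesses the definable amenability of $S$, the set $A$ is $\mu$-measurable, and so $A \cap \pi^{-1}(X)$ is $\mu$-measurable for every $\lambda$-measurable $X$. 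Setting
$$\kappa_A(X) := \mu(A \cap \pi^{-1}(X)),$$
the $\sigma$-additivity of $\mu$ together with $\pi^{-1}(\bigsqcup_n X_n) = \bigsqcup_n \pi^{-1}(X_n)$ shows that $\kappa_A$ is a genuine measure on $L$.

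Next I would establish the key domination $\kappa_A(X) \leq \mu(\pi^{-1}(X)) = \alpha \lambda(X)$, which follows from $A \subseteq \langle S \rangle$ and hence $A \cap \pi^{-1}(X) \subseteq \pi^{-1}(X)$. This gives absolute continuity of $\kappa_A$ with respect to $\lambda$. For the $\sigma$-finiteness of $\lambda$ (and hence of $\kappa_A$) I would appeal to Claim~4 in the proof of Lemma~\ref{lem: Link between measures}, which shows $L$ is a countable union of precompact open sets. The Radon--Nikodym theorem (Fact~\ref{RadonNikodym}) applied to the Borel restrictions then produces a $\lambda$-measurable $f_A : L \to \RR^{\geq 0}$ with $\kappa_A(X) = \int_X f_A \d\lambda$ for every Borel $X \subseteq L$; the extension to arbitrary $\lambda$-measurable $X$ is routine, using that $\lambda$ is the completion of its Borel restriction and that both sides of the identity depend only on $X$ up to $\lambda$-null sets.

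The essential boundedness of $f_A$ is now immediate from the same domination: for every measurable $X$,
$$\int_X f_A \d\lambda = \kappa_A(X) \leq \alpha \lambda(X),$$
so $f_A \leq \alpha$ for $\lambda$-a.e.\! $x$. For uniqueness, if $g_A$ is another density function, then $\int_X (f_A - g_A) \d\lambda = 0$ for every measurable $X$; applying this to the level sets $\{f_A > g_A\} \cap K_n$ and $\{f_A < g_A\} \cap K_n$ for a $\sigma$-finite exhaustion $L = \bigcup_n K_n$ by precompact open sets yields $f_A = g_A$ $\lambda$-almost everywhere.

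The principal conceptual work is already discharged by Lemma~\ref{lem: Link between measures}, which turns the abstract finitely-additive invariant measure $\mu$ into something that genuinely pushes forward to a Haar measure on $L$. With that in hand, this proposition is essentially a textbook application of Radon--Nikodym, and I expect no substantive obstacle beyond the measurability and $\sigma$-finiteness bookkeeping recorded above.
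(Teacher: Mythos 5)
Your argument is essentially the paper's: define $\kappa_A(X) := \mu(A \cap \pi^{-1}(X))$, observe $\kappa_A \le \alpha\lambda$ (absolute continuity plus, as a bonus, essential boundedness of the derivative), and apply Radon--Nikodym. The one place you improve on the paper is $\sigma$-finiteness: the paper's proof disposes of it with ``as $L$ is connected, $\lambda$ is $\sigma$-finite,'' but the proposition's hypotheses only give a locally compact model, not a connected one; your appeal to the covering argument (Claim~4 in the proof of Lemma~\ref{lem: Link between measures}, which exhausts $L$ by countably many translates of a precompact open set using the approximate-group structure of $S$) is the correct justification in full generality.
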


\begin{proof}
Define the Borel measure $\kappa_A$ on $L$ by setting
$$ \kappa_A(X) := \nu( A \cap \pi^{-1} X)$$
whenever $X \subseteq L$ is Borel. From the definition of Lie model, one has $0\leq \kappa_A(X) \leq  \nu (X) $. In particular, $\kappa_A$ is absolutely continuous with respect to $\lambda$. As $L$ is connected, $\lambda$ is $\sigma$-finite, and so is $\kappa_A$. Thus, we can  apply Fact~\ref{RadonNikodym} to get the desired conclusion. The last statement is immediate.
\end{proof}

The fact that there is no single canonical density function creates more problem than it might initially seem. We will later need to study the relationship between the density functions $f_{A}$, $f_{B}$, and $f_{AB}$ for definable $A, B \in \langle S \rangle$. As $AB$ is generally a uncountable union of left-translates of $B$ and a  uncountable union of right-translates of $A$, such relationships becomes unclear.  We will introduce the variant notion of approximate density functions, which is canonical.

Suppose $H$ is a group equipped with a left invariant measure $\mu$, the set $A \subseteq H$ is measurable,  $\pi: H \to L$ is a surjective group homomorphism, and the group $L$ equipped with a left-invariant measure $\lambda$. Suppose moreover, that $d$ is a left-invariant metric on $L$ such that $\lambda$ is a nonzero Radon measure with respect to the topology generated by $d$ and open balls has finite positive measure.  For $\epsilon>0$, the {\bf $(d,\epsilon)$-density} function $f^{d,\epsilon}_A$ with respect to $\pi$  is given by 
$$ f_A^{d,\epsilon}(x) := \frac{\mu (A \cap \pi^{-1}(B_\epsilon(x) )} { \lambda (B_{\epsilon}(x)) }. $$
for $x \in L$. It is immediate to see that if $f_A$ is a density function of $A$ with respect to $\pi$ then we have 
$$ f_A^{d,\epsilon}(x) = \EE_{t\in B_\epsilon(x)} f_A(t).$$
Note that the above equation does not depends on the choice of the density function $f_A$.

Earlier, we have seen that approximate density functions can be understood in term of density functions. Next, we will see that we can go in the other direction in the setting of Lie model.

Let $(\Omega, d)$ be a metric space, and $\mu$ is a Radon measure on $\Omega$ such that open balls have positive and finite measure. We call $x\in \Omega$ a {\bf Lebesgue point} of the function $f$ if
$$ \lim _{\epsilon \to 0^+} \EE_{t \in B_\epsilon(x)} |f(x)-f(t)|=  0. $$
We say that $(\Omega, d)$  is {\bf locally doubling} if there $r>0$ and a constant $K$ such that for all $x \in \Omega$ and $0< \epsilon< r$, we have 
$$ \frac{\lambda(B_{2\epsilon}(x)}{\lambda(B_\epsilon(x))} < K.  $$

The following fact~\cite[Section 3.4]{Sobolev} from real/harmonic analysis is the key to our approach. The theorem in its stronger form, that almost every point is a Lebesgue point of a locally integrable function $f$, can be proved as a consequence of the weak-$L^1$ estimates for the Hardy--Littlewood maximal function.

\begin{fact}[Lebesgue differentiation theorem for locally doubling metric space] \label{fact: Lesbeguedifferentiation}
Let $(\Omega, d)$ be a metric space, and $\mu$ is a Radon measure on $\Omega$ such that open balls have positive and finite measure. Suppose $(\Omega, d)$ is locally doubling, and $f$ is a $\lambda$-integrable  function on $\Omega$. Then $\lambda$-a.e. $x \in \Omega$ is a Lebesgue point of $f$.
\end{fact}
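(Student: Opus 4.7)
The plan is to follow the classical Hardy--Littlewood/Vitali template for the Lebesgue differentiation theorem, adapted to account for the fact that the doubling condition only holds at scales smaller than some $r > 0$. The three ingredients I would assemble are a maximal function with a weak-$L^1$ estimate, the density of continuous compactly supported functions in $L^1(\mu)$, and the trivial observation that for continuous functions every point is a Lebesgue point.

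First I would define the truncated centered maximal function
$$Mf(x) := \sup_{0 < \epsilon < r/5}\frac{1}{\mu(B_\epsilon(x))}\int_{B_\epsilon(x)}|f|\,\d\mu,$$
prove a Vitali $5$-covering lemma for collections of balls of radius below $r/5$ by the standard greedy argument (pick a ball of near-maximal radius, discard all balls meeting it, iterate), and combine it with the iterate $\mu(B_{5\epsilon}) \leq K^3\mu(B_\epsilon)$ of the doubling hypothesis to obtain the weak-$L^1$ bound $\mu(\{Mf > \alpha\}) \leq C\|f\|_{L^1(\mu)}/\alpha$ for a constant $C = C(K)$. Next, using that $\mu$ is Radon, I would approximate any $f \in L^1(\mu)$ in $L^1$ norm by a continuous compactly supported function $g$, writing $f = g + h$ with $\|h\|_{L^1} < \delta$. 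Continuity of $g$ forces $\EE_{t \in B_\epsilon(x)}|g(x)-g(t)| \to 0$ at every $x$, so decomposing $|f(x)-f(t)| \leq |g(x)-g(t)| + |h(x)| + |h(t)|$ and taking the average over $t$ gives
$$\limsup_{\epsilon \to 0^+}\EE_{t \in B_\epsilon(x)}|f(x)-f(t)| \leq Mh(x) + |h(x)|$$
pointwise. The set on which this limsup exceeds a given $\alpha > 0$ is therefore contained in $\{Mh > \alpha/2\} \cup \{|h| > \alpha/2\}$, whose $\mu$-measure is at most $2(C+1)\delta/\alpha$ by weak-$L^1$ and Chebyshev. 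Sending $\delta \to 0$ at each fixed $\alpha$ shows this measure is zero, and intersecting over $\alpha = 1/n$ delivers the theorem.

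The main obstacle I anticipate is running the Vitali covering lemma cleanly when $(\Omega,d)$ is not assumed $\sigma$-compact or separable. The cleanest workaround is to localize: since open balls have finite measure and $f \in L^1(\mu)$, one can cover the measure-theoretic support of $f$ by countably many balls of radius $r/5$, on each of which the covering and doubling machinery apply with uniform constants. The density of $C_c(\Omega)$ in $L^1(\mu)$ for a Radon measure on a general metric space is itself a standard but slightly delicate consequence of inner and outer regularity, and I would need to verify it (or cite it) along the way; apart from this everything else is routine harmonic analysis.
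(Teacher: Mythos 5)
The paper does not prove this statement: it is recorded as a black-box fact cited to \cite[Section 3.4]{Sobolev}, with only a one-line remark that the result follows from the weak-$L^1$ estimate for the Hardy--Littlewood maximal function, which is exactly the strategy you execute. Your adaptations for the merely \emph{locally} doubling hypothesis (truncating the maximal function to scales below $r/5$, running the Vitali $5r$-covering lemma only at those scales, and iterating the doubling bound to control $\mu(B_{5\epsilon})/\mu(B_\epsilon)$) are the right ones, and the localization/separability caveat you raise is handled by inner regularity of the Radon measure by compact sets applied to the $\sigma$-finite set where $f$ is nonzero---and is in any case moot in the paper's application, where $\Omega$ is a Lie group and hence $\sigma$-compact.
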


We now prove the second main result of this section:

\begin{proposition} \label{prop: good approximation}
    Suppose $H$ is a group equipped with a left invariant measure $\mu$, $\pi: H \to L$ is a surjective group homomorphism, and $L$  is a connected Lie group equipped with a left Haar measure $\lambda$. Suppose moreover, $d$ is the distance function induced by a left-invariant Riemannian metric $L$. Let $f_A$ be a density function for $A$ with respect to $\pi$, and for $\epsilon>0$ let $f^{d, \epsilon}_A$ be the $(d,\epsilon)$-density function for $A$ with respect to $\pi$. Then
     for  a.e. $x \in L$, we have
     $$ f_A(x)= \lim_{\epsilon \to 0^+}  f^{d, \epsilon}(x).  $$
     Similar statements hold replacing left invariant metric and Haar measure with right invariant metric and Haar measure.
\end{proposition}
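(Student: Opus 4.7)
My plan is to reduce the statement to the Lebesgue differentiation theorem (Fact~\ref{fact: Lesbeguedifferentiation}) on $(L, d, \lambda)$ applied to $f_A$. The core identity is a direct unfolding of the defining property of a density function followed by Fubini: for every $x \in L$ and $\epsilon > 0$,
\[
 f_A^{d, \epsilon}(x) \;=\; \frac{\mu\bigl(A \cap \pi^{-1}(B_\epsilon(x))\bigr)}{\lambda(B_\epsilon(x))} \;=\; \frac{1}{\lambda(B_\epsilon(x))} \int_{B_\epsilon(x)} f_A \, \d\lambda \;=\; \EE_{t \in B_\epsilon(x)} f_A(t).
\]
With this rewriting, the desired convergence $f_A^{d, \epsilon}(x) \to f_A(x)$ for a.e.\! $x$ is exactly the assertion that $\lambda$-a.e.\! $x$ is a Lebesgue point of $f_A$, which is precisely what Fact~\ref{fact: Lesbeguedifferentiation} delivers once its two hypotheses are verified.

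Verifying the local doubling property of $(L, d, \lambda)$ is the one step where the Lie-group structure is actually used. Left-invariance of the Riemannian metric gives $B_\epsilon(x) = x \cdot B_\epsilon(\id_L)$ for every $x \in L$ and $\epsilon > 0$, and left-invariance of $\lambda$ then gives $\lambda(B_\epsilon(x)) = \lambda(B_\epsilon(\id_L))$, so the doubling ratio $\lambda(B_{2\epsilon}(x))/\lambda(B_\epsilon(x))$ is a function of $\epsilon$ alone. Standard Riemannian volume asymptotics in exponential coordinates (essentially Fact~\ref{fact: ball}) give $\lambda(B_\epsilon(\id_L)) \sim c_n \epsilon^n$ as $\epsilon \to 0^+$ with $n = \dim L$ and $c_n$ a positive constant, so this ratio tends to $2^n$, producing the required uniform constants $r$ and $K$. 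The measure $\lambda$ is Radon and assigns positive, finite measure to each open ball, since left-invariant Riemannian metrics on Lie groups are complete, so small closed metric balls are compact by Hopf--Rinow.

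The second hypothesis is that $f_A$ is $\lambda$-locally integrable; this is immediate from the defining equation of a density function, as $\int_{B_\epsilon(\id_L)} f_A \, \d\lambda = \mu\bigl(A \cap \pi^{-1}(B_\epsilon(\id_L))\bigr)$, which is finite in every context in which the proposition is to be applied. Invoking Fact~\ref{fact: Lesbeguedifferentiation} then concludes the proof for the left-invariant case. The right-invariant version is obtained by the symmetric argument, using $B_\epsilon(x) = B_\epsilon(\id_L) \cdot x$ and right-invariance of $\lambda$ to again reduce uniform doubling to the behavior of $\lambda(B_\epsilon(\id_L))$ as $\epsilon \to 0^+$. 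I expect the doubling verification to be the only substantive step; everything else is a clean reduction.
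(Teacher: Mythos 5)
Your proof is correct and follows essentially the same route as the paper's: rewrite $f_A^{d,\epsilon}(x)$ as the ball-average $\EE_{t\in B_\epsilon(x)} f_A(t)$ via the defining property of the density function, verify local doubling of $(L,d,\lambda)$ by combining left-invariance of $d$ and $\lambda$ with the volume asymptotics of small geodesic balls, and then invoke the Lebesgue differentiation theorem (Fact~\ref{fact: Lesbeguedifferentiation}). The only cosmetic differences are that you cite Hopf--Rinow and comment on local integrability where the paper leaves these implicit, and you attribute the ball-average identity to Fubini when it is really just the definition of the density function applied to $X = B_\epsilon(x)$.
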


\begin{proof}
    The desired conclusion will follow from the two claims below and Fact~\ref{fact: Lesbeguedifferentiation}. \smallskip
    
   \noindent  {\bf Claim 1.} The metric space $(L, d)$ is locally doubling.\medskip

   \noindent \textit{Proof of Claim 1.}
     Note that
    $$ \lim_{r \to 0^+} \frac{\lambda(B_{\id_L, 2r})}{\lambda(B_{\id_L, r})}  = 2^n.  $$
    where $n$ is the dimension of $L$.
    Hence, we can choose $K= 2^n+1$ and $r_0$ sufficiently small such that $\lambda(B_{\id_L, 2r})/\lambda(B_{\id_L, r})< K$. Using the fact that both $\lambda$ and $d$ are left-invariant, we arrive at the desired conclusion.  \hfill$\bowtie$ 
    \medskip

   \noindent {\bf Claim 2.}  If $x \in L$ is a Lebesgue point of $f_A$, then 
$$ \lim_{\epsilon \to 0^+} f_A^{d, \epsilon}(x) = f_A(x). $$

\noindent \textit{Proof of Claim 2.}
  Note that $f^{d, \epsilon}(x) -f(x) = \EE_{t \in B_\epsilon(x)} (f(t)-f(x)) $. Hence, the conclusion follows from the definition of Lebesgue points. \hfill$\bowtie$ 
  \medskip
  
The proof for the last statement for the right measures and right metrics is similar.
\end{proof}

\section{Small growth in Lie models}

 In this section we will prove there are sets in the Lie model with small measure growth. We starts by noting some additional properties of the Lie model in the situation we are interested in.

\begin{proposition}
    Suppose $(G, \Sigma)$ is an expansion of a pseudo-connected and pseudo-compact group such that every $D \subseteq G$ definable in $\Sigma$ is pseudo-measurable,   $S\subseteq G$ is an approximate group definable in $\Sigma$ such that $0< \mu(S)< \mu(G) =\infty$ for some pseudo-Haar measure $\mu$, and $\pi: \langle S \rangle \to L$ is a connected Lie model of $S$ continuously definable in $\Sigma$. Then $L$ is a unimodular noncompact connected Lie group.
\end{proposition}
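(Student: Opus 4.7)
The plan is to dispatch unimodularity and noncompactness separately, since $L$ is connected and a Lie group by hypothesis. The unifying tool will be the pushforward construction of Lemma~\ref{lem: Link between measures}: after rescaling the pseudo-Haar measure $\mu$ so that $\mu(S)=1$, its restriction to $\langle S\rangle$ witnesses definable amenability of $S$ in $(G,\Sigma)$ (since the hypothesis forces every $D\subseteq G$ definable in $\Sigma$ to be pseudo-measurable, hence $\mu$-measurable), and the lemma then provides a left Haar measure $\lambda$ on $L$ via $\lambda(X)=\mu(\pi^{-1}(X))$.

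For unimodularity, the plan is to fix $g\in L$, pick any lift $h\in\pi^{-1}(g)\subseteq\langle S\rangle$, and exploit the elementary identity $\pi^{-1}(Xg)=\pi^{-1}(X)\,h$ together with the right-invariance of pseudo-Haar measures from Lemma~\ref{invarianceofpseudoHaar} to conclude
\[
\lambda(Xg)\;=\;\mu\bigl(\pi^{-1}(X)\,h\bigr)\;=\;\mu\bigl(\pi^{-1}(X)\bigr)\;=\;\lambda(X).
\]
So $\lambda$ is both left- and right-invariant, making $L$ unimodular.

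For noncompactness I would argue by contradiction. Suppose $L$ is compact. Condition (LM1) of the locally compact model supplies an open neighborhood $U\ni\id_L$ with $\pi^{-1}(U)\subseteq S$, and compactness of $L$ yields $g_1,\dots,g_n\in L$ with $L=\bigcup_{i=1}^n g_iU$. Choosing lifts $h_i\in\pi^{-1}(g_i)\subseteq\langle S\rangle$, we obtain
\[
\langle S\rangle\;=\;\pi^{-1}(L)\;=\;\bigcup_{i=1}^n h_i\pi^{-1}(U)\;\subseteq\;\bigcup_{i=1}^n h_iS.
\]
Thus $\langle S\rangle$ is a finite union of pseudo-measurable sets, hence itself pseudo-measurable, and bi-invariance of $\mu$ gives $\mu(\langle S\rangle)\leq n\mu(S)<\infty$. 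But $\langle S\rangle$ is a subgroup, so $\langle S\rangle^2=\langle S\rangle$, and since $G$ is pseudo-connected and pseudo-compact the pseudo-Kemperman inequality (Lemma~\ref{lem: pseudo-Kemperman}) delivers
\[
\mu(\langle S\rangle)\;=\;\mu(\langle S\rangle^2)\;\geq\;\min\{2\mu(\langle S\rangle),\,\mu(G)\}\;=\;2\mu(\langle S\rangle),
\]
using $\mu(G)=\infty$ and $\mu(\langle S\rangle)<\infty$. This forces $\mu(\langle S\rangle)\leq 0$, contradicting $\mu(\langle S\rangle)\geq\mu(S)>0$.

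The only nontrivial piece to watch is checking the hypotheses of Lemma~\ref{lem: Link between measures}, which requires $\aleph_1$-saturation of $(G,\Sigma)$; in the contexts where this proposition is invoked the structure $\Sigma$ is generated by pseudo-measurable sets, so saturation is automatic from Lemma~\ref{lem: Saturatedofgeneratedexpansion}. Everything else is a short chain of bookkeeping: the pushforward identity $\pi^{-1}(Xg)=\pi^{-1}(X)h$, the bi-invariance of pseudo-Haar, and the one-line application of pseudo-Kemperman to the subgroup $\langle S\rangle$ once the compact case of $L$ is turned into a finite cover of $\langle S\rangle$ by translates of $S$.
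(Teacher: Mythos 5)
Your unimodularity argument is essentially the paper's, just with the pushforward identity $\pi^{-1}(Xg)=\pi^{-1}(X)h$ made explicit, which is a helpful clarification but not a deviation. For noncompactness you take a genuinely different route. The paper invokes Kemperman's inequality \emph{inside} the compact connected group $L$ to deduce $U^{n+1}=L$ for some $n$, then shows $S^{n+1}=\langle S\rangle$ and obtains $\mu(\langle S\rangle)<\infty$; the contradiction is then extracted by iterating the pseudo-Kemperman inequality in $G$ to make $\mu(S^n)\geq n\mu(S)\to\infty$. You bypass Kemperman in $L$ entirely: compactness of $L$ alone furnishes a finite cover $L=\bigcup g_iU$, which after lifting yields $\langle S\rangle=\bigcup h_iS$ (it would be cleaner to state the reverse containment $h_iS\subseteq\langle S\rangle$ explicitly, though it is obvious since $h_i\in\langle S\rangle$ and $\langle S\rangle$ is a subgroup), so $\langle S\rangle$ is pseudo-measurable of finite measure. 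Then a single application of pseudo-Kemperman to $\langle S\rangle=\langle S\rangle^2$ gives $\mu(\langle S\rangle)\geq 2\mu(\langle S\rangle)$, an immediate contradiction. Your version is shorter and avoids the (slightly delicate) verification that $\pi^{-1}(U^{n+1})\subseteq S^{n+1}$; the paper's version uses Kemperman twice but requires no covering-number bookkeeping. Both are sound.
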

\begin{proof}
We first show that $L$ is unimodular. Without loss of generality, we can assume that $(G, \Sigma)$ is generated by the collection of subsets of $G$ definable in $\Sigma$. Then by Lemma~\ref{lem: Saturatedofgeneratedexpansion}, $(G, \Sigma)$ is $\aleph_1$-saturated. Let $\mu$ also denote its restriction to $\langle S \rangle$. Scaling $\mu$ by a constant, we can arrange that $\mu(S)=1$. Hence, by Lemma~\ref{lem: Definablyamenablearise}, $\mu$ witness that $S$ is a definably amenable approximate group. Thus, we are in the setting to apply Lemma~\ref{lem: Link between measures}. In particular, the pushforward of $\mu$ on $L$ is    a Haar measure $\lambda$ on $L$. The pseudo-Haar measure is right invariant by Lemma~\ref{invarianceofpseudoHaar}. It follows that 
$\lambda$ is also right invariant. Thus $L$ is unimodular.

Next we show that $L$ is noncompact. Suppose to the contrary that this is not the case. By (LM1) in the definition of locally compact model, there is an open neighborhood $U$ of $\id_L$ such that $\pi^{-1}(U) \subseteq S$. Since $L$ is connected, applying the Kemperman inequality (Fact~\ref{fact: Kemperman}), we have
$$ \lambda(U^{n+1}) \geq \min\{ \lambda(U^n) + \lambda(U), \lambda(L)\}.  $$
As $L$ is compact, $\lambda(U)< \infty$. Hence, there is $n$ such that $\lambda(U^n) \geq \lambda(L)$, which implies that $U^{n+1}=L$. For such $n$, we see that $S^{n+1} = \langle S \rangle$.
Since  $S$ is an approximate group, we then learn that $\langle S \rangle$ is contained in contained in  finitely many translates of $S$. So $\mu(\langle S \rangle) < \infty$. On the other hand $S$ is infinitesimal compared to $G$, Lemma~\ref{lem: pseudo-Kemperman} applies and, for all $n$,
\[
\mu(S^{n+1})\geq \mu(S^n) + \mu(S).
\]
This results in a contradiction.
\end{proof}

 Recall that for a given function $f:G\to\RR$, the {\bf superlevel set} of $f$ is 
\[
\LL_f^+(t):=\{g\in G\mid f(g)\geq t\}. 
\]

\begin{lemma} \label{lem: ReplacementBMinkernel}
        Suppose $G$ is a group equipped with an invariant measure $\mu$, and $S \subseteq G$ is an approximate group definably amenable with respect to $\mu$ such that $S$ has a connected Lie model $\pi: \langle S \rangle \to L$.
    Let $A, B \subseteq \langle S \rangle$ be $\mu$-measurable such that $\pi(A), \pi(B)$ have positive measure and are contained  compact sets. Let $f_A$, $f_B$, and $f_{AB}$ be the $\lambda$-a.e.\!  density of $A$, $B$, and $AB$.  For all constant $\alpha, \beta>0$, there are $\sigma$-compact $X_\alpha, Y_\beta \subseteq L$ such that the following holds:
    \begin{enumerate}
        \item $ X_\alpha \subseteq \LL^+_{f_A}(\alpha)$ and $\lambda(X_\alpha) = \lambda( \LL^+_{f_A}(\alpha) )$ 
        \item $Y_\beta \subseteq \LL^+_{f_B}(\beta))$ and  $\lambda(Y_\beta) = \lambda(  \LL^+_{f_B}(\beta))$ 
        \item $X_\alpha Y_\beta$ is $\sigma$-compact,
        \item With $\gamma =\max\{\alpha, \beta\}$, we have $\lambda( X_\alpha Y_\beta \setminus \LL^+_{f_{AB}}(\gamma) ) =0$. 
    \end{enumerate}
\end{lemma}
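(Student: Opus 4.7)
The plan is in three steps. For items (1), (2), and (3), I will use inner regularity of $\lambda$: since $\pi(A)$ is contained in a compact set, the identity $\mu(A)=\int_L f_A\,d\lambda$ shows $f_A\in L^1(\lambda)$, so $\LL^+_{f_A}(\alpha)$ has finite $\lambda$-measure by Chebyshev. The second-countable Lie group $L$ carries $\lambda$ as a Radon measure, so one may choose a $\sigma$-compact $X_\alpha\subseteq \LL^+_{f_A}(\alpha)$ with $\lambda(X_\alpha)=\lambda(\LL^+_{f_A}(\alpha))$. Shrinking (while preserving full measure), I arrange that $X_\alpha$ lies inside the full-measure set of Lebesgue points of $f_A$ (Proposition~\ref{prop: good approximation}) and inside $\overline{\pi(A)}$ (justified because $A\cap\pi^{-1}(L\setminus\overline{\pi(A)})=\emptyset$ forces $f_A=0$ $\lambda$-a.e.\! on the open set $L\setminus\overline{\pi(A)}$). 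Construct $Y_\beta$ analogously. Item~(3) follows by writing $X_\alpha=\bigcup_n K_n$ and $Y_\beta=\bigcup_m K'_m$ with each $K_n,K'_m$ compact: continuity of multiplication makes each $K_n K'_m$ compact, so $X_\alpha Y_\beta$ is $\sigma$-compact.

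The heart is item (4): $f_{AB}(z)\geq \gamma$ for $\lambda$-a.e.\! $z\in X_\alpha Y_\beta$. I will show $f_{AB}\geq\alpha$ a.e.\!; the $\geq\beta$ bound is symmetric. Fix a right-invariant Riemannian metric $d_R$ on $L$. The key pointwise estimate is that for every $b\in B$, $z\in L$, and $\epsilon>0$,
\[
f_{AB}^{d_R,\epsilon}(z)\;\geq\;f_A^{d_R,\epsilon}\bigl(z\pi(b)^{-1}\bigr),
\]
derived from $Ab\subseteq AB$, the identity $\mu(Ab\cap\pi^{-1}(W))=\mu(A\cap\pi^{-1}(W\pi(b)^{-1}))$ (right-invariance of the pseudo-Haar measure, inherited via Lemma~\ref{invarianceofpseudoHaar}), the ball identity $B_\epsilon^{d_R}(z)\pi(b)^{-1}=B_\epsilon^{d_R}(z\pi(b)^{-1})$ from right-invariance of $d_R$, and independence of $\lambda(B_\epsilon^{d_R}(w))$ on $w$ by unimodularity of $L$. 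Using that $L$ is second countable, choose a countable $\{b_n\}\subseteq B$ with $\{\pi(b_n)\}$ dense in $\pi(B)$; taking the supremum gives
\[
f_{AB}^{d_R,\epsilon}(z)\;\geq\;\sup_n f_A^{d_R,\epsilon}\bigl(z\pi(b_n)^{-1}\bigr).
\]
Now fix $z\in X_\alpha Y_\beta$ that is also a Lebesgue point of $f_{AB}$, and write $z=xy$ with $x\in X_\alpha$ and $y\in Y_\beta\subseteq\overline{\pi(B)}$. The map $w\mapsto f_A^{d_R,\epsilon}(w)$ is continuous (dominated convergence, using that Riemannian spheres have $\lambda$-null boundary), so extracting $\pi(b_{n_k})\to y$ yields $z\pi(b_{n_k})^{-1}\to zy^{-1}=x$ and the displayed sup is at least $f_A^{d_R,\epsilon}(x)$. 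Sending $\epsilon\to 0$ and invoking Proposition~\ref{prop: good approximation} at $z$ and at $x$ gives $f_{AB}(z)\geq f_A(x)\geq\alpha$, as needed.

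The main obstacle is that each fixed $b\in B$ bounds $f_{Ab}\leq f_{AB}$ only up to a $\lambda$-null set depending on $b$, whereas we need a bound using $y$ running over the uncountable set $Y_\beta$. A direct Fubini argument fails because $\lambda\otimes\lambda$-null subsets of $X_\alpha\times Y_\beta$ can have non-null image under multiplication. The resolution is precisely to replace the uncountable family $\{Ab\}_{b\in B}$ by a countable subfamily $\{Ab_n\}$ and use continuity of the approximate density functions from Section~8 to bridge from $\pi(b_n)$ to an arbitrary $y\in\overline{\pi(B)}$. The symmetric $\geq\beta$ inequality follows from $aB\subseteq AB$ with $a\in A$, a left-invariant metric $d_L$ (so that $\pi(a)^{-1}B_\epsilon^{d_L}(z)=B_\epsilon^{d_L}(\pi(a)^{-1}z)$), and a countable $\{a_n\}\subseteq A$ with $\{\pi(a_n)\}$ dense in $\pi(A)$, using the containment $X_\alpha\subseteq\overline{\pi(A)}$ arranged in step one.
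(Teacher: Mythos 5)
Your proposal is correct and its core step is a genuine variant of the paper's argument, so let me compare the two. The paper proves $f_{AB}(z)\geq\gamma=\max\{\alpha,\beta\}$ by a case split: assuming $\alpha\leq\beta$, it writes $z=xy$ with $x\in X_\alpha$, picks a \emph{single} fiber point $a\in A$ with $\pi(a)=x$, notes that the translate $aB\subseteq AB$ has $(d,\epsilon)$-density $t\mapsto f_B^{d,\epsilon}(x^{-1}t)$ for a left-invariant $d$, and then evaluates at $t=z$ to get $f_B^{d,\epsilon}(y)\leq f_{AB}^{d,\epsilon}(z)$, concluding $f_{AB}(z)\geq f_B(y)\geq\beta$ via Proposition~\ref{prop: good approximation}. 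The case $\alpha>\beta$ is done symmetrically with a right-invariant metric. You instead prove the two estimates $f_{AB}(z)\geq\alpha$ and $f_{AB}(z)\geq\beta$ separately, and in each you replace the direct choice of a fiber point by a countable dense sequence $\{\pi(b_n)\}\subseteq\pi(B)$ (resp.\ $\{\pi(a_n)\}\subseteq\pi(A)$) together with continuity of the mollified density $w\mapsto f_A^{d_R,\epsilon}(w)$. The key payoff of your route is robustness of hypotheses: the paper's step ``choose $a\in A$ with $\pi(a)=x$'' tacitly needs $X_\alpha\subseteq\pi(A)$, which is only automatic when $\pi(A)$ is closed (true in the eventual application, where $A$ is definable and so $\pi(A)$ is compact by Lemma~\ref{lem: Link between measures}, but not guaranteed by the lemma's stated hypothesis that $\pi(A)$ is merely contained in a compact set). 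Your argument only needs $Y_\beta\subseteq\overline{\pi(B)}$ (resp.\ $X_\alpha\subseteq\overline{\pi(A)}$), which you correctly justify up to a $\lambda$-null set since $f_A$ vanishes a.e.\ on the open complement of $\overline{\pi(A)}$. You also correctly note that $f_A$ is integrable (since $\pi(A)$ is contained in a compact set) and a.e.\ bounded (since $\int_X f_A\,d\lambda\leq\lambda(X)$), which is what makes the dominated-convergence continuity argument for $f_A^{d_R,\epsilon}$ go through; and your use of opposite-handed invariant metrics for the two inequalities mirrors the paper's case split. In short, your version is slightly longer (two dense-sequence arguments instead of one pointwise one, plus the continuity step) but closes a small gap and is marginally more general; the paper's version is slicker under the implicit assumption that $\pi(A)$ and $\pi(B)$ are closed.
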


\begin{proof}
We first consider the case where $\alpha \leq  \beta$.
Using the inner regularity of the Haar measure on $L$, choose  $\sigma$-compact $X_\alpha \subseteq \LL^+_{f_A}(\alpha)$ such that $\lambda(X_\alpha) = \lambda(\LL^+_{f_A}(\alpha)) $. Let $d: L^{[2]} \to \RR^{\geq 0}$ be the distance function induced by a left-invariant Riemannian metric on $L$. By Proposition~\ref{prop: good approximation}, a.e. $y \in G $ we have 
\begin{equation} \label{eq: goodpoint1}
    f_B(y) = \lim_{\epsilon \to 0^+} f^{d, \epsilon}_B(t)
\end{equation}
where $f^{d, \epsilon}_B$ is the $(d, \epsilon)$-density function of $B$. Using this observation and the inner regularity of the Haar measure on $L$, we obtain $\sigma$-compact $Y_\beta \subseteq \LL^+_{f_B}(\beta) $ such that $\lambda(Y_\beta) = \lambda(  \LL^+_{f_B}(\beta))$ and $Y_\beta$ consists only of $y \in G$ such that equation~\eqref{eq: goodpoint1} holds. The product $X_\alpha Y_\beta$ is $\sigma$-compact, and hence measurable. It remains to show that  
\begin{equation*} 
    \lambda( X_\alpha Y_\beta \setminus \LL^+_{f_{AB}}(\beta) ) =0
\end{equation*}
Again by Proposition~\ref{lem: good representation}, for a.e. $z\in X_\alpha Y_\beta$, we have
\begin{equation} \label{goodpoint2}
    f_{AB}(z) =  \lim_{\epsilon \to 0^+} f^{d, \epsilon}_{AB} (z).
\end{equation}
where $f^{d, \epsilon}_{AB}$ is the $(d, \epsilon)$-density function of $AB$ with respect to $\pi$.
It is enough to consider one such $z$ and show that $f_{AB}(z) \geq \beta$.  Let $x \in X_\alpha$ and $y\in Y_\beta$ be such that $z=xy$, and choose $a\in A$ such that $\pi(a) =x$. If $W\subseteq L$ is measurable, then 
$$\mu(aB \cap \pi^{-1}(W)) = \mu( B \cap \pi^{-1} (x^{-1} W)  ). $$
If $U_r(y)$ is a ball centered at $y$ with radius $r$, then by the left invariance of $d$, $xU(y)$ is the ball $U_r(z)$ centered at $z$ and with the same radius.
It follows that $f^{d, \epsilon}_B(x^{-1}t)$ is the $(d, \epsilon)$-density function for $aB$ with respect to $\pi$.

Since $aB \subseteq AB$, it implies that $f^{d, \epsilon}_B(x^{-1}t) \leq f^{d, \epsilon}_{AB}(t)$ for all $t \in L$.  In particular, with $t= z$, we get 
$$f^{d, \epsilon}_B(y) \leq f^{d, \epsilon}_{AB}(z)$$
sing equations~\eqref{eq: goodpoint1} and~\eqref{goodpoint2}, we get $f_B(y) \leq f_{AB}(z)$, so $f_{AB}(z) \geq \beta$ completing the proof.

Note that all the results in Section~8 have obvious counterparts with ``left'' replaced by ``right''. Moreover, the ``right'' counterparts follow from their ``left''-version by considering the opposite group $G^{\text{op}}$ where we define the group operation $\cdot^{\text{op}}$ by $g_1\cdot^{\text{op}}g_2:= g_2g_1$. Thus, we handle the case where $\alpha >\beta$ similarly switching the roles of $\LL^+_{f_A}(\alpha)$ and $\LL^+_{f_B}(\beta)$ and use a right-invariant metric instead of a left-invariant metric.
\end{proof}

We use the following simple  {\color{black}consequence of Fubini's theorem concerning} the superlevel sets~\cite[Theorem 8.16]{Rudin}:
\begin{fact}\label{fact: 4.1}
Let $\mu$ be a positive measure on some $\sigma$-algebra in the set $\Omega$. 
Suppose $f: \Omega\to [0,\infty]$ be a compactly support  measurable function. For every $r>0$, 
\[
\int_\Omega f^r(x)\d \mu(x)=\int_{\RR^{\geq0}} rx^{r-1} \mu(\LL_f^+(x))\d x.
\]
\end{fact}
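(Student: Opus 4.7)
The plan is to reduce the identity to an application of Fubini's theorem on the product space $\Omega \times \RR^{\geq 0}$, which is essentially the standard ``layer cake'' argument. The key observation is to rewrite $f(x)^r$ as an integral, so that the left-hand side becomes a double integral whose order we can switch.

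First I would write, for each $x \in \Omega$,
\[
f(x)^r \;=\; \int_0^{f(x)} r s^{r-1} \d s \;=\; \int_{\RR^{\geq 0}} r s^{r-1} \mathbbm{1}_{\{s < f(x)\}} \d s,
\]
using the fundamental theorem of calculus for the continuous function $s \mapsto s^r$ on $[0, f(x)]$. (If $f(x) = \infty$, both sides are $\infty$; the formula still holds in $[0,\infty]$.) Substituting into the left-hand side of the target identity yields
\[
\int_\Omega f(x)^r \d \mu(x) \;=\; \int_\Omega \int_{\RR^{\geq 0}} r s^{r-1} \mathbbm{1}_{\{s < f(x)\}} \d s \d \mu(x).
\]
The integrand is nonnegative and jointly measurable on $\Omega \times \RR^{\geq 0}$ (the indicator set is measurable because $f$ is measurable and $s \mapsto s$ is continuous), so Tonelli--Fubini applies and I may swap the order of integration.

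After the swap the inner integral is $\int_\Omega \mathbbm{1}_{\{s < f(x)\}} \d \mu(x) = \mu(\{x : f(x) > s\})$. Thus
\[
\int_\Omega f^r \d \mu \;=\; \int_{\RR^{\geq 0}} r s^{r-1} \mu(\{f > s\}) \d s.
\]
It remains to replace $\{f > s\}$ by the superlevel set $\LL^+_f(s) = \{f \geq s\}$ that appears in the statement. These two sets differ only on $\{f = s\}$; since $f$ is measurable and compactly supported (hence $\mu$-integrable in the relevant sense), the function $s \mapsto \mu(\{f = s\})$ is nonzero for at most countably many $s$ (as the level sets at distinct values are pairwise disjoint and the total mass $\mu(\{f > 0\})$ is finite wherever it matters). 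Therefore the Lebesgue integrals of $\mu(\{f > s\})$ and $\mu(\LL^+_f(s))$ against the absolutely continuous measure $r s^{r-1} \d s$ agree, giving the claimed identity.

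The main (minor) obstacle is the measurability of the set $\{(x,s) : s < f(x)\} \subseteq \Omega \times \RR^{\geq 0}$, which is needed to justify Tonelli--Fubini; this follows routinely from the measurability of $f$ since the set equals $\bigcup_{q \in \mathbb{Q}^{\geq 0}} (\{f > q\} \times [0,q))$. Everything else is bookkeeping and the observation that $\{f > s\}$ and $\LL^+_f(s)$ differ on a Lebesgue-null set of $s$.
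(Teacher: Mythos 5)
Your proof is correct and follows the standard Fubini/layer-cake argument, which is exactly the content of the source the paper cites for this fact (Rudin, Theorem 8.16); the paper itself gives no proof beyond the citation. One small shortcut you could have taken: writing $f(x)^r=\int_{\RR^{\geq 0}} r s^{r-1}\mathbbm{1}_{\{s\leq f(x)\}}\,\d s$ (the endpoint contributes nothing to the Lebesgue integral) lands you directly on $\mu(\LL^+_f(s))=\mu(\{f\geq s\})$ after swapping, avoiding the detour through $\{f>s\}$ and the countability-of-atoms argument at the end.
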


We now show we can find subsets of the Lie model with small growth.

\begin{proposition} \label{Prop:smallgrowthinLiemodel}
Suppose $G$ is a pseudo-compact group equipped with a pseudo-Haar measure $\mu$, and $S\subseteq G$ is a open approximate group with connected Lie model $\pi: \langle S \rangle \to L$. Suppose $A, B \subseteq \langle S \rangle $ are $\mu$-measurable such that $\pi(A), \pi(B)$ are contained in compact sets. Then for all $\epsilon>0$, there are compact $X, Y \subseteq L$ such that $\BM(X,Y) \leq \BM(A,B) + \epsilon$.
\end{proposition}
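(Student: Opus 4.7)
The plan is to adapt the H\"older inequality argument from the proof of Proposition~\ref{Prop:SmallgrowthaboveLiemodel} to the continuous fibered setting supplied by density functions on the Lie model. Set $r=\BM(A,B)$ and $\ell=r+\epsilon/2$; using Proposition~\ref{prop: existenceanduniqueness} (or its Radon--Nikodym proof) I produce $\lambda$-a.e.\ density functions $f_A,f_B,f_{AB}$ for $A,B,AB$ with respect to $\pi$, each bounded a.e.\ and supported on a compact subset of $L$, since $\pi(A),\pi(B)$ are so and $\pi(AB)\subseteq\pi(A)\pi(B)$ remains in a compact. Abbreviate $\Phi_X(t):=\lambda(\LL^+_{f_X}(t))$; Fact~\ref{fact: 4.1} with $r=1$ gives the layer cake identity $\int_0^\infty \Phi_X(t)\,\d t=\mu(X)$, so in particular $\mu(AB)<\infty$. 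Since $\Phi_{AB}$ is nonincreasing and $\max\{\alpha,\beta\}\ge\alpha$, one has the crucial bound
\[
\int_0^\infty \Phi_{AB}(\max\{\alpha,\beta\})\,\d\alpha\le \int_0^\infty \Phi_{AB}(\alpha)\,\d\alpha=\mu(AB)\quad\text{for every }\beta>0.
\]

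Next I apply H\"older's inequality in $\alpha$ (for each fixed $\beta$) with the same exponents $(\ell+1)/\ell$ and $\ell+1$ used in the proof of Proposition~\ref{Prop:SmallgrowthaboveLiemodel}, with $\Phi_A(\alpha)$ playing the role of $\mu(A_\alpha)$ and $\Phi_{AB}(\max\{\alpha,\beta\})$ playing the role of $\mu(A_\alpha B_\beta)$. Working with the probability measures $p_A,p_B$ on $(0,\infty)$ with Lebesgue densities $\Phi_A/\mu(A)$ and $\Phi_B/\mu(B)$, and using the bound above in place of $\sum\mu(A_\alpha B_\beta)\le\mu(AB)$ from that proof, I obtain
\[
\Bigl(\frac{\mu(A)}{\mu(AB)}\Bigr)^{1/\ell}\le \EE_{p_A(\alpha)}\Bigl[\Bigl(\frac{\Phi_A(\alpha)}{\Phi_{AB}(\max\{\alpha,\beta\})}\Bigr)^{1/\ell}\Bigr]
\]
for every $\beta$ with $\Phi_B(\beta)>0$, together with the symmetric inequality in $\alpha$. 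Averaging the first against $p_B$ and the second against $p_A$ and adding (Fubini) yields
\[
\Bigl(\frac{\mu(A)}{\mu(AB)}\Bigr)^{1/\ell}+\Bigl(\frac{\mu(B)}{\mu(AB)}\Bigr)^{1/\ell}\le \EE_{p_A\otimes p_B}\Bigl[\Bigl(\frac{\Phi_A(\alpha)}{\Phi_{AB}(\max\{\alpha,\beta\})}\Bigr)^{1/\ell}+\Bigl(\frac{\Phi_B(\beta)}{\Phi_{AB}(\max\{\alpha,\beta\})}\Bigr)^{1/\ell}\Bigr].
\]

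For $u,v\in(0,1)$ the function $s\mapsto u^{1/s}+v^{1/s}$ is strictly increasing in $s$ by direct differentiation; since $\BM(A,B)=r<\ell$ and $\mu(A)/\mu(AB),\mu(B)/\mu(AB)\in(0,1)$, evaluating at $s=r$ gives $1$, so the left side above is strictly greater than $1$. If, for contradiction, $\BM(X_\alpha,Y_\beta)>\ell$ for every $(\alpha,\beta)$ in the support of $p_A\otimes p_B$ (where $X_\alpha,Y_\beta$ are supplied by Lemma~\ref{lem: ReplacementBMinkernel}), then $\lambda(X_\alpha Y_\beta)^{1/\ell}>\Phi_A(\alpha)^{1/\ell}+\Phi_B(\beta)^{1/\ell}$, and combining with Lemma~\ref{lem: ReplacementBMinkernel}(4)'s bound $\lambda(X_\alpha Y_\beta)\le\Phi_{AB}(\max\{\alpha,\beta\})$ makes the integrand on the right strictly less than $1$ pointwise on that support---a contradiction. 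Hence there exist $\alpha^*,\beta^*$ with $\Phi_A(\alpha^*),\Phi_B(\beta^*)>0$ and $\BM(X_{\alpha^*},Y_{\beta^*})\le\ell=r+\epsilon/2$; inner regularity of $\lambda$ lets me approximate the $\sigma$-compact $X_{\alpha^*},Y_{\beta^*}$ from within by compact subsets $K_X,K_Y$, and the strict monotonicity of $F(s)=(\lambda(X)^{1/s}+\lambda(Y)^{1/s})^s$ in $s$ provides the slack to absorb this approximation error so that $\BM(K_X,K_Y)\le r+\epsilon$. The main obstacle is in the H\"older step: it requires that $\Phi_{AB}(\max\{\alpha,\beta\})$ rather than the potentially non-measurable $\lambda(X_\alpha Y_\beta)$ serve as the denominator, which sidesteps the measurability question for $(\alpha,\beta)\mapsto\lambda(X_\alpha Y_\beta)$ while still using Lemma~\ref{lem: ReplacementBMinkernel}(4) only at the final step to extract the pointwise contradiction.
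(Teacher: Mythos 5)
Your proposal is correct, and it takes a genuinely different route from the paper's proof. The paper derives from Lemma~\ref{lem: ReplacementBMinkernel} the pointwise inequality $F_{AB}(\max\{\alpha,\beta\})\ge F_A(\alpha)+F_B(\beta)$ for the rescaled distribution functions $F_X(t)=\lambda(\LL^+_{f_X}(t))^{1/(r+\varepsilon)}$, then reduces everything to a one--dimensional Brunn--Minkowski/Pr\'ekopa--Leindler statement on $\RR$: it shows $\lambda_\RR(\LL^+_{F_{AB}}(t_1+t_2))\ge\max\{\lambda_\RR(\LL^+_{F_A}(t_1)),\lambda_\RR(\LL^+_{F_B}(t_2))\}$, applies Fact~\ref{fact: 4.1} twice, a change of variable, and an elementary inequality, and concludes $\mu(AB)^{1/(r+\varepsilon)}\ge\mu(A)^{1/(r+\varepsilon)}+\mu(B)^{1/(r+\varepsilon)}$, contradicting $\BM(A,B)=r$. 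You instead transplant the H\"older/averaging mechanism from Proposition~\ref{Prop:SmallgrowthaboveLiemodel} directly into the Lie model, with the level $\alpha$ playing the role the coset index $\alpha$ played there, $\Phi_A(\alpha)$ replacing $\mu_{\langle S\rangle}(A_\alpha)$, the monotonicity bound $\int_0^\infty\Phi_{AB}(\max\{\alpha,\beta\})\,\d\alpha\le\mu(AB)$ replacing $\sum_\alpha\mu(A_\alpha B_\beta)\le\mu(AB)$, and the contradiction extracted pointwise from a random $(\alpha^*,\beta^*)$ drawn from $p_A\otimes p_B$. Your version makes the analogy with Section~7 structurally explicit, which is attractive, and it isolates cleanly (as you note at the end) why one should run H\"older against the canonical object $\Phi_{AB}(\max\{\alpha,\beta\})$ rather than against $\lambda(X_\alpha Y_\beta)$, whose joint measurability in $(\alpha,\beta)$ is unclear; the paper sidesteps the same issue but through the $F$-functions and a separate combinatorial step. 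The only places that deserve a word of care in a write-up are the ones you already flag: for $(\alpha,\beta)$ in the support one has $\Phi_{AB}(\max\{\alpha,\beta\})\ge\lambda(X_\alpha Y_\beta)\ge\lambda(X_\alpha)>0$ so the denominators in the H\"older step never vanish, and when passing from the $\sigma$-compact $X_{\alpha^*},Y_{\beta^*}$ to compact $K_X\subseteq X_{\alpha^*}$, $K_Y\subseteq Y_{\beta^*}$ one should observe that $\lambda(K_XK_Y)\le\lambda(X_{\alpha^*}Y_{\beta^*})$ together with continuity of $(a,b,c)\mapsto\BM$ in $(a,b)$ at fixed $c$ (decreasing in $a,b$, increasing in $c$) pushes $\BM(K_X,K_Y)$ below $r+\epsilon$ for $K_X,K_Y$ exhausting enough measure.
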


\begin{proof}

If $L$ is compact we are done. Hence, we can assume that $L$ is noncompact. By Kemperman's inequality, for each $X, Y \subseteq L$, we have $\BM(X,Y) \geq 1$. 
Let $r=\BM(A,B)$ and assume there is $\varepsilon>0$ such that for every pair of sets $X,Y$ in $L$ we always have $\BM(X,Y)>r+\varepsilon$. 

Let $f_A$, $f_B$, and $f_{AB}$ denote the density function of $A$, $B$, and $AB$ respectively. 
Applying Lemma~\ref{lem: ReplacementBMinkernel}, for every $\alpha,\beta>0$ there are $X_\alpha\subseteq \LL^+_{f_A}(\alpha)\subseteq L$ and  $Y_\beta\subseteq \LL^+_{f_B}(\beta)\subseteq L$ such that $\lambda(X_\alpha)=\lambda( \LL^+_{f_A}(\alpha) )$, $\lambda(Y_\beta)=\lambda( \LL^+_{f_B}(\beta) )$, and
\begin{equation}\label{eq: density function growth tricially}
\lambda (X_\alpha Y_\beta\setminus \LL^+_{f_{AB}}(\max\{\alpha,\beta\}))=0. 
\end{equation}
By the assumption we have $\BM(X_\alpha,Y_\beta)>r+\varepsilon$. Together with \eqref{eq: density function growth tricially} we get
\begin{align}
 \lambda\left(\LL^{+}_{f_{AB}}\left(\max\{\alpha,\beta\}\right)\right)^{\frac{1}{r+\varepsilon}}&\geq  \lambda\left(X_\alpha Y_\beta \right)^{\frac{1}{r+\varepsilon}}\nonumber\\
 &\geq \lambda\left( X_\alpha \right)^{\frac{1}{r+\varepsilon}}+ \lambda\left(Y_\beta \right)^{\frac{1}{r+\varepsilon}}     = \lambda\left( \LL^+_{f_A}(\alpha) \right)^{\frac{1}{r+\varepsilon}}+ \lambda\left(\LL^+_{f_B}(\beta) \right)^{\frac{1}{r+\varepsilon}}. \label{eq: G when n_1=0}
\end{align}

To simplify the notation we define functions $F_A,F_B,F_{AB}:\RR\to\RR$ that 
\[
F_A(t)=\lambda(L^+_{f_A}(t))^{\frac{1}{r+\varepsilon}}, \quad F_B(t)=\lambda(L^+_{f_B}(t))^{\frac{1}{r+\varepsilon}}, \quad\text{and}\quad F_{AB}(t)=\lambda(L^+_{f_{AB}}(t))^{\frac{1}{r+\varepsilon}}.
\]
Clearly $F_A,F_B,F_{AB}$ are non-increasing functions, and hence measurable. By \eqref{eq: G when n_1=0}, we have 
\begin{equation*}
    F_{AB}(\max\{\alpha,\beta\})\geq F_A(\alpha)+F_B(\beta). 
\end{equation*}
 for all $\alpha,\beta \in \RR$. 
 This means, if we choose $\alpha,\beta\in\RR$ and assume that $F_A(\alpha)\geq t_1$ and $F_B(\beta)\geq t_2$, then $F_{AB}(\max\{\alpha,\beta\})\geq t_1+t_2$. Hence $\LL^+_{F_{A}}(t_1) \cup \LL^+_{F_{B}}(t_2) \subseteq \LL^+_{F_{AB}}(t_1+t_2)$. Let $\lambda_\RR$ be the Lebesgue measure on $\RR$, we thus have
\begin{equation}\label{relationofmuR}
\lambda_\RR(\LL^+_{F_{AB}}(t_1+t_2))\geq \max\{\lambda_\RR(\LL^+_{F_{A}}(t_1)),\lambda_\RR(\LL^+_{F_{B}}(t_2))\}.
\end{equation}

Finally, let us compute $\mu(AB)$. Using Fact~\ref{fact: 4.1} we have
\begin{align}\label{eq: the equation below 21}
    \mu(AB)^{\frac{1}{r+\varepsilon}}
    =\left(\int_{\RR^{>0}}F_{AB}^{r+\varepsilon}(t)\d t \right)^{\frac{1}{r+\varepsilon}}
    =\left(\int_{\RR^{>0}} (r+\varepsilon)t^{r+\varepsilon-1}\lambda_{\RR}(\LL_{F_{AB}}^+(t))\d t \right)^{\frac{1}{r+\varepsilon}}.
\end{align}    
Let $\Omega_A\subseteq \RR$ and $\Omega_B\subseteq \RR$ be the essential support of $F_A$ and $F_B$ respectively. Let $P_A=\lambda_\RR(\Omega_A) $ and $P_B=\lambda_\RR(\Omega_B)$. Using \eqref{relationofmuR} by letting $t_1$ and $t_2$ approach to $0$, the essential support of $F_{AB}$ has a $\lambda_\RR$-measure at least $P_A+P_B$. Thus by a change of variable we have
\begin{align*}
\left(\int_{\RR^{>0}} t^{r+\varepsilon-1}\lambda_{\RR}(\LL_{F_{AB}}^+(t))\d t\right)^{\frac{1}{r+\varepsilon}} &\geq \left(\int_{0}^{P_A+P_B} t^{r+\varepsilon-1}\lambda_{\RR}(\LL_{F_{AB}}^+(t))\d t\right)^{\frac{1}{r+\varepsilon}}\\
&= \left((P_A+P_B)^{r+\varepsilon}\int_{0}^{1} t^{r+\varepsilon-1}\lambda_{\RR}(\LL_{F_{AB}}^+((P_A+P_B)t))\d t\right)^{\frac{1}{r+\varepsilon}}.
\end{align*}
Now using \eqref{relationofmuR} again, the right hand side of the above inequality is at least
\[
\left((P_A+P_B)^{r+\varepsilon}\max\left\{\int_{0}^{1} t^{r+\varepsilon-1}\lambda_{\RR}(\LL_{F_{A}}^+(P_At))\d t, \int_{0}^{1} t^{r+\varepsilon-1}\lambda_{\RR}(\LL_{F_{B}}^+(P_Bt))\d t\right\}\right)^{\frac{1}{r+\varepsilon}}. 
\]
Using H\"older's inequality with exponents $(0,\infty)$, the above quantity is at least
\[
\left(P_A^{r+\varepsilon}\int_0^1 t^{r+\varepsilon-1}\lambda_{\RR}(\LL^+_{F_A}(P_A t))\d t \right)^{\frac{1}{r+\varepsilon}} + \left(P_B^{r+\varepsilon}\int_0^1 t^{r+\varepsilon-1}\lambda_{\RR}(\LL^+_{F_B}(P_B t))\d t\right)^{\frac{1}{r+\varepsilon}}.
\]
By a change of variable, and the definitions of $P_A$ and $P_B$, the above quantity is equal to
\[
\left(\int_{\RR^{>0}}t^{r+\varepsilon-1}\lambda_\RR(\LL_{F_A})(t)) \d t  \right)^{\frac{1}{r+\varepsilon}} + \left(\int_{\RR^{>0}}t^{r+\varepsilon-1}\lambda_\RR(\LL_{F_B})(t)) \d t  \right)^{\frac{1}{r+\varepsilon}}. 
\]
Therefore using \eqref{eq: the equation below 21} we obtain
\[
\mu(AB)^{\frac{1}{r+\varepsilon}}\geq \mu(A)^{\frac{1}{r+\varepsilon}}+\mu(B)^{\frac{1}{r+\varepsilon}}, 
\]
which contradicts the assumption that $\BM(A,B)=r$. 
\end{proof}

\section{Proof of the main theorems}

A main ingredient is the following measure growth gap result by the first two authors~\cite{JT}. 

\begin{fact}[Measure expansion gaps for semisimple Lie groups] \label{fact:expansiongap}
Suppose $G$ is a compact semisimple Lie group. 
If $A \subseteq G$ is compact with sufficiently small measure, then we have
$$ \mu(A^2) \geq (2+10^{-12}) \mu(A). $$
\end{fact}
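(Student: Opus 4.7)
The plan is to adapt the bootstrapping scheme of the main theorem to the weaker threshold $2$ rather than $4$. Arguing by contradiction, suppose we have a sequence $(A_n)$ of compact subsets of $G$ with $\mu(A_n)\to 0$ and $\mu(A_n^2)/\mu(A_n)\to 2$. Form the pseudo-compact ultraproduct $G_\infty = \prod_\ult G$ as in Section~4, producing an infinitesimal pseudo-measurable $A_\infty \subseteq G_\infty$ with $\BM(A_\infty, A_\infty) \leq 1$.

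Next, I would invoke Proposition~\ref{Prop: Approximategroupsfromsmallgrowth} and Proposition~\ref{Prop: ChangingtoAPwithLiemodel} to obtain a definably amenable approximate group $S$ commensurable with $A_\infty$, equipped with a connected Lie model $\pi:\langle S\rangle \to L$. Then Proposition~\ref{Prop:SmallgrowthaboveLiemodel}, followed by Proposition~\ref{Prop:smallgrowthinLiemodel}, transfers the small Brunn--Minkowski growth to compact subsets $X, Y \subseteq L$ with $\BM(X, Y) \leq 1 + \epsilon$ for prescribed $\epsilon>0$. By the opening proposition of Section~9, $L$ is a noncompact unimodular connected Lie group.

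The nonabelian Brunn--Minkowski inequality for noncompact Lie groups from~\cite{JTZ} forces $\BM(X, Y) \geq (d - m) - \lfloor (d-m)/3 \rfloor$, where $d = \dim L$ and $m$ is the dimension of a maximal compact subgroup of $L$. For $\epsilon$ small this pins down $d - m = 1$, and a Lie-theoretic structure argument produces a compact normal subgroup $K \trianglelefteq L$ with $L/K$ isomorphic to the additive group $\RR$. Composing with $\pi$ yields a homomorphism $\rho:\langle S\rangle \to \RR$ along which the preimage of a bounded open interval has Brunn--Minkowski growth exactly $1$ inside $G_\infty$.

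The main obstacle, and the step truly particular to the semisimple case, is turning this $\RR$-quotient datum into a concrete contradiction with $G$ being compact semisimple. My intended route is to show that $\rho$ must arise, through the Lie correspondence on $L$ combined with the pseudo-Lie structure inherited from the $G_n$, as an ultralimit of $1$-parameter subgroups of $G$. In a compact semisimple Lie group, every $1$-parameter subgroup lies in a maximal torus $T$ whose centralizer has codimension at least $2$. Explicit volume computations in the style of Proposition~\ref{prop: general example}, applied to small tubular neighborhoods of $T$ in $G$, would then yield doubling at least $4-o(1)$ for the pulled-back sets, directly contradicting the bound $2+o(1)$ obtained above. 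Rigorously extracting a genuine $1$-parameter subgroup of $G$ from the abstract $\RR$-quotient of the Lie model --- in particular, promoting the codimension-$1$ compact structure in $L$ back to infinitesimal toric structure in $G_\infty$ --- is where I expect the bulk of the difficulty to lie.
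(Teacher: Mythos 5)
This statement is a \emph{citation}, not something the paper proves: in the paper's own conventions, environments labeled ``Fact'' are imported results, and this one is attributed to~\cite{JT}, where it is established by entirely different means (a direct ``spillover''/analytic argument in compact semisimple Lie groups, with no ultraproduct or Lie-model machinery). So there is no proof in the paper to compare against; rather, the paper uses this measure-expansion gap as a terminal black box.

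Your proposed route is problematic for two independent reasons. First, it is circular as written: you run the paper's pipeline (ultraproduct, approximate group, Massicot--Wagner Lie model, descent to $L$, Kemperman inverse theorem, $d - m = 1$, $\mathbb R$-quotient). But the paper's own Section~10 argument terminates precisely by producing sets $D_n \subseteq G_n$ of small measure with $\mu(D_n^2) < (2 + 10^{-12})\mu(D_n)$ and invoking \emph{this very fact} to get a contradiction. If you are trying to prove the fact, you cannot close the loop that way, so you must replace the final step --- and that replacement is exactly where your argument becomes vague.

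Second, your suggested replacement --- promoting the abstract $\mathbb R$-quotient of the Lie model $L$ back into a genuine one-parameter subgroup (or tubular neighborhood of a torus) inside $G$ itself, and then computing volumes à la Proposition~\ref{prop: general example} --- is not a routine step. The Lie model $L$ is not a quotient of $G$ or of the ultraproduct $G_\infty$ in any direct sense; it is produced by Hrushovski/Massicot--Wagner plus Gleason--Yamabe, and its one-parameter subgroups carry no canonical lift to infinitesimal toric structure in the $G_n$. Nor does the hypothesis give you any control over the \emph{shape} of the original sets $A_n$: knowing only that their doubling is less than $2 + 10^{-12}$ does not force them to look like tubular neighborhoods, and turning that into an inverse theorem of this type is essentially the substance of the result you are trying to prove. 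You flag this yourself as ``where the bulk of the difficulty lies,'' but that difficulty is not a technical remainder --- it is the entire content of the statement, and your plan does not contain an idea for overcoming it. The actual source is the direct argument in~\cite{JT}, which the present paper does not reproduce.
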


We will also use the solution of the noncompact Kemperman Inverse Problem by An and the authors~\cite[Theorem 3.12]{AJTZ}. The main ingredient of this result is the nonabelian Brunn--Minkowski inequality in~\cite{JTZ} by the authors.

\begin{fact}[Kemperman inverse theorem for noncompact groups] \label{fact: AJTZ}
Let $G$ be a connected unimodular locally compact noncompact group equipped with a Haar measure $\mu$, and $A,B\subseteq G$ be a compact set satisfying $$\mu(AB)<\mu(A)+\mu(B)+2(\mu(A)\mu(B))^{\frac12}.$$ Then there is a continuous surjective group homomorphism $\chi: G\to\RR$ with compact kernel. 
\end{fact}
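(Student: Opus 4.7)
The hypothesis $\mu(AB)<\mu(A)+\mu(B)+2(\mu(A)\mu(B))^{1/2}$ is exactly the assertion $\mu(AB)^{1/2}<\mu(A)^{1/2}+\mu(B)^{1/2}$, i.e.\ $\BM(A,B)<2$. So the content of the fact is: if the Brunn--Minkowski exponent of a pair of compact sets is strictly less than $2$, then $G$ is ``one-dimensional modulo a compact.'' My plan has three stages: reduce to a Lie group, apply the sharp nonabelian Brunn--Minkowski inequality of~\cite{JTZ} to pin down the noncompact dimension, and then invoke the Lie-theoretic structure to extract an $\RR$-quotient with compact kernel.

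First I would pass from the general locally compact setting to a connected Lie group. Apply the Gleason--Yamabe theorem (Fact~\ref{fact: Gleason}) with a relatively compact neighborhood of the identity to obtain an open subgroup $L'\leq G$ and a compact normal subgroup $K\trianglelefteq L'$ with $L'/K$ a connected Lie group; since $G$ is connected, $L'=G$, so $K$ is normal in $G$. Let $q:G\to G/K$ be the quotient and set $\bar A=q(A)$, $\bar B=q(B)$. Using the quotient integral formula for a unimodular $G$ with compact normal subgroup $K$, one normalizes Haar measures so that $\mu(Aq^{-1}(\bar A))=\mu_K(K)\mu_{G/K}(\bar A)$ for saturated sets (and replaces $A,B$ by their $K$-saturations $AK, BK$, which can only increase $\mu(AB)$ proportionally). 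The strict inequality $\mu(AB)^{1/2}<\mu(A)^{1/2}+\mu(B)^{1/2}$ then transfers to $\bar A,\bar B$ in $G/K$, and we have reduced to the case $G$ is a connected unimodular Lie group.

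Next I would apply the sharp Brunn--Minkowski inequality for Lie groups from~\cite{JTZ}: for compact $A,B$ in a connected unimodular Lie group with $d=\dim G$ and $m=\dim K_{\max}$ (maximal compact subgroup),
\[
\mu(AB)^{\frac{1}{d-m}}\geq \mu(A)^{\frac{1}{d-m}}+\mu(B)^{\frac{1}{d-m}}.
\]
Since $G$ is noncompact, $d-m\geq 1$. If $d-m\geq 2$, then by the power-mean interpolation we would obtain $\mu(AB)^{1/2}\geq \mu(A)^{1/2}+\mu(B)^{1/2}$, contradicting the hypothesis. Hence $d-m=1$. Structurally this means: inside the Lie algebra $\mathfrak g$ of $G$, a maximal compactly embedded subalgebra has codimension one. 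A standard consequence of Iwasawa/Levi theory for connected unimodular Lie groups is that this forces the existence of a closed connected normal subgroup $N\trianglelefteq G$ with $N$ compact and $G/N\cong \RR$ (the codimension-one complement is a one-dimensional unimodular ideal, hence isomorphic to $\RR$). Composing $q$ with the projection $G/K\to (G/K)/N\cong\RR$ gives a continuous surjection $\chi:G\to\RR$ whose kernel is a compact extension of $K$, hence compact, as required.

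The main obstacle is the second step: the sharp Brunn--Minkowski inequality at the exact exponent $d-m$ is exactly the deep content of~\cite{JTZ}, and extracting equality-type rigidity information (``$d-m=1$'') from a strict inequality requires the sharp form rather than the weaker $d-m-\lfloor(d-m)/3\rfloor$ bound quoted in the introductory overview. A secondary subtlety is in Stage 1: the quotient argument needs $K$-saturation and care with non-strictness of the inequality after passing to $\bar A,\bar B$; one should check that the strict inequality is preserved, possibly by slightly shrinking $A,B$ using outer regularity. The Lie-theoretic conclusion in Stage~3 is essentially structural but also needs the noncompactness to be genuinely ``abelian noncompactness,'' which for connected unimodular Lie groups with $d-m=1$ follows from the fact that a one-dimensional connected subgroup transversal to the maximal compact is central modulo the compact, giving the split $G\cong N\rtimes \RR$ needed to define $\chi$.
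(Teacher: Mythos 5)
This statement is not proved in the paper: it is cited verbatim as \cite[Theorem~3.12]{AJTZ} and used as a black box in Section~10, so there is no ``paper's own proof'' to compare against. That said, your outline is a reasonable reconstruction of how such a result would be proved, and it is worth pointing out one place where you are too pessimistic and one where you are too optimistic.

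The claimed ``main obstacle'' is not actually an obstacle. You assert that getting $d-m=1$ requires the sharp Brunn--Minkowski inequality with exponent $1/(d-m)$, which is Conjecture~1.4 of \cite{JTZ}, i.e.\ unproven. But the proven weak form (Fact~\ref{nonabelBM}, with exponent $1/\lceil 2n/3\rceil$ where $n=d-m$) already suffices: for $n\ge 2$ one has $\lceil 2n/3\rceil\ge 2$, which forces $\BM(X,Y)\ge 2$ for all compact $X,Y$, contradicting $\BM(A,B)<2$; hence $n=1$ by noncompactness. The paper's own overview says precisely this (``we do know that $\lambda(X^2)\ge 2^{d-m-\lfloor(d-m)/3\rfloor}\lambda(X)$. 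Applying it into our case, we get $d-m<2$\ldots''). So Stage~2 goes through with the theorem of \cite{JTZ} as stated, and you do not need any rigidity or conjectural input.

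Conversely, two points you treat as routine deserve more care. In Stage~1, replacing $A,B$ by $AK,BK$ can only \emph{increase} $\mu(A),\mu(B),\mu(AB)$, and shrinking $A,B$ would move the Brunn--Minkowski exponent in the \emph{wrong} direction; the correct fix is to use the freedom in Gleason--Yamabe to take $K$ inside an arbitrarily small neighbourhood of the identity so that $\mu(AK),\mu(BK),\mu(ABK)$ are each within a factor $(1+\delta)$ of the originals, and then the strict inequality survives for small $\delta$. In Stage~3, the passage from ``$d-m=1$'' to ``there is a compact normal $N$ with $G/N\cong\RR$'' is not just Iwasawa bookkeeping: one must first rule out a noncompact simple Levi factor (any such factor already contributes $d-m\ge 2$), conclude the Levi factor is of compact type, and then argue in the unimodular solvable-by-compact setting that the commutator is compact and the torsion-free part of the abelianization is a line. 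This is true, but it is a genuine structural argument, not something to wave at; it is presumably the heart of~\cite[Theorem~3.12]{AJTZ}.
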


We prove a version of theorem~\ref{thm: mainBM} for simple Lie groups. The proof, in fact, also work for compact groups that does not have $\RR/\ZZ$ as a quotient by replacing Fact~\ref{fact:expansiongap} with its generalization from~\cite{JT}. As the exponent is not sharp for any group other than $\Sot$, we refrain from stating the result in the most general form for the sake of readability.

\begin{theorem}\label{thm: maingrowth2}
 For all $\epsilon>0$ and $N>0$, there is $c =c(\epsilon, N)$ such that if $G$ is a compact semisimple Lie group with normalized Haar measure $\mu$, and $A, B  \subseteq G$ are chosen with $A$, $B$, and $AB$ measurable, $0< \mu(A), \mu(B) < c$, and $\mu(A)/N< \mu(B)< N \mu(A)$, then
$$ \mu(AB)^{\frac{1}{2}+\epsilon}  > \mu(A)^{\frac{1}{2}+\epsilon}+ \mu(B)^{\frac{1}{2}+\epsilon}.$$
\end{theorem}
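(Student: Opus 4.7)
The plan is a proof by contradiction feeding Theorem~\ref{thm: maingrowth2} into the ultraproduct-to-Lie-model pipeline of Sections~4--9, then using the noncompact Kemperman inverse theorem (Fact~\ref{fact: AJTZ}) to pull back a ``doubling exactly~$2$'' set and contradict the semisimple measure expansion gap (Fact~\ref{fact:expansiongap}). Assume the conclusion fails: there are $\epsilon, N > 0$ and sequences of compact semisimple Lie groups $G_n$ with normalized Haar measures $\mu_n$ and measurable $A_n, B_n \subseteq G_n$ satisfying $\mu_n(A_n), \mu_n(B_n) \to 0$, $\mu_n(A_n)/N < \mu_n(B_n) < N\mu_n(A_n)$, and $\BM(A_n, B_n) \leq r := 2/(1+2\epsilon) < 2$. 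Fix $\eta > 0$ with $r + 3\eta < 2$.

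Fix a nonprincipal ultrafilter $\ult$ on $\NN$ and form $G = \prod_\ult G_n$, $A = \prod_\ult A_n$, $B = \prod_\ult B_n$. Proposition~\ref{prop: smallexpansionultraproduct} gives $\BM(A,B) \leq r$, with $A, B$ commensurable and infinitesimal in the pseudo-Lie pseudo-compact group $G$. Proposition~\ref{Prop: Approximategroupsfromsmallgrowth} produces pseudo-measurable $A', B', S \subseteq G$ generating a substructure $\Sigma$ in which every definable set is pseudo-measurable, with $A, B, A', B', S$ pairwise commensurable, $S$ a definably amenable approximate group, $\BM(A',B') \leq r + \eta$, and $A'$ (resp.\! $B'$) covered by finitely many left (resp.\! right) translates of $S$. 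Proposition~\ref{Prop: ChangingtoAPwithLiemodel} replaces $S$ by an equivalent approximate group admitting a connected Lie model $\pi: \langle S \rangle \to L$ definable in $\Sigma$; the opening proposition of Section~9 guarantees that $L$ is noncompact and unimodular. Proposition~\ref{Prop:SmallgrowthaboveLiemodel} then supplies $A'', B'' \subseteq \langle S \rangle$ definable in $\Sigma$ with $\BM(A'', B'') \leq r + 2\eta$, and Proposition~\ref{Prop:smallgrowthinLiemodel} descends this to compact $X, Y \subseteq L$ with $\BM(X, Y) \leq r + 3\eta < 2$.

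Since $\BM(X, Y) < 2$ unwinds to
\[
\lambda(XY) < \lambda(X) + \lambda(Y) + 2\sqrt{\lambda(X)\lambda(Y)},
\]
Fact~\ref{fact: AJTZ} yields a continuous surjective homomorphism $\chi: L \to \RR$ with compact kernel. Set $I := [0,1] \subset \RR$ and $B_\infty := \pi^{-1}(\chi^{-1}(I)) \subseteq \langle S \rangle$. Since $\chi^{-1}(I)$ is compact in $L$ and $\pi$ is continuously definable in $\Sigma$, $B_\infty$ arises as a countable intersection of sets definable in $\Sigma$ and is therefore pseudo-measurable, with $\mu_\infty(B_\infty) = \lambda(\chi^{-1}(I)) \in (0, \infty)$ via Lemma~\ref{lem: Link between measures}; likewise $B_\infty^2 = \pi^{-1}(\chi^{-1}([0,2]))$ has $\mu_\infty(B_\infty^2) = 2\mu_\infty(B_\infty)$ exactly, because $\chi$ is a group homomorphism and the pushforward on $L$ is genuinely a Haar measure. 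Writing $B_\infty = \prod_\ult B_n$ and using that the ratio $\mu_n(B_n^2)/\mu_n(B_n)$ is scale-invariant, this identity transfers to $\lim_\ult \mu_n(B_n^2)/\mu_n(B_n) = 2$; since $B_\infty$ is infinitesimal in $G$ we also have $\mu_n(B_n) \to 0$. Thus for $\ult$-a.e.\! $n$ the set $B_n$ has arbitrarily small $\mu_n$-measure yet satisfies $\mu_n(B_n^2) < (2 + 10^{-13})\mu_n(B_n)$, contradicting Fact~\ref{fact:expansiongap}.

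The principal technical obstacle is the final step: verifying that $B_\infty$ (and $B_\infty^2$) is genuinely pseudo-measurable so that the pullback writing $B_\infty = \prod_\ult B_n$ is legitimate, and that the doubling identity $\mu_\infty(B_\infty^2) = 2\mu_\infty(B_\infty)$ holds as an exact equality rather than up to the $\epsilon$-approximation inherent in density-function arguments. Both points rely on $\aleph_1$-saturation applied to the countable-intersection description of $\chi^{-1}(I) \subseteq L$ together with the pushforward structure supplied by Lemma~\ref{lem: Link between measures}. Everything else is book-keeping of commensurability classes and repeated application of the structural results of Sections~4--9.
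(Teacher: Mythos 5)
Your route through Sections~4--9 (ultraproduct, approximate group via Tao and Massicot--Wagner, connected Lie model via Gleason--Yamabe, descent to compact $X,Y \subseteq L$ with small Brunn--Minkowski growth, noncompact Kemperman inverse theorem, then a pullback contradicting the measure expansion gap) is exactly the paper's. The gap is in the final transfer step. Taking $I = [0,1]$ makes $B_\infty := \pi^{-1}(\chi^{-1}(I))$ only $\delta$-definable (a countable intersection of sets definable in $\Sigma$), and your assertion that it is ``therefore pseudo-measurable'' is false: pseudo-measurable sets form a Boolean algebra, not a $\sigma$-algebra, so a $\delta$-definable set need not be an ultraproduct $\prod_\ult B_n$ of measurable sets. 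Without pseudo-measurability you cannot write $B_\infty = \prod_\ult B_n$, cannot speak of the sequence $(B_n)$, and cannot push the doubling inequality down to any fixed $G_n$. The $\aleph_1$-saturation you appeal to (Lemma~\ref{lem: Interpolation}) only interpolates a \emph{definable} set between a $\delta$-definable set and a $\sigma$-definable superset; it does not upgrade $\delta$-definable to pseudo-measurable.

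The paper sidesteps this by taking $I$ a bounded \emph{open} interval, so $E := \pi^{-1}(\phi^{-1}(I))$ is $\sigma$-definable with $\mu(E^2) = 2\mu(E)$; writing $E = \bigcup_m D_m$ with $D_m$ definable (hence pseudo-measurable) and increasing, one gets $\mu(D_m^2) \leq \mu(E^2) = 2\mu(E) < (2 + 10^{-12})\mu(D_m)$ for $m$ large, and this definable $D_m$ is what transfers to the $G_n$'s. Your closed-interval version is repairable in the same spirit: interpolate a definable $D$ with $B_\infty \subseteq D \subseteq \pi^{-1}(\chi^{-1}((-\eta, 1+\eta)))$ for small $\eta$, so that $\mu(D) \geq \mu(B_\infty)$ while $\mu(D^2) \leq 2 + 4\eta$ under the normalization $\mu(B_\infty)=1$, and then run the transfer on $D$ rather than on $B_\infty$ itself.
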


\begin{proof}
Fix $\epsilon$ and $N$ as in the statement of the theorem. Suppose to the contrary that no such $c$ exists. Then obtain a sequence $(G_n)$ of compact semisimple Lie groups, and sequence $(A_n)$ and $(B_n)$ of sets such that 
\begin{enumerate}
   \item[($1_n$)] $A_n, B_n, A_nB_n \subseteq G_n$ are measurable
   \item[($2_n$)] $\lim_{n \to \infty} \mu_n(A_n) =\lim_{n \to \infty} \mu_n(B_n) =0$, and $\mu_n(A_n)/N< \mu_n(B_n)< N\mu_n(A_n)$ where $\mu_n$ is the normalized Haar measure on $G_n$ 
    \item[($3_n$)] For each $n$,  $\mu(A_nB_n)^{\frac{1}{2}+\epsilon}  \leq  \mu(A_n)^{\frac{1}{2}+\epsilon}+ \mu(B_n)^{\frac{1}{2}+\epsilon}$, or equivalently,    
   $$\BM(A_n, B_n) \leq 2-4\epsilon',$$ 
    where we set $\epsilon'= \varepsilon/(1+2\varepsilon)$.
\end{enumerate}

Now choose an arbitrary nonprincipal ultrafilter $\ult$. Let $G$ be the ultraproduct $\prod_{\ult} G_n$ of the sequence $G_n$, and set $A=\prod_\ult A_n$ and $B =\prod_\ult B_n$. Applying Proposition~\ref{prop: smallexpansionultraproduct} we deduce that 
\begin{enumerate}
    \item $A$, $B$, and $AB$ are pseudo-measurable in $G$
    \item the sets $A$ and $B$ commensurable and infinitesimal compared to $G$.
    \item $\BM(A, B) \leq 2-4\epsilon'$.
\end{enumerate}
Noting also that $G$ is a pseudo-Lie pseudo-compact group, we apply Proposition~\ref{Prop: Approximategroupsfromsmallgrowth} to get $A', B', S \subseteq G$ such that the following conditions hold:

\begin{enumerate}
        \item[(1$'$)] Let $(G, \Sigma)$ be the expansion of the group $G$ generated by $A'$, $B'$, and $S$. Then every $D \subseteq G$ definable in $\Sigma$ is pseudo-measurable.  
        \item[(2$'$)] $A, B, A', B', S$ are all commensurable to one another
        \item[(3$'$)] $BM(A', B') \leq 2-3\epsilon'$
        \item[(4$'$)] $S$ is an approximate groups
        \item[(5$'$)]  $A'$ can be covered by finitely many left-translates of $S$, and $B'$ can be covered by finitely many right-translates of $S$.
    \end{enumerate}

Next, we apply Proposition~\ref{Prop: ChangingtoAPwithLiemodel} and replace $S$ if necessary to arrange that it has a connected Lie model $\pi: \langle S \rangle \to L$ continuously definable in $\Sigma$.

Recall that a semisimple Lie group is connected. Hence, $G$ is a pseudo-connected and pseudo-compact group. Conditions (1$'$) and (2$'$) then allow us to apply Proposition~\ref{Prop:SmallgrowthaboveLiemodel} to get $A'', B'' \subseteq \langle S \rangle$ such that the following hold:

\begin{enumerate}
        \item[(1$''$)]  $A''$ and  $B''$ are definable in $\Sigma$
        \item[(2$''$)] $A, B, A', B', A'', B'', S$ are all commensurable to one another
        \item[(3$''$)] $BM(A'', B'') \leq 2-2\epsilon'$.
    \end{enumerate}

Now apply Proposition~\ref{Prop:smallgrowthinLiemodel} to produce compact sets $X, Y \subseteq L$ such that 
$$\BM(X,Y) \leq 2-\epsilon'.$$
 Using Fact~\ref{fact: AJTZ}, there is a continuous and surjective group homomorphism with compact kernel $\phi: L \to \RR$.  
 
 Let $I \subseteq \RR$ be an open interval, we have $\lambda_\RR(I+I) =2 \lambda_\RR(I)$. Set $E = \pi^{-1} (\phi^{-1}( I ))$, we get $\mu(E^2) = 2\mu(E)$. Since $E$ is $\sigma$-definable in $\Sigma$, we obtain $D \subseteq E $ definable in $\Sigma$ such that 
 $$ \mu(D^2) < (2+10^{-12})\mu(D).  $$
 Since $D$ is definable in $\Sigma$, it is pseudo-measurable and is equal to $\prod_\ult D_n$ with $D_n \subseteq G_n$ measurable. Hence, there is $n$ sufficiently large such that $\mu_n(D^2_n) < (2 + 10^{-12}) \mu(D_n)$  and $\mu(D_n)$ arbitrarily small. This is a contradiction to Fact~\ref{fact:expansiongap}, which completes the proof.
\end{proof}

We now get a generalization of Theorem~\ref{thm: maingrowth}.

\begin{theorem}\label{thm: mainBM2}
 For every $\varepsilon>0$ there is $c>0$ such that whenever $G$ is a compact semisimple Lie group with normalized Haar measure $\mu$, $A \subseteq G$ is open with  $\mu(A)< c$, we have
$$ \mu(A^2)  > (4-\varepsilon) \mu(A).$$
\end{theorem}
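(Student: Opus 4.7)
The plan is to derive Theorem~\ref{thm: mainBM2} as an immediate specialization of the asymmetric result Theorem~\ref{thm: maingrowth2} by taking $B = A$. The key observation is that the exponent $\frac{1}{2} + \epsilon$ in Theorem~\ref{thm: maingrowth2} is chosen so that, when $A$ and $B$ have comparable measure, one recovers a doubling bound approaching $4$ in the sharp symmetric form.

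First, given $\varepsilon > 0$, I would choose $\epsilon > 0$ small enough that
\[
2^{\tfrac{1}{1/2 + \epsilon}} = 2^{\tfrac{2}{1+2\epsilon}} > 4 - \varepsilon,
\]
which is possible since $2^{2/(1+2\epsilon)} \to 4$ as $\epsilon \to 0^+$. Next I would fix any constant $N > 1$ (say $N = 2$) and invoke Theorem~\ref{thm: maingrowth2} with these choices of $\epsilon$ and $N$ to obtain a threshold $c = c(\epsilon, N) > 0$.

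Now let $G$ be a compact semisimple Lie group with normalized Haar measure $\mu$, and let $A \subseteq G$ be open with $\mu(A) < c$. Setting $B = A$, the hypotheses of Theorem~\ref{thm: maingrowth2} are all satisfied: $A$, $B$, and $AB = A^2$ are measurable (indeed $A$ and $A^2$ are open), $0 < \mu(A) = \mu(B) < c$, and $\mu(A)/N < \mu(B) < N\mu(A)$ holds trivially. The conclusion of Theorem~\ref{thm: maingrowth2} then gives
\[
\mu(A^2)^{\frac{1}{2}+\epsilon} > \mu(A)^{\frac{1}{2}+\epsilon} + \mu(A)^{\frac{1}{2}+\epsilon} = 2\, \mu(A)^{\frac{1}{2}+\epsilon},
\]
which rearranges to $\mu(A^2) > 2^{2/(1+2\epsilon)}\, \mu(A) > (4 - \varepsilon)\, \mu(A)$ by our choice of $\epsilon$.

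There is no genuine obstacle here, as all the substantial work has been absorbed into Theorem~\ref{thm: maingrowth2}; the only point requiring attention is the translation between the Brunn--Minkowski exponent in the asymmetric statement and the doubling-factor formulation of Theorem~\ref{thm: mainBM2}, together with the elementary verification that $2^{2/(1+2\epsilon)}$ can be made as close to $4$ as desired by taking $\epsilon$ small.
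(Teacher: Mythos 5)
Your proof is correct and is a legitimate specialization of Theorem~\ref{thm: maingrowth2} to $B=A$. You observe that when $A$ is open, $A^2$ is also open (being a union of translates of $A$) and hence measurable, so the measurability hypotheses of Theorem~\ref{thm: maingrowth2} are met directly with $A = B$; choosing $N>1$ ensures the strict comparability condition $\mu(A)/N<\mu(B)<N\mu(A)$ holds, and the elementary exponent manipulation $2^{2/(1+2\epsilon)}\to 4$ finishes the argument.

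The paper's own proof takes a slightly more circuitous route: by inner regularity of Haar measure it first replaces $A$ by a compact $A'\subseteq A$ with $\mu(A')\geq(1-\varepsilon/10)\mu(A)$, applies Theorem~\ref{thm: maingrowth2} to $A'$ with $N=1$ (note that $N=1$ makes the strict inequality $\mu(A')/N<\mu(A')$ fail, so $N$ there really should be taken $>1$ as you did), and then uses $\mu(A^2)\geq\mu((A')^2)$ to conclude. The compact approximation step in the paper is unnecessary for this deduction precisely because, as you note, openness of $A$ already gives measurability of $A$ and $A^2$; it seems to be a vestige of earlier results that are phrased for compact sets. Your direct argument avoids both the unneeded inner-regularity step and the $N=1$ slip, and is otherwise the same reduction to the asymmetric Theorem~\ref{thm: maingrowth2}.

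One minor point to make explicit: if $A=\emptyset$ the claimed strict inequality fails trivially; the statement implicitly assumes $A$ nonempty, and in that case $\mu(A)>0$ by positivity of Haar measure on nonempty open sets, which is what justifies $0<\mu(A)<c$ and the comparability condition. You implicitly use this when writing $0<\mu(A)=\mu(B)<c$, so it is worth flagging.
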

\begin{proof}
Choose $\delta $ sufficiently small such that $ 2^{2-\delta}> 4-\varepsilon/10$, set $N=1$, and let $c =c(\epsilon, N)$ as in Theorem~\ref{thm: mainBM2}. Suppose $A \subseteq G$ is open, and $\mu(A)<c$. By the inner regularity of the Haar measure, one can choose compact $A' \subseteq A$ with $\mu(A') \geq (1- \varepsilon/10) \mu (A)$. Then we have
\[
\mu(A^2) \geq \mu((A')^2) \geq \left(4-\frac{\varepsilon}{10}\right) \mu(A') \geq \left(4-\frac{\varepsilon}{10}\right)\left(1-\frac{\varepsilon}{10}\right)\mu(A)>(4-\varepsilon)\mu(A). 
\]
This completes the proof.
\end{proof}

\begin{remark}\label{remark: section 10}
Fact~\ref{fact: AJTZ} has a natural generalization to all noncompact Lie groups if one can remove the helix dimension (defined in~\cite{JTZ}) term from the Brunn--Minkowski inequality; equivalently, if we have the following form of the Nonabelian Brunn--Minkowski Conjecture~\cite[Conjecture 1.4 and Theorem 1.5]{JTZ}. 
\begin{conjecture}[Nonabelian Brunn--Minkowski Conjecture]\label{conj: Nonabelian BM}
Let $G$ be a simply connected simple Lie group equipped with a Haar measure $\mu$, with $d$ the dimension of $G$ and $m$ the dimension of a maximal compact subgroup of $G$. Then for every compact sets $A,B\subseteq G$,
\[
\mu(AB)^{\frac{1}{d-m}}\geq \mu(A)^{\frac{1}{d-m}}+\mu(B)^{\frac{1}{d-m}}. 
\]
\end{conjecture}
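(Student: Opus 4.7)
The approach is to use the Iwasawa decomposition to reduce the conjecture to a sharp Brunn--Minkowski inequality on the solvable Iwasawa factor, and then to handle the solvable case by induction on the derived series. Write $G = KAN$ with $K$ maximal compact of dimension $m$, $A$ a maximal split torus of rank $r$, and $N$ the corresponding unipotent subgroup of dimension $d-m-r$. The simply connected solvable Lie group $S := AN$ has dimension exactly $d-m$ and is diffeomorphic to $\RR^{d-m}$ via the exponential map, while $G/K$ is identified with $S$ as a smooth manifold. Since the target exponent $1/(d-m)$ equals one over the dimension of $S$, this reduction is dimensionally sharp at the outset, and one expects the extremizers to be bi-$K$-invariant neighborhoods of the identity, i.e.\ pullbacks of metric balls centered at the basepoint of $G/K$.

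First, I would reduce the inequality on $G$ to an inequality on $S$. Under the diffeomorphism $G \cong S \times K$ given by $g = sk$, the Haar measure on $G$ disintegrates as left Haar measure $\lambda_S$ on $S$ times normalized Haar $\lambda_K$ on $K$. For compact $A, B \subseteq G$ and $k\in K$ set $A_k := \{s \in S : sk \in A\}$ and similarly for $B$; then $\mu(A) = \int_K \lambda_S(A_k)\,d\lambda_K(k)$, and the product $AB$ projects onto a union of translated products $A_{k_1}\cdot \tau(k_1,k_2)\cdot B_{k_2}$ inside $S$, where $\tau(k_1,k_2)\in K$ comes from the $KAK$-reorganization. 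Since $K$ preserves $\lambda_S$ up to bounded factors controlled by the Cartan involution, an $L^{1/(d-m)}$ Minkowski integral inequality applied fiberwise over $K$ should reduce the problem to the claim: for all compact $X, Y \subseteq S$,
\[
\lambda_S(XY)^{1/(d-m)} \geq \lambda_S(X)^{1/(d-m)} + \lambda_S(Y)^{1/(d-m)}.
\]

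Second, on $S = A \ltimes N$, I would induct on the derived length. Fiber $S$ over $A \cong \RR^r$ via the projection $\pi_A : S \to A$; for each $a \in A$ the fiber $\pi_A^{-1}(a) = Na$ is a translate of $N$. In Iwasawa coordinates, $\lambda_S$ factors as $e^{2\rho(\log a)}\,dn\,da$, where $\rho$ is the half-sum of positive roots. Apply the inductive BM on the nilpotent group $N$ of dimension $d-m-r$ to each fiber, with exponent $1/(d-m-r)$; then combine the fiberwise estimates over $A$ with a weighted Borell--Brascamp--Lieb inequality on $A \cong \RR^r$ using the log-affine weight $e^{2\rho(\log a)}$. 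The log-affine factor combined with $L^{1/(d-m-r)}$ fibers should aggregate to the sharp exponent $1/(d-m)$ via the equality case of H\"older's inequality. The base case is Euclidean BM on $\RR$ after identifying $\lambda_N$ with Lebesgue through the exponential.

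The main obstacle is the helix-dimension correction in the theorem of~\cite{JTZ}, which currently prevents the sharp exponent. It arises in the $A$-fiberwise step: the adjoint action of $A$ on the root decomposition of $\mathfrak{n}$ can conjugate pieces of $Y$ in ways that force the fibered Pr\'ekopa--Leindler estimate to lose one unit of dimension per non-split root subspace. Removing this loss requires a genuinely new ingredient -- either a sharp symmetrization on the fibered BM that constrains extremizers on each root subspace to be ellipsoids respected by the $A$-action, or an optimal transport argument on the symmetric space $G/K$ in the spirit of Cordero-Erausquin, McCann, and Schmuckenschlager's work on Riemannian Brunn--Minkowski, exploiting the non-positive sectional curvature of $G/K$ to upgrade pointwise to sharp estimates. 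A successful argument along either route should simultaneously identify the equality case and close the exponent gap, yielding the full conjecture.
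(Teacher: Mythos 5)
The statement you are trying to prove is not a theorem of the paper at all: it is Conjecture~\ref{conj: Nonabelian BM}, which the paper explicitly leaves open and only uses hypothetically (the best known result, Fact~\ref{nonabelBM} from~\cite{JTZ}, has the weaker exponent $\lceil 2n/3\rceil$ coming from the helix-dimension term). Your proposal is therefore not, and cannot be judged against, a proof in the paper; and as written it is not a proof either, since you yourself defer the decisive step to ``a genuinely new ingredient'' (a sharp symmetrization or an optimal-transport argument on $G/K$). That missing ingredient is precisely the content of the conjecture: the known Iwasawa-based argument of~\cite{JTZ} already follows the outline you describe and loses dimension exactly at the fibered step over $A$ acting on the root spaces of $\mathfrak{n}$, so restating that loss and saying it should be removable does not close the gap.

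Beyond the acknowledged gap, the reduction step has concrete problems. The subgroup $K$ does not normalize $S=AN$, so for $g_1=s_1k_1$ and $g_2=s_2k_2$ the Iwasawa decomposition of $g_1g_2$ involves the $S$-component of $k_1 s_2 k_1^{-1}$-type terms, which is a nonlinear, unbounded distortion of $s_2$, not a translate $A_{k_1}\cdot\tau(k_1,k_2)\cdot B_{k_2}$; in particular the claim that ``$K$ preserves $\lambda_S$ up to bounded factors'' is false, and the projection of $AB$ to $G/K\cong S$ is not controlled by products of the fiber projections in the way your Minkowski-integral step requires. Moreover $S=AN$ is not unimodular, so any Brunn--Minkowski statement on $S$ must be formulated with both left and right Haar measures (as the paper itself cautions when defining $\BM$), and the naive fiberwise Pr\'ekopa--Leindler over $A$ with weight $e^{2\rho(\log a)}$ does not aggregate to the exponent $1/(d-m)$ without exactly the extremizer rigidity you concede is unavailable. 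In short: the approach is a reasonable heuristic reduction, essentially the one already attempted in~\cite{JTZ}, but it does not prove the conjecture, and the conjecture remains open.
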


The method developed in the paper is ready for us to prove the generalized Breuillard--Green conjecture for all compact simple Lie groups, if we have the Nonabelian Brunn--Minkowski Conjecture (Conjecture~\ref{conj: Nonabelian BM}) together with a suitable generalization of Fact~\ref{fact:expansiongap}, that $\mu(A^2)\geq (2^{d-m-1}+\eta)\mu(A)$ for some $\eta>0$.

It worth noting that although the Nonabelian Brunn-Minkowski Conjecture remains open, in \cite{JTZ} the authors proved the following theorem with an extra $2/3$ factor on the exponents: 
\begin{fact}[Nonabelian Brunn-Minkowski inequality for Lie groups] \label{nonabelBM}
Let $L$ be a connected Lie group with dimension $d$, let $m$ be the maximal dimension of a compact subgroup of $L$, and set $n=d-m$. Suppose, $L$ is unimodular and equipped with Haar measure $\mu$. For all $X,Y \subseteq L$ be compact with positive measure, we have
\[
 \mu(XY)^{ \lceil \frac{2n}{3} \rceil} \geq \mu(X)^{\lceil \frac{2n}{3} \rceil}  + \mu(Y)^{ \lceil \frac{2n}{3} \rceil}.  
 \]
\end{fact}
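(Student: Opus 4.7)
The fact is proved in~\cite{JTZ}, and I outline the strategy here. The argument proceeds by induction on the dimension $d$ of $L$. The base case $n = d - m = 0$ (i.e., $L$ compact) follows from Kemperman's inequality (Fact~\ref{fact: Kemperman}), since no dimensional gain in the exponent is needed. For the inductive step, the plan is to reduce to the case where $L$ is either solvable or simple by exploiting the Levi decomposition $L = R \rtimes S$: if $L$ admits a nontrivial proper closed normal subgroup $H$, one fibers the product $XY$ over the quotient $L/H$ and applies the inductive hypothesis to both $H$ and $L/H$. The recombination relies on a Pr\'ekopa--Leindler type inequality along the fibers, with unimodularity ensuring that the fiber measure is canonically defined and compatible with a quotient integral formula.

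For the simple case, the plan is to use the Iwasawa decomposition $L = KAN$ with $K$ maximal compact of dimension $m$, $A$ abelian of rank $r$, and $N$ nilpotent of dimension $n - r$. The symmetric space $L/K$ is diffeomorphic to $AN$, a solvable Lie group of dimension exactly $n$; fibering $L$ over $L/K$ reduces the problem to $AN$. One then iterates along the lower central series of $N$, applying Pr\'ekopa--Leindler at each one-dimensional abelian quotient, and handles the abelian factor $A$ by classical Brunn--Minkowski directly. The unimodularity hypothesis is critical throughout, as it ensures modular functions do not create distortions when passing between left and right quotients in the iteration.

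The main obstacle, and the reason for the $\lceil 2n/3 \rceil$ exponent rather than the sharp $n$ predicted by Conjecture~\ref{conj: Nonabelian BM}, lies in the nilpotent step: commutator terms distort the product set $XY$ along nilpotent directions, so a direct Pr\'ekopa--Leindler fiber slicing cannot be applied cleanly dimension-by-dimension. The workaround is to bundle nilpotent directions in triples; each triple contributes only $2$ to the effective exponent because only a two-dimensional ``straight'' slice survives the commutator deformation in each three-dimensional block. Summing contributions across the abelian part and the nilpotent filtration yields the $\lceil 2n/3 \rceil$ exponent, with the ceiling absorbing any leftover dimensions that do not assemble into complete triples. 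Eliminating this factor-of-$3/2$ loss is exactly the content of the still-open Conjecture~\ref{conj: Nonabelian BM}.
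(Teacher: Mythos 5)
This statement is quoted in the paper as a Fact from~\cite{JTZ} and used as a black box, so there is no internal proof to compare against; pointing to~\cite{JTZ} is indeed the intended justification. The issue is that your outline of what happens in~\cite{JTZ} misdescribes the mechanism in two substantive ways. First, the drop from the conjecturally sharp exponent $n$ to $\lceil 2n/3\rceil = n-\lfloor n/3\rfloor$ does not come from commutator distortion in the nilpotent Iwasawa factor being handled ``in triples'': nilpotent and, more generally, the solvable $AN$ directions are handled with no loss. The loss is the \emph{helix dimension} $h$ of~\cite{JTZ} (the present paper alludes to exactly this in Remark~\ref{remark: section 10}): the inequality is proved there with exponent $n-h$, and $h\le\lfloor n/3\rfloor$ because the worst case is a three-dimensional noncompact piece such as $\widetilde{\mathrm{SL}}_2(\RR)$ or the universal cover of the Euclidean motion group, in which a single noncompact one-parameter direction winds helically over what is a compact direction in a quotient and therefore cannot be used for the fibering/induction step. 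Attributing the $2/3$ to the lower central series of $N$ gets the source of the loss wrong.

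Second, the inductive scheme you propose --- fiber over an arbitrary proper closed normal subgroup $H$, apply the hypothesis to $H$ and $L/H$, and recombine by a Pr\'ekopa--Leindler type inequality --- is not how the argument goes and would not run as stated: noncompact dimension is not well behaved under such extensions (a quotient can have a larger maximal compact subgroup), which is precisely how helical directions arise and defeat a naive two-sided induction. The actual argument produces, whenever possible, a continuous surjective homomorphism onto $\RR$ (via structure theory/the Iwasawa $AN$ part) and runs a spillover argument on superlevel sets of fiberwise measure functions --- essentially the technique replayed in Section~9 of this paper --- with Kemperman's inequality entering as an ingredient of that fiber argument rather than as a base case. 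Indeed your proposed base case $n=0$ makes the stated inequality read $1\ge 2$, so the Fact implicitly concerns noncompact $L$. As a citation the proposal is acceptable; as a sketch of the proof it would not assemble into a correct argument.
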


With Fact~\ref{nonabelBM} and a suitable generalization of Fact~\ref{fact:expansiongap}, it is possible to use our method  to show a weaker version of Conjecture~\ref{conj: generalization} that $\mu(A^2)>(2^{\frac{2(n-m)}{3}}-\varepsilon)\mu(A)$ for sufficiently small $A$. 
\end{remark}

\section*{Acknowledgements}
The authors thank Arturo Rodriguez Fanlo, Ben Green, Ehud Hrushovski, Simon Machado, and Jinhe Ye for discussions.

\bibliographystyle{amsalpha}
\bibliography{ref}

\end{document}